\definecolor{mygray}{gray}{0.8}
\definecolor{mygray2}{gray}{0.8}
\renewcommand{\hat}{\widehat}
\newcommand{\ti}{\widetilde}
\renewcommand{\bar}{\overline}
\newcommand{\iii}[1]{{\left\vert\kern-0.25ex\left\vert\kern-0.25ex\left\vert #1 
    \right\vert\kern-0.25ex\right\vert\kern-0.25ex\right\vert}}
\def\Z{{\mathbb Z}}\def\T{{\mathbb T}}\def\R{{\mathbb R}}\def\C{{\mathbb C}}
\def\cG{\mathcal{G}}
\def\cT{\mathcal{T}}
\def\CC{{\mathcal C}}
\def\e{\eta}
\def\g{\gamma}
\def\be{\beta}\def\d{\delta}
\def\be{\beta}
\def\th{\theta}
\def\L{\Lambda}\def\D{\Delta}
\def\cU{\mathcal{U}}
\def\Leb{\mathrm{Leb}}
\def\toitself{\righttoleftarrow}
\def\dist{\mathrm{dist}}
\def\b#1{\lbrace#1\rbrace}
\def\a#1{\left|#1\right|}
\def\l#1{\langle #1\rangle}
\def\<{\langle}
\def\>{\rangle}
\theoremstyle{plain}
\newtheorem{Thm}{Theorem}[section]
\newtheorem{defin}[Thm]{Definition}
\newtheorem{notation}{Notation}[section]
\newtheorem*{MainMain*}{Main Theorem}
\newtheorem{MainMain}{Main Theorem}
\newtheorem{MainMainprime}{Main Theorem}
\newtheorem*{question*}{Question}
\newtheorem{question}{Question}
\newtheorem*{TheoOthers}{Theorem}
\newtheorem*{Principle*}{Principle}
\newtheorem{Main}{Theorem}
\newtheorem{Mainprime}{Theorem}
\def\iff{\Longleftrightarrow}
\def\cC{\mathcal C}
\newcommand{\ph}{\varphi}
\def\a{\alpha}
\def\d{\delta}
\def\ti{\tilde}
\def\e{\varepsilon}
\def\pa{\partial}
\def\s{\sigma}
\def\th{\theta}
\def\ud{\mathop{\underline{\textrm{d}}}}
\def\urad{\mathop{\underline{\textrm{rad}}}}
\def\ovrad{\mathop{\overline{\textrm{rad}}}}
\def\ua{\mathop{\underline{\textrm{a}}}}
\def\uc{\underline{\textrm{card}}}
\let\newpf\proof \let\proof\relax
\def\area{\operatorname {area}}
\def\cN{\mathcal{N}}
\def\cL{\mathcal{L}}
\def\cD{\mathcal{D}}
\newcommand{\ba}{\overline{A}}
\newcommand{\cF}{\mathcal{F}}
\newcommand{\cO}{\mathcal{O}}
\def\be{\begin{equation}}
\def\ee{\end{equation}}
\def\ba{{\begin{align}}}
\def\ea{{\end{align}}}
\def\bm{\begin{pmatrix}}
\def\em{\end{pmatrix}}
\def\a{{\alpha}}
\def\g{{\gamma}}
\def\fO{{\frak O}}
\def\fd{{\frak d}}
\def\cS{\mathcal{S}}
\def\bD{\mathbb{D}}
\def\0{{\mathbf 0}}
\newtheorem{theo}{Theorem}[section]
\newtheorem{thm}{Theorem}[section]
\newtheorem{cor}[thm]{Corollary}
\newtheorem{lemma}[thm]{Lemma}
\newtheorem{prop}[thm]{Proposition}
\theoremstyle{remark}
\newtheorem{rem}{Remark}[section]
\theoremstyle{definition}
\newenvironment{proof}{ \noindent{\it Proof.}\quad}{\ \hfill $\Box$\vskip .2cm}
\def\iff{\Longleftrightarrow}
\def\ssm{\smallsetminus}
\renewcommand{\setminus}{\ssm}
\newcommand{\De}{{\Delta}}
\newcommand{\N}{{\mathbb N}}
\newcommand{\Q}{{\mathbb Q}}
\newcommand{\bR}{{\mathbb R}}
\newcommand{\bT}{{\mathbb T}}
\newcommand{\bA}{{\mathbb A}}
\newcommand{\bB}{{\mathbb B}}
\newcommand{\bZ}{{\mathbb Z}}
\def\B0{{\bold{0}}}
\def\b{\beta}
\def\a{\alpha}
\def\l{\lambda}
\def\be{\begin{equation}}
\def\ee{\end{equation}}
\def\cA{\mathcal{A}}
\def\cA{\mathcal{A}}
\def\cC{\mathcal{C}}
\def\cE{\mathcal{E}}
\def\cF{\mathcal{F}}
\def\cM{\mathcal{M}}
\def\cQ{\mathcal{Q}}
\def\cU{\mathcal U}
\def\cX{\mathcal X}
\def\1{{\bf 1}}
\def\Empty{}
\newcommand\oplabel[1]{
  \def\OpArg{#1} \ifx \OpArg\Empty {} \else
  	\label{#1}
  \fi}
\newcommand{\comm}[1]{}
\newcommand{\comment}[1]{}
\renewcommand{\ti}{\widetilde}
\renewcommand{\check}{\widecheck}
\def\hs{\hskip 0.05cm}
\begin{document}

\title{On the divergence of Birkhoff Normal Forms}
\author{Raphaël Krikorian }

\address{
Department of Mathematics, CNRS UMR 8088, 
CY Cergy Paris Université (University of Cergy-Pontoise),  2, av. Adolphe Chauvin F-95302 Cergy-Pontoise, France.} 
\email{raphael.krikorian@cyu.fr.}
%


\thanks{This work was supported by a Chaire d'Excellence LABEX MME-DII, the project ANR BEKAM : ANR-15-CE40-0001 and an AAP project from CY Cergy Paris  Université.}

\maketitle
\begin{abstract}  It is well known that a  real  analytic  symplectic diffeomorphism of the $2d$-dimensional disk ($d\geq 1$) admitting the origin as a non-resonant elliptic fixed  can be {\it formally} conjugated to its Birkhoff Normal Form, a formal power series defining a {\it formal integrable} symplectic  diffeomorphism at the origin.   We prove in this paper that this Birkhoff Normal Form is in general divergent.  This solves, in any dimension,   the question of determining which of the two  alternatives of Perez-Marco's  theorem \cite{PM} is true and answers    a question by H. Eliasson.    Our  result is a consequence  of the fact that when $d=1$ the convergence of the formal object that is the BNF has strong dynamical consequences on the Lebesgue measure of the set of invariant circles in arbitrarily small neighborhoods of the origin. Our proof, as well as our results, extend to the case of real-analytic diffeomorphisms of the annulus  admitting a Diophantine invariant torus.

\end{abstract}

\tableofcontents 

\section{Introduction}
We consider in this paper {\it real analytic}  diffeomorphisms defined on an open set  of the $2d$-cartesian space $\R^{d}\times\R^d$ or respectively of the  $2d$-cylinder (or annulus) $(\R/2\pi\Z)^d\times \R^d$ ($d\geq 1$), which are {\it symplectic} with respect to  the canonical symplectic forms $\sum_{j=1}^d dx_{j}\wedge dy_{j}$, $(x,y)\in\R^d\times\R^d$, resp.  $\sum_{j=1}^dd\th_{j}\wedge dr_{j}$, $(\th,r)\in (\R/2\pi\Z)^d\times \R^d$, and leave invariant $\{(0,0)\}\in\R^d\times \R^d$, resp. the torus $\cT_{0}:=(\R/2\pi\Z)^d\times\{0\}\subset  (\R/2\pi\Z)^d\times \R^d$. We shall assume that the invariant sets $\{(0,0)\}\in\R^d\times\R^d$, resp.  $(\R/2\pi\Z)^d\times \{0\}$, are {\it elliptic equilibrium sets} in the following sense: there exists $\omega=(\omega_{1},\ldots\omega_{d})\in \R^d$, the {\it frequency vector}, such that 
\be\begin{cases} f:(\R^{d}\times\R^d,(0,0))\righttoleftarrow,\qquad  f=Df(0,0)\circ (id+O^2(x,y))\\
{\rm spec}(Df(0,0))=\{e^{\pm2\pi \sqrt{-1}\omega_{j}},\  1\leq j \leq d\}
\end{cases}\label{n1.1}\ee
and respectively
\be f:((\R/2\pi\Z)^d\times \R^{d},\cT_{0})\righttoleftarrow,\qquad  f(\th,r)=(\th+\omega,r)+(O(r),O(r^2)).\label{n1.1bis}\ee
We can assume without loss of generality  that in (\ref{n1.1}) the derivative $Df(0,0)$ of $f$ at the fixed point $(0,0)$ is a {\it symplectic rotation}:
 for any  $x=(x_{1},\ldots,x_{d})$, $y=(y_{1},\ldots,y_{d})$, $\ti x=(\ti x_{1},\ldots,\ti x_{d})$, $\ti y=(\ti y_{1},\ldots,\ti y_{d})$ one has ($i=\sqrt{-1}$)
$$Df(0,0)\cdot(x,y)=(\ti x,\ti y) \ \iff\  \begin{cases}& \ti x_{j}+i\ti y_{j}=e^{2\pi i \omega_{j}}(x_{j}+i y_{j}) \\
&\forall \ 1\leq j\leq d\end{cases}.$$

We shall refer to  situation (\ref{n1.1}) as the {\it Elliptic fixed point} or the {\it Cartesian Coordinates} ((CC) for short) case and to  situation (\ref{n1.1bis}) as the {\it Action-Angle} ((AA) for short)  case.

Important examples of such diffeomorphisms are provided by flows  $(\Phi_{H}^t)_{t\in\R}$, or  by  suitable  Poincaré sections  on some energy level, of Hamiltonian systems
$$\dot x=\frac{\pa H}{\pa y}(x,y),\quad \dot y=-\frac{\pa H}{\pa x}(x,y),\quad \textrm{resp.}\quad \dot \th=\frac{\pa H}{\pa r}(\th,r),\quad \dot r=-\frac{\pa H}{\pa \th}(\th,r)$$
where $H:(\R^{d'}\times\R^{d'},(0,0))\to\R$ resp. $H:((\R/2\pi\Z)^{d'}\times\R^{d'},\cT_{0})\to\R$ 
($d'=d$ or $d'=d+1$)
 is real analytic and satisfies 
 \begin{align}&\textrm{(CC)-case}\qquad H(x,y)=2\pi\sum_{j=1}^{d'}\omega_{j} \frac{x_{j}^2+y_{j}^2}{2}+O^3(x,y),\label{HamCaseCC}\\
&\textrm{(AA)-case}\qquad  H(\th,r)=2\pi\sum_{j=1}^{d'}\omega_{j} r_{j}+O(r^2).\label{HamCaseAA}
\end{align}

If we denote by $\Phi_{H}$ the time-1 map of a  hamiltonian $H$ and define the observable 
$r_{j}:(x,y) \mapsto (1/2)(x_{j}^2+y_{j}^2)$, resp. $r_{j}:r\mapsto r_{j}$ ($1\leq j\leq d$)
 we can write (\ref{n1.1}), resp. (\ref{n1.1bis}), as
\begin{align}&\textrm{(CC)-case}\quad f:(\R^{d}\times\R^d,(0,0))\righttoleftarrow,\qquad  f=\Phi_{2\pi\<\omega, r\>}+O^2(x,y)\label{newn1.1}\\
 &\textrm{(AA)-case}\quad f:((\R/2\pi\Z)^d\times \R^{d},\cT_{0})\righttoleftarrow,\qquad  f=\Phi_{2\pi\<\omega, r\>}+(O(r),O(r^2))\label{newn1.1bis}
 \end{align}
where $\<\omega,r\>=\sum_{j=1}^d\omega_{j}r_{j}$, $r=(r_{1},\ldots,r_{d})$. 

The representations (\ref{newn1.1}), resp. (\ref{newn1.1bis}), give a very rough understanding of the behavior of the {\it finite time} dynamics of the diffeomorphism $f$ in a neighborhood  of the elliptic equilibrium sets $\{(0,0)\}$, resp. $\cT_{0}$:
it is interpolated\footnote{For $(x,y)$ $\e$-close to  $(0,0)$ and $n\in\N$ not too large $n=O(\e^{-\a})$, $0<\a<1$ the iterates $f^k(x,y)$, $k\leq n$ ($f^k$ denotes the composition $f\circ\mathop{\cdots}\circ f$, $k$ times) stay $\e^{2-\a}$-close to those of the the symplectic rotation, $\Phi^k_{2\pi\<\omega,r\>}(x,y)$.}by the   dynamics of $\Phi_{2\pi\<\omega,r\>}$ which is  {\it quasi-periodic} in the sense that all its orbits are quasi-periodic with frequencies $\omega_{1},\ldots\omega_{d}$.   Improving this approximation is an old   and important problem 
(it was  a central  theme of  research of the astronomers of the XIXth century; see the  references  of the very instructive introduction by P\'erez-Marco in \cite{PM}) that has a solution  at least in the (CC)-case (\ref{newn1.1})   if  the frequency vector  $\omega$ is {\it nonresonant}:  
 any relation $k_{0}+k_{1}\omega_{1}+\cdots+k_{d}\omega_{d}=0$ with $k_{0},k_{1},\ldots,k_{d}\in\Z$ implies that $k_{0}=k_{1}=\ldots=k_{d}=0$. Indeed,  after using nice changes of coordinates (symplectic transformations) 
one can 
 interpolate, in {\it small neighborhoods of the origin},  the dynamics of $f$ by  quasi-periodic ones
    with  much better orders of approximation and  for much longer times.  There are two remarkable features of this interpolation:  the first, is that  the frequencies of the interpolating quasi-periodic motions now depend on the initial point and do not  necessarily coincide with the frequencies at the origin;  the second, is that if one pushes the order of approximation, these frequencies stabilize in some sense.  This is the content of the famous {\it Birkhoff Normal Form Theorem}, formalized by Birkhoff in the 1920's \cite{Bi1}, \cite{Bi2}, \cite{SiMo},  which paved the way to  the major achievements of  the KAM theory (named after Kolmogorov, Arnold and Moser) in the 1960's, on the existence of (infinite time) quasi-periodic motions for a wide class of  diffeomorphisms of the form (\ref{n1.1}), (\ref{n1.1bis}); see  \cite{Ko}, \cite{Ar},  \cite{Mo} (and \cite{Ne} for finite time approximations).  We now describe in more details the Birkhoff Normal Form Theorem.

\subsection{Birkhoff Normal Forms}\label{sec:1.1}
We begin with the Elliptic fixed point case  ((CC)-case). The first statement of the Birkhoff Normal Form Theorem is the following.  For any $N\in\N^*$, there exist a  polynomial $B_{N}\in \R[r_{1},\ldots,r_{d}]$, $B_{N}(r)=2\pi\<\omega,r\>+O(r^2)$, of total degree $N$ and a symplectic diffeomorphism $Z_{N}:(\R^{d}\times \R^d,(0,0))\righttoleftarrow$ (preserving the standard symplectic form $\sum_{k=1}^d dx_{k}\wedge dy_{k}$ and tangent to the identity $Z_{N}=id+O^2(x,y)$) such that 
\be Z_{N}\circ f\circ Z_{N}^{-1}(x,y)=\Phi_{B_{N}}(x,y)+\cO^{2N+1}(x,y).\label{eq:1.1}
\ee   
The diffeomorphism $\Phi_{B_{N}}:(\R^d\times\R^d,0)\toitself$ is a {\it generalized symplectic rotation}   
 \be \Phi_{B_{N}}(x,y)=(\ti x,\ti y) \ \iff\  \begin{cases}& \ti x_{j}+i\ti y_{j}=e^{ i \pa_{j} B_{N}(r)}(x_{j}+i y_{j}) \\
&\forall \ 1\leq j\leq d\end{cases}\label{def:symprot}\ee
(recall $r=((1/2)(x_{1}^2+y_{1}^2),\ldots,(1/2)(x_{d}^2+y_{d}^2))$) and defines an {\it integrable} dynamics in a strong sense: every orbit of $\Phi_{B_{N}}$ is {\it quasi-periodic} and, in addition, the origin is {\it  Lyapunov stable}.  Indeed, for each $c=(c_{1},\ldots,c_{d})\in (\R^*_{+})^d$, the $d$-dimensional torus 
$$\cT_{c}:=\{(x,y)\in\R^{2d},\ \forall\ 1\leq j\leq d,\ r_{j}:=(1/2)(x_{j}^2+y_{j}^2)=c_{j}\}$$
is globally invariant by $\Phi_{B_{N}}$ and the restricted dynamics  of $\Phi_{B_{N}}$ on the torus $\cT_{c}\simeq \T^{d}:= \R^d/(2\pi\Z)^d$ is conjugated to a translation $\T^d\ni \th\mapsto \th+2\pi\omega(c)\in\T^d$ with {\it frequency vector} $\omega(c)=(2\pi)^{-1}\nabla B_{N}(c)$.  The dynamics of $\Phi_{B_{N}}$ is thus completely understood on the whole phase space\footnote{When $c$ has some zero components, $\cT_{c}$ is a $d_{c}$-dimensional torus, $0\leq d_{c}\leq d$, and  the restricted dynamics of $\Phi_{B_{N}}$ on $\cT_{c}$ is again conjugate to a translation on a torus.} $\R^{d}\times \R^d$.

Here comes the second part of the statement. The polynomials $B_{N}$ and the components of $Z_{N}-id$ 
 converge as {\it formal} power series when $N$ goes to infinity: $B_{N}\to B_{\infty}\in \R[[r_{1},\ldots ,r_{d}]]$, $Z_{N}\to Z_{\infty}\in \R[[x,y]]$ and, in $\R[[x,y]]$, one has the following formal conjugacy relation
 \be Z_{\infty}\circ f\circ Z_{\infty}^{-1}(x,y)=\Phi_{B_{\infty}}(x,y).\label{eq:1.2}
\ee
 The formal power series $B_{\infty}$ is unique if $Z_{\infty}$ is tangent to the identity and is therefore {\it invariant by (smooth or formal) conjugation} tangent to the identity; it    is called the {\it Birkhoff Normal Form} (BNF for short) of $f$ and we shall denote it by $BNF(f)$:
$$BNF(f)=B_{\infty}(r_{1},\ldots,r_{d})\in \R[[r_{1},\ldots,r_{d}]].
$$
On the other hand the formal conjugacy $Z_{\infty}$, which is called the {\it normalization transformation},  is not unique (but if properly normalized is unique). 

The preceding results hold  in the Action-Angle case (\ref{newn1.1bis}) but under a {\it Diophantine assumption} on $\omega$ (this is stronger that mere nonresonance):
\be \forall \ k\in\Z^d\setminus\{0\},\ \min_{l\in\Z}|\<k,\omega\>-l|\geq \frac{\kappa}{|k|^\tau}\qquad (\tau\geq d).\label{firstdc}\ee
The exponent $\tau$ is called the {\it exponent} of the Diophantine condition\footnote{The set of vectors of $\R^d$ satisfying a Diophantine condition with fixed exponent $\tau$ has positive Lebesgue measure if $\tau>d$ and the union of these sets on all $\tau\geq d$ has full Lebesgue measure in $\R^d$.}.
One can then prove similarly   the existence:  (a) for any $N\in\N^*$, of a polynomial $B_{N}\in \R[r_{1},\ldots,r_{d}]$, $B_{N}(r)=2\pi\<\omega,r\>+O(r^2)$ and of a symplectic diffeomorphism $Z_{N}:((\R/2\pi\Z)^d\times\R^d,\cT_{0})\righttoleftarrow$ (preserving the standard symplectic form $\sum_{k=1}^d d\th_{k}\wedge dr_{k}$, $Z_{N}=id+(O(r),O(r^2))$) such that 
\begin{align} &Z_{N}\circ f\circ Z_{N}^{-1}(\th,r)=\Phi_{B_{N}}(\th,r)+(\cO^N(r),\cO^{N+1}(r)) \label{eq:1.1.t}\\
&\Phi_{B_{N}}(\th,r)=(\th+\nabla B_{N}(r),r)\label{eq:1.1.tbis}
\end{align}
($\Phi_{B_{N}}$ is called an {\it integrable twist});
and: (b) of a formal power series  $B_{\infty}\in \R[[r_{1},\ldots,r_{d}]]$, the Birkhoff Normal Form,  and of a formal symplectic transformation $Z_{\infty}=id+(O(r),O(r^2))$ in $C^\omega(\T^d)[[r_{1},\ldots,r_{d}]]$ (the set of formal power series with coefficients in the set of real analytic functions $\T^d\to\T$) such that one has in $C^\omega(\T^d)[[r_{1},\ldots,r_{d}]]$ the formal conjugation relation
\be Z_{\infty}\circ f\circ Z_{\infty}^{-1}(\th,r)=(\th+\nabla B_{\infty}(r),r).\label{eq:1.2.t}
\ee
Again we denote $BNF(f)=B_{\infty}(r_{1},\ldots,r_{d})\in \R[[r_{1},\ldots,r_{d}]]$.

All the preceding discussion on Birkhoff Normal Forms holds if we only assume $f$ to be smooth\footnote{In the $C^k$ category, one cannot define in general $BNF(f)$.}. 
We can summarize this:
 \begin{TheoOthers}[Birkhoff] Any  smooth symplectic diffeomorphism $f:(\R^{d}\times\R^d,(0,0))\toitself$ ($d\geq 1$) (resp. $f:((\R/2\pi\Z)^{d}\times\R^d,\cT_{0})\toitself$) admitting the origin as a non resonant elliptic fixed point (resp. of the form (\ref{newn1.1bis}) with $\omega$ Diophantine) is formally (strongly) integrable: it is conjugated  in $\R[[x,y]]$ (resp. $C^\infty((\R/2\pi\Z)^d)[[r]]$) to the formal generalized symplectic rotation (resp. the  formal integrable twist) $\Phi_{BNF(f)}$. The formal series $BNF(f)$ is an invariant of formal conjugation. 
\end{TheoOthers}

These  formal (and approximate) Birkhoff Normal Forms can be defined in the more classical setting of Hamiltonian flows $\dot x=\frac{\pa H}{\pa y}(x,y),\quad \dot y=-\frac{\pa H}{\pa x}(x,y)$ (or $\dot \th=\frac{\pa H}{\pa r}(\th,r),\quad \dot r=-\frac{\pa H}{\pa \th}(\th,r)$): $f$ and $\Phi_{B}$ ($B=BNF(f)$)  are then replaced by $(\Phi^t_{H})_{t\in\R}$ an $(\Phi^t_{B})_{t\in\R}$ in (\ref{eq:1.2}), (\ref{eq:1.2.t})  (we shall then write $B=BNF(H)$\footnote{A more classic  equivalent formulation is $H=B\circ Z$. }).
In the Hamiltonian case, there is a weaker notion of integrability, usually called {\it Poisson integrability}, which corresponds to the situation where  the considered Hamiltonian has a complete system of functionally independent integrals (observables constant under the motion) which commute for the Poisson bracket.

Poincaré  discovered \cite{Po} that,  in general, real analytic  Hamiltonian flows do not admit other analytic first integrals than the Hamiltonian itself and hence  that in general no relation like \ref{eq:1.2} can hold with  converging $Z_{\infty}$ and $B_{\infty}$. Siegel proved  \cite{Si} in 1954 (see also \cite{Si2}, \cite{SiMo}, \cite{V1}, \cite{PM})   that, whatever the {\it fixed}  non resonant frequency vector at the origin $\omega$ is,  the normalizing conjugation $Z_{\infty}$ cannot in general\footnote{Here it means $G_{\d}$-dense in some set of real analytic functions with fixed radius of convergence. This phenomenon is even  ``prevalent'' as shown by Pérez-Marco \cite{PM}.} define a convergent series.  Indeed,  the existence of a convergent normalizing transformation yields real analytic Poisson integrability\footnote{If $Z_{\infty}$ converges the observables $r_{j}\circ Z_{\infty}$, $j=1,\ldots, d'$  are a complete set of real analytic and functionally independent Poisson commuting integrals.} a fact (known to Birkhoff \cite{Bi1}) that is not compatible with the richness\footnote{By which we mean the coexistence of quasi-periodic motions and hyperbolic behavior in any neigborhood of the equilibrium; see for a global view on these topics and references  the book \cite{AKN}.} of a generic dynamics near a non resonant elliptic equilibrium.   Note that the converse statement is true: real analytic Poisson integrability implies the existence of a real analytic normalizing Birkhoff transformation  ({\it cf.} \cite{It}, \cite{Kap}, \cite{Zung}).

As for the Birkhoff Normal Form itself,   H. Eliasson formulated the following natural question \cite{E1}, \cite{E2} (see also the references in \cite{PM}):
\begin{question}[Eliasson]
Are there examples of real analytic symplectic diffeomorphisms or  Hamiltonians  admitting divergent ({\it i.e.} with a null radius of convergence)   Birkhoff Normal Form?
\end{question}
The preceding  question has an easy  positive answer in the smooth case  (the map $f$ is only assumed to be smooth): indeed, one can choose $f$ to be of the form $f=\Phi_{\Omega}$ where $\Omega:(\R^{d},0)\to\R$ is smooth with a divergent Taylor series at the origin; since equalities (\ref{eq:1.2}) (\ref{eq:1.2.t})  only depend on the infinite jet  $J(f)$ of $f$ at 0, the special integrable form of $f$ implies  $BNF(f)=J(f)$ thus $BNF(f)$ is diverging. The situation is not so clear if  $f$ is real analytic.  In contrast with the aforementioned generic divergence of the normalizing transformation,
  there  seems\footnote{We shall in fact see in this paper  that {\it there are} such dynamical obstructions.} to be  {\it a priori} no obvious dynamical obstruction\footnote{Like the accumulation at the origin of hyperbolic periodic points or normally hyperbolic tori.} to the  divergence of the Birkhoff Normal Form.    

The first breakthrough in connection with   Eliasson's question    came from R. Pérez-Marco \cite{PM} who proved,  in the setting of Hamiltonian systems having a non resonant elliptic fixed point, the following   {\it dichotomy}:

\begin{TheoOthers}[Pérez-Marco \cite{PM}] 
 For {\rm  any fixed  nonresonant }frequency vector $\omega\in\R^{d'}$, $d'\geq 2$,  one has the following {\it dichotomy}: either for all real analytic Hamiltonian  $H$  of the form (\ref{HamCaseCC})  $BNF(H)$ converges (defines a converging analytic series) or there is a ``prevalent'' set of such $H$ for which $BNF(H)$ diverges.
\end{TheoOthers}
We refer to Subsection \ref{sec:1.2.2} for a precise definition of ``prevalent''.
A similar dichotomy holds in the setting of real analytic symplectic diffeomorphisms  in the  (CC)-case, and, both in the Hamiltonian or diffeomorphism framework,  it can be extended to the (AA)-case (but under the stronger assumption that  $\omega$  is Diophantine); {\it cf.} Theorem \ref{theo:PMdichotomy} of our paper.

Pérez-Marco's argument is not based on an analysis of the {\it dynamics} of $f$  but rather focuses on the {\it coefficients} of  the BNF and exploit their polynomial dependence on the coefficients of the initial perturbation  by using  techniques from potential theory.

The following two Theorems are  an answer (in the symplectomorphism setting) to Eliasson's question and decide which of the two assertions of Pérez-Marco's alternative holds (see Theorem \ref{theo:main3} of Subsection \ref{sec:1.2.2} for a more precise statement).

\begin{MainMain}[(CC)-Case]\label{MainTheorem1}For any $d\geq 1$ and any nonresonant frequency vector $\omega\in\R^d$,  there exists a ``prevalent'' set of real analytic symplectic diffeomorphism  $f:(\R^d\times \R^d,(0,0))\toitself $ of the form (\ref{newn1.1})   the Birkhoff Normal Forms of which  are divergent.
\end{MainMain}
In the Action-Angle Case (\ref{newn1.1bis}) it takes the following form:
\begin{MainMainprime}[(AA)-Case] \label{MainTheorem1AA}For any $d\geq 1$ and any Diophantine frequency vector $\omega\in\R^d$, there exists a ``prevalent'' set of real analytic symplectic diffeomorphism   $f:((\R/2\pi\Z)^d\times \R^{d},\cT_{0})$ of the form (\ref{newn1.1bis})   the Birkhoff Normal Forms of which  are divergent.
\end{MainMainprime}
Main Theorems \ref{MainTheorem1}, \ref{MainTheorem1AA} also extend to the Hamiltonian case (\ref{HamCaseCC})-(\ref{HamCaseAA}) (with $d'=d+1$)\footnote{It is not clear whether one can, for general systems, deduce the case of  Hamiltonian flows from the case  of diffeomorphisms and {\it vice versa}. On the other hand the proofs of  Main Theorems \ref{MainTheorem1}, \ref{MainTheorem1AA} and in particular the {proofs} of Main Theorem \ref{MainTheorem2} and  of  Theorems  \ref{theo:main1}--\ref{theo:main2},  \ref{theo:mainprime1}--\ref{theo:main2prime} below extend to Hamiltonian flows with $1+1$ degrees of freedom.}. 
Note that from Pérez-Marco's Theorem (and its analogue in the symplectomorphism case), in order  to prove that the divergence of the Birkhoff Normal Form holds in a prevalent way, it is enough to provide, for each fixed frequency vector $\omega$, one example for which the  BNF is divergent. On the other hand, if one is able to construct one such example for some $d_{0}$, then it is easy to construct other such examples for any  
$d> d_{0}$ (see for example the proof of Theorem \ref{theo:main3}).  Proving  Main Theorem \ref{MainTheorem1} (resp. \ref{MainTheorem1AA})  thus boils down to construct when $d=1$, for each irrational (resp. Diophantine)  $\omega\in\R$,   one example of a real analytic symplectic  diffeomorphism with a diverging BNF.

Gong already provided in  \cite{Gong} (by a direct analysis of the coefficients of the BNF)   examples of   real analytic Hamiltonians $\<\omega, r\>+F:(\R^2\times\R^2,0)\to\R$, $F=O^3(x,y)$,  with Liouvillian frequency $\omega\in\R^2$ at the origin and a divergent BNF and Yin \cite{Yin} produced  analogue of  Gong's examples in the diffeomorphism case (area preserving map of $(\R^2,0)$ with a very Liouvillian elliptic fixed point). In these examples the divergence of the BNF is caused by the presence of very small denominators (due to the Liouvillian character of $\omega$) appearing in the  coefficients of the BNF. 
After our result was announced,  Fayad \cite{F}  constructed simple  examples of real analytic Hamiltonian systems in $(\R^8,0)$  ($d'=d+1=4$ degrees of freedom) with any fixed nonresonant frequency vector at the origin and     divergent BNF. 
The  argument again is based on an analysis of the { coefficients} of the BNF;  one consider Hamiltonians with two degrees of freedom where two extra action variables are added as formal parameters, one of them appearing later in the denominators of the BNF.
These type of examples can be constructed in the diffeomorphism case for $d\geq 3$

 In a different context, that of reversible systems, let us mention a result of divergence of normal forms  in  \cite{GongStolo1} and a result of divergence of  normalizing transformations in \cite{MoWeb}. 

\medskip
 Eliasson's question can now be formulated in a  stronger form:
\begin{question}\label{QB} Does  the convergence of a {\rm formal} conjugacy invariant like the  Birkhoff Normal Form of a  real analytic symplectic diffeomorphism (or Hamiltonian) have consequences  on the {\rm dynamics} of the diffeomorphism (or  Hamiltonian)?
\end{question}
Note that  the similar question about the dynamical consequences of the convergence of the {\it Normalizing transformation} is answered  by  Siegel's Theorem   \cite{Si}: the convergence of the normalizing transformation of a Hamiltonian $(\R^4,0)\to\R$ admitting a non resonant elliptic fixed point   implies the existence of a three-parameter family of periodic solutions having arbitrarily large period and lying in arbitrarily small neighborhoods of the origin.  

As for Question \ref {QB}, there are indeed   various results pointing to some  kind of rigidity phenomena if analyticity (and some arithmetic properties on $\omega$) is assumed.  To be more specific, let us mention a striking one: Bruno \cite{Br} and R\"ussmann \cite{Ru} proved  that if $f$ is real analytic  and if its BNF  is {\it trivial}, ${BNF(f)}=2\pi\<\omega,r\>$ (in particular $BNF(f)$  converges),   then $f$ is real analytically conjugated to $\Phi_{2\pi\<\omega,r\>}$,  {\it provided} the  frequency vector at the origin $\omega$ satisfies a {\it Diophantine condition}.
We refer to \cite{V2}, \cite{It}, \cite{E1}, \cite{EcVa}, \cite {Stolovitch}, \cite{GongStolo2}, \cite{EFK}, \cite{EFKbis} for generalizations of the Bruno-R\"ussmann Theorem  and  related results.

The Main Result of our  paper is in some sense one answer, amongst possibly others, to the previous question at least when $d=1$ and if $f$ is assumed to satisfy  some {\it twist condition}. 

Let us say that a diffeomorphism of the form  $(\R^2,0)\toitself$ (\ref{n1.1}) or  $(\R\times\T,\cT_{0})\toitself$   is {\it twist} (or satisfies a  {\it twist condition}) if  the second order term of its BNF is not zero\footnote{An easily checkable condition.}: $(2\pi)^{-1}BNF(f)(r)=\omega r+b_{2}r^2+O(r^3)$, $b_{2}\ne 0$.
\begin{MainMain}\label{MainTheorem2}
If the Birkhoff Normal Form of a real-analytic symplectic  twist  diffeomorphism $(\R^2,0)\toitself$ (\ref{n1.1}) or  $(\R\times\T,\cT_{0})\toitself$    (\ref{n1.1bis})  converges
 then the measure  of  the complement of the  union  of all  {\it invariant curves} accumulating the origin  is  {\it much smaller} than what it is   for a general such diffeomorphism.
\end{MainMain}
In other words, the convergence of a {\it formal} object like the BNF has consequences on the {\it dynamics} of the diffeomorphism.
Precise statements are given  in Theorems \ref{theo:main1}--\ref{theo:main2},  \ref{theo:mainprime1}--\ref{theo:main2prime}) of Subsection \ref{ssec:1.2}.
Combined with  (the extension to the diffeomorphism case of) Pérez-Marco's Theorem \cite{PM},   this gives that  in any number of degrees of freedom,  a general  real analytic  symplectic diffeomorphism   admitting the origin as an non resonant elliptic equilibrium     has a divergent Birkhoff Normal Form (see Theorem \ref{theo:main3}).

 Having in mind the aforementioned result by Bruno and R\"ussmann,  a natural stronger question is whether the following  {\it rigidity} result is true:  
\begin{question}\label{QC} Is it true that a   real analytic symplectic diffeomorphism or  Hamiltonian system having a Diophantine elliptic equilibrium and a non degenerate  and  convergent  BNF is (real analytically)  integrable (in some neighborhood of the origin)?
\end{question}
The examples by  Farré and Fayad   in \cite{FaFé} of real analytic Hamiltonians on $\T^{d+1}\times \R^{d+1}$ with convergent BNF and with an unstable diophantine elliptic fixed point  show that such a generalization is not true   for $d\geq 2$ if by non degenerate we mean that $BNF(f)$ is trivial. The question is still open for $d<2$. Note that though in Farré-Fayad's examples the BNF (which is explicit)  is not completely degenerate (like it is   in the  Bruno-R\"ussmann Theorem) it has some degeneracies (it is not completely non degenerate). If one drops in the question  the Diophantine assumption and assumes the BNF to be trivial  the question is open (this question is related to a question of Birkhoff on pseudo-rotations (area preserving maps with no periodic points except the origin) and to  the problem  of constructing real analytic Anosov-Katok examples; {\it cf.} for details and references \cite{FaKri}).

When $d=1$ the situation might be more favorable. To any twist area preserving diffeomorphism  $f:(\R^2,0)\toitself$ (\ref{n1.1}) or  $f:(\R\times\T,\cT_{0})\toitself$    (\ref{n1.1bis}) one can associate (we use the notations  and terminology of \cite{Sib}) its {\it minimal action} $\a:I\to\R$ ($I$ is an open interval containing $\omega$) that assigns to each $\ph\in I$ the average action of any minimal orbit with rotation number $\ph$. The function $\a$ is strictly convex (in fact differentiable at any irrational) and one can thus define  its Legendre conjugate function $\a^*:r\mapsto\sup_{\ph \in I}(\ph r-\a(\ph))$ \footnote{The functions $\a$ and $\a^*$ are called Mather's or also   $\b$ and $\a$ Mather's functions.} (see \cite{Ma}, \cite {MaFo}, \cite{Sib} for further details). The function $r\mapsto \a^*(r)$ (defined on a neighborhood of 0) can be seen as a {\it frequency map}  in the sense that if $\gamma$ is an invariant circle for $f$ with ``symplectic height'' (area with respect to the origin) $c$   then $\a^*(c)$ is the rotation number of $f$ restricted on $\g$. It has the following properties: the Taylor series of $\a^*$ at 0 coincides with the Birkhoff Normal Form of $f$; morerover,    if $\a^*$ (hence $\a$) is differentiable then $f$ is $C^0$-integrable (see \cite{Sib}). This  $C^0$-integrability often yields rigidity (we refer to \cite{AdSK}, \cite{KaSo}  for an illustration of this fact in the context of billiard maps). The techniques developed in our paper are probably enough to prove that  if the function $\a^*$ is real analytic then $f$ is in fact real analytically integrable. A more delicate  issue is   to establish real analyticity of $\a^*$ by only knowing that its Taylor series at 0 (the BNF) defines a converging series. Note that if $f$ is real analytic  one can construct  dynamically relevant   holomorphic  functions (frequency maps) defined on  complex domains  having   positive Lebesgue measure  intersections (Cantor sets)  with the real axis (see \cite{CMS}, \cite{Pos}) and which coincide on these intersections with $\a^*$.
The restrictions of these holomorphic functions on these Cantor sets  have some quasi-analyticity properties  but it seems that there are not strong enough  to deduce  that $\a^*$ behaves like a real quasi-analytic function (in particular that the convergence of the Taylor series at 0 implies analyticity); we refer to \cite{CMS} for references and for more details.

\medskip
We conclude this subsection by the following question.
\begin{question}\label{QD}Is a given real analytic symplectic diffeomorphism accumulated\footnote{This means that if the given diffeomorphism $f$  has a holomorphic extension to some domain $W$ there exists a slightly smaller  subdomain $\ti W\subset W$ and a sequence of real-symmetric holomorphic  diffeomorphisms $f_{n}$ defined on $\ti W$ such that $\lim_{n\to\infty}\sup_{\ti W} |f-f_{n}|=0$.} by real analytic symplectic diffeomorphisms having convergent BNF's? (We do not ask the radii of convergence of the BNF's to be bounded below).
\end{question}
Positive answers to Questions \ref{QC}, \ref{QD} would imply that any real analytic symplectic diffeomorphism admitting an elliptic equilibrium set is accumulated  in the strong real analytic topology  by diffeomorphisms of the same type that are in addition integrable in a neighborhood of the equilibrium set.

\subsection{Invariant circles}\label{ssec:1.2}

As  suggest (\ref{eq:1.1}), (\ref{eq:1.1.t})  the BNF  (more precisely its  approximate version $B_{N}$) is, as we have already mentioned,  a precious tool 
  to study the problem of  the existence of quasi-periodic motions in the neighborhood of an elliptic equilibrium, an old and  fundamental question  in Celestial and  Hamiltonian Dynamics.  A bright illustration of this fact is certainly the KAM Theorem (\cite{Ko}, \cite{Ar}, \cite{Mo}) that yields,  under   suitable  {\it non-degeneracy} conditions on the BNF (non-planarity),  the existence of many KAM tori \footnote{A KAM torus is an invariant Lagrangian torus on which the dynamics is conjugated to a linear translation  with a Diophantine frequency vector.}accumulating the origin (see \cite{EFK}, \cite{EFKbis}  for results under much weaker non-degeneracy assumptions).  
  
We shall be mainly concerned with the $2$-dimensional case ($d=1$) and we restrict to this case in this subsection. Recall our notation  $\T=\R/2\pi\Z$ for  the 1-dimensional torus.
  An {\it invariant circle} (or also {\it invariant curve})   for a real analytic (or smooth) diffeomorphism $f:(\R\times\R,(0,0))\toitself$ of the form (\ref{newn1.1}) is the image  $\gamma=g(\T)$ of  an injective  $C^1$ map $g:\T\to \R^2\setminus\{0\}$  with  index $\pm1$ at $0$ such that $f(\gamma)=\gamma$. Likewise, in the (AA) case, an  {\it invariant circle} or  {\it invariant curve})   for a real analytic (or smooth) diffeomorphism $f:(\T\times\R,\cT_{0})\toitself$ of the form (\ref{newn1.1bis})  is the image  $\gamma=g(\T)$ of  an injective  $C^1$ map $g:\T\to \T\times\R$ which is homotopic to the circle $\cT_{0}=\T\times \{0\}$ and  such that $f(\gamma)=\gamma$.
  Note that in this latter case, by a theorem of Birkhoff ({\it cf.} \cite{Bi2}, \cite{He}),  invariant circles close enough to $\cT_{0}$ are in fact {\it graphs}  if $f$ satisfies a {\it twist condition}:
\be b_{2}(f)\ne 0\qquad\textrm{if}\quad  (2\pi)^{-1}BNF(f)=\omega r+b_{2}(f)r^2+\cdots.\label{twistcond1.10}\ee
 In both  cases,  we  denote by $\bar \cG_{f}$ the set of $f$-invariant curves and for $t>0$,  by $\bar \cL_{f}(t)$ the set of points in 
$M_{\R}:=\R^2$ or $\T\times \R$
which belong to an  invariant curve $\gamma\in \bar \cG_{f}$ such that $\gamma\subset M_{\R}\cap \{|r|<t\}$ \footnote{ In the (AA) case $M_{\R}\cap \{|r|<t\}=\{(\th,r)\in\T\times \R,\ |r|<t\}$ and in the (CC) case  $M_{\R}\cap \{|r|<t\}=\{(x,y)\in\R\times \R,\ (1/2)(x^2+y^2)<t\}$.}
$$\bar\cL_{f}(t)=\bigcup_{\substack{\g\in\bar \cG_{f}\\ \g\subset M_{\R}\cap\{|r|<t\}}}\gamma.
$$
We then define ($M_{\R}=\R\times \R$ or $\T\times \R$)
$$m_{f}(t)={\Leb}_{M_{\R}}((M_{\R}\cap\{|r|<t\})\setminus \bar\cL_{f}(2t)).$$
The 2-dimensional version of the KAM Theorem is the celebrated Moser's twist Theorem \cite{Mo}:
\begin{TheoOthers}[Moser] If $f$ is a symplectic smooth  diffeomorphism like (\ref{newn1.1}) or (\ref{newn1.1bis}) satisfying the twist condition (\ref{twistcond1.10}) one has for some constant $a>0$
\be m_{f}(t)\lesssim t^{a}.\label{eMoser}\ee
\end{TheoOthers}
In the previous theorem $\omega$ need not be diophantine in the (AA)  or (CC) case (this is due to the twist condition), the twist condition can be considerably weakened (see for example   \cite{EFK}, \cite{EFKbis}) and when $d=1$ symplecticity (area preservation) can be replaced by the weaker {\it intersection property}.

On the other hand, if $f$ is real analytic and $\omega$ (both in the (CC) and (AA) cases)  is Diophantine one can get, by pushing to its limit the ``standard''  KAM method, a better estimate: for any $0<\b\ll 1$ and $t\ll_{\b}1$
\be m_{f}(t)\lesssim \exp(-(1/t)^{\frac{1}{1+\tau(\omega)}-\b})\label{e1.12}
\ee  
where we have defined for any irrational $\omega$
\be \tau(\omega)=\limsup_{k\to\infty}\frac{-\ln\min_{l\in\Z}|k\omega-l|}{\ln k} =\limsup_{n\to\infty} \frac{\ln q_{n+1}}{\ln q_{n}};
\label{eq:1.5}
\ee
in the preceding formula  $(p_{n}/q_{n})_{n\geq 0}$ is  the sequence of convergents\footnote{As usual, if $\omega=1/(a_{1}+1/(a_{2}+1/(\cdots)))$, $a_{i}\in\N^*$,    we define $p_{n}/q_{n}=1/(a_{1}+1/(a_{2}+1/(\cdots+1/a_{n}))$.}
 of $\omega$.
Note that  if $\tau(\omega)<\infty$,  $\omega$ is Diophantine with exponent $\tau'$ for any $\tau'>\tau(\omega)$. If $\tau(\omega)=\infty$ we say that  that $\omega$ is  {\it Liouvillian}.

The main results of our  paper are that: (a)  one can improve  the  exponent in (\ref{e1.12}) if $BNF(f)$ converges; (b) in the ``general case'' the exponent in (\ref{e1.12}) is almost optimal.
More precisely 
\begin{Main}\label{theo:main1} Let $f$ be a real  analytic   symplectic diffeomorphim $f:(\R\times\R,(0,0))\toitself$ like  (\ref{newn1.1}) or $f:(\T\times\R,\cT_{0})\toitself$ like (\ref{newn1.1bis})  satisfying the twist condition (\ref{twistcond1.10}) and assume that in both cases $\omega$ is Diophantine. 
Then,  if $BNF(f)$ defines a converging series one has for any $0<\b\ll 1$ and  $0<t\ll_{\b} 1$ 
\be m_{f}(t)\lesssim \exp\biggl(-\biggl(\frac{1}{t}\biggr)^{(\frac{2}{1+\tau(\omega)})-\b}\biggr).\label{eq:main}
\ee 
\end{Main}
On the other hand general  real analytic twist symplectic  diffeomorphisms like  (\ref{newn1.1}) ,  (\ref{newn1.1bis})   behave quite differently:

\begin{Main} \label{theo:main2}Let  $\omega\in \R$ be Diophantine. There  exist  real analytic twist  symplectic diffeomorphisms $f:(\R\times\R,(0,0))\toitself$ like   (\ref{newn1.1}) or $f:(\T\times \R,\cT_{0})\toitself)$ like  (\ref{newn1.1bis})  satisfying the twist condition (\ref{twistcond1.10}) and a sequence of positive numbers $(t_{k})$ converging to zero such that for any $0<\b \ll 1$, $0<t_{k}\ll_{\b} 1$
\be m_{f}(t_{k})\gtrsim  \exp\biggl(-\biggl(\frac{1}{t_{k}}\biggr)^{(\frac{1}{1+\tau(\omega)})+\b}  \biggr).
\ee  
\end{Main}
In the (CC)-case, one has the following results if  $\omega$ is Liouvillian (note that to define the BNF in the general  (AA) case one needs $\omega$ to be Diophantine).
 Let us define for $\omega\in\R\setminus\Q$
\be t_{n}(\omega)=\frac{5\min(|b_{2}(f)|,|b_{2}(f)|^{-1})}{q_{n+1}q_{n}}.\label{ee1.20}\ee
\begin{Mainprime}\label{theo:mainprime1} Let $\omega$ be Liouvillian and $f:(\R\times\R,(0,0))\toitself$ be a real-analytic symplectic diffeomorphism of the form  (\ref{newn1.1}) satisfying the twist condition (\ref{twistcond1.10}).
 Then, if $BNF(f)$ defines a converging series one has for every $n\in\N$ large enough such that $q_{n+1}\geq q_{n}^{10}$
\be m_{f}(t_{n}(\omega))\lesssim \exp(-q_{n+1}^{1/5}).
\ee

\end{Mainprime}
Note that if $\omega$ is Liouvillian, one has for infinitely many $n$, $q_{n+1}\geq q_{n}^{10}$.

On the other hand:

\begin{Mainprime} \label{theo:main2prime}For any $\omega\in \R\setminus\Q$, there exist  real-analytic  symplectic diffeomorphisms   $f:(\R\times\R,(0,0))\toitself$  of the form   (\ref{newn1.1}) satisfying the twist condition (\ref{twistcond1.10}) such that for every $\e>0$ and   infinitely many  $n\in\N$ 
\be m_{f}(t_n(\omega))\gtrsim  \exp(-q_{n+1}^\e).
\ee  
\end{Mainprime}
Theorems \ref{theo:main1}--\ref{theo:main2},  \ref{theo:mainprime1}--\ref{theo:main2prime} clearly imply  Main Theorem \ref{MainTheorem2} and when $d=1$  Main Theorems \ref{MainTheorem1}-\ref{MainTheorem1AA} in the elliptic fixed point case and the action-angle case.

Theorems \ref{theo:main1}  is a consequence of the following theorem (this is Theorem \ref{prop:12.7} of Section \ref{sec:8curves}):
\begin{Main}\label{MainE} Let $\omega$ be Diophantine and let $f:(\R\times\R,(0,0))\toitself $ of the form (\ref{newn1.1}) or $f:(\T\times\R,\cT_{0})\toitself$ of the form (\ref{newn1.1bis}) be a real analytic symplectic diffeomorphisms satisfying  the twist condition (\ref{twistcond1.10}). Then,  for any $0<\b\ll 1$, $0<t \ll_{\b} 1$, there exists a finite collection $\check \cD_{t}$ of pairwise disjoint disks  of the complex plane $\check D\in \check \cD_{t}$,  centered on the real axis such that: 
\begin{enumerate}
\item $\#\check\cD_{t}\lesssim (1/t)^{1-\b}$
\begin{align}
&\forall \ \check D\in \check\cD_{t}\qquad |\check D\cap \R|\lesssim  \exp(-(1/\rho)^{\frac{1}{1+\tau(\omega)}-\b} )\label{eq:1.18ante}\\
&m_{f}(t)\lesssim \exp(-(1/t)^{\frac{2}{1+\tau(\omega)}-\b} )+\sum_{D\in\check \cD_{t}}|\check D\cap \R|.\label{eq:1.18}
\end{align}
\item If $BNF(f)$ converges, then  for any  $t\ll_{\b}1$ one has for each $\check D\in \check\cD_{t}$
\be |\check D\cap \R|\lesssim \exp(-(1/\rho)^{\frac{2}{1+\tau(\omega)}-\b} ). \label{2ndpart}\ee
\end{enumerate}
\end{Main}
Estimate (\ref{eq:1.18ante}) explains why in general (without the assumption that $BNF(f)$ converges) one only gets the  estimate (\ref{e1.12}).

There is a corresponding theorem in the Liouvillian  (CC) case, see Theorem \ref{prop:12.11}.

We explain in  Subsection \ref{sec:1.3.1} where these disks $\check D$ come from.

\subsection{Prevalence of divergent  BNF's}\label{sec:1.2.2}
 Let us explain more precisely  the aforementioned  dichotomy of R. Pérez-Marco  of Subsection \ref{sec:1.1}. 

Any real analytic symplectic diffeomorphism  $f:(\R^{d}\times\R^d,(0,0))\toitself$  of the form (\ref{newn1.1}) or $f:(\T^d\times\R^d,\cT_{0})\toitself$ of the form (\ref{newn1.1bis}) can be parametrized in the following convenient form:
\be f=\Phi_{ 2\pi\<\omega,r\> }\circ f_{F},\label{ee1.22}\ee
where,  $F:(\R^{d}\times\R^d,(0,0))\to\R$, $F=O^3(x,y)$ or $F:(\T^d\times\R^d,\cT_{0})\to \R$, $F=O^2(r)$ is some real-analytic function and where we denote $f_{F}:(x,y)\mapsto (\ti x,\ti y)$ or $(\th,r)\mapsto (\ti\th,\ti r)$  the {\it exact-symplectic} map (see Subsection \ref{sec:4.5})  defined implicitly  by 
\be \begin{cases}
&\ti x=x+\pa_{\ti y}F(x,\ti y),\quad \ y=\ti y+\pa_{x}F(x,\ti y)\quad (\textrm{CC\  case} )\\
&\textrm{or}\\
&\ti \th=\th+\pa_{\ti r}F(\th,\ti r),\quad r=\ti r+\pa_{\th}F(\th,\ti r)\quad (\textrm{AA \ case}).
\end{cases}\label{ee1.16}
\ee
For $d\geq 1$, $\omega\in\R^d$ non resonant, we define $\cS_{\omega}(\R^{d}\times\R^d)$ (resp. $\cS_{\omega}(\T^d\times \R^{d})$) the set of real analytic symplectic diffeomorphisms $f:(\R^{d}\times\R^d,(0,0))\righttoleftarrow$ (resp. $f:(\T^d\times\R^{d},\cT_{0})\righttoleftarrow$) of the form $f=\Phi_{\Omega_{\omega}}\circ f_{F}$ with  $F:(\R^d\times\R^d,(0,0))\to\R$, $F=O^3(x,y)$ (resp. $F:(\T^d\times\R^d,\cT_{0})\to \R$, $F=O^2(r)$) real-analytic. Let say that a subset $\cA$ of a real vector space $\cE$ is {\it $(PM)$-prevalent}\footnote{See   \cite{HSY} for the concept of prevalence.} if there exists $F_{0}\in\cA$ such that for any $F\in\cE$  the set $\{t\in\R,\ t F_{0}+ (1-t)F\notin\cA\}$ has 0 Lebesgue measure\footnote{We can replace zero Lebesgue measure by  zero (logarithmic) capacity like in Pérez-Marco's paper.}. We then say that  a subset of $S_{\omega}(\R^{d}\times\R^d)$ (resp. $S_{\omega}(\T^d\times \R^{d})$)   is {\it $(PM)$-prevalent} if it is of the form $\{\Phi_{2\pi\<\omega,r\>}\circ f_{F},\ F\in\cA\}$ for some $(PM)$-prevalent subset $\cA$ of  $C^\omega(\R^{d}\times\R^d,\R)\cap O^3(x,y)$ (resp. $C^\omega(\T^d\times \R^{d},\R)\cap O^2(r)$). 

Here is the version of Pérez-Marco's Dichotomy Theorem \cite{PM} for real analytic symplectic diffeomorphisms of the $2d$-disk or the $2d$-cylinder.
\begin{theo}\label{theo:PMdichotomy}Let $d\geq 1$ and  $\omega\in\R^d$ be a non-resonant frequency vector.  Then,  either for any $f\in \cS_{\omega}(\R^{d}\times \R^d)$, the formal series $BNF(f)$ converges ({\it i.e.} the series it defines has a positive radius of convergence), or there exists a $(PM)$-prevalent subset of  $ \cS_{\omega}(\R^{d}\times \R^d)$ such that for any $f$ in this subset $BNF(f)$ diverges.  

The same dichotomy holds in  $\cS_{\omega}(\T^d\times \R^d)$ provided $\omega$ is Diophantine. 
\end{theo}

As we mentioned earlier Pérez-Marco's Dichotomy Theorem was proved in the setting of real analytic Hamiltonians having an elliptic fixed point. The extension to the diffeomorphism setting follows essentially Pérez-Marco's arguments. We refer to Subsection \ref{sec:PMdichotomy} for further details in particular in  the Action-Angle case ({\it cf.} Lemma \ref{BNF:lemma6.3}). 

As a Corollary of Theorems  \ref{theo:main1} and \ref{theo:main2},  Theorems \ref{theo:mainprime1} and \ref{theo:main2prime} we thus have:
\begin{Main}  \label{theo:main3}For any $d\geq 1$ and any non-resonant $\omega\in \R^d$, the set of $f\in \cS_{\omega}(\R^{d}\times\R^d)$ with a divergent BNF is $(PM)$-prevalent. If $\omega$ is Diophantine the same result holds with  $ \cS_{\omega}(\T^{d}\times \R^d)$ in place of $ \cS_{\omega}(\R^{d}\times\R^d)$.
\end{Main}
\begin{proof}We give the proof in the case of  real analytic symplectic diffeomorphisms of the $2d$-disk. 

Let $\omega=(\omega_{1},\ldots,\omega_{d})\in\R^d$ be non resonant.  According to Pérez-Marco's result it is enough to provide one example of a real analytic symplectic diffeomorphism of the $2d$-disk with diverging BNF and frequency vector $\omega$ at the origin to get the conclusion. Since $\omega$ is non resonant, there exists $1\leq j\leq d$ such that $\omega_{j}$ is irrational. According to whether $\omega_{j}$ is Diophantine or Liouvillian we use Theorems \ref{theo:main1} and \ref{theo:main2} or Theorems \ref{theo:mainprime1} and \ref{theo:main2prime} to produce a real analytic symplectic diffeomorphism $f_{j}:(\R^2,0)\righttoleftarrow$ with frequency $\omega_{j}$ at the origin and with a divergent BNF.
We now define $f:(\R^{d}\times\R^d,(0,0))\righttoleftarrow$ by $f(x_{1},\ldots,x_{d},y_{1},\ldots,y_{d})=(\ti x_{1},\ldots \ti x_{d},\ti y_{1},\ldots,\ti y_{d})$,
$$\begin{cases}\textrm{for}\ k\ne j,\ (\ti x_{k}+\sqrt{-1}\ti y_{k})=e^{2\pi \sqrt{-1} \omega_{k}}(x_{k}+\sqrt{-1}  y_{k})\\
(\ti x_{j},\ti y_{j})=f_{j}(x_{j},y_{j}).\end{cases}
$$
This diffeomorphism is real analytic, symplectic and 
$$BNF(f)(r_{1},\ldots,r_{d})=BNF(f_{j})(r_{j})+\sum_{k\in\{1,\ldots,d\}\setminus j}2\pi \omega_{k}r_{k}$$
which is diverging since $BNF(f_{j})$ is.
\end{proof}

When $d=1$ we can be a little bit more precise. Let $\cX$ be the set $([-1,1]^2)^{\N^*}=\{(\zeta_{1,k},\zeta_{2,k})\in [-1,1]^2,\ k\in\N^*\}$ endowed with the product measure $\mu_{\infty}=  ({\rm Leb}_{[-1,1]^2})^{\otimes \N^*}$.  For $\zeta\in\cX$ and $h>0$ we define
 $G_{\zeta}\in C^\omega(\T\times\R)$  ($h>0$ fixed)
$$G_{\zeta}(\th,r)= r^{\ \bar a}\sum_{k\in\N^*}e^{-|k|h}(\zeta_{1,k}\cos(k\th)+\zeta_{2,k}\sin(k\th))$$
where $\bar a$ is some universal integer. 
\begin{Main}\label{theo:MainD}For any  $\omega$  Diophantine and any $f\in \cS_{\omega}(\T\times\R)$  the BNF of $f\circ f_{G_{\zeta}}$ is divergent for $\mu_{\infty}$-almost $\zeta\in\cX$. 
\end{Main}
There is a similar result for the case $f:(\R^{2},0)\righttoleftarrow$, but  with an extra assumption on  the sign of the twist coefficient $b_{2}$ depending on the sequence of convergents of    $\omega$; see Subsection \ref{sec:16.2}.

\subsection{Some words on the proofs}
The starting point of the proofs of Theorems  \ref{theo:main1}, 
 \ref{theo:mainprime1},
  and \ref{MainE} is a KAM scheme that we implement on a holomorphic  extension  of the real analytic diffeomorphism $f$. This allows to work with holomorphic functions defined on complex domains ``with holes'' (i.e. disks).  If these domains are ``nice'' we can use some quantitative form of the analytic continuation principle to propagate informations in the neighborhood of the origin, like the convergence of the BNF,  to the  neighborhoods of each hole.   We illustrate this with the proof of Theorem \ref{MainE}.
  
\subsubsection{Sketch of the proof of Theorem \ref{MainE}}\label{sec:1.3.1} We describe it in the (AA) case. 
Let   $f:(\T\times\R,\cT_{0}) \toitself$, $\cT_{0}=\T\times \{0\}$, be a real analytic symplectic  diffeomorphism of the form (\ref{ee1.22})  with $b_{2}(f)\ne 0$ and $\omega$ Diophantine:
\be f=\Phi_{\Omega}\circ f_{F}, \quad \Omega(r)=2\pi \omega r,\quad F=O(r^2)\label{e1.13}\ee
where $f_{F}$ is the exact symplectic map ({\it cf.} (\ref{ee1.16})) associated to some  real-symmetric\footnote{This  means that it takes real values when $\th$ and $r$ are real} holomorphic function $F:\T_{h}\times \bD(0,\bar \rho)\to\C$ ($h,\bar \rho>0$);  the notations    $\T_{h}$,  $\bD(0,\bar \rho)$ are for $\T_{h}:=((\R+i]-h,h[)/(2\pi\Z))$,  $\bD(0,\bar\rho)=\{r\in\C,\ |r|<\bar\rho\}$.

\paragraph{\it Adapted KAM Normal Form}
 Theorem \ref{MainE} can be seen as an improved version of the classic KAM Theorem on the  positive Lebesgue measure of the set  of points lying on   invariant curves ({\it cf. } Moser's Theorem of Subsection \ref{ssec:1.2}). There are several ways to prove this standard KAM Theorem. A direct approach (which goes back to Kolmogorov) is to find a sequence of (real-symmetric) holomorphic  symplectic diffeomorphisms $g_{i}$ close to the identity, defined on smaller and smaller  complex domains $\T_{h_{i}}\times U_{i}$ ($h_{i-1}\geq h_{i}\geq h/2$, $U_{i}\subset U_{i-1}\subset  \bD(0,\bar \rho)$) and such that   $g_{i}^{-1}\circ f\circ g_{i}$ gets closer and closer to some  integrable\footnote{This means that $\Omega_{i}$ depends only on the $r$ variable.} models   $\Phi_{\Omega_{i}}$:
 \be[\T_{h_{i}}\times U_{i}]\qquad  g_{i}^{-1}\circ f\circ g_{i}=\Phi_{\Omega_{i}}\circ f_{F_{i}},\qquad \|F_{i}\|\ll 1 \label{ee1.19}
 \ee
(in the preceding formula, the set written  on the left is a domain where the conjugation relation holds). 
  One then proves  that $g_{i}$ and $\Omega_{i}$ converge (in some sense) on $\T\times (U_{\infty}\cap\R)$ ($U_{\infty}:=\bigcap_{i}U_{i}$) to some limits $g_{\infty},\  \Omega_{\infty}$ and that $U_{\infty}\cap \R$ (in general a Cantor set)  has positive Lebesgue  measure.  The searched for set of  $f$-invariant curves  is then $\bigcup_{c\in U_{\infty\cap\R}}g_{\infty}(\{r=c\})$ and one has for some constant $a>0$ and any  $\rho<\bar\rho$
 \be m_{f}(\rho)\lesssim \|F\|^{a}.\label{standKAM}\ee
 We refer to Theorem \ref{measureestimates} for more details. 
  The domains $U_{i}$ can be chosen to be {\it holed dmains} {\it i.e.} disks  $\bD(0, \rho_{i})$ $(\rho_{i}\approx \bar \rho$)  from which a finite number of small complex disks centered on the real axis (the ``holes'' of $U_{i}$) have been removed. Removing these small disks is due to the necessity of avoiding {\it resonances} when one inductively construct $g_{i},\Omega_{i},F_{i}$ from $g_{i-1},\Omega_{i-1},F_{i-1}$. More precisely, $U_{i}$ is essentially obtained from $U_{i-1}$ by removing ``resonant disks'' {\it i.e.} disks where the ``frequency map'' $(2\pi)^{-1}\pa\Omega_{i-1}$ is close to a rational of the form $l/k$,  $(l,k)\in\Z\times \N^*$, $\max (|l|,k)\lesssim N_{i-1}$ ($N_{i}$ is  an exponentially  increasing (in $i$) sequence which is defined at the beginning of the inductive procedure). The sizes of the holes  of $U_{i}$ created by removing a finite number of  disks from  $U_{i-1}$ decay very fast with $i$. We shall call a conjugation relation like (\ref{ee1.19}) a(n) (approximate)   {\it KAM Normal Form} for $f$. Its construction is  presented  in Section \ref{sec:5}.
  
A useful observation  ({\it cf.} Section \ref{sec:adaptedkam}) is that, depending on $\rho<\bar\rho$,   one can choose  indices $i_{-}(\rho)<i_{+}(\rho)$ such that all the holes $D$ of the domain $U_{i_{+}(\rho)}$ that  intersect $\bD(0,\rho)$, are disjoint and are created at some step $i-1=i_{D}\in [i_{-}(\rho),i_{+}(\rho)]$ (hence $D\subset U_{i_{D}}$); moreover,   $i_{-}(\rho)$ is large enough to ensure  that the size of $D$ is small.  Writing  (\ref{ee1.19})  with $i=i_{+}(\rho)$ we get  (note the change of notations) 
\be[\T_{h/2}\times U^{KAM}_{i_{+}(\rho)}]\qquad  g_{i_{+}(\rho)}^{-1}\circ f\circ g_{i_{+}(\rho)}=\Phi_{\Omega^{KAM}_{i_{+}(\rho)}}\circ f_{F^{KAM}_{i_{+}(\rho)}},\qquad \|F^{KAM}_{i_{+}(\rho)}\|\ll 1. \label{e1.19}
 \ee
This is what we call our  {\it adapted KAM Normal Form} (adapted to $\bD(0,\rho)$). With the choice we make for $i_{+}(\rho)$ we have
 \be \|F^{KAM}_{i_{+}(\rho)}\|\lesssim \exp(-(1/\rho)^{\frac{2}{1+\tau}-} ).\label{e1.22}
 \ee
\paragraph{\it Hamilton-Jacobi Normal Forms}{\it cf.} Section \ref{sec:HJBNF}.
A hole $D\subset U_{i_{D}}$  of the domain $U_{i_{+}(\rho)}$  that is  created at step $i_{D}$ corresponds as we have mentioned  to a resonance $(2\pi)^{-1}\pa\Omega^{KAM}_{i_{D}}\approx l/k$,   $(l,k)\in\Z\times \N^*$, $\max (|l|,k)\lesssim N_{i_{D}}$  that appears when one constructs  the KAM Normal Form  (\ref{ee1.19}) from step $i_{D}$ to step $i_{D}+1$.
In this resonant  situation we are able to associate to $D$ a  {\it Hamilton-Jacobi  Normal Form}, {\it cf.} Section \ref{sec:HJBNF}, Proposition \ref{proppropHJ}: there exists an  annulus 
 $\hat D\setminus\check D$ ($\hat D, \check D$ are disks),  $\hat D\subset U_{i_{D}}$, $\hat D\supset \check D$,  $\hat D\supset D$ ($\hat D$ is much bigger than $D$) on which  one has 
\begin{align}&[ \T_{h/9}\times \hat D\setminus \check D] \qquad  (g_{D}^{HJ})^{-1}\circ \Phi_{\Omega_{i_{D}}}\circ f_{F_{i_{D}}}\circ (g_{D}^{HJ})=\Phi_{\Omega_{D}^{HJ}}\circ f_{F_{D}^{HJ}},\label{e1.20}\\
& \|F_{D}^{HJ}\|\lesssim \|F^{KAM}_{i_{+}(\rho)}\|. \label{e1.21}
\end{align}
This HJ Normal Form also satisfies the important {\it Extension Property} which in some situation allows to bound above the size of  $\check D$ (note that in general the sizes of $\check D$ and $D$ are comparable).
 
 \medskip 
 \paragraph{\it We can now explain  (\ref{eq:1.18}) of Theorem \ref{MainE}}
Applying the aforementioned standard KAM estimate (\ref{standKAM}) on the holed  domain $U_{i_{+}(\rho)}$ to $\Phi_{\Omega_{i_{+}(\rho)}}\circ f_{F^{KAM}_{i_{+}(\rho)}}$ ({\it cf.} (\ref{e1.19})) and on each annulus $\hat D\setminus\check D$ to $\Phi_{\Omega_{D}^{HJ}}\circ f_{F_{D}^{HJ}}$, ({\it cf. } (\ref{e1.20})) together with  the estimate (\ref{e1.21})  we get that outside a set of measure $\sum_{D\in\cD_{\rho}}|\check D\cap \R|$ the invariant curves of $f$ cover a set  the complement of which in $\bD(0,\rho)$ has a measure $\lesssim\|F^{KAM}_{i_{+}(\rho)}\|^{a}$ for some $a>0$ hence the conclusion by (\ref{e1.22}).

\medskip
\paragraph{\it To prove the second part of Theorem \ref{MainE}, (\ref{2ndpart})} We need to introduce one further approximate Normal Form, namely the approximate Birkhoff Normal Form ({\it cf.} Section  \ref{sec:BNF}) valid on $\T_{h/2}\times \bD(0,\rho^{b_{\tau}})$ ($b_{\tau}=\tau+2$), $\bD(0,\rho^{b_{\tau}})\subset U^{KAM}_{i_{+}(\rho)}$
\begin{align}& [\T_{h/2}\times \bD(0,\rho^{b_{\tau}})]\qquad  (g_{\rho}^{BNF})^{-1}\circ f\circ (g_{\rho}^{BNF})=\Phi_{\Omega_{\rho}^{BNF}}\circ f_{F_{\rho}^{BNF}},\label{e1.20BNF}\\
& \|F_{\rho}^{BNF}\|\lesssim \|F^{KAM}_{i_{+}(\rho)}\|. \label{e1.21BNF}
\end{align}

\medskip
Having the three  Normal Forms (\ref{e1.19}), (\ref{e1.20BNF}), (\ref{e1.20}) in hands (see Figure \ref{Figure2})  the proof of the second part of Theorem \ref{MainE} relies on the following  three principles.

\medskip 
\noindent {\it -- Comparison  Principle} {\it cf.} Section \ref{sec:comparingnf}: since $F_{i_{+}(\rho)}^{KAM}$, $F^{HJ}_{D}$ $F_{\rho}^{BNF}$ are equally very small, all the previous Normal Forms almost coincide on the intersections of their respective domains of defintions (this is done  in Proposition \ref{comparingBNF}), more precisely their frequency maps almost coincide
\be \Omega_{\rho}^{BNF}\mathop{\approx}_{\bD(0,\rho^{b_{\tau}})\cap U_{i_{+}(\rho)}^{KAM}}\Omega_{i_{+}(\rho)}^{KAM}\mathop{\approx}_{U_{i_{+}(\rho)}^{KAM}\cap (\hat D\setminus \check D)} \Omega^{HJ}_{D}\label{plan1}
\ee
where the symbol $a\approx b$ means here  $|a-b|\lesssim  \exp(-(1/\rho)^{\frac{2}{1+\tau}-})$.
Moreover, if the formal BNF converges and equals a holomorphic function $\Xi$ defined on, say, $\bD(0,1)$  one has also ({\it cf.} Corollary \ref{theo:compBNF})
$$\Xi\mathop{\approx}_{\bD(0,1)\cap \bD(0,\rho^{b_{\tau}})}\Omega_{\rho}^{BNF}$$
and in particular from (\ref{plan1}) (we have $\bD(0,\rho^{b_{\tau}})\subset U_{i_{+}(\rho)}^{KAM}$)
\be \Xi\mathop{\approx}_{\bD(0,1)\cap \bD(0,\rho^{b_{\tau}})}\Omega_{i_{+}(\rho)}^{KAM}.\label{e1.15}
\ee

\medskip 
\noindent {\it -- No-Screening Principle, {\it cf.} Section \ref{sec:noscreening}:} Since $\Xi$ and $\Omega_{i_{+}(\rho)}^{KAM}$ are both defined on $U_{i_{+}(\rho)}^{KAM}\supset \bD(0,\rho^{b_{\tau}})$  we use Proposition \ref{lem:fewholes} to extend   the approximate equality (\ref{e1.15}) valid on  $\bD(0,\rho^{b_{\tau}})$ to the bigger domain $\bD(0,1)\cap U_{i_{+}(\rho)}^{KAM}$:
\be  \Xi\mathop{\approx}_{\bD(0,1)\cap U_{i_{+}(\rho)}^{KAM}}\Omega_{i_{+}(\rho)}^{KAM}.\label{e1.16}
\ee
We have to choose here $i_{+}(\rho)$ not too large to avoid a ``screening phenomenon'' that  a too large number of holes could cause. This is studied in Section \ref{sec:adaptedkam}, Proposition \ref{prop:10.4noscreening}.

\medskip 
\noindent{\it -- Residue or Extension Principle {\it cf. Subsection \ref{sec:10.4}}:} From (\ref{plan1}), (\ref{e1.16}) one has
\be\Xi\mathop{\approx}_{U_{i_{+}(\rho)}^{KAM}\cap (\hat D\setminus \check D)} \Omega^{HJ}_{D}\label{e1.17}
\ee
or, in other words, $\Omega_{D}^{HJ}$ which is defined on the  {\it annulus} $\hat D\setminus \check D$ coincides with a very good approximation with a {\it holomorphic function} defined on the {\it whole disk} $\hat D$.
From the Extension Principle of Proposition \ref{proppropHJ} we then get that the radius of  $\check D$ is $\approx 0$ i.e. finally, $|\check D\cap \R|\lesssim \exp(-(1/\rho)^{\frac{2}{1+\tau}-})$. This is (\ref{2ndpart}).

\subsubsection{The Elliptic fixed point case}
The proof in the  non-resonant  elliptic fixed point case, $f:(\R^{2},0)\righttoleftarrow$, follows the same strategy especially if the frequency $\omega$ is Diophantine. A technical point is that to be able to implement  the No-Screening Principle of  Section \ref{sec:noscreening} we need to work with domains $U_{i_{+}(\rho)}^{KAM}\supset U_{\rho}^{BNF}$ where $U_{\rho}^{BNF}$ is a disk around 0 (the estimate on the capacity of this disk is then favorable). This is the reason why we cannot in this situation  use Action-Angle variables since this would force us to work on  angular sector domains and not  disks\footnote{To say it shortly, in Jensen's formula on subharmonic functions,  the ``weight'' of a  small disk $\bD(0,\rho)\subset\bD(0,1)$  is $1/|\ln \rho|$ while the  ``weight'' of $\bD(0,\rho)\cap\D\subset\D$, $\D$ being an angular sector at 0 is only $\rho^{a}$, $a>0$. }. Instead, we define our approximate BNF and KAM Normal Forms directly in Cartesian Coordinates.  The formalism turns out to be the same  as in the Action-Angle case (see Section \ref{sec:conj}), so we  treat these two cases  simultaneously.  The case where $\omega$ is Liouvillian is done in a similar (and even simpler) way. 

\subsubsection{On the proofs of Theorems \ref{theo:main2},  \ref{theo:main2prime} and \ref{theo:MainD} }
The proofs are based on the fact   that, in the general case, resonances are associated to the existence of hyperbolic periodic points in the neighborhood of which no (``horizontal'') invariant circle can exist. We then just estimate the strength of the hyperbolicity and the size of the corresponding local  stable and unstable manifolds (see Section \ref{sec:openingtheeyes}). A key ingredient in this computation are the Resonant Normal Forms; see  the Appendix \ref{appendixE:RNF}.

\subsection{Organization of the paper} 
Section \ref{sec:2} is essentially dedicated to fixing some notations and introducing the notion of domains with holes that plays  a central role in the KAM approach (à la Kolmogorov). We discuss Cauchy's estimates and Whitney's extension Theorem in this framework. 

In Section \ref{sec:noscreening} we give  a No Screening criterium. This is just Jensen's formula on subharmonic functions applied in a domain with not too many holes.

In Section \ref{sec:4} our main purpose is to check that estimates on compositions of generating functions  hold in the case of domains with holes. We treat in a unified way the CC and AA cases. We also discuss invariant curves.

In Section \ref{sec:conj} we study the (co)homological equations and give the proposition on the basic KAM step (Proposition \ref{prop:n5.5}).

Birkhoff Normal Forms (approximate and formal) are presented  in Section \ref{sec:BNF} and  Appendix \ref{formalD}.   We explain in Subsection \ref{sec:PMdichotomy} how Pérez-Marco's dichotomy extends to the diffeomorphism case. 

Section \ref{sec:5} is dedicated to the KAM scheme which is central in our paper; we pay particular attention to the location of the holes of the KAM-domains.

In Section \ref{sec:HJBNF} we present the  Hamilton-Jacobi Normal Form associated to each resonance appearing during the KAM scheme.  Their construction is based on a Resonant Normal Form and an argument of approximation by vector fields the proofs of which are left in the Appendix, Sections \ref{appendixE:RNF} and \ref{sec:appendixG}. The most important property of these Hamilton-Jacobi Normal Forms is the {\it Extension Property} that states that if the corresponding frequency map defined on a annulus  is very close to a holomorphic function defined on a bigger disk, the domain of validity of this Normal Form is essentially this disk.

The Matching or Comparison  Principle is presented in Section \ref{sec:comparingnf}. It quantifies the fact that (exact) symplectic maps have essentially one frequency map.

We construct in Section \ref{sec:adaptedkam} and \ref{sec:10.2} our coexisting adapted KAM, BNF and HJ Normal Forms in the respective cases $\omega$ Diophantine or Liouvillian, the latter being easier to treat.

In Section \ref{sec:8curves} we first state a generalization of the classical KAM estimate on the measure of the set of invariant curves that holds on domains with holes (Theorem \ref{measureestimates}) and we apply it to our adapted KAM and HJ Normal forms to  get measure estimates on the set of invariant curves lying in the union of the domains of definitions of these Normal Forms. These provides the important Theorems   \ref{prop:12.7} and \ref{prop:12.11}. In particular this proves the first part of Theorem \ref{MainE}.

In Section \ref{sec:smallholes13} we use the Extension Principle of Section  \ref{sec:HJBNF}  to prove the second part of Theorem \ref{MainE} and in Section  \ref{sec:conclAA'} we conclude the proofs of Theorems  \ref{theo:main1} and   \ref{theo:mainprime1}. 

The mechanism for the creation  of zones  of the phase space that do not intersect the set of invariant circles (``Hyperbolic eyes'')  is presented in  Section \ref{sec:openingtheeyes}  (Proposition \ref{prop:16.1}). This allows us to construct (prevalent) examples  that satisfy Theorems  \ref{theo:main2},  \ref{theo:main2prime} and \ref{theo:MainD} in Section \ref{sec:16}.

Finally, an Appendix completes the text by giving more details on the proofs of some  statements or by presenting more or less classical methods that had to be adapted to our more specific case.

\subsection*{Acknowledgments} The author wishes to thank     Alain Chenciner and H\char229 kan Eliasson for their  continuous encouragements,  Abed Bounemoura,  Bassam Fayad, Jacques Féjoz, Jean-Pierre Marco,  Stefano Marmi, Laurent Niederman,  Ricardo Pérez-Marco, Laurent Stolovitch for interesting discussions and  all the participants of the Groupe de travail de Jussieu for their patient  listening of this work and their constructive comments.

\section{Notations, preliminaries}\label{sec:2}
Let  $\bT$ be  the 1-dimensional torus  $\T:=\R/(2\pi\Z)=\{x+2\pi \Z,x\in\R\}$ and for $0<h\leq\infty$
$$\T_{h}=\{x+iy+(2\pi\Z),\ x,y\in\R,\ |y|<h\}\qquad (i^2=-1)$$
the complex cylinder of width $2h$. If $\th_{1}=(x_{1}+iy_{1})+(2\pi\Z),\th_{2}=x_{2}+iy_{2}+(2\pi\Z)\in\T_{\infty}$ we set $|\th_{1}-\th_{2}|_{\T_{\infty}}:=\min_{l\in\Z}|(x_{1}-x_{2}-2\pi l)+i(y_{1}-y_{2})|$.

If  $\rho>0$ we denote by $\bD(z,\rho)\subset \C$ the open  disk of center $z$ and radius $\rho$ and by $\bar \bD(z,\rho)$ its closure \footnote{With this notation $\bar \bD(z,0)=\emptyset$.}; sometimes for short we shall write $\bD_{\rho}$ for $\bD(0,\rho)$ (and by $\bar\bD_{\rho}$ its closure).

If $z=x+i y\in\C$, ($i=\sqrt{-1}$) $x,y\in\R$,  (resp. $\th =x+i y+(2\pi\Z)\in\T_{\infty}$),   we denote by $\s_{0}(z)=\bar z=x-iy $  (resp. $\s_{0}(\th)=\bar \th=x-i y+(2\pi\Z)$) its complex conjugate. 

We define the {\it involutions} $\sigma_{1},\sigma_{2}:\C^2\to\C^2$ and $\sigma_{3}:\T_{\infty}\times \C\to \T_{\infty}\times \C$ by
\be \sigma_{1}(x,y)=(\bar x,\bar y),\qquad \sigma_{2}(z,w)=(i\bar w,i\bar z),\qquad \sigma_{3}(\th,r)=(\bar\th,\bar r).\label{invol}\ee

For $w=(w_{1},w_{2}),w'=(w_{1}',w_{2}')\in \C\times\C$ (resp. $\in\T_{\infty}\times\C$) we define   the distance $d(w,w')=\max (|w_{1}-w'_{1}|,|w_{2}-w_{2}'|)$ (resp. $d(w,w')=\max (|w_{1}-w'_{1}|_{\T_{\infty}},|w_{2}-w_{2}'|)$).

If  $W$  is an open subset of $\C\times \C$    or of  $\T_{\infty}\times \C$ and if  $F:W\to \C$ we set
$$\|F\|_{W}=\sup_{W}|F|.
$$
If a  function $W\ni (w_{1},w_{2})\mapsto F(w_{1},w_{2})$  is differentiable enough  (for the standard real differentiable structure on $M$) we can as usual define  its partial derivatives\footnote{Here we use the standard notation: if $w=t+is$, $(t,s)\in\R^2$,  $\pa_{w}=(1/2)(\pa_{t}-i\pa_{s})$ and  $\pa_{\bar w}=\bar \pa_{w}=(1/2)(\pa_{t}+i\pa_{s})$.} $\pa^{k_{1}}_{w_{1}}\pa_{\bar w_{1}}^{l_{1}}\pa^{k_{2}}_{w_{2}}\pa_{\bar w_{2}}^{l_{2}}F$ ($k_{1},k_{2},l_{1},l_{2}\in\N$) and its (total) $j$-th derivative  $D^jF=(\pa^{k_{1}}_{w_{1}}\pa_{\bar w_{1}}^{l_{1}}\pa^{k_{2}}_{w_{2}}\pa_{\bar w_{2}}^{l_{2}}F)_{k_{1}+k_{2}+l_{1}+l_{2}=j}$    ($j\in\N$). We then define
$$\|D^jF\|_{W}=\mathop{\max}_{\substack{(k_{1},l_{1},k_{2},l_{2})\in\N^4\\ k_{1}+l_{1}+k_{2}+l_{2}=j}}\|\pa^{k_{1}}_{w_{1}}\pa_{\bar w_{1}}^{l_{1}}\pa^{k_{2}}_{w_{2}}\pa_{\bar w_{2}}^{l_{2}}F\|_{W},\qquad \|F\|_{C^n(W)}=\max_{0\leq j\leq n}\|D^j F\|_{W}.
$$
We denote by $C^k(W)$ the set of functions $F:W\to\C$ such that $\|F\|_{C^n(W)}<\infty$ and by $\cO(W)$ the set of holomorphic functions $F:W\to\C$ (all the preceding partial derivatives of the form $\bar \pa_{w}=\pa_{\bar w}$ then  vanish).

 We say that an  open set  $W$ of $M:=\C^2$    or of  $M:=\T_{\infty}\times \C$ is  {\it $\s_{i}$-symmetric} ($i=1,2,3$)  if it is invariant by $\sigma_{i}$ ($\s_{i}(W)=W$);    if $W$ is $\sigma_{i}$-symmetric we  say that a function $F:W\to \C$ is $\s_{i}$-symmetric  if $F\circ \sigma_{i}=\s_{0}\circ F=\bar F$ (here $\s_{0}$ denotes the complex conjugate) and we denote by $C^n_{\s_{i}}(W)$, resp. $\cO_{\sigma_{i}}(W)$, the set of $C^n$ resp.  holomorphic functions $F:W\to\C$ that are $\sigma_{i}$-symmetric.   When no confusion is possible on the nature of the relevant $\sigma_{i}$ involved,  we shall often say $\sigma$-symmetric  or even real-symmetric instead of $\s_{i}$-symmetric. If $W$ is $\s$-symmetric we use the notation 
 $W_{\R}=\{w\in W,\ \s(w)=w\}$; if   $W_{\R}\ne\emptyset$ then $F\in\cO_{\s}(W)$ defines by restriction a map (still denoted by $F$)  $F:W_{\R}\to \R$.    Note that a  function $F:(\R^2,0)\to\R$  which is real analytic is in $\cO_{\sigma_{1}}(\bD(0,\rho)\times\bD(0,\rho)) $ for some $\rho>0$.

Let $W$ be a open set of $M:=\C^2$ or $\T_{\infty}\times \C$. We denote by ${\rm Diff}^n(W)$, resp.  ${\rm Diff}^{\cO}(W)$, the set of  $C^n$, resp. holomorphic, diffeomorphism $f:\ti W\to f(\ti W)\subset M$ defined on an open  neighborhood $\ti W$ of $W$ containing the closure of $W$.

Note that there exists a constant $C$ depending only on $M$ such that for any  $C^1$-diffeomorphisms $f_{1},f_{2}:M\to M$
satisfying  $\|f_{1}-id\|_{C^1}+\|f_{2}-id\|\leq 1$ then 
\be \|(f_{2}\circ f_{1})-id\|_{C^1}\leq C(\|f_{1}-id\|_{C^1}+\|f_{2}-id\|_{C^1}).\label{2.40} \ee

If now $W$ is a $\s$-symmetric open set of $(M,\s)$ we  denote by ${\rm Diff}^{n}_{\s}(W)$ resp. ${\rm Diff}^{\cO}_{\s}(W)$ the set of $f\in{\rm Diff}^{n}_{}(W)$,  resp. $f\in{\rm Diff}^{\cO}_{}(W)$, such that $f\circ \s=\s\circ f$. It then defines by restriction a $C^n$, resp. real analytic,  diffeomorphism (that we still denote $f$)  $f:W_{\R}\to f(W_{\R})\subset M_{\R}$. 

When $f,g$ are two $\s$ symmetric holomorphic diffeomorphisms we  write
\be [W]\qquad  f=g\label{ee2.40}\ee
to say that  $f,g\in{\rm Diff}_{\s}^\cO(W)$  coincide on an open neighborhood of $W$ containing the closure of $W$.

\subsection{Domains $W_{h,U}$}\label{sec:new2.3}
Let $h>0$ and 
$U$ an  open  connected set of $\C$;  we shall  define domains $W^{AA}_{h,U}$ of $M=M^{AA}=\T_{\infty}\times \C$  (AA stands for ``Action-Angle'') and $W^{CC}_{h,U}$, $W^{CC*}_{h}$ of  $M=M^{CC}=M^{CC*}=\C^2$ (CC for ``Cartesian Coordinates'') the following way:
\begin{itemize}
\item {\it Cartesian Coordinates ($CC*$)}: if $\rho_{U}:=\sup\{|r|,\ r\in U\}$, the set  $W^{CC*}_{h,U}\subset \C\times\C$ is
\be W^{CC*}_{h,U}=\{(x,y)\in \C^2,\ |x\pm iy|\leq \sqrt{2}e^h\rho_{U}^{1/2},\ \frac{x^2+y^2}{2}\in U\}; \label{holedomaincc*}\ee 
\item {\it Cartesian Coordinates (CC)}: if $\rho_{U}:=\sup\{|r|,\ r\in U\}$, the set  $W^{CC}_{h,U}\subset \C\times\C$ is
\be W^{CC}_{h,U}=\{(z,w)\in \bD(0,e^h\rho_{U}^{1/2})\times \bD(0,e^h\rho_{U}^{1/2}),\ -izw\in U\}; \label{holedomaincc}\ee 
\item {\it Action Angle coordinates (AA)}: the set  $W^{AA}_{h,U}$  of $\bT_{\infty}\times\C$ is
\be W^{AA}_{h,U}=\T_{h}\times U\label{holedomaintc}.\ee
\end{itemize}
In all these three cases we denote by $r$ the observable $(x,y)\mapsto (1/2)(x^2+y^2)$, $(z,w)\mapsto -izw$, $(\th,r)\mapsto r$.

\subsection{Cauchy estimates}

If $\d>0$ we denote by $\cU_{\d}(W)=\{w\in W,\ \bB(w,\d)\subset W\}$ (here $\bB(w,\d)$ is  the ball $\{z\in M,\ d(z,w)<\delta\}$). Assume that $F\in\cO(W)$.
By differentiating  $(k_{1}+k_{2})$- times  Cauchy complex integration formula 
$$ F(w_{1},w_{2})=\frac{1}{(2\pi i)^2}\int_{|w_{1}-\zeta_{1}|=\d}\int_{|w_{2}-\zeta_{2}|=\d}\frac{F(\zeta_{1},\zeta_{2})}{(w_{1}-\zeta_{1})(w_{2}-\zeta_{2})}d\zeta_{1}d\zeta_{2}
$$
one sees that if $\cU_{\d}(W)$ is  not empty
\be\|\pa_{w_{1}}^{k_{1}}\pa_{w_{2}}^{k_{2}} F\|_{\cU_{\d}(W)}\leq C_{k_{1},k_{2}}\delta^{-(k_{1}+k_{2})}\|F\|_{W}.
\label{eq:cauchy:derivatives}
\ee

\subsection{Holed domains}
\subsubsection{ Holed domain of $\C$} \label{se:2.5.1}
A holed domain of $\C$ is an open set of $\C$ of the form 
\be U=\bD(c,\rho )\setminus \bigcup_{i\in I} \bar \bD(c_{i},\rho_{i}),\label{reprholeddomain}
\ee
 for some $c\in\C$, 
 $\rho>0$ , $c_{i}\in\C$, $\rho_{i}>0$
and where $I$ is a finite set which is either empty or such that for any $i\in I$, 
$\bD(c_{i},\rho_{i})\cap \bD(c,\rho)\ne \emptyset$. 
 It is not difficult to see that there exists a unique minimal $J_{U}\subset I$ (for the inclusion) such that $\bigcup_{i\in J_{U}} \bar \bD(c_{i},\rho_{i})=\bigcup_{i\in I} \bar \bD(c_{i},\rho_{i})$ and that the representation (\ref{reprholeddomain}) with $I$ replaced by $J_{U}$ is then unique:
\be U=\bD(c,\rho)\setminus \bigcup_{i\in J_{U}} \bar \bD(c_{i},\rho_{i}).\label{reprholeddomain-bis}
\ee
We then denote by
\be \cD(U)=\{\bD(c_{i},\rho_{i}),\ i\in J_{U}\}.\label{reprholeddomain-ter}
\ee
  We shall call $\bD(c,\rho)$ 
  the {\it external disk} of $U$. We then set 
\be\begin{cases}&\rho_{U}:=\ovrad{U}:=\rho,\quad\urad(U)=\min_{i\in J_{U}}\rho_{i}\\
&\ua(U)=(\sum_{i\in J_{U}}\rho_{i}^2)^{1/2} \\
&\uc(U)=\# J_{U}\\
&\ud(U)=\ovrad(U)\  \textrm{if}\ J_{U}=\emptyset,\quad \ud(U)=\min(\ovrad(U),\urad({U})) \ \textrm{if}\ J_{U}\ne \emptyset.
\end{cases}\label{uaud}\ee
If $J_{U}$ is empty or if all the disks $\bD(c_{i},\rho_{i})$, $i\in J_{U}$,  are pairwise disjoint and included in $\bD(c,\rho)$ we say that the holed domain $U$ has disjoint holes and we call $\bD(c_{i},\rho_{i})$ the holes of $U$ (the bounded connected components of $\C\setminus U$). We denote by $\cD(U)$ the set of all these disks.

\bigskip
\noindent
{\bf Note:} We shall only consider in this paper holed domains (\ref{reprholeddomain}) where the $c_{i}$ are on the real axis.

\subsubsection{Holed domains of $\C\times\C$ or $\T_{\infty}\times \C$} These are by definition sets of the form $W_{h,U}$ where $h>0$ and  $U$ is a holed domain; see (\ref{holedomaincc}) or (\ref{holedomaintc}). We then define 
$$\ud(W_{h,U})=\min(h,\ud(U)).$$

\subsubsection{Deflation of a holed domain}
If $\d\in\R$ we use the notation $e^{-\d}\bD(c,\rho)$ for 
$$e^{-\d}\bD(c,\rho)=\bD(c,e^{-\d}\rho).
$$
If $U\subset \C$ is a holed domain of the form (\ref{reprholeddomain-bis}) and if $\delta>0$
 we denote by $e^{-\delta}U\subset U$ the (possibly empty) open set
$$e^{-\d}U=\bD(c,e^{-\delta}\rho)\setminus\bigcup_{i\in J_{U}} \bar \bD(c_{i},e^{\delta}\rho_{i}).$$
Similarly if $0<\delta<h$
$$e^{-\d}W_{h,U}=W_{h-\delta/2, e^{-\d}U}.$$

We make the following simple observations (the first two items are proved by area considerations):
\begin{lemma}\label{lemma2.1}For $1>\d>0$ one has:
\begin{enumerate}
\item\label{i1.L.2.1} For any $z\in \bD(c,\rho)$, $\dist(z,U)\leq 2\ua(U)$.
\item\label{i2.L.2.1} If $\rho^2>2e^{4\d}\sum_{i\in J_{U}}\rho_{i}^2$ then $e^{-\d}U$ is not empty.
\item\label{i3.L.2.1} If $e^{-\d}U$ is not empty, for any  $z\in e^{-\d}U$ one has 
$$\bD(z,(1/2)\d\ud(U))\subset U.
$$
\end{enumerate}
\end{lemma}
\subsubsection{Reformulation of Cauchy's Inequalities}\label{subsec:2.6.4}
Using item \ref{i3.L.2.1} of Lemma \ref{lemma2.1} we can in particular reformulate inequalities (\ref{eq:cauchy:derivatives}) when $W$ is of the form $W_{h,U}$ and $F\in\cO(W_{h,U})$:
\be \|D^mF\|_{e^{-\d}W_{h,U}}\leq C_{m} \d^{-m}\ud(W_{h,U})^{-m}\|F\|_{W_{h,U}}.
 \label{eq:cauchy:derivativesbis}
\ee
One can sometimes obtain better estimates.
\begin{itemize} 
\item In the (AA)-case, if $0<\d<h$, one has 
\be \|\pa^k_{\th}F\|_{e^{-\d}W_{h,U}}\lesssim \d^{-k}\|F\|_{W_{h,U}} \label{eq:cauchy:derivativester}
\ee
\item In the (CC)-case, if $U=\bD(0,\rho)$ and  $\d<1/2$ one has  $e^{-\d}W_{h,\bD(0,\rho)}\subset \cU_{\ti\d}(W_{h,\bD(0,\rho)})$ with $\ti \d=\rho^{1/2}e^{-h}\d/4$ and thus
\be \|\nabla F\|_{e^{-\d}W_{h,U}}\lesssim e^{h}\d^{-1}\rho^{-1/2}\|F\|_{W_{h,U}}. \label{eq:cauchy:derivativesquarto}
\ee
\end{itemize}

\subsection{Whitney type extensions on domains with holes}\label{sec:2.4}The discussion that follows  will be useful in the construction of the KAM Normal Form of Section \ref{sec:5}.

Let $U$ be a real symmetric  holed domain
\be U=\bD(0,\rho)\setminus \bigcup_{i\in J_{U}} \bar \bD(c_{i},\rho_{i}),\qquad c_{i}\in\R,\label{reprholeddomain-bisbis}
\ee
$h>0$, $W_{h,U}$ one of the domains defined in Subsection \ref{sec:new2.3}
and $F:W_{h,U}\to\C$ be a $C^k$   \footnote{Differentiability here is related to the real differentiable structure of $W_{h,\C}$.}$\s$-symmetric function i.e. $F\circ \s=\s_{0}\circ F$ ($\s_{0}$ is the complex conjugation). We say that a $C^k$, $\s$-symmetric  function \footnote{The exponent $Wh$ stands for ``Whitney''.}  $F^{Wh}:W_{h,\C}\to\C$ is a {\it Whitney extension}\footnote{See \cite{Whi}, \cite{Stein} }  for $(F,W_{h,U})$  if   
$$\forall \ m\in W_{h,U},\quad F^{Wh}(m)=F(m).$$
Note that since $U$ is open this implies that  for all $0\leq j\leq k$, $D^jF$ and $D^{j}F^{Wh}$ coincide on $W_{h,U}$.

We shall construct such Whitney's extensions in two situations. 

\begin{lemma}\label{lemma:2.3ee}Let  $F\in\cO_{\s}(W_{h,U})$. For any $\d\in ]0,1[$, there exists a $C^k$, $\s$-symmetric function $F^{Wh}:W_{h,\C}\to \C$ such that 
\begin{align}&\forall \ m\in e^{-\d}W_{h,U},\quad F^{Wh}(m)=F(m)\\
&\sup_{0\leq j\leq k}\|D^j F^{Wh} \|_{W_{h,\C}}\leq  C(1+\#J_{U})^k(\d\ud(U))^{-2k}\max_{0\leq j\leq k}\|D^jF\|_{W_{h,e^{-\d/10}U}}.
\end{align}
\end{lemma}
\begin{proof} See Subsection \ref{sec:appendixBbis} of the Appendix.
 \end{proof}

\begin{notation}\label{notation:2.1}We denote by $\ti\cO_{\s}(W_{h,U})$ the set of $C^3$,  $\s$-symmetric maps $F:W_{h,\C}\to\C$ such that the restriction of $F$ on $W_{h,U}$ is holomorphic.
\end{notation}

\medskip
Let $A\geq 1$, $B\geq 0$, $U\subset \C$ a symmetric  holed domain. We say that a $\s$-symmetric $C^3$ function $\Omega:U\to\C$ satisfies an {\it $(A,B)$-twist condition} on $U$  if 
\be  \forall \ r\in U\cap \R,\  A^{-1}\leq \frac{1}{2\pi}\pa^2\Omega(r)\leq A,\quad \textrm{and}\ \|\frac{1}{2\pi}D^3\Omega\|_{U}\leq B. \label{twist2.57} \ee
 
If $U$ is a disk $\bD(0, \rho_{0})$ one can construct  for some  $0<\bar \rho< \rho_{0}$ a $C^3$, $\s$-symmetric Whitney extension for $\Omega$ on $\bD(0,\bar \rho)$ that satisfies an $(A,B)$-twist condition on $\bD(0,\bar \rho)$.
 \begin{lemma}\label{Whextension} Let $\Omega\in\cO_{\s}(\bD(0,\rho_{0}))$ 
 $$(2\pi)^{-1}\Omega(z)=\omega_{0}z+b_{2}z^2+O(z^3),\qquad \|\Omega\|_{\bD(0, \rho_{0})}\leq 1.$$
 There exists $0<\bar \rho<\rho_{0}$, $B\geq 0$ and a  $C^3$, real symmetric extension  $\Omega^{Wh}\in\ti\cO_{\s}(\C)$ of $(\Omega,\bD(0,\bar \rho))$ that satisfies an $(A,B)$-twist condition on $\C$ with $A=3\max(b_{2}, b_{2}^{-1})$.
\end{lemma}
\begin{proof}
See the Appendix \ref{sec:appendixBbisbis}.
\end{proof}
\begin{notation} \label{notation:2.2} We denote by $\cT\cC(A,B)$ the set of $C^3$, real symmetric maps $\Omega:\C\to\C$ satisfying an $(A,B)$-twist  condition (\ref{twist2.57}) with $U=\C$.
\end{notation}

\medskip
Let $U\subset \C$ be a symmetric  connected holed domain.
 \begin{prop}\label{proppramexclusion}
 There exists $\bar C_{0}$ such that if  $\Omega\in\ti\cO_{\s}(U)\cap \cT\cC(A,B)$ with 
  \be \bar C_{0}\times \max(\rho,\ua(U))\times A\times B<1\label{condUnew}\ee
then  the following holds. For  any $\nu\ll_{A,B} 1$ and any $\b\in\R$, either for any $z\in U$ 
 $$|\omega(z)-\beta|\geq \nu\qquad (\omega=(2\pi)^{-1}\pa\Omega)
 $$
 or there exists a unique $c_{\b}\in ]-\rho-10A^2\nu,\rho+10A^2\nu[$ such that $\omega^{}(c_{\b})=\b$ and  for any $z\in U\setminus \bar\bD(c_{\b},10A^2\nu)$ one has
  $$|\omega(z)-\beta|\geq  5A\nu.
 $$
 \end{prop}
 \begin{proof}See Appendix \ref{sec:appendixB}.
 \end{proof}

\subsection{Notation $\fO_{p}$}
Let $h>0$, $U$ be a holed domain, functions $F_{1},\ldots,F_{n}\in\cO(W_{h,U})$ and $l\in\N^*$. We define the  relation 
$$G=\fO_{l}(F_{1},\ldots,F_{n})$$
as the follows:
there exist $a\in\N^*$, $C>0$ and   $Q(X_{1},\ldots,X_{n})$ a homogeneous polynomial  (independent of $U$) of degree $l$  in the variables $(X_{1},\ldots,X_{n})$ such that for any  $0<\d<h/2$ satisfying 
\be C \ud({W_{h,U}})^{-a}\d^{-a}\max_{1\leq i\leq n}\|F_{i}\|_{W_{h,U}}\leq 1\label{2.35}
\ee
one has $G\in\cO(e^{-\d}W_{h,U})$ and  
\be 
\|G\|_{e^{-\d}W_{h,U}}\leq \ud({W_{h,U}})^{-a}\delta^{-a} Q(\|F_{1}\|_{W_{h,U}},\ldots,\|F_{n}\|_{W_{h,U}}).\label{2.36}
\ee
When $\d$ satisfies (\ref{2.35}) we write  $$\d=\fd(F_{1},\ldots,F_{n};W_{h,U}).$$
\begin{rem}\label{rem:2.3}
Note that if $U=\bD(0,\rho_{0})$ is a disk containing 0 and $F\in \cO(W_{h,U}^{*})$, $*=CC,AA$,  one has 
\begin{align*}& F(z,w)=O^p(z,w)\iff \forall\ 0\leq\rho\leq\rho_{0},\ \|F\|_{W_{h,\bD(0,\rho)}}\lesssim \rho^{p/2}\\
& F(\th,r)=O^p(r)\iff \forall\ 0\leq \rho\leq\rho_{0},\ \|F\|_{W_{h,\bD(0,\rho)}}\lesssim \rho^{p}
\end{align*}
hence if $F_{1},\ldots, F_{n}\in \cO(W_{h,U}^{CC})$ (resp. $\in \cO(W_{h,U}^{AA})$) satisfy $F_{i}(z,w)=O^{p}(z,w)$ (resp. $F_{i}=O^{p}(r)$), $1\leq i\leq n$,  one has 
$$\fO_{m}(F_{1},\ldots,F_{n})=O^{lp-2a}(z,w)\qquad (\textrm{resp.}\  O^{mp-a}(r)).$$
\end{rem}

 We shall use the notation $\dot \fO_{p}(F_{1},\ldots,F_{n})$ if  the polynomial $Q$ is null when $X_{1}=0$:   $Q(0,X_{2},\ldots,X_{n})=0$; for example if $l=n=2$, $Q(X_{1},X_{2})=X_{1}X_{2}+X_{1}^2$.

When we want to keep track of the exponent $a$ appearing in (\ref{2.35}), (\ref{2.36}) we shall use the symbol $\fO_{p}^{(a)}$.
\section[Poisson-Jensen formula]{A no-screening criterium on domains with holes}\label{sec:noscreening}

Let $D$ be a disk  and  $U$ be an open subset of $D$ of the form $U=D\setminus(\bigcup_{1\leq j\leq N}\overline{D}_{j})$ where $(D_{j})_{j1\leq j\leq N}$ is a collection of open sub-disks such that ${\overline D}_{j}\subset D$. We can define the {\it Green function} of $U$, $g_{U}:U\times U\to \bR$ as follows: for any $z\in U$, $-g(z,\cdot)$ is the  function equal to 0 on the boundary $\pa U$ of $U$, which is subharmonic on $U$,  harmonic on $U\setminus\{w\}$ and which behaves like $\log|z-w|$ when $z\to w$. The Green function $g_{U}$ is thus nonnegative. We denote by $\omega_{U}:U\times Bor(\pa U)\to [0,1]$ the {\it harmonic measure} of $U$ ($Bor(\pa U)$ is the set of borelian subsets of $\pa U$) defined as follows:     if $z\in U$ and $I\in Bor(\pa U)$ (one can assume $I$ is an arc for example) then the function $\omega_{U}(\cdot, I)$ is  the unique harmonic function defined on $U$, having a continuous extension to $\bar U$ and such that $\omega_{U}(z,I)=1$ if $z\in I$ and 0 if $z\in \pa U\setminus I$
\footnote{It can also be defined by using Brownian motions: $\omega_{U}(z,I)=\mathbb{E}({\bf 1}_{I}(W_{z}(T_{z,I})))$
where $W_{z}(t)$ is the value at time $t$ of a Brownian motion issued from the point $z$ (at time 0) and $T_{z,I}$ is the stopping time adapted to the filtration $\cF_{z}$ of hitting $I$ before $\pa U\setminus I$.}.
{\it Poisson-Jensen formula} ({\it cf.} \cite{Ra}) asserts that for any subharmonic function $u:U\to \C$
$$ u(z)=\int_{\pa U}u(w)d\omega_{U}(z,w)-\int_{U}g_{U}(z,w)\Delta u(w)
$$
where $\Delta u$ is the usual laplacian of $u$.
In particular, if $f$ is a holomorphic function on $U$, the application of this formula to $u(z)=\ln |f(z)|$ gives
$$ \ln|f(z)|=\int_{\pa U}\ln|f(w)|d\omega_{U}(z,w)-\sum_{w:f(w)=0}g_{U}(z,w)
$$
and thus since $g_{U}$ is nonnegative
$$ \ln|f(z)|\leq \int_{\pa U}\ln|f(w)|d\omega_{U}(z,w).
$$

\begin{prop}\label{lem:fewholes} Let $U$ be a domain $U=\bD(0,\rho)\setminus(\bigcup_{1\leq j\leq N}\overline{\bD(z_{j},\e_{j})})$ ,   $\overline{\bD(z_{j},\e_{j})}\subset \bD(0,\rho)$ ($\rho\in ]0,1[$) and let $B\subset U$, $B=\bD(0,\sigma)$. Assume that   $f\in \cO(U)$ satisfies
$$  \|f\|_{U}\leq 1$$
and 
$$ \|f\|_{\pa B}\leq m.
$$
Then
 for any point $z\in \hat U:=\bD(0,\rho)\setminus(\bigcup_{1\leq j\leq N}\bD(z_{j},d_{j}))$, $2\e_{j}<d_{j}<1$
\be \ln |f(z)|\leq \biggl(\frac{\ln(|z|/\rho)}{\ln (\sigma/\rho)} -\sum_{j=1}^N \frac{\ln (d_{j}/2\rho)}{\ln (\e_{j}/\rho)}\biggr)\ln m. \label{eq:0center}
\ee
\end{prop}
\begin{proof} 
Replacing $z/\rho$ by $z$, $z_{j}/\rho$ by $z_{j}$, $\sigma/\rho$ by $\sigma$, $\e_{j}/\rho$ by $\e_{j}$ and $d_{j}/\rho$ by $d_{j}$,  we can reduce to the case $\rho=1$. We then denote $D=\bD(0,1)$, $D_{j}=\bD(z_{j},\e_{j})$, $B=\bD(0,\s)$.

By Poisson-Jensen formula
\begin{align} \ln|f(z)|&\leq \int_{\pa (U\setminus B)}\ln |f(w)|d\omega_{U\setminus{\overline{B}}}(z,w)\notag\\
&\leq\omega_{U\setminus \overline{B}}(z,\pa B) \ln m. \label{eq:8.33'}
\end{align}
We now compare $\omega_{U\setminus \bar B}(z,\pa B)$ with $\omega_{D\setminus \bar B}(z,\pa B)$. We observe that the  function $z\mapsto \omega_{U\setminus \bar B}(z,\pa B)$ is the unique harmonic function defined on $U\setminus\bar B$ which is 1 on $\pa B$ and 0 on  $\pa D\cup \pa(D\setminus U)$; since   $\pa(U\setminus \bar B)=\pa B\cup \pa D\cup \pa(D\setminus U)$ we deduce  by the Maximum Principle that  it takes its values in $[0,1]$. Similarly, the function $z\mapsto \omega_{D\setminus \bar B}(z,\pa B)$ is the unique harmonic function  defined on $D\setminus \bar B$ which is 1 on $\pa B$ and 0 on $\pa D $, hence it takes also its values in $[0,1]$. So 
\be v(\cdot):=\omega_{U\setminus \bar B}(\cdot,\pa B)-\omega_{D\setminus \bar B}(\cdot,\pa B)\label{3.46}\ee is a harmonic function defined on $U\setminus \bar B$, $-1\leq v\leq 1$,  which is 0 on $\pa B\cup\pa D$. For $1\leq j\leq N$, let $v_{j}$ be the harmonic function defined on  $D\setminus (\bar B\cup \bar  D_{j})$ which is 0 on $\pa(D\setminus \bar B)=\pa D\cup \pa B$ and $-1$ on $\pa D_{j}$; by the Maximum Principle  $-1\leq v_{j}\leq 0$.
\begin{lemma} The function $\sum_{j=1}^Nv_{j}$ is harmonic on $U\setminus \bar B$ and on this set
$$\sum_{j=1}^Nv_{j}\leq v.
$$
\end{lemma}
\begin{proof}We notice that the function $\sum_{j=1}^Nv_{j}$ is defined and harmonic on  $D\setminus (\bar B\cup\bigcup_{j=1}^N \bar D_{j})=U\setminus \bar B$. We want to compare $v$ and $\sum_{j=1}^Nv_{j}$ on the boundary  $\pa(U\setminus \bar B)=\pa D\cup \pa B\cup \pa(D\setminus U)$. On $\pa D\cup\pa B$ the two functions $v$ and $\sum_{j=1}^Nv_{j}$ are equal (they are both equal to 0). To compare them on $\pa(D\setminus U)$ we notice that 
$\pa(D\setminus U)\subset \bigcup_{j=1}^N\pa D_{j}$ and since $v_{j}\ _{| \ \pa D_{j}}=-1$ and for $i\ne j$, $v_{i}\leq 0$ we have 
at each point $z\in \pa(D\setminus U)$ which is in $\pa D_{j}$, $\sum_{i=1}^Nv_{i} (z )\leq -1$ hence $\sum_{i=1}^Nv_{i}\ _{|\ \pa(D\setminus U)}\leq -1$. But we have seen that $-1\leq v\leq 1$ on $U\setminus B$. We have thus proven that on $\pa(U\setminus B)$ one has $\sum_{j=1}^Nv_{j}\leq v.
$ and we conclude the proof by the Maximum Principle.
\end{proof}
Since by the Maximum Principle  on $D\setminus (\bar B\cup \bar D_{j})$
$$  -\frac{\ln|z-z_{j}|-\ln 2}{\ln \e_{j}}\leq v_{j}(z)
$$
one has for $z\in \hat U$
$$ v(z)\geq - \sum_{j=1}^N \frac{\ln (d_{j}/2)}{\ln \e_{j}}.
$$
On the other hand 
$$ \omega_{D\setminus B}(z,B)=\frac{\ln|z|}{\ln \sigma},
$$
so that from (\ref{3.46}) one has for $z\in \hat U$
$$\omega_{U\setminus\bar B}(z,B)\geq \frac{\ln|z|}{\ln \sigma}- \sum_{j=1}^N \frac{\ln (d_{j}/2)}{\ln \e_{j}}.
$$
Finally since $\ln m\leq 0$,  (\ref{eq:8.33'}) gives that  for any $z\in \hat U$
$$ \ln |f(z)|\leq (\frac{\ln|z|}{\ln \sigma} -\sum_{j=1}^N \frac{\ln (d_{j}/2)}{\ln \e_{j}})\ln m.\label{eq:0center}
$$

\end{proof}

\begin{defin}\label{defin:3.1}Let $U,U_{1},U_{2}$ be three  nonempty open sets of $\C$ such that , 
$$U_{1}\subset U,\qquad U_{2}\subset U.$$ 
We say that the triple $(U,U_{1},U_{2})$ is $A$-good ($A>0$)  if  for any $f\in\cO(U)$, $\sup_{U}|f|\leq 1$, one has
$$ \ln \|f\|_{U_{1}}\leq A\ln \|f\|_{U_{2}}.
$$
\end{defin}
\begin{rem}\label{rem:3.1}Notice that if there exists an open set $U'\subset U$, $U_{1}\subset U'$, $U_{2}\subset U'$ such that   
$(U',U_{1},U_{2})$ is $A$-good, then $(U,U_{1},U_{2})$ is also $A$-good. 
\end{rem}

We recall that we denote by $\bA(z;\l_{1},\l_{2})$, $0<\l_{1}<\l_{2}$, the annulus $\bD(z,\l_{2})\setminus\bar\bD(z,\l_{1})$.

Here is an immediate Corollary of Lemma (\ref{lem:fewholes}):
\begin{cor}\label{cor:3.2}Assume that the assumptions of Lemma (\ref{lem:fewholes}) hold with $\s=\rho^b/2$ ($b>1$). Then for all $1\leq i\leq N$ such that  $\bD(z_{i},d_{i})\subset \bD(0,e^{-\d}\rho)$ ($\d>0$) the triple $$\biggl(U,\bA(z_{i}; (d_{i}/10),d_{i}), \bD(0,\rho^b/2)\biggr)$$ is $A$-good with
$$A=\frac{\d}{b|\ln \rho|} -\sum_{j=1}^N \frac{\ln (d_{j}/20\rho)}{\ln (\e_{j}/\rho)}.$$
\end{cor}

\section{Symplectic diffeomorphisms on holed domains}\label{sec:4}

\subsection{Cartesian Coordinates (CC) and Action-Angle variables (AA)}\label{sec:2.2} 
We define on $\R^2:=\{(x,y),\ x, y \in \R\}$ (resp.  $\T\times \R:=\{(\th,r),\ \th\in\T,\ r\in \R\}$) the canonical  symplectic structure (area) $\beta_{\R}^{CC*}:=dx\wedge dy$ (resp. $\beta_{\R}^{AA}:=d\th\wedge dr$).
This space as well as its  symplectic structure can be complexified: the space $\C^2:=\{(x,y),\ x, \ y \in \C\}$ (resp. $\T_{\infty}\times \C:=\{(\th,r),\ \th\in\T_{\infty},\ r\in \C\}$) carries the symplectic structure $\beta_{\C}^{CC*}:=dx\wedge dy$ (resp. $\beta_{\C}^{AA}:=d\th\wedge dr$)   and the involution $\s_{1}$  (resp. $\s_{3}$)  defined  in (\ref{invol}) preserves $(\C^2,\beta^{CC*}_{\C})$ (resp. $(\T_{\infty}\times \C,\beta^{AA}_{\C})$) and fixes  $(\R^2,\beta_{\R}^{CC*})$ (resp.  $(\T\times \R,\beta^{AA}_{\R})$).

When working in the elliptic fixed point case, it will be more convenient to use other cartesian coordinates. 
Let's introduce the (holomorphic) complex change of coordinates
 $\ph:\C^2\to\C^2$,  $\ph:(x,y)\mapsto (z,w)$, 
 \be\begin{cases}&z=\frac{1}{\sqrt{2}}{(x+iy)}\\ &w=\frac{i}{\sqrt{2}}{(x-iy)}\end{cases}\iff \begin{cases}&x=\frac{1}{\sqrt{2}}(z-iw)\\ &y=\frac{-i}{\sqrt{2}}(z+iw).\end{cases}\label{changecoordxyzw}
\ee
We see that ($\s_{2}$ is as in  (\ref{invol})) with the notations of Subsection \ref{sec:new2.3}
$$ dx\wedge dy= \ph^*(dz\wedge dw),\qquad  \ph\circ\s_{1}\circ \ph^{-1}=\s_{2},\qquad \ph(W_{h,U}^{CC*})=W_{h,U}^{CC}.
$$

We shall denote 
$(M^{CC},\b^{CC},\s_{2})$, {\it resp.} $(M^{CC*},\b^{CC^*},\s_{1})$, (CC stands for Cartesian Coordinates) the space $\C^2$ endowed with the symplectic structure $\beta^{CC}:=dz\wedge dw$, {\it resp.} $\beta^{CC*}=dx\wedge dy$, and the involution $\s_{2}$, {\it resp.} $\s_{1}$.  Similarly,  $(M^{AA},\b^{AA},\s_{3})$ (AA for Action-Angle coordinates) is the space $\T_{\infty}\times \C$ endowed with the symplectic structure $\beta^{AA}:=d\th\wedge dr$ and the involution $\s_{3}$. 
We shall use for short the generic notation $(M,\b,\s)$ to denote either of  the preceding sets endowed with their symplectic structure and involution. We also use the notation $M_{\R}$ or $(M)_{\R}$ for $M\cap \s(M)$. The 2-form $\b$ restricted to $M_{\R}$ is still a symplectic form.
We shall call the  {\it origin} $O$ in  $M_{\R}$, the set  $O=\{(0,0)\}$ if $M=\C^2=M^{CC}$ or  $M^{CC^*}$ and $O=\T\times\{0\}$ if $M=M^{AA}=\T_{\infty}\times \C$.

If $W$ is a nonempty open set of $M$ (resp. $M_{\R}$) we say that $f\in{\rm Diff}^\cO(W)$ (resp.  $f\in{\rm Diff}^{C^1}(W)$) is {\it symplectic} if it preserves the canonical symplectic form $\beta$: $f^*\beta=\beta$. We denote by ${\rm Symp}^{\cO}(W)$ (resp. ${\rm Symp}^{C^1}(W)$) the set of such symplectic holomorphic (resp. $C^1$) diffeomorphisms. If furthermore $f\circ \s=\s\circ f$ we write $f\in {\rm Symp}_{\s}^{\cO}(W)$. We shall say that a  symplectic diffeomorphism $f$ is {\it exact symplectic} if there exists a 1-form $\l$, the {\it Liouville form},  such that $d\l=\beta$ and $f^*\l-\l$ is exact: there exists a function $S$ such that $f^*\l-\l=dS$; $S$ is called the {\it generating function} of $f$ (w.r.t. $\l$). We then denote $f\in{\rm Symp}^\cO_{ex.,\s}(W)$ (resp.  $f\in{\rm Symp}_{ex.}^{C^1}(W)$). In our case the relevant Liouville forms will be 
\be \textrm{(AA)}\  \l=rd\th,\quad \textrm{(CC)} \ \l=(1/2)(wdz-zdw),\quad \textrm{(CC*)}\ \l=(1/2)(xdy-ydx).\label{eq:Liouville} \ee
Let $W\subset M$ be $\s$-symmetric ($\s(W)=W$) and such that $(W)_{\R}:=W\cap \s(W)=W\cap M_{\R}$ is a nonempty open set of $M_{\R}$. Then, if $f\in{\rm Symp}^\cO_{ex.,\s}(W)$,  its   restriction $f_{\hs|  (W)_{\R} }:M_{\R}\supset (W)_{\R}\to f((W)_{\R})\subset M_{\R}$ defines a real analytic (exact) symplectic diffeomorphism.  If  $S\subset W$ is $f$-invariant ($f(S)=S$) the set   $(S)_{\R}:=S\cap M_{\R}$ is also left invariant by $f_{\hs|  (W)_{\R} }$.  Notice that If $U\subset \C$ is a real-symmetric  open set such that $U\cap \R\ne\emptyset$ we have
$$\begin{cases}&(W_{h,U}^{AA})_{\R}=(\T_{h}\times U)_{\R}=\T\times (U\cap \R)=W^{AA}_{0,U\cap \R}\\
&(W^{CC}_{h,U})_{\R}=\{(z,w)\in W^{CC}_{0,U\cap \R_{+}},\ w=i\bar z\}= (W^{CC}_{0,U\cap \R_{+}})_{\R}\\
&(W^{CC*}_{h,U})_{\R}=\{(x,y)\in\R^2,\ \frac{x^2+y^2}{2}\in U\cap\R_{+} \}=W^{CC*}_{0,U\cap\R_{+}}.
\end{cases}
$$
Notice that in any case $(W_{h,U})_{\R}=\{r\in U\}\cap M_{\R}=\{r\in U\cap\R\}\cap M_{\R}$.

There are  symplectic  changes of coordinates  $\psi_{\pm}$ that allow to pass from   the $(z,w)$-coordinates ((CC)- coordinates) to the $(\th,r)$ coordinates ((AA)-coordinates). 
They are  defined as follows. 
The maps $r\mapsto r^{1/2}$,  $te^{is}\mapsto t^{1/2}e^{is/2}$ for $t>0$ and $-\pi<s<\pi$   ({\it resp.} for $t>0$ and $0<s<2\pi$) define  holomorphic functions  on $\C\setminus\R_{-}$ ({\it resp.} on $\C\setminus\R_{+}$). We can thus define 
the biholomorphic diffeomorphisms  
\begin{align}&\T_{\infty}\times (\C\setminus\R_{\pm} )\ni (\th,r)\ \mathop{\longrightarrow}^{\psi_{\pm}}\ (z,w)\in  \{(z,w)\in\C^2,\ -izw\notin \R_{\pm} \} \notag\\
&\begin{cases}&z=e^{i\pi/4}r^{1/2}e^{-i \th}\\
&w=e^{i\pi/4}r^{1/2}e^{i \th}
\end{cases}
\iff
\begin{cases}&r=-izw\\
&e^{i\th}=e^{-i\pi/4}\frac{w}{(-izw)^{1/2}}=e^{i\pi/4}\frac{(-izw)^{1/2}}{z}
\end{cases}\label{defPsi}
\end{align}
which satisfy 
$$dz\wedge dw=d\th\wedge dr\ \textrm{and}\ \psi_{\pm}\circ\s_{2}\circ \psi_{\pm}^{-1}=\s_{3}.
$$
Notice that if $h>0$
\be  \T_{h}\times (\bD(0,\rho)\setminus\R_{\pm} ) \ \mathop{\longrightarrow}^{\psi_{\pm}} \  \biggl\{(z,w)\in\C^2,\ \begin{cases}&-izw\in\bD(0,\rho)\setminus \R_{\pm}\\ & e^{-2h}<|z/w|<e^{2h} \end{cases} \biggr\}\label{e2.14bis}
\ee
hence with the notations of Subsection \ref{sec:new2.3}
\be W^{CC}_{h,U\setminus\R_{\pm}}\supset\psi_{\pm}(W^{AA}_{h,U\setminus\R_{\pm}}).\label{e2.14}
\ee

%
%
%
%
%
%
%
%
%
%
%
%
%
%
%

\subsection{Symplectic vector fields}\label{sec:4.2}
If $(M,\beta)=(M^{CC},\beta)$ or $(M^{AA},\beta)$ and $F\in \cO_{\s}(M)$ we define   the holomorphic {\it symplectic} vector field $X$ by 
$i_{X}\beta=dF$. If  $J$ is the matrix $\bm0& 1\\ -1&0\em$  one has 
$$X_{F}=J\nabla F.$$

We denote by $\phi_{J\nabla F}^t$ the flow at time $t\in\R$ of the vector field $J\nabla F$ and  $\Phi_{F}=\phi^1_{J\nabla F}$ its time 1-map. It is a symplectic diffeomorphism.

If $G:M\to\R\ \textrm{or}\ \C$ is another  smooth observable we define the {\it Poisson bracket} of $F$ and $G$ by the formula $\{F,G\}=\beta(X_{F},X_{G})$ or equivalently 
$$ \{F,G\}:=\<\nabla F,J\nabla G\>.
$$
One then has 
$$ \frac{d}{dt}(G\circ\Phi_{F}^t)_{|t=0}=L_{J\nabla F}G=\{F,G\},\qquad [L_{X_{F}},L_{X_{G}}]=L_{X_{\{F,G\}}}.$$

If $f$ is a symplectic diffeomorphism one has
\be f\circ \Phi_{F}\circ f^{-1}=\Phi_{f_{*}F},\qquad \textrm{where}\  f_{*}F=(f^{-1})^*F=F\circ f^{-1}.\label{eq:4.4}
\ee

\subsection{Integrable models}\label{sec:4.3}
We assume that $(M,\beta,\s)$ is $(\C^2,dx\wedge dy,\s_{1})$, $(\C^2, dz\wedge dw,\s_{2})$ or  $(\T_{\infty}\times \C,d\th\wedge dr,\s_{3})$. In all these examples there exists a natural (Lagrangian) foliation given by the level lines of the  observable $r:M\to \C$
\be r(x,y)= \frac{x^2+y^2}{2},\quad r(z,w)= -i zw,\quad r(\th,r)= r,\label{observabler}\ee
 which has the property that  for every $m\in M$, such that $r(m)\in\R$, the map $\R\ni t\mapsto \phi^t_{J\nabla r}(m)$ is $2\pi$-periodic. In particular, for $c\in\R$, the set $\{r=c\}\subset M$ is itself foliated by the $2\pi$-periodic orbits of the flow $\phi^t_{J\nabla r}$; they are either points or homeomorphic to $\mathbb{S}^1$.
We shall say that a symplectic diffeomorphism of $M$ is {\it integrable} if it is symplectically conjugated to a diffeomorphism that leaves globally invariant each level line of the preceding function $r$. It is not difficult to see that a diffeomorphism satisfying the previous  condition is of the form $\Phi_{H}$ where $H=\Omega\circ r$.

Let $U$ be a $\s$-symmetric  holed domain  of $\C$  and $\Omega\in\cO_{\s}(U)$. Then,
\be\begin{cases}
&(CC): \ \Phi_{\Omega}(z,w)=(e^{-i\pa\Omega(r)}z,e^{i\pa\Omega(r)}w),\\
&(AA):\ \Phi_{\Omega}(\th,r)=(\th+\pa\Omega(r),r),\\
&(CC*):\  \Phi_{\Omega}(x,y)=(\Re(e^{-i\pa\Omega(r)}(x+iy)),\Im(e^{-i\pa\Omega(r)}(x+iy)))
\end{cases}\label{Phiflow}
\ee
and in any case 
$$
\Phi_{\Omega}(W_{\ti h,U})\subset W_{h,U},\qquad \ti h=h-\|\Im(\pa\Omega)\|_{U}.$$

On the other hand, since $\Omega$ is $\s$-symmetric, one has whenever $U$ is $\s$-symmetric,  $$\Phi_{\Omega}((W_{h,U})_{\R})=((W_{h,U})_{\R}).$$

Notice that in all cases $\Phi_{\Omega}$ is an integrable diffeomorphism of $M$.

\subsection{KAM circles}\label{sec:kamcircles}

A {\it circle} of $M_{\R}$ ($M_{\R}$ equals $M^{CC*}_{\R}=\R^2$, $M^{CC}_{\R}$, $M^{AA}_{\R}=\T\times\R$) is any set of the form $(\{r=c\})_{\R}=(\{r=c)\}\cap M_{\R}$, $c\in\R$, of cardinal $>1$ ($r$ is the observable of (\ref{observabler})). Notice that in the (AA) {\it resp.} (CC*) cases this set coincides with the usual circle $\T\times \{r=c\}$ {\it resp.} $\{(x,y)\in\R^2,\ (1/2)(x^2+y^2)=r\}$; in the (CC) or (CC*) cases $(\{r=c\})_{\R}$ is a circle if and only if $c>0$ (it is  empty if $c<0$ and reduced to $\{(0,0\}$ if $c=0$). 

Let $W$ be an open subset of  $M_{\R}$ and $f\in  {\rm Symp}^{C^1}_{ex.}(W)$  a $C^1$ symplectic diffeomorphism $W\mapsto f(W)$. For example $f$ could be the restriction on $W=(W_{h,U})_{\R}$ of  $f\in  {\rm Symp}^\cO_{ex.,\s}(W_{h,U})$, $W_{h,U}\subset M$.
 A {\it KAM-circle} (or {\it KAM-curve}) for $f$ is the image $g((\{r=c\})_{\R})\subset W$  of a circle $(\{r=c\})_{\R}$, $c\in\R$,  by a $C^1$  symplectic diffeomorphism  $g:M_{\R}\to M_{\R}$ fixing the origin ($g(\{r=0\}_{\R})=\{r=0\}_{\R}$) and such that 
$$ g^{-1}\circ f\circ g=\Phi_{2\pi\omega r}+O(r-c),\qquad \omega\in\R\setminus\Q.$$
The set  $g((\{r=c\})_{\R})\subset W$ is then $f$-invariant, homeomorphic to $\mathbb{S}^1$ and non homotopically trivial in the following sense: in the (AA)-case it is homotopic to $\{r=0\}_{\R}=\T\times\{0\}$   and in the  (CC) or (CC*) case  it has degree $\pm 1$ w.r.t. to the origin $\{r=0\}_{\R}=\{(0,0)\}$. Moreover, the restriction of $f$ on  $g((\{r=c\})_{\R})\subset W$ is conjugated to a rotation on a circle with frequency $\omega\in\R$.

We denote by $\cG(f,W)$ the set of $f$-invariant  KAM-circles   $\g\subset  (W)_{\R}$ and by  $\cL(f,W)\subset (W)_{\R}$ their union: $\cL(f,W)=\bigcup_{\g\in\cG^{}(f,W)} \g$.
\begin{rem}Let  $g,f, f_{1},f_{2}:M_{\R}\to M_{\R}$ be  $C^1$ symplectic diffeomorphisms  where $g(\{r=0\}_{\R})=\{r=0\}_{\R}$. Then,
\begin{enumerate}
\item  If $A\subset B\subset (M)_{\R}$, then $\cL(f,A)\subset \cL(f,B)$.
\item If $f_{1},f_{2}$ coincide on a set $A$, $\cL(f_{1},A)=\cL(f_{2},A)$
\item  For any set $A\subset M_{\R}$
\be g(\cL(f,A))=\cL(g\circ f\circ g^{-1}, g(A)).\label{4.47}\ee
\item If $g^{-1}\circ f_{1}\circ g$ and $f_{2}$ coincide on a set $A$ one has
\be \cL(f_{1},g(A))=g(\cL(f_{2}, A)).\label{4.47ter}\ee
\end{enumerate}

\end{rem}

\begin{notation}If $A\subset \C$ we define 
$W_{A}=\{r\in A\}\cap M_{\R}=\{r\in A\cap \R\}\cap M_{\R}$.
\end{notation}

\medskip Let us now state  a criterium that ensures the existence of KAM-circles.
 Assume that there exist 
 $$\emptyset\ne L\subset A=I\setminus\bigcup_{j\in J}I_{j}\subset \ti A\subset\R,$$ where $L$ is compact  and $A$ is of the form $I\setminus\bigcup_{j\in J}I_{j}$ where $I\subset\R $ is an interval and the $I_{j}$ are pairwise disjoint intervals.
\begin{prop}\label{prop:prop4.1}Let $f\in{\rm Symp}^{C^1}(W_{\ti A})$ and suppose that there exist $\Omega\in C^1(\R)$ and a $C^1$ symplectic diffeomorphism  $g:M_{\R}\to M_{\R}$ fixing the origin,   $\|g-id\|_{C^1}\leq C^{-1}$ ($C$ depends only on $M$), such that 
$$\textrm{on}\  W_{L}\qquad  g^{-1}\circ f\circ g=\Phi_{\Omega(r)}\qquad \textrm{and}\qquad g(W_{L})\subset W_{\ti A}.$$
Then, if $ \sum_{j\in J}|I_{j}|^{1/2}\leq 1$, one has 
$${\rm Leb}_{M_{\R}}(W_{A}\setminus\cL(f,W_{\ti A} ))\leq C\times ( {\rm Leb}_{\R}(A\setminus L)+\|g-id\|_{C^0}^{1/2}).$$
\end{prop}
\begin{proof}
Since $W_{L}=\cL(\Phi_{\Omega(r)},W_{L})$  one has from (\ref{4.47ter}) $g(W_{L})=g(\cL(\Phi_{\Omega(r)},W_{L})=\cL(f,g(W_{L}))$  and since $g(W_{L})\subset W_{\ti A}$ one has $g(W_{L})\subset \cL(f,W_{\ti A})$.
On the other hand if we define $E$ by $W_{A}=W_{L}\cup E$, one has
$g(W_{A})=g(W_{L})\cup g(E)$ and thus $g(W_{A})\subset \cL(f,W_{\ti A})\cup g(E)$. We therefore have
$${\rm Leb}_{M_{\R}}(W_{A}\setminus\cL(f,W_{\ti A} ))\lesssim {\rm Leb}_{M_{\R}}(g(E))\\ +{\rm Leb}_{M_{\R}}(W_{A} \ \triangle\  g(W_{A} )).
$$
Since $A=I\setminus\bigcup_{j\in J}I_{j}$ and $\sum_{j\in J}|I_{j}|^{1/2}\leq 1$,  Lemma \ref{lemma42appendix} from the Appendix  yields  ${\rm Leb}_{M_{\R}}(W_{A} \ \triangle\  g(W_{A} ))\leq \|g-id\|_{C^0}^{1/2}$ and since ${\rm Leb}_{M_{\R}}(g(E))={\rm Leb}_{M_{\R}}(E)$ we get the conclusion.

\end{proof}

\subsection{Generating functions}\label{sec:4.5}
Let   $h>0$, $U\subset \C$ be a real-symmetric  holed domain and $W_{h,U}^{AA}$ and $W_{h,U}^{CC}$ the domains defined in (\ref{holedomaintc}) and (\ref{holedomaincc}) 
$$\begin{cases}&W^{AA}_{h,U}=\T_{h}\times U\\
&W^{CC}_{h,U}=\{(z,w)\in \bD(0,e^h\rho_{U}^{1/2})\times \bD(0,e^h\rho_{U}^{1/2}),\ r:=-izw\in U\}.
\end{cases}$$
We shall associate to each   $F\in \cO_{\s}(W_{h,U})$ small enough a  real symmetric holomorphic symplectic diffeomorphism $f_{F}$ of $W_{h,U}$ which  is  {\it exact} with respect to the respective Liouville forms as defined in (\ref{eq:Liouville})). It is defined as follows:
in the (AA)-case
\be f_{F}(\th,r)=(\ph,R) \ \iff\  \begin{cases}&\ph=\th+\pa_{R}F(\th,R)\\ &r=R+\pa_{\th}F(\th,R)\label{4.43}\end{cases}
\ee

and in the (CC)-case
\be f_{F}(z,w)=(\ti z,\ti w) \ \iff\  \begin{cases}&\ti z=z+\pa_{\ti w}F(z,\ti w)\\ &w=\ti w+\pa_{\ti w}F(z,\ti w).\label{4.43cc}\end{cases}
\ee

\begin{lemma}\label{domaindef} There exists a constant $\bar C$ such that if $F\in\cO_{\s}(W_{h,U})$ and $0<\d<h$ satisfy  
\be \bar C(\d\ud(W_{h,U}))^{-2}\|F\|_{W_{h,U}}<1,\label{e:4.45bis}
\ee
the map $f_{F}$ defined by (\ref{4.43}), (\ref{4.43cc}) is a real-symmetric holomorphic exact symplectic  diffeomorphism from  $e^{-\d}W_{h,U}$ onto its image
 and 
\be e^{-2\d}W_{h,U}\subset  f_{F}^{}(e^{-\d}W_{h,U})\subset W_{h,U}.\label{f-dom}\ee
We shall call $f_{F}$   the {\it generating map} of  $F$.
Moreover
\be f_{F}^{-1}=f_{-F+O(\|D^2F\|\|DF\|)}.\label{EE427}\ee
\end{lemma}
\begin{proof}
See Appendix \ref{sec:A2eante}.
\end{proof}
\begin{rem}\label{sec:rem4.1}The symplectic change of coordinates $\psi_{\pm}$ introduced in  Subsection \ref{sec:2.2} preserves  exact symplecticity: if $f^{CC}$ is exact symplectic the same is true for $f^{AA}=\psi_{\pm}^{-1}\circ f^{CC}\circ \psi_{\pm}$. Indeed, if $\psi_{\pm}(\th,r)=(z,w)$, $z=e^{i\pi/4}r^{1/2}e^{-i \th}$, $w=e^{i\pi/4}r^{1/2}e^{i \th}$, one computes   the Liouville form 
$(1/2)(wdz-zdw)=rd\th$.

\end{rem}

Conversely, if a diffeomorphism $(\th,r)\mapsto (\ph,R)$ is exact symplectic and close enough to the identity, it admits this type of parametrization.

More precisely:
\begin{lemma}\label{lemma:6.2}Let $f\in {\rm Symp}^\cO_{ex.\s}(W_{h,U})$ be an exact symplectic diffeomorphism  close enough to the identity. Then, there exist $\d=\fd(f-id,W_{h,U})$ and  $F\in\cO_{\s}(e^{-\d}W_{h,U})$ such that on $e^{-\d}W_{h,U}$ one has 
$$ f=f_{F},\qquad  F=O(\|Df-id\|)=\fO_{1}(f-id).
$$
This $F$ is unique up to the addition of a constant.

Conversely, given $F\in \cO(W_{h,U})$ one has 
\be f_{F}=\Phi_{F}\circ f_{\fO_{2}(F)}=id+J\nabla F+O(\|D^2F\|\|DF\|).\label{l465}
\ee
\end{lemma}
\begin{proof}
See the Appendix \ref{sec:A2e}.
\end{proof}

The composition of two exact symplectic maps is again exact symplectic and more precisely
\begin{lemma}\label{lemma:6.2.bis} Let $F,G\in \cO(W_{h,U})$
then on $e^{-\d}W_{h,U}$, $\d=\fd(F,G;W_{h,U})$
\begin{align} &f_{F}\circ f_{G}=f_{F+G+O(\|DF\|_{h,U}\|D G\|_{h,U})}\label{eq:4.53}\\
&f_{F+G}=f_{F+\|DF\|_{{h,U}}\fO_{1} (G)}\circ f_{G}=f_{F}\circ f_{G+ \|DG\|_{{h,U}}\fO_{1} (F)  }.\label{n4.64}
\end{align}
In the Action-Angle case, if  $\Omega$ depends only  on the variable $r$ then $\Phi_{\Omega}=f_{\Omega}$ and 
\be \Phi_{\Omega}\circ f_{F}=f_{\Omega+F}\label{eq:4.55}
\ee
\end{lemma}
\begin{proof}
See the Appendix, Section \ref{sec:A3}.
\end{proof}
\subsection{Parametrization}
We shall parametrize  {\it perturbations} of integrable symplectic diffeomorphisms defined on a domain $W_{h,U}$ by
$$ f=\Phi_{\Omega(r)}\circ f_{F}$$
where $\Omega\in\cO_{\s}(U)$ and $F\in \cO_{\s}(W_{h,U})$. 
Note that if $f_{F}=id+O^2(z,w)$ or $f(\th,r)=id+(O(r),O(r^2))$ then:
$$\textrm{Case\ (CC)} \quad F(z,w)=O^3(z,w),\qquad \textrm{Case\ (AA)}\quad  F(\th,r)=O(r^2).$$

\subsection{Transformation by conjugation}
We now define
\be[\Omega]\cdot Y=Y\circ\Phi_{\Omega}-Y.\label{defcrochetOmega}
\ee
Note that
\be\begin{cases}&  \textrm{(AA)-case\  if}\  Y=Y(\th,r),\quad ([\Phi]\cdot Y)(\th,r)=Y(\th+\pa\Omega(r),r)-Y(\th,r);\\
& \textrm{(CC)-case\  if}\  Y=Y(z,w),\quad ([\Phi]\cdot Y)(z,w)=Y(e^{-i\pa\Omega(r)}z, e^{i\pa\Omega(r)}w)-Y(z,w).
\end{cases}\label{ee4.80}
\ee
If $W=W_{h,U}$ is a holed domain and $\d>0$ we introduce the notation 
$$W_{h,U}^{\Omega}=W_{h,U}^{\Phi_{\Omega}}:=W_{h,U}\cup \Phi_{\Omega}(W_{h,U}).
$$
The main result of this section is the following:
\begin{prop}\label{lemma:6.4} Let $\Omega\in\cO_{\s}(U)$, $F\in \cO_{\s}(W_{h,U})$, $Y\in \cO_{\s}(W_{h,U}^\Omega)$.
Then, for any $\d>0$, $\d=\fd(F,W_{h,U})\cap \fd(Y,W^\Omega_{h,U})$
there exists $\ti F\in \cO_{\s}(e^{-\d}W_{h,U})$ such that 
$$ [e^{-\d}W_{h,U}]\qquad  f_{Y}\circ (\Phi_{\Omega}\circ f_{F})\circ f_{Y}^{-1}=\Phi_{\Omega}\circ f_{\ti F}$$
(see the notation (\ref{ee2.40})) and 
\begin{align*} \ti F&=F+[\Omega]\cdot Y+\|DF\|_{W}\fO_{1}(Y)\\
&=F+[\Omega]\cdot Y+\dot \fO_{2}(Y,F).
\end{align*}
\end{prop}
\begin{proof}See the Appendix, section \ref{sec:proofprop4.4}.
\end{proof}
\begin{rem}\label{rem:4.3} A direct computation shows that  if $\Omega(r)=2\pi\omega_{0}r+O(r^2)$ and 
\begin{align*}&\textrm{Case\ (CC)} \quad F(z,w)=O^{k}(z,w) ,\ Y(z,w)=O^k(z,w),\\
& \textrm{Case\ (AA)}\quad  F(\th,r)=O(r^k), \ Y(\th,r)=O(r^k)
\end{align*}
then 
$$\textrm{Case\ (CC)} \quad \ti F(z,w)=O^{2k-2}(z,w),\qquad \textrm{Case\ (AA)}\quad  \ti F(\th,r)=O^{2k-1}(r).$$
\end{rem}

\subsection{Symplectic Whitney extensions }
Let $U\subset \C$ be a real symmetric holed domain $W_{h,U}\subset M$, $F\in\cO_{\s}(W_{h,U})$ and $F^{Wh}:M\to\C$ be a $\s$-symmetric $C^2$ Whitney extension of $(F,W_{h,U})$ ({\it cf.} Subsection \ref{sec:2.4}). There exists a constant $C>0$ (depending only on $M$) such that if $\| F^{Wh}\|_{C^2(M)}<C^{-1}$,   the equations (\ref{4.43}), (\ref{4.43cc}) define a $C^1$-diffeomorphism $f_{F^{Wh}}:M\to M$ such that
\be \max(\|f_{F^{Wh}}-id\|_{C^1(M)}, \|f_{F^{Wh}}^{-1}-id\|_{C^1(M)})\leq C^{-1}\|F^{Wh}\|_{C^2(M)}.\label{ee4.85}
\ee
Note that $f_{F^{Wh}}$  and $f_{F^{Wh}}^{-1}$ are  $C^1$ $\s$-symmetric extensions of $(f_{F},e^{-\d}W_{h,U})$ and $(f_{F}^{-1},e^{-\d}W_{h,U})$ for any $\d$ satisfying (\ref{e:4.45bis}), {\it cf.} Lemma \ref{domaindef}.
  
In general, the  diffeomorphism $f_{F^{Wh}}$  is {\it not symplectic} on $M$ but since $F^{Wh}$ takes real values on $M_{\R}$, $f_{F^{Wh}}:M_{\R}\to M_{\R}$ is an exact symplectic diffeomorphism of $M_{\R}$. 

\begin{notation}\label{notation:4.2}We shall denote by $\ti{\rm Symp}_{\s}(W_{h,U})$, resp. $\ti{\rm Symp}_{ex.,\s}(W_{h,U})$,  the set of $C^1$ $\s$-symmetric difffeomorphisms  $M\to M$ that are in ${\rm Symp}^\cO_{\s}(W_{h,U})$, resp.  ${\rm Symp}^\cO_{ex,s}(W_{h,U})$, (hence holomorphic on $W_{h,U}$) and symplectic, resp. exact symplectic, when restricted  $M_{\R}\to M_{\R}$.
\end{notation}

\section{Cohomological equations and conjugations}\label{sec:conj}
Our aim in this section is to provide a unified treatment, both in the (AA) and (CC) cases, of the resolution of the (co)homological equations (Proposition \ref{prop:n5.2}) involved in the  Fundamental conjugation step (Proposition \ref{prop:n5.5}) 
that we shall use  to construct  all our  different Normal Forms  (for instance  the approximate Birkhoff Normal Form of Section \ref{sec:BNF}, the  KAM Normal Forms of Section \ref{sec:5} and the resonant Normal Form of Appendix \ref{appendixE:RNF}).
\subsection{Fourier coefficients and their generalization}

In this section we assume that either:
\begin{itemize}
\item {\it Case (CC)}: $(M,\omega)=(\C\times \C,dz\wedge dw)$ and we denote by $r(z,w)=-izw$
\item  or,  {\it Case (AA)}:  $(M,\omega)=(\T_{\infty}\times \C,d\th\wedge dr)$ and we denote by $r:(\th,r)\mapsto r$.
\end{itemize}
In both cases the flow $t\mapsto \phi^t_{J\nabla r}$ is $2\pi$-periodic w.r.t.  $t\in\R$ ({\it cf.} (\ref{Phiflow}).

Let $U$ be a connected open set of $\C$ and $F\in\cO(W_{h,U})$. For any  $m\in W_{h,U}$ and any $t\in \R$, $\phi^t_{J\nabla r}(m)\in W_{h,U}$:
$$\begin{cases}
&(CC): \ \phi^t_{J\nabla r}(z,w)=(e^{-i t}z,e^{it}w),\\
&(AA):\ \phi^t_{J\nabla r}(\th,r)=(\th+t,r).
\end{cases}
$$
 We can hence define $t\mapsto F(\phi^t_{J\nabla r}(m))$ which is a $2\pi$-periodic function $\R\to\C$ and    for $n\in\Z$ we introduce its $n$-th Fourier coefficient $\cM_{n}(F)(m)$:
\begin{align}& \cM_{n}(F)(m)=\frac{1}{2\pi}\int_{0}^{2\pi}e^{-int}F\circ \phi_{J\nabla r}^t(m)dt\label{defDnF}\\
&F(\phi^t_{J\nabla r}(m))=\sum_{n\in\Z}\cM_{n}(F)(m)e^{int}.
\end{align}
The dependence of  $\cM_{n}(F)(m)$  is holomorphic in $m$ and we have thus defined  $\cM_{n}(F)\in\cO(W_{h,U})$.
We observe that 
\be \cM_{n}(F)\circ \phi_{J\nabla r}^{2\pi/n}=\cM_{n}(F)\label{eq:5.69}
\ee
and 
$$ \forall\ t\in\R,\ \cM_{0}(F)\circ \phi_{J\nabla r}^{t}=\cM_{0}(F).
$$
\subsubsection{Case (CC)}\label{sec:5.1.1} One has 
\be \phi^t_{J\nabla r}(z,w)=(e^{-it}z,e^{it}w)\label{ee5.83}\ee
and if $F=F(z,w)$, (\ref{defDnF}) becomes
$$\cM_{n}(F)(z,w)=\frac{1}{2\pi}\int_{0}^{2\pi}e^{-int}F(e^{-it}z,e^{it}w)dt.$$
If furthermore $F(z,w)=\sum_{(k,l)\in \N^2}F_{k,l}z^kw^l$ is converging on some polidisk $\bD(0,{\mu})\times \bD(0,{\mu})$ one has 
\be \cM_{n}(F)(z,w)=\sum_{\substack{(k,l)\in\N^2\\l-k=n}}F_{k,l}z^kw^l\label{formDn}
\ee
hence, if for some $p\in\N^*$, $F(z,w)=O^p(z,w)$, then for any $n\in\N$,  $\cM_{n}(F)(z,w)=O^p(z,w)$. 
\subsubsection{(AA) Case }\label{sec:5.1.2} In that case
$$\phi^t_{J\nabla r}(\th,r)=(\th+t,r)$$ and if $F=F(\th,r)$ we define 
\begin{align*}\cM_{n}(F)(\th,r)&=(2\pi)^{-1}\int_{0}^{2\pi}e^{-int} F(\th+t,r)dt\\
&= \hat F(n,r)e^{in\th}
\end{align*}
where $$\hat F(n,r)=(2\pi)^{-1}\int_{0}^{2\pi}e^{-in\th}F(\th,r)d\th$$ is the $n$-th Fourier coefficient of $F(\cdot,r)$. Notice that though $F$ is only defined on $\T_{h}\times U$,   $\cM_{n}(F)$ is defined in $\T_{\infty}\times U$.

\begin{rem}\label{rem:5.1}We see from (\ref{defDnF}) that if for some $p>0$, $F=O^p(r)$ (which means that for any $m\in W_{h,U}$ one has  $|F(m)|\leq C|r(m)|^p$ for some $C>0$) then $\cM_{n}F=O^p(r)$ for any $n\in\N$. 
\end{rem}
\begin{rem}Using the fact that  $\cM_{n}(F)\circ \phi^{2\pi/n}_{J\nabla r}=\cM_{n}(F)$ one can show that $f_{F}\circ \phi_{J\nabla r}^{2\pi/n}=\phi_{J\nabla r}^{2\pi/n}\circ f_{F}$ both in the (AA) and (CC) Case.
\footnote{For example in the (CC)-case, since $\phi^{2\pi/n}_{J\nabla r}(z,w)=(e^{-2\pi i/n}z,e^{2\pi i/n}w)$, the condition on $F$ implies 
$F(e^{-2\pi i/n}z,e^{2\pi i/n}\ti w)=F(z,\ti w)$ and the conclusion follows from (\ref{4.43cc}). }
\end{rem}

\subsubsection{Form of $\cM_{0}(F)$}

\begin{lemma}\label{ncor:5.4}If $F\in\cO_{\s}(W_{h,U})$ there exists $M(F)\in\cO_{\s}(U)$ such that 
\be \cM_{0}(F)=M(F)\circ r,\qquad \|M(F)\|_{U}\leq \|F\|_{h,U}.\label{eq:cor5.5ante}\ee

Moreover
\be f_{M(F)}=\Phi_{M(F)}\circ f_{\fO_{2}(F)}.\label{eq:cor5.5}
\ee
\end{lemma}
\begin{proof}
By definition of $\cM_{0}(F)$ we see that  for every $t\in\R$
$$\cM_{0}(F)\circ \phi^t_{J\nabla r}=\cM_{0}(F).$$
Lemma \ref{lem:6.3} of the Appendix gives us $M(F)\in\cO_{\s}(U)$ such that $\cM_{0}(F)=M(F)\circ r$.
We just have to prove (\ref{eq:cor5.5}) in the (CC) case. If $(\ti z,\ti w)=f_{M(F)}(z,w)$ one has 
$$\ti z=(1+\pa (M(F))(z\ti w))z,\qquad \ti w=(1+\ti w \pa (M(F))(z\ti w))^{-1}w
$$
and since $\ti w(z,w)=w+\fO(F)$ we get
$$(\ti z,\ti w)=(e^{-\pa (M(F))(zw)}z, e^{\pa (M(F))(zw)} w)+\fO_{2}(F).
$$
\end{proof}

\subsubsection{Decay of the $\cM_{n}(F)$. }
We observe that in {\it Case (AA)},  for $m=(\th,r)$ fixed in $W_{h,U}$,  the function 
$$\begin{cases}&\T_{h-|\Im\th|}\to\C\\
&t\mapsto F(\phi^t_{J\nabla r}(m))=F(\th+t,r)\end{cases}$$
 is  well defined and holomorphic and  that in {\it Case (CC)}, for $(z,w)\in W_{h,U}$ fixed (recall (\ref{ee5.83}) and   the definition (\ref{holedomaincc}) of $W_{h,U}^{CC}$),  the function 
 \be\begin{cases}&\R+i ]-\ln(e^h\rho^{1/2}/|w|), \ln(e^h\rho^{1/2}/|z|)[\to\C\\
 &t\mapsto F(\phi^t_{J\nabla r}(m))=F(e^{-it}z,e^{it}w)\end{cases}\label{e5.94}\ee
 (with $\rho=\sup\{|r|,\ r\in U\}$)
  is also a well defined  $2\pi\Z$-periodic holomorphic function.
 The Fourier coefficients $\cM_{n}(F)(m)$ of the function  $t\mapsto F\circ \phi_{J\nabla r}^t(m)$
\be F\circ \phi_{J\nabla r}^t=\sum_{n\in\Z}e^{int}\cM_{n}(F)\label{expressionF}
\ee
thus satisfy
\be  |\cM_{n}(F)(m)|\lesssim \begin{cases}& e^{-|n|(h -|\Im\th|)}\|F\|_{W_{h,U}}\qquad \textrm{in\ Case\ (AA)} \\
& e^{-|n| h_{z,w}}\|F\|_{W_{h,U}}\qquad\textrm{in\ Case\ (CC)} 
\end{cases} \label{ee5.87}
\ee
where $h_{z,w}=h+\min (\ln(\rho^{1/2}/|w|), \ln(\rho^{1/2}/|z|))$.

\subsubsection{Truncations operators}
Let us define for $N\in \N\cup\{\infty\}$,
$$T_{N}F=\sum_{|n|<N}\cM_{n}(F), \qquad R_{N}F=F-T_{N}F.
$$
\begin{lemma}\label{lemma:n5.1}If $F\in \cO(W_{h,U})$ one has
\begin{align}
&\textrm{on}\ \quad W_{h,U},\quad  F=\sum_{n\in\N}\cM_{n}(F)\label{n5.82}\\
 & \|\cM_{n}(F)\|_{W_{h -\d,U}}\lesssim e^{-|n|\d}\|F\|_{W_{h,U}},\label{n5.81}\\
& \|R_{N}F\|_{W_{h-\d,U}}\lesssim \d^{-1}e^{-N\d}\|F\|_{W_{h,U}}, \label{n5.83}\\
&\|T_{N}F\|_{W_{h-\d,U}}\lesssim \|F\|_{W_{h,U}}\qquad (\textrm{if}\ \d^{-1}e^{-N\d}\leq 1). \label{n5.83bis}
\end{align}
Furthermore, if for some $p>0$, $F=O^p(r)$ then 
\be R_{N}F=O^p(r) \label{n5.81ccbisbis}\ee in the (CC Case), if $F\in \cO(W_{h,U})\cap O^3(z,w)$, one has
\be (R_{N}F)(z,w)=O^{N}(z,w).\label{n5.81ccbis}
\ee
\end{lemma}
\begin{proof}We use (\ref{ee5.87}).
In the (AA Case) if  $m=(\th,r)\in  \T_{h-\d}\times U$ one has $h-|\Im\th|\geq \d $ and in the (CC Case), if $m=(z,w)\in  W^{CC}_{h-\delta,U}$ one has  $\max(|z|,|w|)\leq e^{h-\delta}\rho^{1/2}$ thus  $h_{z,w}\geq \d$. We thus have (\ref{n5.81}) in all cases. Equality (\ref{n5.82}) comes from taking $t=0$, (\ref{n5.83}) is a consequence of (\ref{n5.81}) and (\ref{n5.83bis}) is clear from (\ref{n5.83}).
 Inequality (\ref{n5.81ccbisbis}) is a consequence of Remark \ref{rem:5.1}.
 Inequality (\ref{n5.81ccbis}) is a consequence of (\ref{formDn}).
\end{proof}

\subsection{Solution of the truncated cohomological equation}
We assume that $0<\rho\leq 1$ and that $U$ is a $\s$-symmetric  open connected set of $\bD$. 

We recall that we have defined in (\ref{defcrochetOmega}) ({\it cf.} Proposition \ref{lemma:6.4}) for any $\Omega\in\cO(U)$ and $Y\in\cO(W_{h,U}^\Omega)$ 
$$[\Omega]\cdot Y=Y\circ\Phi_{\Omega(r)}-Y.$$
The main Proposition is the following
\begin{prop}\label{prop:n5.2}Let $\tau\geq 0$, $\Omega\in \cO_{\s}(U)$, $K>0$, $N\in\N^*\cup\{\infty\}$ be such that  one has on $U$
\be \forall \ (k,l)\in\N^*\times \Z,\ 1\leq k<N\ \implies\   |k\frac{1}{2\pi}\pa \Omega(\cdot)-l|\geq K^{-1}|k|^{-\tau}.\label{ncohomcond}
\ee
Then, for any  $F\in\cO_{\s}(W_{h,U})$, there exists   $Y\in \cO_{\s}(W_{h,U}^\Omega)$  such that, on $W_{h,U}$, one has  $\cM_{0}(Y)=0$, $\cM_{k}(Y)=0$ for $|k|\geq N$ and 
\be T_{N}F-\cM_{0}(F)=[\Omega]\cdot Y.\label{cohoeq}
\ee
This $Y$ satisfies for any $0<\d<h$ 
\be \|Y\|_{W_{h-\d,U}^\Omega}\lesssim K\min(\d^{-(1+\tau)},N^{\tau+1})\|F\|_{h,U}.\label{ne5.86}
\ee
\end{prop}
\begin{proof}
We observe that both in {\it Case (AA)} or {\it Case (CC)} one has on $W_{h,U}\cap \{r\in\R\}$ ({\it cf.} (\ref{Phiflow}))
$$\Phi_{\Omega(r)}=\phi^{ \pa \Omega(r)}_{J\nabla r}.$$
Hence, if $G$ is a function in $\cO(W_{h,U})$ one has on $W_{h,U}\cap \{r\in\R\}$
\begin{align*} \cM_{n}(G)\circ \Phi_{\Omega(r)}&=\frac{1}{2\pi}\int_{0}^{2\pi}e^{-int}G\circ \phi_{J\nabla r}^{t+\pa\Omega(r)}dt\\
&=e^{ in\pa\Omega(r)}\frac{1}{2\pi}\int_{0}^{2\pi}e^{-int}G\circ \phi_{J\nabla r}^{t}dt\\
&=e^{ in\pa\Omega(r)}\cM_{n}(G)
\end{align*}
and since $\cM_{n}(G)\in \cO(W_{h,U})$, the left hand side of the preceding equations can be holomorphically  extended to a function in $\cO(W_{h,U})$. 
We then have in $\cO(W_{h,U})$
$$[\Omega]\cdot \cM_{n}(G)=(e^{in \pa\Omega(r)}-1)\cM_{n}(G).$$
Note that from Lemma \ref{lem:D1} one has for $r\in U$,  $|e^{in\pa\Omega(r)}-1|\geq K^{-1}|n|^\tau$.
If we define $Y$ by 
$$Y=\sum_{0<|n|< N} \frac{1}{e^{in\pa\Omega(r)}-1} \cM_{n}(F)$$
we have
from Lemma \ref{lemma:n5.1}
\begin{align*}\|Y\|_{W_{h-\d,U}}&\lesssim K\sum_{1\leq |n|< N} |n|^\tau e^{-|n|\d}\|F\|_{h,U}\\
&\lesssim \min(K{\d}^{-(1+\tau)},KN^{\tau+1})\|F\|_{h,U}
\end{align*}
and 
$$[\Omega]\cdot Y=Y\circ \Phi_{\Omega}-Y=T_{N}F.$$
This last formula shows that 
if we define $\ti Y$ on $\Phi_{\Omega(r)}(W_{h-\d,U})$ by  $\ti Y\circ \Phi_{\Omega(r)}=T_{N}F+Y$ the functions $\ti Y$ and $Y$ coincide on $\Phi_{\Omega(r)}(W_{h-\d,U})\cap W_{h-\d,U}$ and thus $Y$ can be holomorphically  extended to $\Phi_{\Omega(r)}(W_{h-\d,U})\cup W_{h-\d,U}=:W_{h-\d,U}^\Omega$ and 
$$\|Y\|_{W_{h-\d,U}^\Omega}\lesssim \min(K{\d}^{-(1+\tau)},KN^{\tau+1})\|F\|_{h,U}.
$$
The fact that $\cM_{0}(Y)=0$ and its uniqueness (under the condition $\cM_{0}(Y)=0$) comes again from Lemma \ref{lemma:n5.1}. Finally, the $\s$-symmetry of $Y$ on $W_{h,U}$ is clear.
\end{proof}
\begin{rem}\label{rem:5.1}If in Proposition \ref{prop:n5.2} $U=\bD(0,\rho)$ is a disk centered at 0  and 
$$\begin{cases}&\textrm{(AA)-case}\ F(\th,r)=\displaystyle\sum_{k\in\N}\sum_{l\in\Z}\hat F_{k}(l)e^{il\th}r^k\\
&\textrm{(CC)-case}\ F(z,w)=\displaystyle\sum_{(k,l)\in\N }F_{k,l}z^kw^l
\end{cases}$$
one has the more explicit expressions
\be \begin{cases}&\textrm{(AA)-case}\ Y(\th,r)=\displaystyle\sum_{k\in\N}\sum_{l\in\Z^*}\displaystyle\frac{\hat F_{k}(l)}{e^{il\pa\Omega(r) }-1}e^{il\th}r^k\\
&\textrm{(CC)-case}\ Y(z,w)=\displaystyle\sum_{\substack{ (k,l)\in\N \\ {l\ne k} }}\displaystyle\frac{F_{k,l}}{e^{i(l-k)\pa\Omega(r)} -1}z^kw^l.
\end{cases}\label{ee5.94}\ee
In particular, if 
$$\Omega(r)=2\pi\omega_{0}r\qquad  \textrm{and}\ \qquad \begin{cases}& \textrm{(CC)-case}\qquad F(z,w)=O^{m}(z,w)\\
&\textrm{(AA)-case} \qquad F(\th,r)=O^m(r)
\end{cases}
$$
then $Y$ satisfies also (see the remarks at the end of Subsections \ref{sec:5.1.1} and \ref{sec:5.1.2})
$$\begin{cases}& \textrm{(CC)-case}\qquad Y(z,w)=O^{m}(z,w)\\
&\textrm{(AA)-case} \qquad Y(\th,r)=O^m(r).
\end{cases}
$$

\end{rem}

\subsection{Fundamental conjugation step}
We begin by the following  consequence of Proposition \ref{lemma:6.4}. Let $U$ be a holed domain, $h>0$.
\begin{lemma}\label{lemma:5.4}There exists $\bar a\geq 2$ such that if $\Omega\in\cO_{\s}(U)$, $F\in\cO_{\s}(W_{h,U})$, $Y\in\cO_{\s}(W_{h,U}^\Omega)$ one has on $e^{-\d}W_{h,U}$, $\d=\fd(F,Y; W_{h,U})$  ({\it cf.} Lemma \ref{ncor:5.4} for the definition of $M(F)$)
\be f_{Y}\circ \Phi_{\Omega}\circ  f_{F}\circ f_{Y}^{-1}=\\ \Phi_{\Omega+M(F)}\circ f_{F-\cM_{0}(F)+[\Omega+M(F)]\cdot Y+\dot\fO_{2}^{(\bar a)}(Y,F)}.\label{5.117b}
\ee
\end{lemma}
\begin{proof}
We first observe that since $F=F-\cM_{0}(F)+\cM_{0}(F)$ we have by (\ref{n4.64}) and Lemma \ref{ncor:5.4}
\begin{align*}f_{F}&=f_{\cM_{0}(F)}\circ f_{F-\cM_{0}(F)+\fO_{2}(F)}\\
&=\Phi_{M(F)}\circ f_{\fO_{2}(F)} \circ  f_{F-\cM_{0}(F)+\fO_{2}(F)}\\
&=\Phi_{M(F)}\circ f_{F-\cM_{0}(F)+\fO_{2}(F)}
\end{align*}
and thus
$$ \Phi_{\Omega}\circ f_{F}=\Phi_{\Omega+M(F)}\circ f_{F-\cM_{0}(F)+\fO_{2}(F)}.$$
Now we use Proposition \ref{lemma:6.4}: for some $\bar a\geq 2$
\begin{multline} f_{Y}\circ \Phi_{\Omega+M(F)}\circ  f_{F-\cM_{0}(F)+\fO_{2}(F)}\circ f_{Y}^{-1}=\\ \Phi_{\Omega+M(F)}\circ f_{F-\cM_{0}(F)+[\Omega+M(F)]\cdot Y+\dot\fO_{2}^{(\bar a)}(Y,F)}.\label{5.117}
\end{multline}
\end{proof}

\begin{prop}\label{prop:n5.5}Let $\bar a_{0}=\bar a+4$. There exists $\bar C$ such that the following holds. Let $U$ be a holed domain, $\Omega\in \cO_{\s}(U)$,  and $F\in\cO_{\s}(W_{h,U})$. Assume that there exists a holed domain $V\subset U$,   $N\in\N^*\cup\{\infty\}$ and $K>0$ such that on $V$ the following non-resonance  condition ({\it cf.} (\ref{ncohomcond})) is satisfied: 
\be  \forall \ (k,l)\in\N^*\times \Z,\ 1\leq k<N\ \implies\   |k\frac{1}{2\pi}\pa \Omega(\cdot)-l|\geq K^{-1}|k|^{-\tau}\label{diophcondOmegabis}\ee
and assume that $\bar CN^{-1}< \d<\min(h,\bar C^{-1})$,  is such that 
\be (\d \ud(W_{h,V}))^{-(\bar a_{0}+\tau)}K\|F\|_{h,U}<\bar C^{-1}.\label{ne5.878}
\ee
 Then there exists  $Y\in \cO(W_{h,V}^\Omega)$ solution on $W_{h,V}^\Omega$ of the cohomological equation  ({\it cf. }(\ref{cohoeq}), (\ref{ne5.86})):
\be T_{N}F-\cM_{0}(F)=-[\Omega]\cdot Y,\qquad \|Y\|_{e^{-\d/2}W_{h,V}^\Omega} \lesssim K\d^{-(1+\tau)}\|F\|_{W_{h,U}}\label{ee5.100}\ee
and $\ti\Omega\in\cO(e^{-\d}W_{h,V})$, $\ti F\in \cO_{\s}(W_{h,U})$
  such that one has on $e^{-\d}W_{h,V}$
\begin{align}& f_{Y}\circ \Phi_{\Omega(r)}\circ f_{F}\circ f_{Y}^{-1}=\Phi_{\ti \Omega(r) }\circ f_{\ti F},\qquad \ti\Omega=\Omega+M(F)\notag\\
& \|\ti F\|_{C^3(e^{-\d}W_{h,V})}\leq K(\d^{-1}\ud(W_{h,V})^{-1})^{\bar a_{0}+\tau}\biggl( \|F\|_{h,U}^2+e^{-N\d/2}\|F\|_{h,U}\biggr). \label{ne5.89bis}
\end{align}
\end{prop}
\begin{proof}
We apply the preceding Lemma \ref{lemma:5.4} with $Y$ solution of  (\ref{ee5.100}).
Since ({\it cf.} (\ref{ee4.80})) $[\Omega+M(F)]\cdot Y=[\Omega]\cdot Y+O(|\nabla Y||\nabla (M(F))|)=[\Omega]\cdot Y+\fO_{2}^{(2)}(Y,F)$, we get using $[\Omega]\cdot Y+F-\cM_{0}(F)=R_{N}F$ ({\it cf.} (\ref{cohoeq})),
$$e^{-\d}W_{h,U},\qquad f_{Y}\circ \Phi_{\Omega}\circ  f_{F}\circ f_{Y}^{-1}=:\Phi_{\ti\Omega}\circ f_{\ti F}
$$
with
\begin{align}&\ti\Omega=\Omega+M(F)\\
&\ti F=R_{N}F+\fO_{2}^{(\bar a)}(Y,F).
\end{align}
 The definition of the symbol $\fO_{2}^{(\bar a)}$, (\ref{n5.83}) and  (\ref{ee5.100}) 
 show that there exist a universal positive constant $\bar C$ such that if (\ref{ne5.878}) is satisfied one has
\be  \|\ti F\|_{e^{-\d/2}W_{h,V}}\lesssim K\d^{-(1+\tau)}(\d^{-1}\ud(W_{h,V})^{-1})^{\bar a}\|F\|_{h,U}^2+\d^{-1}e^{-N\d/2}\|F\|_{h,U}.\label{ee5.104}\ee
 Inequalities (\ref{ne5.89bis}), 
 comes 
 from  (\ref{ee5.104})
  and Cauchy inequalities, see Subsection \ref{subsec:2.6.4}.
\end{proof}

\section{Birkhoff  Normal Forms}\label{sec:BNF}
\subsection{Formal Normal Forms}
We recall in this subsection the classical results on  (formal) Birkhoff Normal Forms. For more details on the related   formal aspects we refer to  Appendix \ref{formalD}. We also explain how Pérez-Marco's dichotomy extdends to the diffeomorphism case (in particular in the (AA)-case).

\subsubsection{BNF near a non resonant elliptic fixed point ((CC) case)}\label{sec:6.1}
Let $\ti f:(\R^2,0)\to (\R^2,0)$ be a real  analytic symplectic diffeomorphism of the form $\ti f(x,y)=D\ti f(0,0)\cdot(x,y)+O^2(x,y)$ where 
$$D\ti f(0,0)=\Phi_{2\pi \omega_{0}r}=\bm\cos(2\pi\omega_{0}) &-\sin(2\pi\omega_{0})\\
\sin(2\pi\omega_{0})& \cos(2\pi\omega_{0})\em$$ with $\omega_{0}\in \R\setminus \Q$.

 If $\ph:\C^2\to\C^2$ is the change of coordinates  $\ph(x,y)=(z,w)$ defined in (\ref{changecoordxyzw}) the diffeomorphism $f:=\ph\circ \ti f\circ \ph^{-1}$ is exact symplectic and of the form $ f(z,w)=\Phi_{2\pi\omega_{0}r}(z,w)+O^2(z,w)$ where $r(z,w)=-izw$
$$\Phi_{2\pi\omega_{0}r}(z,w)=(e^{-2\pi i \omega_{0}}z,e^{2\pi i \omega_{0}}w).
$$ 
From 
 Lemma \ref{lemma:6.2}   we have the representation
$$ f=\Phi_{2\pi\omega_{0}r}\circ f_{F},\qquad F=O^{3}(z,w)$$
for some $F\in \cO_{\s}(\bD(0,\mu)^2)$, $\mu>0$.
We then have the following classical proposition that establishes the existence   of Birkhoff Normal Forms to arbitrarily high order.
 \begin{prop}\label{statementBNFante}Let $\omega_{0}\in \R\setminus\Q$. Then, for any $N\geq 3$ there exist $\s$-symmetric holomorphic maps $\Omega_{N}:(\C,0)\to \C$, $Z_{N},F_{N}:(\C^2,0)\to \C$  such that on a neighborhood of $0\in\C^2$ one has   ($r=-izw$)
 \be \begin{cases}&f_{Z_{N}}\circ (\Phi_{2\pi\omega_{0}r}\circ f_{F})\circ f_{Z_{N}}^{-1}=\Phi_{\Omega_{N}}\circ f_{F_{N}}\\ 
 &F_{N}(z,w)=O^{2(N+1)}(z,w),\quad Z_{N}(z,w)=O^3(z,w),\quad \Omega_{N}(r)=2\pi\omega_{0}r+O^2(r).\end{cases}
 \label{serformBNF}
 \ee
  \end{prop}
\begin{rem}The sequences $(Z_{N})_{N}$ and $(\Omega_{N})_{N}$  converge respectively  in $\C[[z,w]]$ and in $\R[[r]]$. If $Z_{\infty}\in \C[[z,w]]$ and $\Omega_{\infty}\in \R[[r]]$ are there respective limits one has in $\C[[z,w]]$ the formal identity
\be \begin{cases}&f_{Z_{\infty}}\circ (\Phi_{2\pi\omega_{0}r}\circ f_{F})\circ f_{Z_{\infty}}^{-1}=\Phi_{\Omega_{\infty}}\\
&Z_{\infty}(z,w)=O^3(z,w),\qquad \Omega_{\infty}(r)=2\pi\omega_{0}r+O^2(r).\end{cases}\label{eee6.95}\ee
Conversely, (\ref{eee6.95}) defines $\Omega_{\infty}$ uniquely\footnote{The normalizing map $Z_{\infty}$ is unique up to composition on the left by a formal generalized symplectic rotation  $\Phi_{A}$, $A\in\R[[r]]$. }; $\Omega_{\infty}$ is the Birkhoff Normal Form $BNF(f)$ of $f$ (and $BNF(\ti f)$ of $\ti f$). In particular, $BNF(f)$ is invariant by (formal) symplectic conjugacies which are tangent to the identity.
\end{rem}
\begin{rem} \label{rem:6.2}If $f=\Phi_{\Omega}\circ f_{F}$ with $\Omega=\Omega(r)=2\pi\omega_{0}r+O(r^2)$ and $F(z,w)=O^{2(N+1)}(z,w)$ then 
\be BNF(f)(r)=\Omega(r)+O^{N+1}(r). \label{ee6.118}
\ee
\end{rem}

\subsubsection{BNF near a   KAM circle (Action-Angle case)}
Let $f:(\T\times \R,\T\times\{0\})\to (\T\times \R,\T\times\{0\})$  be a real  analytic symplectic diffeomorphism of the form $f(\th,r)=(\th+2\pi\omega_{0},r)+(O(r),O(r^2))$. We notice that $\Phi_{2\pi\omega_{0}r}:(\th,r)\mapsto (\th+2\pi\omega_{0},r)$. We can thus  write $f$ under the form ($h,\rho>0$)
$$f=\Phi_{2\pi\omega_{0}r}\circ f_{F},\qquad F\in \cO_{\s}(e^{2h}(\T_{h}\times \bD(0,\rho))),\qquad F=O^2(r).
$$

 \begin{prop}\label{statementBNF}
 Let $\omega_{0}\in \R$ be Diophantine. Then, for any $N\geq 3$ there exist real analytic maps  $\Omega_{N}:(\R,0)\to \R$, $Z_{N},F_{N}:(\T\times \R,\T\times\{0\})\to \R$  such that  
 \be \begin{cases}&f_{Z_{N}}\circ (\Phi_{2\pi\omega_{0}r}\circ f_{F})\circ f_{Z_{N}}^{-1}=\Phi_{\Omega_{N}}\circ f_{F_{N}}\\ 
 &F_{N}(\th,r)=O^{N+1}(r),\quad Z_{N}(\th,r)=O^2(r),\quad \Omega_{N}(r)=2\pi\omega_{0}r+O^2(r).\end{cases}
 \label{serformBNF}
 \ee
\end{prop}
\begin{rem}Let  $C^\omega(\T)[[r]]$ (where $C^\omega(\T)=\bigcup_{h>0}C^{\omega}_{h}(\T)$)  be the set of   formal power series  
\be F(\th,r)=\sum_{n\in\N}F_{n}(\th)r^n,\qquad F_{n}\in C^\omega(\T)\ \textrm{for\ all}\ n\in\N.
\ee 
The sequence $(Z_{N})_{N}$  converges in $C^\omega(\T)[[r]]$ and  the sequence $(\Omega_{N})_{N}$  converges in $\R[[r]]$. If $Z_{\infty}\in C^\omega(\T)[[r]]$ and $\Omega_{\infty}\in \R[[r]]$ are there respective limits one has in $C^\omega(\T)[[r]]$ the formal identity
\be \begin{cases}&f_{Z_{\infty}}\circ (\Phi_{2\pi\omega_{0}r}\circ f_{F})\circ f_{Z_{\infty}}^{-1}=\Phi_{\Omega_{\infty}}\\
&Z_{\infty}(\th,r)=O^2(r),\qquad \Omega_{\infty}(r)=2\pi\omega_{0}r+O^2(r).\end{cases}\label{eee6.98}\ee
Conversely, (\ref{eee6.98}) defines  $\Omega_{\infty}$ uniquely\footnote{The normalizing map $Z_{\infty}$ is unique up to composition on the left by a formal integrable twist of the form $\Phi_{A}$, $A\in\R[[r]]$. }; $\Omega_{\infty}$ is the Birkhoff Normal Form $BNF(f)$ of $f$. In particular, $BNF(f)$ is invariant by (formal) symplectic conjugacies which are of the form $id+(O(r),O(r^2))$. 

\end{rem}
\begin{rem}\label{rem:6.4}If $f=\Phi_{\Omega}\circ f_{F}$ with $\Omega=\Omega(r)=2\pi\omega_{0}r+O(r^2)$ and $F(\th,r)=O^{N+1}(r)$ then 
\be BNF(f)(r)=\Omega(r)+O^{N+1}(r). \label{ee6.122}
\ee

\end{rem}

\subsection{Pérez-Marco's Dichotomy}\label{sec:PMdichotomy}
We now discuss the extension of Pérez-Marco's  Dichotomy, Theorem \ref{theo:PMdichotomy}, to the difeomorphism setting. 

The first part of Pérez-Marco's argument in  \cite{PM}, translated in our (CC)-setting,  is based on the fact that the  coefficients of the Birkhoff Normal Form  $B(r)=\sum_{n\in\N^d} b_{n}r^n=\sum_{n\in\N^d} b_{n}(-izw)^n$  of $\Phi_{2\pi\<\omega,r\>}\circ f_{F}$ depend polynomially on the coefficients of $F(z,w)=\sum_{(k,l)\in\N^d\times\N^d}F_{k,l}z^kw^l$. More precisely,  if we denote by $[F]_{j}$, $j\geq 3$, the homogeneous part of $F$ of degree $j$, $[F]_{j}=\sum_{|k|+|l|=j}F_{k,l}z^kw^l$,  the coefficients of the homogeneous part of degree $2j$,  $[B\circ r]_{2j}=\sum_{|k|=j} b_{k}(-izw)^k$ of $B\circ r$,  are polynomials of degree $2 j-2$ in the coefficients of $[F]_{3},\ldots [F]_{j}$. As a consequence, if $(z,w)\mapsto F(z,w)$, $(z,w)\mapsto G(z,w)$ are two $\s$-symmetric holomorphic functions such that $F(z,w)=O^3(z,w)$, $G(z,w)=O^3(z,w)$, for any $n\geq 3$,  the maps $t\mapsto b_{n}(tF+(1-t)G)$ are polynomials of degree $\leq 2|n|-2$.  The second argument in \cite{PM} is then to use results from potential theory (in particular the Bernstein-Walsh Lemma \footnote{On the control of the size of a polynomial of known degree which is bounded above  on a  not pluripolar set.}) applied  to the family of polynomials $t\mapsto b_{n}(tF+(1-t)G)$ that  have a degree which behaves linearly in  $n$. 

To check that the arguments of \cite{PM} adapts to the diffeomorphism case it is hence  enough to check  that $t\mapsto b_{n}(tF+(1-t)G)$ are polynomials of degree $\leq 2( |n|-1)$. 
\begin{lemma} \label{BNF:lemma6.3}If $F,G$ are $\s$-symmetric holomorphic maps $F,G=O^3(z,w)$ in the  (CC)-case (resp. $F,G=O^2(r)$ in the (AA)-case) for every $n\in\N^d$, $|n|\geq 2$, $t\mapsto b_{n}(tF+(1-t)G)$ is a polynomial of degree $\leq 2( |n|-1)$ (resp. $\leq |n|-1$).
\end{lemma}
\begin{proof} We refer to the Appendix \ref{formalD} where we discuss  formal aspects of the BNF (mainly in the (AA)-case) and give a proof of the Lemma in  Subsection  \ref{appendix:D4}).
\end{proof}

\subsection{Approximate BNF}
\subsubsection{Elliptic fixed point case ((CC)-Case)}\label{sec:6.2.1}
Our aim is to give a more quantitative version of Proposition \ref{statementBNFante}.

Recall that $W_{h,\bD(0,\rho)}=\{(z,w)\in \bD(0,e^h\rho^{1/2})^2,\ -izw\in \bD(0,\rho)\}$ and we denote sometimes by $W_{h,\rho}$ the set $W_{h,\bD(0,\rho)}$.

Let $m\geq 4$ be an integer.  Applying    Proposition \ref{statementBNFante} with $m=N-1$ we can assume that the diffeomorphism $f$ is of the form 
\be\begin{cases}
&f=\Phi_{\Omega_{0}}\circ f_{F_{0}}\\
& \Omega_{0}(r)=2\pi\omega_{0}r+O^2(r), \quad\textrm{and}\ F_{0}(z,w)=O^{2m}(z,w).\end{cases}\label{defm}\ee
 In particular ({\it cf.} Remark \ref{rem:2.3}) for some $h>0$ and any $\rho>0$ small enough we can assume that 
\be \|F\|_{e^{h}W_{h,\bD(0,\rho)}}\lesssim \rho^{m},\qquad m=\bar a_{1}:=\max(2\bar a+1,30)\label{ee6.96}
\ee
$\bar a$ being the exponent that appears in Proposition \ref{prop:n5.5} that we can assume to be larger than 10.

Denote by $(p_{n}/q_{n})_{n\geq 1}$ the sequence of best rational approximations of $\omega_{0}$ which has the following properties ({\it cf.} \cite{He79}, Chap. 5, formulae (7.3.1)-(7.3.2) and  Prop. 7.4): for all $n\in\N^*$
 \be \frac{1}{q_{n}+q_{n+1}}< (-1)^n(q_{n}\omega_{0}-p_{n})< \frac{1}{q_{n+1}},\label{7.88}\ee
and 
 \be  \forall \ 0<k<q_{n},\ \forall \ l\in\Z,\   |k\omega_{0}-l|\geq |q_{n-1}\a-p_{n-1}|>\frac{1}{2q_{n}}\label{BNF7.122}.\ee

We refer to Notations \ref{notation:2.1}, \ref{notation:2.2} and \ref{notation:4.2} before stating the following proposition.

\begin{prop}\label{nprop:9.2}Assume that  (\ref{ee6.96}) holds. Then for any $\b>0$
and any $n\gg_{\b} 1$ there exist $g_{q_{n}^{-1}}^{BNF}\in\ti {\rm Symp}_{ex.,\s}(W_{h,q_{n}^{-6}})$, and functions $F^{BNF}_{q_{n}^{-1}}\in\cO_{\s}(W_{h,q_{n}^{-6}})\cap O^{q_{n}^{1-\b}}(z,w)$, $\Omega^{BNF}_{q_{n}^{-1}}\in\ti \cO_{\s}(\bD(0,q_{n}^{-6}))$ such that 
\begin{align}& [W_{h,q_{n}^{-6}}]\qquad  (g_{q_{n}^{-1}}^{BNF})^{-1}\circ \Phi_{\Omega_{0}}\circ f_{F_{0}}\circ g_{q_{n}^{-1}}^{BNF}=\Phi_{\Omega^{BNF}_{q_{n}^{-1}}}\circ f_{F^{BNF}_{q_{n}^{-1}}} \label{ind}\\
& \Omega_{q_{n}^{-1}}^{BNF}(r)-BNF(f)(r)=O^{q_{n}^{1-\b}}(r),\ \textrm{in} \ \R[[r]]  \label{ee6.128}\\
&\|\Omega_{q_{n}^{-1}}^{BNF}\|_{C^3}\lesssim 1\\
&\|g_{q_{n}^{-1}}^{BNF}-id\|_{C^1}\leq q_{n}^{-(m-27)}\\
&\|F^{BNF}_{q_{n}^{-1}}\|_{W_{h,q_{n}^{-6}}}\leq \exp({-q_{n}^{1-\b}}).\label{estFBNqn-1}
\end{align}
If $\Omega\in \cT\CC(A,B)$ one can choose $\Omega_{q_{n}^{-1}}^{BNF}\in\cT\cC(2A,2B)$. 
\end{prop}
\begin{proof}
See the Appendix Section \ref{appendixnprop:9.2}.

\end{proof}

\subsubsection{(AA) or (CC) case when $\omega_{0}$ is Diophantine}
We  formulate here  a more quantitative version of the classical Birkhoff Normal Form Theorem (Propositions \ref{statementBNFante}, \ref{statementBNF}) which holds both in the (AA) or (CC) cases, provided $\omega_{0}$ is Diophantine:
\be \forall \ k\in\Z\setminus\{0\},\ \min_{l\in\Z}| k\omega_{0}-l|\geq \frac{\kappa}{|k|^\tau}\qquad (\tau\geq 1).\label{omega0dioph}\ee

Let as usual $W_{h,\bD(0,\rho)}$ be equal to either $W^{CC}_{h,\bD(0,\rho)}$ or $W^{AA}_{h,\bD(0,\rho)}$ and 
$\Omega\in \cO_{\s}(\bD(0,1))$, $\Omega(r)=2\pi\omega_{0}r+O(r^2)$, where $\omega_{0}$ is assumed to be diophantine with exponent $\tau$.

We assume that $F\in\cO_{\s}(e^hW_{h,\bD(0,1/2)})$, $h>0$ satisfies 
\be \begin{cases}
& (CC)-Case: \quad F(z,w)=O^{2m}(z,w)\\
&(AA)-Case: \quad F(\th,r)=O(r^{m}), 
\end{cases}\quad \textrm{with}\quad m=\bar a_{1,\tau}:=2(\tau+\bar a+2)+1\label{defm}
\ee
(as usual $\bar a$ is the constant introduced in Proposition  \ref{prop:n5.5}) and we define
\be\begin{cases}& (CC)-Case: \quad b_{\tau}=2(\tau+2)\\
& (AA)-Case: \quad b_{\tau}=\tau+2.
\end{cases}\label{e6.89}
\ee

\begin{prop} \label{BNFprop}Assume (\ref{defm}). For any $\b>0$ 
and  any $0<\rho\ll_{\b} 1$, 
 there exist  $\Omega_{\rho}^{BNF}\in\ti \cO_{\s}(\bD(0,\rho^{b_{\tau} }))$, $F_{\rho}^{BNF}\in\cO_{\s}(W_{h,\bD(0,\rho^{b_{\tau}})})\cap O^{(1/\rho)^{1-\b}}(r)$ and $g_{\rho}^{BNF}\in\ti {\rm Symp}_{ex.,\s}(W_{h,\bD(0,\rho^{b_{\tau}})})$ such that on $W_{h,\bD(0,\rho^{b_{\tau}})}$ one has 
\begin{align}&(g_{\rho}^{BNF})^{-1}\circ \Phi_{\Omega}\circ f_{F}\circ g_{\rho}^{BNF}=\Phi_{\Omega_{\rho}^{BNF}}\circ f_{F_{\rho}^{BNF}}\label{6.145}\\
& \Omega_{\rho}^{BNF}(r)-BNF(f)(r)=O^{(1/\rho)^{1-\b}}(r),\ \textrm{in} \ \R[[r]]\label{ee6.128bis}\\
&\|\Omega_{\rho}^{BNF}\|_{C^3}\lesssim 1\notag\\
&\|g_{\rho}^{BNF}-id\|_{C^1}\leq \rho^{m-10}\notag\\
& \|F_{\rho}^{BNF}\|_{W_{h,\bD(0,\rho^{b_{\tau}})}}\lesssim \exp(-(1/\rho)^{1-\b}  ).
\end{align}
If $\Omega\in\cT\cC(A,B)$ then $\Omega_{\rho}^{BNF}\in\cT\cC(2A,2B)$.

\end{prop}
\begin{proof} See the Appendix, Section \ref{appendixBNFprop}.
\end{proof}
\begin{rem}
In the (CC)-case and  when $\omega_{0}$ is Diophantine, one can prove the previous proposition (maybe not with the same value for the exponent $b$)  by using Proposition \ref{nprop:9.2} and the fact that    $n$ large enough
$q_{n}\leq q_{n+1}\leq q_{n}^{\tau(\omega_{0})+}.$
\end{rem}

\subsection{Consequence of the convergence of the BNF}
\begin{lemma}Assume that $BNF(f)$ coincides as a formal power series with a holomorphic function $\Xi\in \cO(\bD(0,\bar \rho))$ and, for $0<\rho\leq \bar \rho$,  let $\Omega\in \cO(\bD(0, \rho))$ be such that 
\be\begin{cases}&\Omega(r)-{BNF}(f)(r)=O^{N+1}(r)\qquad \textrm{in}\ \R[[r]]\\
&\|\Omega\|_{\bD(0,\rho)}\leq 1.
\end{cases}\label{OmegaBNF}
\ee
 Then
$$\|\Omega-\Xi\|_{\bD(0,e^{-1}\rho)}\lesssim \exp(-N).$$ 
\end{lemma}
\begin{proof}Let $\Xi(z)=\sum_{k=0}^\infty \xi_{k} z^k$, $\Omega(z)=\sum_{k=0}^\infty b_{k}z^k$, $\Xi_{N}=\sum_{k=0}^N \xi_{k} z^k$ and $\Omega_{N}=\sum_{k=0}^N b_{k}z^k$. We have  from (\ref{OmegaBNF})  and the fact that $\Xi=BNF(f)$ in $\R[[r]]$
\be\Xi_{N}=\Omega_{N}.\label{6.105}
\ee
On the other hand, we observe that if $g:z\mapsto \sum_{k\in\N}g_{k}z^k$ is in $\cO(\bD(0,\rho))$ one has by Fourier estimates  $|g_{k}|\rho^k\leq \|g\|_{\bD(0,\rho)}$ hence 
for $|z|<e^{-1}\rho$
\begin{align*}|\sum_{k\geq N+1}g_{k}z^k|&\leq \sum_{k\geq N+1}\|g\|_{\bD(0,\rho)}(z/\rho)^{k}\\
&\leq 2e^{-N}\|g\|_{\bD(0,\rho)}.
\end{align*}
As a consequence,
$$\|\Xi-\Xi_{N}\|_{\bD(0,e^{-1}\rho)}\lesssim e^{-N}\|\Xi\|_{\bD(0,\bar\rho)},\qquad \|\Omega-\Omega_{N}\|_{\bD(0,e^{-1}\rho)}\lesssim e^{-N}\|\Omega\|_{\bD(0,t)} .$$
We conclude using (\ref{6.105}).
\end{proof}

To summarize,
\begin{cor}\label{theo:compBNF}If $BNF(\Phi_{\Omega}\circ f_{F})$ converges and coincide on $\bD(0,\bar \rho)$ with $\Xi\in\cO(\bD(0,\bar\rho))$ then  for any $\b>0$ and $\rho\ll_{\b}1$
one has: 
\begin{itemize}
\item If $\omega_{0}$  is $\tau$-diophantine ((AA) or (CC)-case)
$$\|\Omega^{BNF}_{\rho}-\Xi\|_{\bD(0,\rho^{b_{\tau}})}\lesssim \exp(-(1/\rho)^{1-\b}).
$$
\item In the (CC) case for any $\omega_{0}$ irrational 
$$\|\Omega^{BNF}_{q_{n+1}^{-1}}-\Xi\|_{\bD(0,q_{n+1}^{-6})}\lesssim \exp(-q_{n+1}^{1-\b}).
$$
\end{itemize}
\end{cor}

\bigskip

\section{KAM Normal Forms }\label{sec:5}
We present now, in the unified (AA)-(CC) framework, the KAM scheme that is central in all this paper. This will be used in Sections \ref{sec:adaptedkam} and \ref{sec:10.2} to construct the adapted Normal Forms and in Section \ref{sec:8curves} to get estimates on the Lebesgue measure of the set of KAM circles. For sake of clarity we decompose our main result into three propositions: Propositions \ref{prop:1.enonce}, \ref{prop:1.enoncebis}, \ref{prop:1.enoncebisbis}.

As usual we denote in the (AA)-case $M=\T_{\infty}\times \C$, $M_{\R}=\T\times \R$, $O=\T\times\{0\}$ and in the (CC)-case $M=\C\times\C$ and $M_{\R}=M\cap\{r\in\R\}$, $O=\{(0,0)\}$.
\subsection{The KAM statement}
Let  $0<\bar\rho<h/2<1/2$, $A>1$, $B>0$ and  $\Omega\in\ti \cO_{\s}(e^{h}\bD(0,\bar \rho))$ satisfying the following twist condition:
\be \forall \ r \in \R,\  A^{-1}\leq (1/2\pi) \pa^2\Omega(r)\leq A,\quad \textrm{and}\ \ \|(1/2\pi) D^3\Omega\|_{\C}\leq B.\label{7.119}
\ee
 
Let $\omega(r)=(2\pi)^{-1}\pa\Omega(r)$. The image of $\bD(0,e^h\bar \rho)$ by $\omega$ is contained in a disk $\bD(\omega(0),3A\bar\rho)$. We can assume without loss of generality that $\omega(0)\in [-1/2,1/2]$ and consequently, if $\bar \rho$ is small enough we can assume 
\be \omega(\bD(0,e^h\rho)\cap\R)\subset [-3/4,3/4].\label{e7.133}
\ee

Let $\bar C,\bar a_{0}$ be the constants of  Proposition \ref{prop:n5.5}. 
We introduce
\be \bar a_{2}=2(\bar a_{0}+2)+10
\ee
and  assume that $F\in\cO_{\s}(e^{h}W_{h,\bD(0,\bar \rho)})$ satisfies 
\be \|F\|_{e^{h}W_{h,\bD(0,\bar \rho)}}\leq   \bar \rho^{\hs \bar a_{2}}.\label{enew7.135}  \ee
By Cauchy's inequality (\ref{eq:cauchy:derivativesbis}) one has
\be \bar \e:= \max_{0\leq j\leq 3}\|D^j F\|_{W_{h,\bD(0,\bar\rho)}}  \leq  \bar \rho^{\hs 2(\bar a_{0}+2)+1}.\label{eee7.125}\ee 

Associated to this   $\bar\e>0$ there exists a unique $N>0$ such that 
$$-\ln \bar\e=N/(\ln N)^{2}.$$ We then define for $n\geq 1$ the following sequences that depend on   $\bar\e=\bar\e_{1}$, $h$  and $\bar \rho>0$:
\be \begin{cases}
& N_{n}=(4/3)^{n-1}N\\
&\bar \e_{n}=e^{-hN_{n}/(\ln N_{n})^2}\\
&K_{n}^{-1}=\bar\e_{n}^{\hs \frac{1}{2(\bar a_{0}+2)}},\qquad (\bar a_{0}\geq 5)\\
& \delta_{n}=2(\ln N_{n})^{-2}h\\
& \rho_{n}=\bar \rho\exp(-\sum_{j=1}^{n-1}\delta_{j}), \qquad     h_{n}=h-(1/2)\sum_{j=1}^{n-1}\delta_{j}>h/2.
\end{cases}\label{requirements6.76}\ee
If $\bar \rho $ is small enough, for all $n\geq 1$
$$\rho_{n}\geq e^{-1/20}\bar \rho,\qquad h_{n}\geq e^{-1/20}h$$
and ({\it cf.} (\ref{eee7.125})), 
\begin{align}& \rho_{n}/2 >2K_{n}^{-1}\label{ee7.127ante}\\
&  (\d_n(2K_{n}^{-1}))^{-\bar a_{0}}K_{n}\bar\e_{n}<\bar C^{-1}\label{ee7.127}
\end{align}
($\bar C$ is the constant of  Proposition \ref{prop:n5.5}).

\begin{prop}\label{prop:1.enonce}Assume that $\Omega$ and $F$ are as above and that $\bar\rho\ll_{A,B} 1$.
Then, with the notations (\ref{requirements6.76}) the following holds: for $n\geq 1$
 there exist a decreasing (for the inclusion) sequence of  holed domains $(U_{n})_{n\geq 1}$,  functions  $\Omega_{n}\in \ti\cO_{\s}(U_{n})$, $F_{n}\in\cO_{\s}(W_{h_{n},U_{n}})$  with $U_{1}=\bD(0,\bar \rho)$, $\Omega_{1}=\Omega$, $F_{1}=F$ and, for   $n\geq 2$, $1\leq m< n$,  diffeomorphisms
$g_{m,n}\in\ti{\rm Symp}_{ex., \s}^{}(W_{h_{n},U_{n}})$, such that:
\begin{align}&\Omega_{n}\ \textrm{satisfies\ a}\ (2A,2B)-\textrm{twist\ condition} \label{7.139twist}\\
&  g_{m,n}(W_{h_{n},U_{n}})\subset W_{h_{m},U_{m}} \label{ee7.124}\\
& \textrm{on}\  W_{h_{n},U_{n}},\quad  g_{m,n}^{-1}\circ \Phi_{\Omega_{m}}\circ f_{F_{m}}\circ g_{m,n}=  \Phi_{\Omega_{n}}\circ f_{F_{n}}\label{8.112}\\
& \|g_{m,n}-id\|_{C^1}\leq \bar\e_{m}^{1/2},\label{8.112bis}\\
& \max_{0\leq j\leq 3}\|D^jF_{n}\|_{W_{h_{n},U_{n}}}\leq \bar\e_{n}.\label{8.114}
\end{align}
\end{prop}

\begin{proof}
We  construct inductively for $n\geq 2$  sequences $U_{n},F_{n},\Omega_{n},g_{m,n}$ satisfying the conclusion of the proposition with the additional requirements

\smallskip\noindent{\bf Requirement 1:} For $n\geq 2$, $U_{n}$ is of the form 
\begin{align}&U_{n}=\bD(0,\rho_{n})\setminus\bigcup_{i\in I_{n}}\bD(c_{i},\kappa_{i}),\qquad c_{i}\in\R,\qquad \# I_{n}\leq 2N_{n-1}^2\label{e7.145}\\
&K_{n-1}^{-1}\leq \kappa_{i}\leq K_{1}^{-1}e^{\sum_{l=1}^{n-1}\d_{l}},\qquad (\sum_{i\in I_{n}}\kappa_{i}^2)^{1/2}\leq \sqrt{2}e^{\sum_{l=1}^{n-1}\d_{l}}\sum_{l=1}^{n-1}N_{l}K_{l}^{-1}. \label{e7.146}
\end{align}

\smallskip\noindent{\bf Requirement 2:} For $n\geq 2$, $\Omega_{n}\in\ti\cO_{\s}(U_{n})$  satisfies an $(A_{n},B_{n})$-twist condition with
\begin{align}& 1\leq A_{n}\leq 2A-K_{n}^{-1},\qquad 0\leq B_{n}\leq 2B-K_{n}^{-1}\label{eqAnBn}\\
&  \bar C_{0}\max(\bar \rho,{\ua}(U_{n}))\times A_{n}\times B_{n}<1.\label{condUk}\\
&\|\Omega_{n}^{}-\Omega\|_{C^3(\bD(0,\bar\rho))}\leq \sum_{l=1}^{n-1}\bar \e_{l}^{1/2}\leq 2\bar \e_{1}^{\hs 1/2}\label{ee7.150}\\
&\textrm{and}\quad \forall\ m<n,\ \|g_{m,n}-id\|_{C^1}\leq C\sum_{l=m}^{n-1}\e_{l}\lesssim \bar \e_{m}^{1/2}\qquad (C\  \textrm{from}\ (\ref{2.40})).\label{7.150}
\end{align}

For some $n\geq 1$, assume the existence of $U_{n},F_{n},\Omega_{n}$ and the validity of  conditions  (\ref{e7.145}), (\ref{e7.146}),
   (\ref{eqAnBn}), 
   (\ref{condUk}), (\ref{ee7.150}) (if $n\geq 2$) and define $\omega_{n}=(1/2\pi)\Omega_{n}$,   $\omega_{n}^{}=(1/2\pi)\Omega_{n}^{}$. Since (\ref{condUk}) is satisfied we can apply Proposition \ref{proppramexclusion} (with $A=A_{n}$, $B=B_{n}$, $10A^2\nu=K_{n}^{-1}$):  for each $(k,l)\in\Z^2$, $0<k<N_{n}$, such that $\bD(l/k,(10A^2K_{n})^{-1})\cap \omega_{n}(U_{n})\ne \emptyset$,    there exists $c^{(n)}_{l/k}\in\R$ such that 
\be \begin{cases}&\omega^{Wh}_{n}(c_{l/k}^{(n)})=l/k\\
&\forall\ r\in \C\setminus\bD(c_{l/k}^{(n)},K_{n}^{-1}),\ |\omega^{Wh}_{n}(r)-(l/k) |\geq (2A_{n}K_{n})^{-1}.\end{cases}\label{a7.151}\ee
We  denote
$$E_{n}=\{(k,l)\in\Z^2,\ 0<k<N_{n},\ 0\leq | l |\leq N_{n},\  \bD(l/k,(10A^2K_{n})^{-1})\cap \omega_{n}(U_{n})\ne \emptyset \}$$
and we see that 
\be \#E_{n}\leq 2N_{n}^2.\label{cardEn}
\ee
Note that from  (\ref{ee7.150}) and (\ref{e7.133}) we have $|l/k|\leq 1$. Hence, if we define 
\be V_{n}=U_{n}\setminus\bigcup_{(k,l)\in E_{n}} \bD(c_{l/k}^{(n)},K_{n}^{-1})\label{defVn}
\ee
we have for any $r\in V_{n}$ ({\it cf.} (\ref{eqAnBn}))
$$ \forall \ (k,l)\in\N_{n}^*\times \Z,\ 1\leq k<N_{n}\ \implies\   |k\frac{1}{2\pi}\pa \Omega_{n}(r)-l|\geq (4AK_{n})^{-1}$$
hence the non-resonance condition (\ref{diophcondOmegabis}) (with $\tau=0$, $K=4AK_{n}$, $N=N_{n}$)  is satisfied. On the other hand (\ref{e7.145})-(\ref{e7.146})
  ($n\geq 2$) and (\ref{ee7.127ante}) ($n=1$) show using (\ref{defVn}) that (recall $\rho_{n}<\bar\rho<h/2$)
\be \ud(W_{h_{n},V_{n}})=\ud(V_{n})=\min(\ud(U_{n}),K_{n}^{-1})=K_{n}^{-1}\label{aa7.151}
\ee
and  (\ref{ee7.127}) and (\ref{8.114}) show that 
\be  (\d_{n}\ud(W_{h_{n},V_{n}}))^{-\bar a_{0}}K_{n} \|F_{n}\|_{h_{n},U_{n}}<\bar C^{-1}.\label{condKdeltaepsilonn}
\ee 
We can thus apply Proposition \ref{prop:n5.5} (with $\tau=0$, $K=4AK_{n}$, $\d=\d_{n}$, $N=N_{n}$) on $V_{n}$: if one defines 
\be U_{n+1}=e^{-\d_{n}}V_{n}=e^{-\d_{n}}U_{n}\setminus \bigcup_{(k,l)\in E_{n}} \bD(c_{l/k}^{(n)},e^{\d_{n}}K_{n}^{-1}) \label{relUn+1Vn}
\ee
 there exist $Y_{n}\in \cO(e^{-\d_{n}/2}W_{h_{n},V_{n}}^{\Omega_{n}})$, $F_{n+1}\in\cO_{\s}(W_{h_{n+1},U_{n+1}})$, $\Omega_{n+1}\in\cO_{\s}(U_{n+1})$ such that ($\bar\rho$ small enough)
\be \|Y_{n}\|_{e^{-\d_{n}/2}W_{h_{n},V_{n}}}\lesssim K_{n}\d_{n}^{-1}\|F_{n}\|_{W_{h_{n},U_{n}}}\label{7.143}\ee 
\be W_{h_{n+1},U_{n+1}},\qquad   f_{Y_{n}}\circ  \Phi_{\Omega_{n}}\circ f_{F_{n}}\circ  f_{Y_{n}}^{-1}=\Phi_{\ti\Omega_{n+1}}\circ f_{F_{n+1}}\label{eqYnante}\ee

\be \ti \Omega_{n+1}=\Omega_{n}+M(F_{n})\label{eqOmegan}\ee
\be \max_{0\leq j\leq 3}\|D^jF_{n+1}\|_{W_{h_{n+1}, U_{n+1}}}\leq K_{n}(\d_{n}K_{n}^{-1})^{-\hs \bar a_{0}}(\|F_{n}\|_{W_{h_{n},U_{n}}}^2+e^{-\d_{n} N_{n}/2 }\|F_{n}\|_{W_{h_{n},U_{n}}}).\label{eqestKAM}\ee

\medskip 
Let us show that the Requirements 1  (\ref{e7.145})-(\ref{e7.146}) 
 are satisfied for $n+1$. From (\ref{relUn+1Vn}) and (\ref{e7.145}) we see that 
$$U_{n+1}=\bD(0,\rho_{n+1})\setminus\bigcup_{i\in I_{n+1}}\bD(c_{i},\kappa_{i})$$
where $I_{n+1}\leq I_{n}+2N_{n}^2$ ({\it cf.} (\ref{cardEn})) and for all $i\in I_{n+1}$, $\min(e^{\d_{n}}K_{n-1}^{-1},e^{\d_{n}} K_{n}^{-1})\leq \kappa_{i}\leq K_{1}^{-1}e^{\sum_{l=1}^n\d_{l}}$. Similarly, $(\sum_{i\in I_{n+1}}\kappa_{i}^2)^{1/2}\leq e^{\d_{n}}((\sum_{i\in I_{n}}\kappa_{i}^2)^{1/2}+ (2N_{n}^2K_{n}^{-2})^{1/2})$. In other words, (\ref{e7.145})-(\ref{e7.146}) are satisfied for $n+1$.

\medskip Let us now prove that the Requirements 2, 
(\ref{eqAnBn}), 
(\ref{condUk})  (\ref{ee7.150}) are satisfied for $n+1$ and in particular that $\ti \Omega_{n+1}$ has a nice Whitney extension $\Omega_{n+1}:=\ti\Omega_{n+1}^{Wh}$. We first apply Lemma \ref{lemma:2.3ee}  to get a $C^3$,  $\s$-symmetric extension $M(F_{n})^{Wh}:\C\to\C$ for $(M(F_{n}),U_{n})$ such that 
$$\sup_{0\leq j\leq 3}\|D^j M(F_{n})^{Wh} \|_{\C}\lesssim (1+\#J_{U_{n}})^3(\d_{n}\ud(U_{n}))^{-6}\max_{0\leq j\leq 3}\|D^jM(F_{n})\|_{e^{-\d_{n}/10}U_{n}}.
$$
In particular, using Cauchy inequalities, (\ref{e7.145}), (\ref{e7.146}), (\ref{requirements6.76}), (\ref{eq:cor5.5ante}) one gets 
\begin{align}\sup_{0\leq j\leq 3}\|D^j M(F_{n})^{Wh} \|_{\C}&\lesssim N_{n-1}^{6}(\d_{n}K_{n-1}^{-1})^{-6}\d_{n}^{-3}\|M(F_{n})\|_{U_{n}}\notag \\
&\lesssim  K_{n-1}^7\bar \e_{n}\leq \bar \e_{n}^{1/2}.\label{eee7.159}\end{align}
From (\ref{eqOmegan}) we see that if we define the $\s$-symmetric function 
\be \Omega_{n+1}:=\Omega_{n}+ M(F_{n})^{Wh}\label{eee7.160}\ee
one has 
$$\Omega_{n+1}{}_{\ {\big |} U_{n+1} \ }=\ti\Omega_{n+1}
$$
and $(\ref{eqAnBn})_{n+1}$, 
are satisfied (since $-K_{n}^{-1}+\bar \e_{n}^{1/3}<-K_{n+1}^{-1}$).  To see that  $(\ref{condUk})_{n+1}$ holds we use the fact that since the second inequality in (\ref{e7.146})  is true for $n+1$ (as already checked) one has $\ua(U_{n+1}) \leq (\sum_{i\in I_{n+1}}\kappa_{i}^2)^{1/2}\leq 2\sum_{l=1}^{n}N_{l}K_{l}^{-1}\leq K_{1}^{-1/2}$.  If $\bar \rho$ is small enough we see that (\ref{eee7.125}), (\ref{requirements6.76}) and $(\ref{eqAnBn})_{n+1}$ ensures the validity of $(\ref{condUk})_{n+1}$.

Finally let us check $(\ref{7.150})_{n+1}$.  From Lemma \ref{lemma:2.3ee} we see that $(Y_{n}, e^{-(3/4)\d_{n}}W_{h_{n},V_{n}}^{\Omega_{n}})$ has a   $C^3$  $\s$-symmetric Whitney extension $Y_{n}^{Wh}$  such that 
\be  \|Y_{n}^{Wh}\|_{C^3}\lesssim (1+\#J_{V_{n}})^3(\d_{n}\ud(U_{n}))^{-6}\max_{0\leq j\leq 2}\|D^jY_{n}\|_{e^{-(2/3)\d_{n}}V_{n}}.
\ee
From  (\ref{e7.145}), (\ref{defVn}),  (\ref{e7.146})  we see that 
$\#J_{V_{n}}\leq 2N_{n}^2$, $\ud(V_{n})\geq  K_{n}^{-1}$
hence using Cauchy's inequalities, (\ref{7.143}),
(\ref{requirements6.76}) and the fact that $\d_{n},N_{n}=K_{n}^{0+}$ and 
$K_{n}^7\bar\e_{n}\leq \bar\e_{n}^{(1/2)+}$, 
we get
\be \|Y_{n}^{Wh}\|_{C^3}\leq \bar \e_{n}^{1/2}.\label{7.167}
\ee

If we define 
$g_{n,n+1}=f_{Y_{n}^{Wh}}^{-1}\in \ti{\rm Symp}_{ex,\s}(W_{h_{n+1},U_{n+1}})$ and for $m\leq n$, $g_{m,n=1}=g_{m,n}\circ g_{n,n+1}$ we have from (\ref{ee4.85}) and (\ref{2.40})
$$\|g_{m,n+1}-id\|_{C^1}\leq C( \|g_{m,n}-id\|_{C^1}+\|g_{n,n+1}-id\|_{C^1})\leq C \sum_{l=1}^n\bar\e_{l}\lesssim \bar\e_{m}$$
which is  $(\ref{7.150})_{n+1}$ and implies $(\ref{8.112bis})_{n+1}$.

Note that $f_{Y_{n}^{Wh}}^{-1}=f_{Y_{n}}^{-1}$ on $W_{h_{n+1},U_{n+1}}$ and  (\ref{eqYnante}) shows that $(\ref{8.112})_{n+1}$ and $(\ref{ee7.124})_{n+1}$ are satisfied.

\medskip 
We now check  that (\ref{8.114}) holds for $n+1$; from (\ref{eqestKAM}) it is enough to verify   
\be K_{n}^{\bar a_{0}+2}(\bar\e_{n}^{\hs 2}+e^{-\d_{n}N_{n}/2}\bar\e_{n})<\bar\e_{n+1}\label{eqbarepsilonk+1}\ee
or equivalently since $e^{-\d_{n}N_{n}/2}=\bar \e_{n}$,  $K_{n}={\bar \e_{n}}^{-\frac{1}{2(\bar a_{0}+2)}}$,
$$2\bar\e_{n}^{2-(1/2)}\leq \bar \e_{n+1}
$$ which is clearly satisfied since $3/2>4/3$,  {\it cf.} (\ref{requirements6.76}).

\medskip

\end{proof}

\subsection{Localization of the holes}
We can  localize the holes of the domains $U_{n}$:
\begin{prop}[Localization of the holes]\label{prop:1.enoncebis}
For each $1\leq m<n$, 
one has 
\be  \|\pa\Omega_{n}^{}-\pa\Omega_{m}^{}\|_{C^2}\lesssim \bar \e_{n}^{1/2}\label{7.114bis}
\ee
and for some sets $E_{i}\subset \{(k,l)\in\Z^2,\ 0<k<N_{i},\ 0\leq |l|\leq N_{i}\}$ ($1\leq i\leq n-1$) one can write $U_{n}$ as
\be \bD(0,\rho_{n})\setminus\bigcup_{i=1}^{n-1}\bigcup_{(k,l)\in E_{i} }\bD(c_{l/k}^{(i)},s_{i,n-1}K_{i}^{-1}),\quad s_{i,n-1}=e^{\sum_{t=i}^{n-1}{\d_{t}}}\in [1,2] \label{formUn}\ee
 where  $\rho_{n}\geq e^{-1/5}\bar \rho$ and  $c^{(i)}_{l/k}$ is  on the real axis and is the unique solution of the equation $\omega_{i}^{}(c^{(i)}_{l/k}):=(2\pi)^{-1}\pa\Omega_{i}^{}(c^{(i)}_{l/k})=l/k$.
\end{prop} 
\begin{proof} 
Inequality  (\ref{7.114bis}) is  consequence of   (\ref{eee7.160}), (\ref{eee7.159}).
The expression (\ref{formUn}) comes from    (\ref{relUn+1Vn}). 

\end{proof}

We now give  a more detailed description of the structure  of $\cD(U_{n})$, the set of  holes of the domains $U_{n}$ appearing in  Proposition \ref{prop:1.enoncebis}, {\it cf.} (\ref{formUn}).

 \begin{lemma} \label{cor:6.5}With the notations of Proposition \ref{prop:1.enonce}-\ref{prop:1.enoncebis}:
 \begin{enumerate}
  \item\label{ni1}  For any $n_{1}\leq n_{2}$, $(k_{j},l_{j})\in E_{n_{j}}$, $j=1,2$ , 
 \be\begin{cases}&\textrm{if }\ l_{1}/k_{1}=l_{2}/k_{2}\ \textrm{then}\   |c^{(n_{1})}_{l_{1}/k_{1}}-c^{(n_{2})}_{l_{2}/k_{2}}|\lesssim \bar \e_{n_{1}}^{\hs 1/2}\\
 &\textrm{if }\ l_{1}/k_{1}\ne l_{2}/k_{2}\ \textrm{then}\  |c^{(n_{1})}_{l_{1}/k_{1}}-c^{(n_{2})}_{l_{2}/k_{2}}|\gtrsim N_{n_{2}}^{-2}.
 \end{cases}\label{8.131}
 \ee
 \item \label{ni2} Let    $n_{1}, n_{2}\in\N$, $0<\kappa_{{2}}<\kappa_{{1}}$ 
$$\kappa_{{1}}+\kappa_{{2}}\ll N_{\max(n_{1},n_{2})}^{-2},\qquad \bar \e_{\min(n_{1},n_{2})}^{1/2}\ll \kappa_{{1}}-\kappa_{{2}}.
$$
Then, two   disks $\bD(c^{({j})}_{l_{j}/k_{j}},\kappa_{{j}})$,  $(k_{j},l_{j})\in E_{n_{j}}$, $j=1,2$, are either    disjoint or $l_{1}/k_{1}=l_{2}/k_{2}$ and  $\bD(c^{(n_{2})}_{l_{2}/k_{2}},\kappa_{{2}})\subset \bD(c^{(n_{1})}_{l_{1}/k_{1}},\kappa_{{1}})$.
 \end{enumerate}
 \end{lemma}
\begin{proof}Item (\ref{ni1}) is due to  
(\ref{7.114bis}) and  the fact that if $l_{1}/k_{1}\ne l_{2}/k_{2}$
$$|(l_{1}/k_{1})-(l_{2}/k_{2})|\geq 1/(k_{1}k_{2})\geq N_{n_{2}}^{-2}.$$
Item (\ref{ni2}) is a consequence of Item (\ref{ni1}). Indeed, if $l_{1}/k_{1}\ne l_{2}/k_{2}$ then since $\kappa_{{1}}+\kappa_{{2}}\ll N_{n_{2}}^{-2}$  and  $|c^{(n_{1})}_{l_{1}/k_{1}}-c^{(n_{2})}_{l_{2}/k_{2}}|\gtrsim N_{n_{2}}^{-2}$ (we assume $n_{1}\leq n_{2}$), the disks $\bD(c_{l_{1}/k_{1}}^{(n_{1})},\kappa_{{1}})$ and $\bD(c_{l_{2}/k_{2}}^{(n_{2})},\kappa_{{2}})$ must have an empty intersection. On the other hand, if  $l_{1}/k_{1}= l_{2}/k_{2}$ then due to the fact that  $|c^{(n_{1})}_{l_{1}/k_{1}}-c^{(n_{2})}_{l_{2}/k_{2}}|\lesssim \bar \e_{n_{1}}^{1/2}$ the disk $\bD(c_{l_{1}/k_{1}}^{(n_{1})},\kappa_{{1}})$ contains $\bD(c_{l_{2}/k_{2}}^{(n_{2})},\kappa_{{2}})$ since $\bar \e_{n_{1}}^{1/2}+\kappa_{{2}}\ll \kappa_{{1}}$.
\end{proof}

\subsection{Whitney conjugation to an integrable model}
By applying Lemma \ref{lemma:2.3ee} one  sees that   $(F_{n},e^{-\d_{n}}W_{h_{n},U_{n}})$ and $(f_{F_{n}}, e^{-\d_{n}}W_{h_{n},U_{n}})$ have  $C^3$ real symmetric Whitney extensions $F_{n}^{Wh}\in \ti\cO_{\s}(e^{-\d_{n}} W_{h_{n},U_{n}})$, $f_{F_{n}}\in\ti {\rm Symp}_{ex,\s}^{}(W_{h_{n},U_{n}})$ such that (see the discussion leading to (\ref{eee7.159}) and inequality (\ref{ee4.85}))
$$\|F_{n}^{Wh}\|_{C^3}\lesssim \bar \e_{n}^{1/2},\qquad \|f_{F_{n}}-id\|_{C^1}\lesssim \bar e_{n}^{1/3}.
$$
We hence have
\be  \textrm{on}\  e^{-\d_{n}}W_{h_{n},U_{n}},\quad  g_{m,n}^{-1}\circ \Phi_{\Omega^{}_{m}}\circ f_{F^{Wh}_{m}}\circ g_{m,n}=  \Phi_{\Omega^{}_{n}}\circ f_{F^{Wh}_{n}}.\label{7.172}
\ee
We show in the next Proposition that shrinking a little bit the domain of validity of the preceding formula one can impose that $g_{m,n}$ leaves invariant the origin $O=\{r=0\}\cap M_{\R}$.
\begin{lemma}\label{prop:1.enoncebisbis}There exists $\ti g_{m,n}\in \ti {\rm Symp}_{ex,\s}^{}(W_{h_{n}/2,U_{n} \setminus \bD(0,K_{m}^{-1}}))$ that coincide with $g_{m,n}$ on $W_{h_{n}/2,\C \setminus \bD(0,K_{m}^{-1})}$ and 
\be \ti g_{m,n}(\{r=0\})=\{r=0\},\qquad \|\ti g_{m,n}-id\|_{C^1}\leq \bar \e_{m}^{1/4}.\ee
\end{lemma}
\begin{proof}Recall that $g_{m,n}=f_{Y_{m}^{Wh}}^{-1}\circ\cdots\circ f_{Y_{n-1}^{Wh}}^{-1}$ with $Y_{k}^{Wh}\in C^3\cap \cO_{\s}(e^{-(1/2)\d_{n}}W_{h_{k},V_{k}})$ satisfying (\ref{7.167}).  Let $\chi:\R\to [0,1]$ be a smooth function with support in $[-1,1]$ and equal to 1 on $[-1/2,1/2]$  and define the $C^3$ $\s$-symmetric function $\ti Y_{k}=(1-\chi((K_{m}r/2)^2))Y_{k}^{Wh}$. One has $\ti Y_{k}=Y_{k}^{Wh}$ on $W_{h,\C\setminus\bD(0,K_{m}/2)}$ and $\|\ti Y_{k}\|_{C^3}\lesssim K_{m}^3\|Y^{Wh}\|_{C^2}\leq \bar \e_{m}^{1/4}$ hence $f_{\ti Y_{k}}^{-1}$ coincide with $f_{Y_{k}^{Wh}}^{-1}$ on $W_{h,\C\setminus \bD(0,K_{m}^{-1})}$ and $\|f^{-1}_{\ti Y_{k}}-id\|_{C^1}\lesssim \bar \e_{k}^{1/4}$. Since $\ti Y_{k}$ is null on a neighborhood of $\{r=0\}$ the diffeomorphisms $f_{\ti Y_{k}}^{-1}$ fix $\{r=0\}$.
\end{proof}
Note that the sequence of diffeomorphisms $n\mapsto \ti g_{m,n}$ converges in $C^1$ to a $\s$-symmetric diffeomorphism $\ti g_{m,\infty}:\C\to\C$ fixing the origin and  that satisfies $\|\ti g_{m,\infty}-id\|_{C^1}\lesssim  \bar \e_{m}$. On the other hand, the sequence of diffeomorphisms $(f_{F_{n}^{Wh}})_{n}$ converges in $C^1$ to 0 and  from  (\ref{7.114bis}) the sequence of functions $(\Omega_{n}^{})_{n}$, $\Omega_{n}\in\ti\cO_{\s}(U_{n})$ converges in $C^2$ to some $\s$-symmetric  limit $\Omega_{\infty}^{}\in C_{\s}^2(\C)$, ;  hence from (\ref{7.172})
$$ \textrm{on}\  \bigcap_{n\geq m}e^{-\d_{n}}W_{h_{n}/2,U_{n}\setminus\bD(0,K_{m}^{-1})},\quad  \ti g_{m,\infty}^{-1}\circ \Phi_{\Omega^{}_{m}}\circ f_{F^{Wh}_{m}}\circ \ti g_{m,\infty}=  \Phi_{\Omega^{}_{\infty}}.
$$
Recall the notations of Section \ref{sec:kamcircles} and let 
$$L_{m}=\R\cap \bigcap_{n\geq m} e^{-\d_{n}}(U_{n}\setminus \bD(0,K_{m}^{-1})), \qquad  W_{L_{m}}=M_{\R}\cap \bigcap_{n\geq m}e^{-\d_{n}}W_{h_{n}/2,U_{n}\setminus\bD(0,K_{m}^{-1})}.$$
\begin{prop}\label{prop:7.5}For any $m\geq 1$ one  has
\begin{align}&  \textrm{on}\  W_{L},\quad  \ti g_{m,\infty}^{-1}\circ \Phi_{\Omega^{}_{m}}\circ f_{F^{}_{m}}\circ \ti g_{m,\infty}=  \Phi_{\Omega^{}_{\infty}}\label{7.172}\\
& \ti g_{m,\infty}(W_{L_{m}})\subset W_{\R\cap \bar U_{m}}, \label{7.173}\\
& \ti g_{m,n}(\{r=0\})=\{r=0\},\qquad \|\ti g_{m,n}-id\|_{C^1}\leq \bar \e_{m}^{1/4}\\
&{\rm Leb}_{{\R}}({(\R\cap e^{-2\d_{m}} U_{m})}\setminus {L_{m} }))\lesssim \bar\e_{m}^{\hs \frac{1}{2(\bar a_{0}+3)}}.\label{7.174}
\end{align}
\end{prop}
\begin{proof}
Let us prove (\ref{7.173})). Note that since $g_{m,n}$ and $\ti g_{m,n}$ coincide on $W_{h_{n}/2,\C\setminus\bD(0,K_{m}^{-1})}$ one has  from (\ref{ee7.124}) $\ti g_{m,n}(e^{-\d_{n}}W_{h_{n},U_{n}\setminus\bD(0,K_{m}^{-1}) } )\subset W_{h_{m},U_{m}}$ hence  since $\ti g_{m,n}$ is $\s$-symmetric,  $\ti g_{m,n}(W_{L_{m}})\subset W_{\R\cap U_{m}}$ and $\ti g_{m,\infty}(L_{m})\subset \overline{ W_{\R\cap U_{m}}}=W_{\R\cap \bar U_{m}}$. 

The conjugation relation (\ref{7.172}) comes from the fact that    $\Phi_{\Omega_{m}^{Wh}}\circ f_{F_{m}^{Wh}}$ coincide on $W_{\R\cap \bar U_{m}}$ with $\Phi_{\Omega_{m}^{}}\circ f_{F_{m}^{}}$.

For the proof of (\ref{7.174}) we first  observe that 
 from the expression  (\ref{formUn}), for each $n>m$ the set 
$e^{-\sum_{l=m}^{n} \d_{l}}U_{m}\setminus e^{-\d_{n}}U_{n}$ is a union of at most $2N_{n}^2$ disks of radii $\leq 2K_{n}^{-1}$
hence the Lebesgue measure of its intersection with $M_{\R}$ is $\leq 4N_{n}^2K_{n}^{-1}$. In consequence, the Lebesgue measure of $\R\cap e^{-\sum_{l=m}^{\infty}\d_{l}}U_{m}\setminus \bigcap_{n\geq m+1}e^{-\d_{n}}U_{n}$ is $\lesssim \sum_{n=m+1}^\infty N_{n}^2K_{n}^{-1}\leq \bar\e_{m}^{\hs \frac{1}{2(\bar a_{0}+3)}}$ hence
$${\rm Leb}_{M_{\R}}(e^{-2\d_{m}}U_{m}\setminus \bigcap_{n\geq m} e^{-\d_{n}} U_{n})\lesssim \bar\e_{m}^{\hs \frac{1}{2(\bar a_{0}+3)}}$$
and since $L\supset (\bigcap_{n\geq m} e^{-\d_{n}} U_{n})\setminus e^{\d_{m}}\bD(0,K_{m}^{-1})$ we get that ${\rm Leb}_{M_{\R}}((\R\cap e^{-2\d_{m}} U_{m})\setminus L_{m} ))\leq \bar\e_{m}^{\hs \frac{1}{2(\bar a_{0}+3)}}+e^{\d_{m}}K_{m}^{-1}$; (\ref{7.174}) follows from this inequality.
\end{proof}

\bigskip
\begin{rem}\label{rem:7.1}If $U$ is a holed domain, Propositions \ref{prop:1.enonce}, \ref{prop:1.enoncebis}, \ref{prop:7.5} as well as their proofs, extend without any change to the situation where  $F\in\cO_{\s}(e^{h}W_{h,U})$ and  $\Omega\in\ti \cO_{\s}(e^{h}U)$ satisfies   the twist condition (\ref{7.119})-(\ref{condUnew}) and 
if the following  smallness assumption  on $F$ holds
\be  \|F\|_{e^{h}W_{h,U}} \leq \ud(W_{h,U})^{\bar a_{2}}.\label{smallcond}
\ee
\end{rem}

\section{Hamilton-Jacobi Normal Form and the Extension Property}\label{sec:HJBNF}

Our aim in this section is to provide a useful approximate Normal Form (that we call the Hamilton-Jacobi Normal Form)  in a neighborhood of a {\it $q$-resonant circle} $\{r=c\}$:
for some  $(p,q)\in\Z\times \N^*$, $p\wedge q=1$
$\omega(c)=\frac{p}{q}$.

\medskip Let  $0<\hat \rho<h/2<1/20$, $c\in\R$, $(p,q)\in\Z\times \N^*$, $p\wedge q=1$,  $\Omega\in\ti \cO_{\s}(\bD(c,6\hat\rho))$, $F\in \cO_{\s}(W_{h,\bD(c,6\hat \rho)})$ such that 
\begin{align}& \forall \ r\in\R,\  A^{-1}\leq  (2\pi)^{-1}\pa^2\Omega(r)\leq A,\quad \textrm{and}\ \ \|(2\pi)^{-1}D^3\Omega\|_{\C}\leq B.\label{eq:11.257bis}\\
&\bar \e:=  \|F\|_{W_{h,\bD(c,6\hat  \rho)}}\leq  (6\hat \rho)^{\hs \bar a_{3}},\label{8.177}\\
&\omega(c):=(2\pi)^{-1}\pa^2\Omega(c)=\frac{p}{q}\label{eq:11.257,5bis}\\
&(6\hat \rho)^{1/8}<(Aq)^{-1}<h/10,\qquad 6\hat  \rho<|c|/4.\label{8.179}
\end{align}

The purpose of this section is to prove the following result:
\begin{prop}[Hamilton-Jacobi Normal Form]\label{proppropHJ} There exist  a disk $\check D:=\bD(\check c,\check \rho)\subset\hat D$  with 
 $ \check\rho\leq \bar \e^{1/33}$
 and 
$$\Omega^{HJ}_{\hat D}\in \ti \cO_{\s}(\hat D\setminus \check D)),\quad  F^{HJ}_{\hat D}\in \cO_{\s}(W_{h/9,(\hat D\setminus \check D)}),\quad g^{HJ}_{\hat D}\in\ti {\rm Symp}_{\s}((W_{h/9,(\hat D\setminus \check D)})$$  such that  
 \begin{align}&\Omega^{HJ}_{\hat D}\ \textrm{satisfies \ a}\  (2A,2A)-\textrm{twist\ condition}\\
 & W_{h/9, (\hat D\setminus \check D)},\qquad   (g^{HJ}_{\hat D})^{-1}\circ \Phi_{\Omega}\circ f_{F}\circ g^{HJ}_{\hat D}=\Phi_{\Omega^{HJ}_{\hat D}}\circ f_{F^{HJ}_{\hat D}}\\
 &\|g^{HJ}_{\hat D}-id\|_{C^1} \lesssim q \hat \e^{\hs 1/8}\label{estgHJ}\\
&\|F^{HJ}_{\hat D}\|_{W_{h/9, (\hat D\setminus \check D)}}\lesssim \exp(-1/(6q\hat \rho)^{1/4})\bar \e.
\end{align}
Moreover, one has the following:

\smallskip\noindent {\bf Extension property:} $(\Omega^{HJ}_{D},\hat D,\check D)$ satisfes the following Extension Principle: If there exists a holomorphic function $\Xi\in\cO(\hat D)$ such that 
$$\|\Omega^{HJ}_{D}-\Xi\|_{(4/5)\hat D\setminus (1/5)\hat D}\lesssim \nu
$$
then
$\check \rho\lesssim \nu^{1/200}.$
\end{prop}

\begin{rem}
From Lemma \ref{lemma:4.5} and Remark \ref{rem:4.2} we just have to prove the Proposition in the (AA)-setting. This is the setting in which we shall work in all this Section.
\end{rem}

The proof of the first part of Proposition \ref{proppropHJ} is done in Subsection \ref{sec:8.4}  and that of the second part (Extension Principle), based on Proposition \ref{prop:cosofholom},  in Subsection  \ref{subsec10.5}.

From now on we define
$$\bar \rho=6\hat \rho.$$

\subsection{Putting the system into Resonant Normal Form}

From Proposition \ref{prop:11.5} on the existence of approximate $q$-Resonant Normal Form, we know that there exist
 $\bar\Omega\in\ti \cO_{\s}(\bD(c,e^{-1/q}\bar\rho))$, $g_{RNF}\in \ti {\rm Symp}_{ex,\s}(e^{-1/q}W_{h,\bD(c,\bar \rho)})$,  $\bar F^{res},F^{cor}\in\cO_{\s}(e^{-1/q}W_{h,\bD(c,\bar \rho)})$ such that 
$\bar F^{res}$ is $2\pi/q$-periodic, $\cM_{0}(\bar F^{res})=0,$ and 
\be\begin{cases}&e^{-1/q}W_{h,\bD(c,\bar \rho)},\quad  g_{RNF}^{-1}\circ \Phi_{\Omega}\circ  f_{F}\circ g_{RNF}= \Phi_{2\pi(p/q)r}\circ \Phi_{\bar\Omega}\circ  f_{\bar F^{res}}\circ f_{ F^{cor}}\\
&\bar F^{res}\ \textrm{ is}\  2\pi/q-\textrm{periodic},\qquad  \cM_{0}(\bar F^{res})=0,
\end{cases}
\label{10.162bis}
\ee
with
\be\begin{cases}&\|\bar \Omega-(\Omega-2\pi(p/q)r)\|_{\bD(c,e^{-1/q}\bar\rho)}\lesssim \|F\|_{W_{h,\bD(c,\bar\rho)}}\\
&\|\bar F^{res}\|_{e^{-1/q}W_{h,\bD(c,\bar\rho)}}\lesssim \|F\|_{W_{h,\bD(c,\bar\rho)}}\\
& \|F^{cor}\|_{e^{-1/q}W_{h,\bD(c,\bar \rho)}}\lesssim  \exp(-\bar \rho^{\hs-1/4}) \|F\|_{W_{h,\bD(c,\bar\rho)}}\\
&\|g_{RNF}-id\|_{C^1}\leq (q\bar \rho^{\hs-1})^5\|F\|_{h,\bD(c,\bar\rho)}
\end{cases}\label{barFresbis}
\ee
Inequalities (\ref{barFresbis}) and the fact that $\Omega$ satisfies an $(A,B)$-twist condition on $\bD(0,\bar \rho)$  show that there exists a unique $\bar c\in\R$ such that 
$$ \pa\bar\Omega(\bar c)=0,\qquad |\bar c-c|\lesssim \bar\e.
$$
\subsection{Coverings}\label{sec:8.2}
We denote $\R_{h}=\R+\sqrt{-1}[-h,h]$ and by $j_{q}$ the $q$-covering 
\begin{align}j_{q}\:\ (\C/(2\pi \Z))\times \C&\to (\C/(2\pi/q)\Z)\times\C\notag\\
(\th+2\pi\Z,r)&\mapsto (\th+(2\pi/q)\Z,r).\label{defjq}\end{align} Since the function $\bar F^{res}:(\th,r):(\R_{h-2/q}/(2\pi)\bZ)\times \bD(\bar c,e^{-2/q}\bar \rho)\to\C$ is invariant by $(\th,r)\mapsto (\th+2\pi/q,r)$ one can push it down to a function 
$$\bar F^{res}_{j_{q}}:(\R_{h-2/q}/(2\pi/q)\Z)\times \bD(\bar c,e^{-2/q}\bar \rho)\to\C,\qquad \bar F^{res}_{j_{q}}\circ j_{q}= \bar F^{res}.$$  Let 
\begin{align}\L_{q}\ :\ (\C/(2\pi/q)\Z)\times \C&\to \C/(2\pi)\Z\times \C\label{defLambda}\\
(\th,r)&\mapsto (q\th,q(r-\bar c))\notag\end{align} 
and define  $\ti F^{res}:(\R_{qh-2}/(2\pi)\bZ)\times \bD(0,e^{-2/q}q\bar \rho)\to \C$ by 
$$\ti F^{res}=q^2\bar F^{res}_{j_{q}}\circ \L_{q}^{-1};$$
 for all $(\ti\th,\ti r)\in \T_{qh-2}\times \bD(0,qe^{-2/q}\bar \rho)$ and $(\th,r)\in \T_{h-2/q}\times \bD(\bar c, e^{-2/q}\bar \rho)$ such that  $\ti \th=q\th,\ti r=q(r-\bar c)$ one has
\be \ti F^{res}(\ti \th,\ti r)=q^{2}\bar F^{res}(\th,\bar c+r).\label{10.163}\ee
Let  $f_{\ti F^{res}}$ be the (exact) symplectic mapping (for the symplectic form $d\ti\th\wedge d\ti r$) defined by (\ref{4.43}): if $(\ti \ph,\ti R)=\Lambda (\ph,R)$, $(\ti \th,\ti r)=\Lambda(\th,r)$
$$(\ti \ph,\ti R)=f_{\ti F^{res}}(\ti \th,\ti r)\ \iff\ (\ph,R)=f_{\bar F^{res}_{j_{q}}}( \th, r).
$$
 If we set 
 \begin{align} \ti \Omega(r)&:=q^2\biggl(\bar \Omega(\bar c+(r/q))-2\pi(p/q)(r/q)\biggr)\notag\\
 &=(1/2)\pa^2 \bar \Omega(\bar c) r^2+O(r^3)\notag\\
 &=\varpi r^2+r^3b(r)
 \label{tiOmega}\end{align}
  we have
 \be \Lambda_{q}\circ \Phi_{\bar \Omega}\circ f_{\bar F_{j_{q}}^{res}}\circ \Lambda_{q}^{-1}=\Phi_{\ti\Omega}\circ f_{\ti F^{res}}.\label{tiFres}
 \ee
 Note that since $\Omega$ satisfies an $(A,B)$-twist condition, one has from the first equation of (\ref{barFresbis}), the estimate
 \be \forall \ r\in \bD(0,e^{-1/10}\bar \rho),\quad \pa^2\ti \Omega(r)\asymp 1, \qquad \|\ti\Omega\|_{C^3(\bD(0,e^{-1/10}q\bar \rho))}\lesssim 1.\label{ee8.183}
 \ee

\subsection{Approximation by a Hamiltonian flow}
The following proposition says that up to some very  good approximation $\Phi_{\ti\Omega}\circ f_{\ti F^{res}}$ can be seen as the time-1 map of a hamiltonian vector field in the plane.
\begin{prop}\label{prop:10.6}There exists $\ti F^{vf}, \ti F^{per}\in\cO_{\s}(\T_{e^{-2/q}qh/2}\times\bD(0,e^{-2/q}q\bar \rho/2))$, such that on $\T_{e^{-2/q}qh/2}\times\bD(0,e^{-2/q}q\bar \rho/2)$ one has 
\begin{align}&\Phi_{\ti\Omega}\circ  f_{\ti F^{res}}=\Phi_{\ti\Omega+\ti F^{per}}\circ f_{\ti F^{vf}}\label{10.200}\\
& \ti F^{per}=\ti F^{res}+O(\bar\rho^{\hs 1/4} \|\ti F^{res}\|_{\T_{e^{-2/q}qh}\times \bD(0,e^{-2/q}q\bar\rho)})=O(q^2\|F\|_{h,\bD(0,\bar\rho)})\label{11.193}\\
& \|\ti F^{vf}\|_{e^{-2/q}qh/2,\bD(0,e^{-2/q}q\bar \rho/2)}\lesssim \exp(-1/(q\bar \rho)^{1/4})\|F\|_{h,\bD(0,\bar\rho)}.\label{estFvf}
\end{align}
\end{prop}
\begin{proof} This is a consequence of 
(\ref{barFresbis}),  (\ref{10.163}) and  Proposition \ref{lemma:vf2} applied to $\Phi_{\ti\Omega}\circ  f_{\ti F^{res}}$ (since by (\ref{barFresbis}), (\ref{10.163}), condition (\ref{condSG}) is satisfied).
\end{proof}

\bigskip Let $F(\th,r)=\sum_{i=0}^2f_{i}(\th)r^{i}+r^3\ti f(\th,r)$ and  define ({\it cf.} (\ref{tiOmega}))
\begin{align}\ti \Pi(\th,r)&=\ti\Omega(r)+\ti F^{per}(\th,r)\label{e11.212}\\
&=:\varpi r^2+f_{0}(\th)+ f_{1}(\th)r+ f_{2}(\th)r^2+r^3(b(r)+ \ti f(\th,r))\label{e11.212bis}\\
&\begin{multlined}=(\varpi+ f_{2}(\th))\biggl(r+\frac{1}{2}\frac{f_{1}(\th)}{\varpi+ f_{2}(\th)}\biggr)^2-\frac{1}{4}\frac{f_{1}(\th)^2}{\varpi+ f_{2}(\th)} \\
+f_{0}(\th)+r^3(b(r)+\ti f(\th,r))\end{multlined}\label{10.203}
\end{align}
where
 \be \max_{\T_{e^{-2/q}qh/2}\times \bD(0,e^{-2/q}q\bar \rho/2)}(| f_{0}|, | f_{1}|,|f_{2}| , |\ti f|)\lesssim (q\bar \rho)^{-3}q^2\bar\e. \label{eq:another}
\ee

\comm{
\begin{rem}\label{rem:7.1}We notice that 
\be \ti F_{per}=T_{\infty}^{res}F+\cO_{2}(F)
\ee
\end{rem}
}

\subsection{From $\ti \Pi$ to $\bar \Pi$}

We assume in the rest of this section that $\varpi>0$ and we set
\be \rho_{q}=q\bar \rho/3.\label{eq:rhoq}
\ee

The next lemma provides a more convenient expression for the function, viewed as a hamiltonian,  $\ti\Pi=\ti\Omega+\ti F^{per}$ defined in (\ref{e11.212}).
\begin{lemma}\label{lem:10.7}There exists a (not exact) symplectic change of coordinates $G\in {\rm Symp}^\cO_{\s}(\T_{qh/3}\times \bD(0,\rho_{q}))$ of the form $G(\th,r)=(\th,r-e_{0}(\th))$  and  $\bar \Pi\in\cO(\T_{qh/3}\times \bD(0,e^{-1/10}\rho_{q}))$ such that 
\be\bar \Pi(\th,r):=\ti \Pi\circ G^{-1}(\th,r)=\varpi(\th)(r^2-e_{1}(\th)+r^3f(\th,r))\label{10.208}\ee
with  $\varpi, e_{0},e_{1}\in \cO_{\s}(\T_{qh/3})$, $f\in \cO_{\s}(\T_{qh/3}\times \bD(0,\rho_{q}))$,
\be \|\varpi(\cdot)-\varpi\|_{qh/3}\lesssim q\bar \rho^{\hs -2}\bar \e,\qquad \max(\|e_{0}\|_{qh/3},\|e_{1}\|_{qh/3}),\lesssim q\bar \rho^{\hs -1} \bar \e,\qquad \|f\|_{qh/3,\rho_{q}}\lesssim 1.\label{deftiPi}
\ee
\end{lemma}

\begin{proof} See the Appendix \ref{lemma:tiPibarPi}.
\end{proof}

\begin{rem} The previous lemma and (\ref{10.203}) show that 
$$\varpi(\th)=\varpi +f_{2}(\th)+O(\rho_{q}^3\bar\e)$$
and 
$$e_{0}(\th)=-\frac{1}{2}\frac{f_{1}(\th)}{\varpi+ f_{2}(\th)}+O(\rho_{q}^3\bar\e),\quad e_{1}(\th)=-\frac{1}{4}\frac{f_{1}(\th)^2}{(\varpi+ f_{2}(\th))^2}+\frac{f_{0}(\th)}{\varpi+f_{2}(\th)}+O(\rho_{q}^3\bar \e).$$
\end{rem}
\begin{rem}\label{rem:8.3}The change of coordinates $G:(\th,r)\mapsto (\th,r+e_{0}(\th))$ is a symplectic diffeomorphism (but not exact symplectic).
\end{rem}
\begin{rem}Since $\ti \Pi$ is defined up to an additive constant (this will not change the value of $e_{0}$), we can assume that 
$$\int_{\T}\frac{\ti \Pi(\th,e_{0}(\th))}{\varpi(\th)^{1/2}}\frac{d\th}{2\pi}=0$$
which is equivalent to the following condition that we will assume to hold from now on
\be \int_{\T}\varpi(\th)^{1/2}e_{1}(\th)\frac{d\th}{2\pi}=0.\label{condone1}
\ee
\end{rem}

\subsection{Hamilton-Jacobi Normal Form for $\bar \Pi$}\label{subsec:7.3}
The symplectic diffeomorphism $\Phi_{\bar \Pi}$ is the time-1 map of a hamiltonian defined in the cylinder, and as such, it is integrable in the Hamilton-Jacobi sense: the level lines of the hamiltonian foliate the cylinder  and naturally provide invariant curves for the hamiltonian flow. On some open sets\footnote{These are cylindrical domains outside the ``eyes'' defined by separatrices (think of a pendulum).} it is possible to conjugate $\Phi_{\bar \Pi}$ to a hamiltonian depending only on the action variable: this is   the Hamilton-Jacobi Normal Form; see Proposition \ref{prop:8.10}. The purpose  of this  Subsection is to quantify this fact.

\medskip Recall the expression for $\bar \Pi$
$$\bar \Pi(\th,r)=\varpi(\th)(r^2-e_{1}(\th)+r^3f(\th,r)).$$
Let $0\leq s\leq h/3$. We denote 
\be \e_{1}:=\|e_{1}\|_{C^0(\T)}\lesssim q\bar \rho^{\hs -1} \bar \e,\qquad  \e_{1,s}=\e_{1}(s)=\|e_{1}\|_{qsh/3}, \label{def:epsilon1}
\ee
and  for $L\gg 1$ we introduce
\be  \l_{0,L}:=L\e_{1}^{1/2},\qquad  \l_{s,L}=\l(s,L)= L\e_{1,s}^{1/2}, \label{lambda-L}\ee
with the requirement
\be \l_{s,L}<q\bar\rho/6=\rho_{q}/2 \quad \textrm{or \ equivalently }\    1\ll L \lesssim q \bar\rho \e_{1,s}^{-1/2}. \label{lambda-L'}
\ee
We notice that $0<\l_{s,L} \lesssim \rho_{q}$ and that from the Three Circles Theorem
\be \e_{1}(0)\leq  \e_{1}(s)\leq \e_{1}(0)^{1-s}\e_{1}(1)^{s} \label{10.215}
\ee
hence
\be L\e_{1}^{1/2}\leq \l_{s,L}\leq L\e_{1}^{(1-s)/2}.\label{e8.199}
\ee

\bigskip\noindent{\bf Notation:} For $0<a_{1}<a_{2}$ and $z\in\C$ we denote by $\bA(z;a_{1},a_{2})$ the annulus centered at $z$ with inner and outer radii  of sizes respectively $a_{1}$ and $a_{2}$. When $z=0$ we simply denote this annulus by $\bA(a_{1},a_{2})$

Before giving the Hamilton-Jacobi Normal Form of $\bar \Pi$ we need two lemmas.

\begin{lemma}\label{lemma:10.2}There exists a holomorphic function $g$ defined  on $${\rm Dom}(g):=\bigcup_{0\leq s\leq 1}(\T_{qsh/3}\times\bA(\l_{s,L},\rho_{q}))$$  such that for every $(\th,z)\in{\rm Dom}(g)$ one has
\be \bar \Pi(\th,g(\th,z))=z^2.\label{7.192}
\ee Moreover, there exists $\mathring{g}\in \cO({\rm Dom}(g))$ such that on ${\rm Dom}(g)$ one has 
\be g(\th,z)=\varpi(\th)^{-1/2}z(1+\mathring{g}(\th,z)),\qquad \|\mathring{g}\|_{{\rm Dom(g)}}\lesssim L^{-2}.\label{12.214}
\ee
\end{lemma}
\begin{proof}
See the Appendix Section \ref{sec:G2}.
\end{proof}

Since $\T\times \bA(\l_{0},\rho_{q})\subset {\rm Dom}(g)$ we can  define the function $\Gamma\in \cO(\bA(\l_{0},\rho_{q}))$ by $\Gamma:\bA(\l_{0,L},\rho_{q})\to\C$
\be \Gamma(u)=(2\pi)^{-1}\int_{0}^{2\pi}g(\ph,u)d\ph.\label{defGamma}
\ee
Using (\ref{12.214}) we see that $\Gamma$ can be written 
$$\Gamma(u)=\gamma u(1+\mathring{\Gamma}(u)),\qquad \gamma:=(2\pi)^{-1}\int_{0}^{2\pi}\varpi(\th)^{-1/2}d\th,\qquad \|\mathring{\Gamma}\|_{\bA(\l_{s,L},\rho_{q})}\lesssim L^{-2}. 
$$

\begin{lemma}\label{lemma:7.6}There exists a solution $H\in\cO(\bA(2\l_{s,L},\rho_{q}/2))$ of the equation
\be \Gamma(H(z))=z\label{eq:8.197}
\ee
Moreover it can be written
\be H(z)=\gamma^{-1}z(1+\mathring{H}(z)),\qquad \|\mathring{H}\|_{\bA(2\l_{s,L},(1/2)\rho_{q})}\leq L^{-2}. \label{10.223}
\ee
\end{lemma}
\begin{proof}
See the Appendix Section \ref{sec:G3}.
\end{proof}

We now apply the preceding results with 
$$s=1/12.$$

\begin{prop}\label{prop:8.10}[Hamilton-Jacobi]
There exists an exact  symplectic change of coordinates $\bar W\in \ti {\rm Symp}_{ex,\s}(\T_{qh/25}\times \bA(2\e_{1}^{1/32},\rho_{q}/2))$ such that 
\be \bar W^{-1}\circ \Phi_{\bar\Pi}\circ \bar W=\Phi_{H^2}\label{10.188}
\ee
\be  \|\bar W-id\|_{C^1}\lesssim q\e_{1}^{1/4}\label{estbarW}
\ee
\end{prop}
\begin{proof} 
Let $H$ be the function defined by the previous lemma (with $s=1/16$) and define for $z\in\bA(2\l_{1/8,L},\rho_{q}/2)$ and  $\th\in J_{qh/48}:= [-4\pi,4\pi]+i[-qh/48,qh/48]$
\be S(\th,z)=\int_{[0,\th]} g(\ph,H(z))d\ph.\label{eq:8.199}
\ee
We notice that by Cauchy Formula, (\ref{defGamma}) and (\ref{eq:8.197}) 
\begin{align*} S(\th+2\pi,z)-S(\th,z)&=\int_{[\th,\th+2\pi]}g(\ph,H(z))d\ph\\
&=\int_{0}^{2\pi}g(\ph,H(z))d\ph\\
&=2\pi\Gamma(H(z))\\
&=2\pi z
\end{align*}
hence
$$\Sigma:(\th,z)\mapsto S(\th,z)-\th z$$ defines a holomorphic function on $\T_{qh/6}\times \bA(2\l_{1/2,L},\rho_{q}/2)$.
Moreover, from (\ref{eq:8.199}), (\ref{12.214}) and (\ref{10.223}) one can write 
\begin{align*} S(\th,z)&=\int_{0}^\th g(\ph,H(z))d\ph\\
&=\int_{0}^\th \varpi(\ph)^{-1/2}H(z)(1+\mathring{g}(\ph,H(z)))d\ph\\
&=\gamma \th H(z)+\int_{0}^\th \varpi(\ph)^{-1/2}H(z)\mathring{g}(\ph,H(z))d\ph\\
&=\th z(1+\mathring{H}(z))+\int_{0}^\th \varpi(\ph)^{-1/2}H(z)\mathring{g}(\ph,H(z))d\ph
\end{align*}
and we see that 
$$\|\Sigma\|_{\T_{qh/6}\times \bA(2\l_{1/2,L},\rho_{q}/2)}\lesssim L^{-2}(1+qh/6).\label{}
$$

Define $U_{L,\d}=\T_{qh/6-\d}\times \bA(2L\e_{1}^{1/4}+\d,\rho_{q}/2-\d)$ and note that by  (\ref{e8.199}) one has  $\l_{1/2,L}\leq L\e_{1}^{1/4}$ so that $U_{L,0}\subset \T_{qh/6}\times \bA(2\l_{1/2,L},\rho_{q}/2)$ and 
$$\|\Sigma\|_{U_{L,0}}\lesssim qL^{-2}.
$$
By Cauchy estimates 
\be \|\Sigma\|_{C^2(U_{L,\d})}\lesssim q(\d L)^{-2}.\label{10.233bis}
\ee

Let us choose, $\d=\e_{1}^{1/16}$, $L=\e_{1}^{-7/32}$. We then have  $L\e_{1}^{1/4}=\e_{1}^{(-7/32)+(8/32)}=\e_{1}^{1/32}$,
$L^{-2}\d^{-2}=\e_{1}^{(7/16)-(2/16)}=\e_{1}^{5/16}$, $L^{-2}\d^{-3}=\e_{1}^{(7/16)-(3/16)}=\e_{1}^{1/4}$ hence 
\be \|\Sigma\|_{C^2(\T_{qh/6}\times \bA(2\e_{1}^{1/32},\rho_{q}/2 ))}\lesssim q\e_{1}^{1/4}.\label{10.233bis}
\ee

Using  Lemma \ref{lemma:2.3ee} and  Lemma \ref{domaindef}, we see that $(\Sigma, \T_{qh/6}\times \bA(2\e_{1}^{1/32},\rho_{q}/2 ))$ has a $C^2$, $\s$-symmetric Whitney extension $\Sigma^{Wh}$ such that    
\be \bar W=f_{\Sigma^{Wh}}^{-1}\quad  \bar W^{-1}=f_{\Sigma^{Wh}} \in \ti  {\rm Symp}(\T_{qh/7}\times \bA(3\e_{1}^{1/32},\rho_{q}/3))\label{10.233bisbis}
\ee
 and ($\|\bar W-id\|_{C^1}\lesssim \e_{1}^{-4/32}\e_{1}^{1/4}$) 
\be \|\bar W-id\|_{C^1}\lesssim q\e_{1}^{1/8}.\label{8.214}
\ee

On the other hand taking the derivative of (\ref{eq:8.199}) we have 
\be \pa_{\th}S(\th,z)=g(\th,H(z))\label{eq:8.201}
\ee
and so $S$ is a solution of the Hamilton-Jacobi equation
\begin{align}\bar \Pi(\th,\frac{\pa S}{\pa \th}(\th,z))&=\bar\Pi(\th,g(\th,H(z)))\label{10.224}\\
&=H^2(z)\qquad (\textrm{by}\ (\ref{7.192})).
\end{align}

Hence,  the exact symplectic  change of variable $\bar W=f_{\Sigma}^{-1}$
\be
\bar W^{\ -1}=f_{\Sigma}:(\th,w)\mapsto (\ph,z)\iff 
\begin{cases}w&=\frac{\pa S}{\pa \th}=w+\pa_{\th}\Sigma(\th,z)\\
\ph&=\frac{\pa S}{\pa z}=\th+\pa_{z}\Sigma(\th,z)
\end{cases}\label{9.256}
\ee
conjugates $\Phi_{\bar \Pi(\th,w)}$ to $\Phi_{H(z)^2}$ since from (\ref{eq:4.4})
$$\bar\Pi\circ \bar W=H^2\qquad \iff \qquad \bar W^{\ -1}\circ \Phi_{\bar\Pi}\circ \bar W=\Phi_{H^2}.
$$
This concludes the proof.
\end{proof}

\subsection{Consequences on $\Phi_{\ti \Omega}\circ f_{\ti F}$}
Let $G:(\th,r)\mapsto (\th,r+e_{0}(\th))$ be the diffeomorphism introduced in Lemma \ref{lem:10.7} and 
\be \ti W=G\circ \bar W.\label{whoistiW}\ee
We notice that  $\ti W\in \ti  {\rm Symp}_{\s}(\T_{qh/7}\times \bA(3\e_{1}^{1/32},\rho_{q}/3))$  and that its image contains  $G(\T_{qh/7}\times \bA(3\e_{1}^{1/32},\rho_{q}/3))$ (see (\ref{10.233bisbis}));  from  (\ref{8.214}) and (\ref{deftiPi}) we have
\be \|\ti  W-id\|_{C^1}\lesssim q\e_{1}^{1/8}. \label{10.233}
\ee
\begin{cor} \label{cor:10.11}One has 
\be \ti W^{-1}\circ \Phi_{\ti \Omega}\circ f_{\ti F^{res}}\circ  \ti  W=\Phi_{H^2}\circ f_{\hat F^{vf}} \label{10.257}
\ee
with 
\be \|\hat F^{vf}\|_{\T_{qh/8}\times \bA(4\e_{1}^{1/32},\rho_{q}/4))}\lesssim \exp(-1/(q\bar \rho)^{1/4})\|F\|_{h,\bD(0,\bar\rho)}
\label{e10258}
\ee
\end{cor}
\begin{proof}
Recall that from (\ref{10.200}) and the definition of $\ti \Pi$ (\ref{e11.212})) 
\begin{align*}\Phi_{\ti\Omega}\circ  f_{\ti F^{res}}&=\Phi_{\ti\Omega+\ti F^{per}}\circ f_{\ti F^{vf}}\\
&=\Phi_{\ti\Pi}\circ f_{\ti F^{vf}}.
\end{align*}
By Lemma \ref{lem:10.7} and Proposition \ref{prop:8.10} 
$$G^{-1}\circ \Phi_{\ti\Pi}\circ G=\Phi_{\bar \Pi},\qquad \bar W^{-1}\circ \Phi_{\bar\Pi}\circ \bar W=\Phi_{H^2}
$$
hence
$$\ti W^{-1}\circ \Phi_{\ti \Pi}\circ \ti W=\Phi_{H^2}
$$
and so 
$$\ti W^{-1}\circ \Phi_{\ti \Pi}\circ f_{\ti F^{vf}}\circ \ti W=\Phi_{H^2}\circ f_{\hat F^{vf}},\qquad f_{\hat F^{vf}}= \ti W^{-1}\circ f_{\ti F^{vf}}\circ \ti W$$
which is (\ref{10.257}).

The estimate on $\hat F^{vf}$ comes from (\ref{estFvf}) and (\ref{10.233}).
\end{proof}
\subsection{Proof of Proposition \ref{proppropHJ}: Existence of  Hamilton-Jacobi Normal Form}\label{sec:8.4} 
Let $\ti W$ be the  diffeomorphism constructed in  Corollary \ref{cor:10.11}.  The map $\Lambda_{q}$  (defined in (\ref{defLambda})) sends  $((\R+i]-h/8,h/8[)/(2\pi/q)\Z)\times \bA(\bar c;4q^{-1}\e_{1}^{1/32},\bar \rho/4)$ to  $\T_{qh/8}\times \bA(4\e_{1}^{1/32},q\bar \rho/4)$. 
From (\ref{tiFres}), (\ref{10.257}) one has
$$ \ti W^{-1}\circ \L_{q}\circ \Phi_{\bar\Omega}\circ f_{\bar F^{res}_{j_{q}}}\circ \L_{q}^{-1}\circ  \ti W=\Phi_{H^2}\circ f_{\hat F^{vf}}
$$
hence
\begin{multline*} (\L_{q}^{-1}\circ \ti W^{-1}\circ \L_{q})\circ \Phi_{\bar\Omega}\circ f_{\bar F^{res}_{j_{q}}}\circ (\L_{q}^{-1}\circ  \ti W\circ \L_{q})=\\(\L_{q}^{-1}\Phi_{H^2}\circ \L_{q})\circ(\L_{q}^{-1}\circ  f_{\hat F^{vf}}\circ \L_{q}).
\end{multline*}
Let $W$, $\Phi_{\mathring{\Omega}^{HJ}}$ and $f_{\mathring{F}^{vf}}$
be lifts by $j_{q}$ of $\Lambda_{q}^{-1}\circ \ti W\circ \Lambda_{q}$, $\Lambda_{q}^{-1}\circ \Phi_{H^2}\circ \Lambda_{q}$, $\Lambda_{q}^{-1}\circ f_{\hat F^{vf}}\circ \Lambda_{q}$.
 Since  $\Phi_{\bar\Omega}\circ f_{\bar F^{res}}$ is a lift by $j_{q}$ of  $\Phi_{\bar\Omega}\circ f_{\bar F^{res}_{j_{q}}}$ one has for some $m\in\Z$ ($0\leq m\leq q-1$)
$$W^{-1}\circ \Phi_{\bar\Omega}\circ f_{\bar F^{res}}\circ W=\Phi_{2\pi (m/q)r}\circ \Phi_{\mathring{\Omega}^{HJ}}\circ f_{\mathring{F}^{vf}}
$$
where 
\be \mathring{\Omega}^{HJ}(r)=q^{-2}H^2(q(r-\bar c)),\qquad  \mathring{F}^{vf}=O(\hat F^{vf}).\label{a10262}
\ee
If we define
\begin{align}& f_{{\hat F}^{cor}}=W^{-1}\circ f_{ F^{cor}}\circ W,\qquad \hat F^{cor}=O(F^{cor})\label{estFcorhat}\\
& g^{HJ}=f_{Z}\circ W\qquad(f_{Z}\ \textrm{from} \ (\ref{10.162bis})) \label{8.223}\end{align}
one has  from (\ref{10.162bis})  (note that $W$ commutes with $\Phi_{2\pi(p/q)r}$)
\begin{align*} (g^{HJ})^{-1}\circ \Phi_{\Omega}\circ f_{F}\circ g^{HJ}&=\Phi_{2\pi(p/q)r}\circ W^{-1}\circ\Phi_{\bar\Omega}\circ   f_{\bar F^{per}}\circ W\circ  f_{ {\hat F}^{cor}}\\
&\begin{multlined}= \Phi_{2\pi(p/q)r}\circ \Phi_{2\pi (m/q)r}\circ \Phi_{\mathring{\Omega}^{HJ}}\circ f_{\mathring{F}^{vf}}\circ f_{ {\hat F}^{cor}}\end{multlined}\\
&=: \Phi_{{\Omega^{HJ}}}\circ f_{F^{HJ}}
\end{align*}
with (see  (\ref{a10262}), (\ref{e10258}), \ref{barFresbis})
\begin{align}&\Omega^{HJ}\in \cO_{\s}(\bA(\bar c;5q^{-1}\e_{1}^{1/32},\bar \rho/5)),\\
&\Omega^{HJ}(r)=2\pi((p+m)/q)r+q^{-2}H^2(q(r-\bar c))\label{10.239}\\
&F^{HJ}=\mathring{F}^{vf}+{\hat F}^{cor}+\fO_{2}(\mathring{F}^{vf},{\hat F}^{cor})\in \cO_{\s}(\T_{h/9}\times \bA(\bar c;5q^{-1}\e_{1}^{1/32},\bar \rho/5))\\
&\|F^{HJ}\|\lesssim  \exp(-1/(q\bar\rho)^{1/4})\bar \e.
\end{align}

The disk $\check D$  of Proposition \ref{proppropHJ} can be taken to be ({\it cf.} (\ref{def:epsilon1}), (\ref{deftiPi}))
\be \begin{cases}&\check D=\bD(\check c, \check \rho)\subset \bD(\bar c,\bar\e^{\ 1/33}) \\
&\check c=\bar c,\qquad \check \rho=\e_{1}^{1/32}=\|e_{1}\|_{C^0(\T)}^{1/32}\leq \bar \e^{\hs 1/33}\qquad (\e_{1}\lesssim q\bar \rho^{\ -1}\bar \e)
\end{cases}\label{eq:checkD}
\ee
and the disk $\hat D$ can be taken to be (recall $|\bar c-c|\lesssim \bar \e$)
\be \hat D=\bD( c, \bar  \rho/6)=\bD(c,\hat\rho).
\label{eq:checkhatD}
\ee

With a slight abuse of notation, we can write $W= \Lambda_{q}^{-1}\circ \ti W\circ \Lambda_{q}$ and using (\ref{8.223}), (\ref{whoistiW}) and  the definition of $\bar W$ ({\it cf.} Proposition \ref{prop:8.10})  we can write 
$$g^{HJ}=g_{RNF}\circ \L_{q}^{-1}\circ (G\circ \bar W)\circ \L_{q}\in \ti{\rm Symp}_{\s}(\T_{h/9}\times \bA(\bar c;\e_{1}^{1/32},\bar \rho/5)).
$$
 The last inequality of (\ref{barFresbis}) and (\ref{10.233})  show that (remember (\ref{def:epsilon1}))
$$\|g^{HJ}-id\|_{C^1}\lesssim q\e_{1}^{1/8}+q\bar\e^{1-}\lesssim q\bar\e^{\hs1/8}.
$$
which is (\ref{estgHJ}).

Let us check  that one can choose $\Omega^{HJ}$ in $\ti \cO_{\d}(\hat D\setminus \check D)$ which satisfies a $(2A,2B)$-twist condition. Indeed,  from    (\ref{10.223}) (recall $L=\e_{1}^{-7/32}$) we see that 
$$\|q^{-2}H^2(q(r-\bar c))-\gamma^{-2}(r-\bar c)^2\|_{C^{3}(\T_{h/9}\times \bA(\bar c;6\e_{1}^{1/32},\bar \rho/6))}\lesssim q  \e_{1}^{-1/32}\e_{1}^{7/16}\leq q\e_{1}^{13/32}.
$$
We now apply Lemma \ref{lemma:2.3ee}: since $\e_{1}^{-6/32}\times \e_{1}^{13/32}\lesssim \e_{1}^{7/32} $, there exists a $C^3$ $\s$-symmetric Whitney extension  with $C^3$-norm less that $\e_{1}^{1/5}$ for $(q^{-2}H^2(q(r-\bar c))-\gamma^{-2}(r-\bar c)^2,\T_{h/9}\times \bA(\bar c;6\e_{1}^{1/32},\bar \rho/6))$. We then conclude by (\ref{10.239}).

\ \hfill $\Box$

\subsection{Extending the linearizing map inside the hole}\label{sec:10.4}
In general the  previously defined maps $g,\Gamma,H$ {\it are not} holomorphically defined  on a {\it whole} disk  but rather on an annulus with inner disk of radius $\e_{1}^{1/32}$ where $\e_{1}=\|e_{1}\|_{C^0(\T)}$.  In this sub-section we quantify to which extent the domains of holomorphy of these maps can be extended if one knows that the frequency map $\Omega^{HJ}$ coincides on this annulus with a holomorphic function defined on a disk (containing the annulus).
\begin{prop}\label{prop:cosofholom} If there exists a holomorphic function $\ti \Xi$ defined on $\bD(0,\rho_{q})$ such that 
\be \|\ti \Xi-H^2\|_{C(0,\rho_{q}/2)}\leq \nu\label{eqprop8.8}
\ee
then 
$$ \e_{1}=\|e_{1}\|_{C^0(\T_{})}\lesssim  \nu^{(1/6)-}.
$$
\end{prop}
We prove this proposition in Subsection \ref{sec:8.8.2}.

We now take 
$$s=0.\qquad (\textit{cf.} \ (\ref{lambda-L}))$$

\medskip
By (\ref{12.214}) for $z\in A(\l/2,\l/4) $, $|g(\th,z)|$ compares to $\l$ and thus from (\ref{7.192})and  (\ref{10.208})
$$ z^2=\varpi(\th)\biggl(g(\th,z)^2-e_{1}(\th)\biggr)+O( g^3)$$
so that 
\begin{align}g(\th,z)&=\biggl(z^2/\varpi(\th)+e_{1}(\cdot)+O( g^3)\biggr)^{1/2}\\
&=\biggl((z^2/\varpi(\th))+e_{1}(\th) \biggr)^{1/2}+O(\l^2)\label{8.216}.
\end{align}
Let's introduce 
\be \ti g(\th,z)=\biggl(\frac{z^2}{\varpi(\th)}+e_{1}(\th)\biggr)^{1/2}\label{8.217}
\ee
\be \ti \Gamma(\cdot)=(2\pi)^{-1}\int_{0}^{2\pi}\ti g(\th,\cdot)d\th,\qquad \ti H=\ti\Gamma^{-1}\label{8.17kov}
\ee
where  the   inverse is with respect to   composition. The functions $\ti \Gamma$ and $\ti H$ are defined on $\{z\in\C, L\e_{1}^{1/2}<|z|\}$ for some fixed $L\gg 1$, independent of $\e_{1}$,  satisfying  
\be L\leq \bar \rho\e_{1}^{-1/2}\label{10.246}\ee
(we take here $s=0$, {\it cf.} (\ref{lambda-L})). 

\bigskip\noindent {\bf Notation:} In the following we denote by $C(0,t)$ the circle of center 0 and radius $t>0$.

We have the following proposition:

\subsubsection{Computation of a residue}

\begin{lemma}\label{lemma:9.10}For any  circle $C(0,t)$ centered at 0 with $t> L\e_{1}^{1/2}$ one has 
$$ \frac{1}{2\pi i}\int_{C(0,t)} z\ti H(z)^2dz=(\gamma/4)\int_{\T}\varpi(\th)^{3/2}e_{1}(\th)^2\frac{d\th}{2\pi}
$$
where $\gamma=(2\pi)^{-1}\int_{0}^{2\pi}\varpi(\th)^{-1/2}d\th$.
\end{lemma}
\begin{proof}
We compute the   expansion of $\ti g(\th,\cdot)$ ({\it cf.} (\ref{8.217}))  into Laurent series: on $\C\setminus \bD(0,L \e_{1}^{1/2})$:
\begin{align*}\ti g(\th,z)&=(z/\varpi(\th)^{1/2})\biggl(1+\varpi(\th)e_{1}(\th)z^{-2}\biggr)^{1/2}\\
&=(z/\varpi(\th)^{1/2})\biggl(1+\frac{1}{2}\varpi(\th)e_{1}(\th)z^{-2}-\frac{1}{8}(\varpi(\th)e_{1}(\th))^2z^{-4}+O(z^{-6})\biggr)\\
&=\frac{z}{\varpi(\th)^{1/2}}+\frac{1}{2}\varpi(\th)^{1/2}e_{1}(\th)z^{-1}-\frac{1}{8}\varpi(\th)^{3/2}e_{1}(\th)^2z^{-3}+O(z^{-5}).
\end{align*}
As a consequence 
since $\ti \Gamma(z)=(2\pi)^{-1}\int_{0}^{2\pi}\ti g(\th,z)d\th$ we have with the notation $\gamma=(2\pi)^{-1}\int_{0}^{2\pi}\varpi(\th)^{-1/2}d\th$ the identity 
$$ \ti \Gamma(z)=\gamma (z+a_{-1}z^{-1}+a_{-3}z^{-3})+O(z^{-5})
$$
where 
$$ a_{-1}=\gamma^{-1}(1/2)(2\pi)^{-1}\int_{0}^{2\pi}\varpi(\th)^{1/2}e_{1}(\th)d\th$$
\be a_{-3}=\gamma^{-1}(-1/8)(2\pi)^{-1}\int_{0}^{2\pi}\varpi(\th)^{3/2}e_{1}(\th)^2d\th.\label{a-3}\ee
By our choice (\ref{condone1}) we have $a_{-1}=0$ and 
we can thus write
\be\ti\Gamma=\Lambda_{\gamma}\circ (id+u)\label{8.229}
\ee
where $\Lambda_{\gamma}z=\gamma z$ and 
$$ u(z)=a_{-3}z^{-3}+O(z^{-5}).
$$

If  $v$ is defined by
$$(id+u)\circ (id+v)=id$$ 
we have
$$v(z)=-a_{-3}z^{-3}+O(z^{-4})$$
and therefore
\begin{align}(z+v(z))^2&=(z-a_{-3}z^{-3}+O(z^{-4}))^2\notag\\ 
&=z^2-2a_{-3}z^{-2}+O(z^{-3}).\label{8.231}
\end{align}

Now since $\ti H$ is the inverse  for the composition of $\ti \Gamma$  ({\it cf.} \ref{8.17kov})), $z=(\ti \Gamma\circ \ti H)(z)$, we have by (\ref{8.229}) $\ti H=(id+u)^{-1}\circ \Lambda_{\gamma}^{-1}=(id+v)\circ \Lambda_{\gamma}^{-1}$ and we get by (\ref{8.231})
$$ \ti H(z)^2=\gamma^{-2}z^2-2a_{-3}\gamma^{2}z^{-2}+O(z^{-3})
$$
and thus
$$z\ti H(z)^2=\gamma^{-2}z^3-2a_{-3}\gamma^{2}z^{-1}+O(z^{-2}).
$$
Hence by Cauchy formula and (\ref{a-3}), for any circle $C(0,t)$, $t> L\e_{1}^{1/2}$:
\begin{align*} \frac{1}{2\pi i}\int_{C(0,t)} z\ti H(z)^2dz&=-2a_{-3}\gamma^{2}\\
&=(\gamma/4)\int_{\T}\varpi(\th)^{3/2}e_{1}(\th)^2\frac{d\th}{2\pi}.
\end{align*}
\end{proof}

\subsubsection{Proof of Proposition \ref{prop:cosofholom} }\label{sec:8.8.2}
\begin{lemma}\label{lemma:9.11} Let $ L\e_{1}^{1/2}\leq\l<\rho_{q}/2$, $L\gg 1$ (independent of $\e_{1}$). One has for $z\in A(\l/4,\l/2)$
\be| H(z)^2-\ti H(z)^2|\lesssim \l^3.
\ee
\end{lemma}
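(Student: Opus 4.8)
The plan is to deduce the bound from two facts: first, that $g$ and $\ti g$ are uniformly $O(\bar\e|z|^{2})$-close on the relevant annulus; second, that $\Gamma$ and $\ti\Gamma$ are both small perturbations of the linear map $\zeta\mapsto\gamma\zeta$ with $\gamma=\int_{\T}(1+f_{2})^{-1/2}\approx1$, so that passing to the compositional inverses $h,\ti h$ only costs a bounded factor.

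I would begin by extending the domains of holomorphy. Since $\l\geq L\e_{1}^{1/2}$ with $L\gg1$, for $|z|\geq\l/4$ the quantities $z^{2}+e_{1}(\th)$ and $z^{2}+e_{1}(\th)-g^{3}f(\th,g)=z^{2}+e_{1}+O(\bar\e|z|^{3})$ stay $\gtrsim|z|^{2}$ away from $0$ (this is exactly what keeps $z$ clear of the branch points $z=\pm i\,e_{1}(\th)^{1/2}$ of $\ti g$ and makes the cubic term negligible). Hence the explicit formula for $\ti g$ and the contraction argument defining $g$ both run on $\T_{qh/4}\times\bA(\l/4,\rho_{q})$, where $|g|,|\ti g|\approx|z|$ and, exactly as in the computation at the beginning of \secref{sec:7.4} (write $(A+B)^{1/2}-A^{1/2}=B/((A+B)^{1/2}+A^{1/2})$ with $A=z^{2}+e_{1}\gtrsim|z|^{2}$ and $B=O(\bar\e|z|^{3})$),
\be g(\th,z)-\ti g(\th,z)=O(\bar\e|z|^{2}).\ee
Averaging over $\th\in\T$ gives $\Gamma-\ti\Gamma=O(\bar\e|z|^{2})$ on $\bA(\l/4,\rho_{q})$, and Cauchy estimates on a slightly wider annulus give $|\Gamma'-\gamma|,|\ti\Gamma'-\gamma|\ll\gamma$ there; hence $\Gamma,\ti\Gamma$ are injective, and the degree argument of \lemref{lemma:7.6} shows that $h=\Gamma^{-1}$ and $\ti h=\ti\Gamma^{-1}$ are holomorphic on an annulus containing $\bA(\l/4,\l/2)$, with $|h(z)|,|\ti h(z)|\approx|z|$.

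Now fix $z\in\bA(\l/4,\l/2)$ and set $w=h(z)$, $\ti w=\ti h(z)$, so that $\Gamma(w)=z=\ti\Gamma(\ti w)$ and $|w|,|\ti w|\approx\l$. Then $\ti\Gamma(w)-\ti\Gamma(\ti w)=\ti\Gamma(w)-\Gamma(w)=-(\Gamma-\ti\Gamma)(w)=O(\bar\e\l^{2})$, while $\ti\Gamma(w)-\ti\Gamma(\ti w)=\int_{[\ti w,w]}\ti\Gamma'(\zeta)\,d\zeta$ has modulus $\geq(\gamma/2)|w-\ti w|$ once $|w-\ti w|$ is small enough for the segment $[\ti w,w]$ to stay in the region where $|\ti\Gamma'-\gamma|\leq\gamma/2$ (this last smallness being bootstrapped from the first equality). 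Hence $|h(z)-\ti h(z)|=|w-\ti w|\lesssim\bar\e\l^{2}$, and therefore
\be |h(z)^{2}-\ti h(z)^{2}|=|w-\ti w|\,|w+\ti w|\lesssim(\bar\e\l^{2})\cdot\l\lesssim\l^{3},\ee
using $\bar\e<1$ (recall $\bar\e<q\hat K^{-1}/10$ with $q<N$ and $\hat K=N^{50P}$).

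The only genuinely delicate point is the domain bookkeeping: one must make sure that $g,\Gamma,h$ (and their tilded counterparts) really do extend holomorphically down to radius $\sim\l$ and that $\Gamma,\ti\Gamma$ keep derivative close to $\gamma\approx1$ on that scale, which is precisely where the hypothesis $\l\geq L\e_{1}^{1/2}$ with $L$ large is used. Everything after that is the elementary Lipschitz estimate for the inverse of a near-linear holomorphic map.
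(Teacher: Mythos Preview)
Your proof is correct and follows essentially the same route as the paper: estimate $g-\ti g=O(\bar\e|z|^{2})$, average to $\Gamma-\ti\Gamma$, use a bi-Lipschitz bound on $\ti\Gamma$ to transfer to $h-\ti h$, then multiply by $|h+\ti h|\approx\l$. The only cosmetic difference is that the paper obtains the bi-Lipschitz control on $\ti\Gamma$ by invoking the secant estimate of \lemref{lemma:11.10} directly (which works for any pair $z,z'$ in the annulus and so avoids your segment-stays-in-domain bootstrap), whereas you compute $|\ti\Gamma'-\gamma|\ll\gamma$ and integrate along $[w,\ti w]$.
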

\begin{proof} 
For  $z\in A(\l/4,\l/2)$, $\th\in\T$ one has by (\ref{8.216}), (\ref{8.217})
$$ |g(\th,z)-\ti g(\th,z)|\lesssim \l^2
$$
so ({\it cf.} (\ref{defGamma}), (\ref{8.17kov}))
\be |\Gamma(z)-\ti\Gamma(z)|\lesssim \l^2.\label{8.234}
\ee
On the other hand, from Lemma \ref{lemma:11.10}
$$ e^{-3/L^2}\leq \biggl|\frac{ \ti g(\th,z)-\ti g(\th,z')}{z-z'}\biggr|\leq e^{2/L^2}
$$
hence
\be e^{-3/L^2}\leq \biggl|\frac{ \ti \Gamma(z)-\ti \Gamma(z')}{z-z'}\biggr|\leq e^{2/L^2}.\label{8.236}
\ee
Since $z=\Gamma(H(z))=\ti\Gamma(\ti H(z))$  and $H(z), \ti H(z)\asymp z$ ({\it cf.} (\ref{10.223})), one has from (\ref{8.234})
$$ |\ti \Gamma(H(z))-\ti \Gamma(\ti H(z))|\lesssim \l^2
$$
and so from (\ref{8.236})
$$ |\ti H(z)-H(z)|\lesssim \l^2.
$$
Since  from (\ref{10.223})  $|\ti H(z)+H(z)|\lesssim \l$ we  thus have
$$ |\ti H(z)^2-H(z)^2|\lesssim \l^3.
$$

\end{proof}
We recall that $\e_{1}=\|e_{1}\|_{C^0(\T)}$.
The function $\ti\Xi-H^2$ satisfies ({\it cf.} (\ref{eqprop8.8}), (\ref{10.223}))
$$ \|\ti \Xi- H^2\|_{C(0,\rho_{q}/2)}\lesssim \nu,\qquad \|\ti \Xi- H^2\|_{C(0,L\e_{1}^{1/2})}\lesssim 1.
$$
Let $M>5$ and 
\be \l_{M}:=(\rho_{q}/2)^{1/M}(L \e_{1}^{1/2})^{1-1/M}\leq (L\e_{1}^{1/2})^{1-1/M}\label{10.263}\ee
 (we can assume $\rho_{q}\leq 1$).
By the Three Circles Theorem,
$$  \|\ti \Xi- H^2\|_{C(0,\l_{M})}\lesssim \nu^{1/M}.
$$
Lemma \ref{lemma:9.11} tells us that
$$ \|\ti\Xi-\ti H^2\|_{C(0,\l_{M})}\lesssim \nu^{1/M}+\l_{M}^3
$$
hence for any $z$ in the circle $C(0,\l_{M})$
$$ |{z}\ti\Xi(z)-{z}\ti H^2(z)|\lesssim {\l_{M}}(\nu^{1/M}+\l_{M}^3)
$$
and 
$$ \biggl| \frac{1}{2\pi i}\int_{C(0,\l_{M})} ({z}\ti\Xi(z)-{z}\ti H^2(z))dz\biggr| \lesssim \l_{M}^{2}(\nu^{1/M}+\l_{M}^3).
$$
Since $z\mapsto {z}\ti\Xi(z)$ is holomorphic on $D(0,2\l_{M})$, $\int_{C(0,\l_{M})} {z}\ti\Xi(z)dz=0$ and by Lemma \ref{lemma:9.10} we get
$$ \int_{\T}\varpi^{3/2}(\th)e_{1}(\th)^2d\th\lesssim {\l_{M}^2}(\nu^{1/M}+\l_{M}^3).
$$
Since $\varpi(\th)\gtrsim 1$ this gives
$$ \int_{\T}e_{1}(\th)^2d\th\lesssim {\l_{M}^2}(\nu^{1/M}+\l_{M}^3)
$$
hence remembering (\ref{10.263})
\begin{align*} \|e_{1}\|_{L^2(\T)}&\lesssim {\l_{M}}\nu^{1/(2M)}+\l_{M}^{5/2}\\
&\lesssim L^{(1-1/M)}\e_{1}^{(1/2)(1-1/M)}\nu^{1/(2M)}+L^{(5/2)(1-1/M)}\e_{1}^{(5/4)(1-1/M)}.
\end{align*}
If we define
$$\d_{M}=L^{(5/2)(1-1/M)}\e_{1}^{(5/4)(1-1/M)-1},\qquad \mu_{M}= L^{(1-1/M)}\e_{1}^{(1/2)(1-1/M)}\nu^{1/(2M)}$$
this can be written (recall that $\|e_{1}\|_{C^0(\T)}=\e_{1}\lesssim q\bar \rho^{\ -1}\bar\e$) for some $C>0$
$$ \|e_{1}\|_{L^2(\T)}\leq C\d_{M} \|e_{1}\|_{C^0(\T)}+C\mu_{M}
$$
and we are in position to apply Lemma \ref{L2C0} (our choice  $M>5$ implies that for some $\b>0$,  $\d_{M}\leq  \e_{1}^{2\b}\ll 1 $):
\begin{align*}\e_{1}=\|e_{1}\|_{C^0(\T)}&\leq (\mu_{M}/\d_{M})+Ch^{-1}\exp(-h/(C\d_{M}^2))q\bar \rho^{\ -1}\bar\e\\
&\lesssim (\mu_{M}/\d_{M})+\exp(-(1/\e_{1})^{\b})\qquad (\b>0)\\
&\lesssim (\mu_{M}/\d_{M})+(1/2)\e_{1}
\end{align*}
which gives 
$$ \e_{1}\lesssim L^{ -(3/2)(1-1/M) }\e_{1}^{1- ((3/4)(1-1/M))   }\nu^{1/(2M)} 
$$
or equivalently
$$\e_{1}^{(3/4)(1-1/M)}\lesssim L^{ -(3/2)(1-1/M)} \nu^{1/(2M)} 
$$
and taking $M=5+$, one finally gets:
\begin{align*}   \e_{1}^{  }&\lesssim L^{ -(2-) }\nu^{(1/6)-}\\
&\leq \nu^{(1/6)-}.
\end{align*}
This completes the proof of Proposition \ref{prop:cosofholom}.
\ \hfill$\Box$

\subsection{Proof of Proposition \ref{proppropHJ}: the Extension Property }\label{subsec10.5}
From (\ref{10.239}) we see that if there exists a holomorphic function $\Xi$ defined on $\hat D$ such that 
$$\|\Xi-\Omega^{HJ}_{D}\|_{(4/5)\hat D\setminus (1/5)\hat D}\lesssim \nu 
$$
there exists a holomorphic function $\ti\Xi$ defined on $\bD(0,\rho_{q})$ (recall that $\rho_{q}=q\bar \rho/3$ {\it cf.} (\ref{eq:rhoq})) such that 
$$\|\ti \Xi-H^2\|_{C(0,\rho_{q}/2)}\lesssim \nu
$$
and thus by Proposition \ref{prop:cosofholom}
$$\e_{1}=\|e_{1}\|_{C^0(\T_{})}\lesssim \nu^{(1/6)-}.
$$
Now (\ref{eq:checkD}) shows that  that the conclusion of 
Proposition \ref{prop:cosofholom} holds with $\check D=D(c,\nu^{1/200})$.
\hfill $\Box$

\section{Comparison Principle for  Normal Forms}\label{sec:comparingnf}
In this section, if  $0\leq \rho_{1}<\rho_{2}$, we denote by   $\bA(c;\rho_{1},\rho_{2})$ the annulus $\{z\in\C,\  \rho_{1}\leq |z-c|<\rho_{2}\}$ (it is thus the  disk $\bD(c,\rho_{2})$ if $\rho_{1}=0$).

\begin{prop}\label{comparingBNF}[(AA) Case] There exist positive  constants $\bar C,\bar a_{5},\bar a_{6}$ for which the following holds. Let $0<\rho_{1}<\rho_{2}$ (resp. $\rho_{1}=0<\rho_{2}$), $\e,\nu>0$ and for $j=1,2$, $\Omega_{j}\in\ti \cO_{\s}(\bA(c;\rho_{1},\rho_{2}))$, $F_{j}\in \cO_{\s}(W_{h,\bA(c;\rho_{1},\rho_{2})})$, $g_{j}\in\ti {\rm Symp}_{\s}(W_{h,\bA(c;\rho_{1},\rho_{2})})$ such that: $\Omega_{1},\Omega_{2}$ satisfy an $(A,B)$-twist condition and
\be \|g_{j}-id\|_{C^1}\leq \e< \bar C^{-1}h\label{estgj}
\ee
$$\|F_{j}\|_{W_{h,\bA(c;\rho_{1},\rho_{2})}}\leq \nu
$$
and  on $g_{1}(\bA(c;\rho_{1},\rho_{2}))\cap g_{2}(\bA(c;\rho_{1},\rho_{2}))$ one has 
$$g_{1}\circ \Phi_{\Omega_{1}}\circ f_{F_{1}}\circ g_{1}^{-1}=g_{2}\circ \Phi_{\Omega_{2}}\circ f_{F_{2}}\circ g_{2}^{-1}.
$$
Then, if  $\d>0$ satisfies
\be \bar C\e\leq \d/4<(\rho_{2}-\rho_{1})\quad  \textrm{and}\quad \bar C\d^{-\hs\bar a_{5}}\nu<1,\label{11308}\ee there exists $\g\in\R$, $|\g|\leq \bar C \e$ such that one has 
$$\|\pa\Omega_{1}(\cdot+\g)-\pa\Omega_{2}\|_{\bA(c;\rho_{1}+\d,\rho_{2}-\d)}\leq \bar C\d^{-\hs\bar a_{6}} \nu.
$$
$$(\textrm{resp.}\quad \|\pa\Omega_{1}(\cdot+\g)-\pa\Omega_{2}\|_{\bD(c,\rho_{2}-\d)}\leq \bar C\d^{-\bar a_{6}} \nu.)
$$
Furthermore, if $g_{1}$ and $g_{2}$ are exact symplectic on $M_{\R}$, one can choose $\g=0$.
\end{prop}
\begin{proof}
We only treat the case $\rho_{1}>0$ (the case $\rho_{1}=0$ is done similarly).

From  (\ref{estgj}) we see that there exists $C>0$ such that   one has on $W_{1}:=\T_{h-C\e}\times \bA(c;\rho_{1}+C\e,\rho_{2}-C\e)$
\be g\circ \Phi_{\Omega_{1}}=\Phi_{\Omega_{2}}\circ g\circ f_{F}\label{e12.276}
\ee
where
\begin{align*}&g:=g_{2}^{-1}\circ g_{1}\in \ti{\rm Symp}_{\s}(W_{1})\\
&F\in \cO(W_{1}),\qquad f_{F}:=g^{-1}\circ f_{F_{2}}\circ g\circ f_{F_{1}}^{-1},\qquad \|F\|_{W_{1}}\lesssim \nu.
\end{align*} We write
\be g(\th,r)=(\th+u(\th,r),r+v(\th,r))\label{eq:formZ}\ee
and we introduce the notations $\omega_{i}=\pa\Omega_{i}$, $i=1,2$ (we drop the usual factor $(2\pi)^{-1}$).
We have 
$$ g\circ \Phi_{\Omega_{1}}(\th,r)=(\th+\omega_{1}(r)+u(\th+\omega_{1}(r),r),r+v(\th+\omega_{1}(r),r))
$$
and
$$ \Phi_{\Omega_{2}}\circ g=(\th+u(\th,r)+\omega_{2}(r+v(\th,r)),r+v(\th,r))
$$
We thus have on $W_{2}:=\T_{h-C\e-B\rho_{2}-\d}\times \bA(c;\rho_{1}+C\e+\d,\rho_{2}-C\e-\d)$
\be 
\begin{cases}\omega_{2}(r+v(\th,r))-\omega_{1}(r)=I+u(\th+\omega_{1}(r),r)-u(\th,r)\\
v(\th+\omega_{1}(r),r)-v(\th,r)=II
\end{cases}\label{eq:6.64}
\ee
with $\max(\|I\|_{W_{2}},\|II\|_{W_{2}})=O(\delta^{-b}\nu)$.
We  observe that from the twist assumption  on $\Omega_{1}$
 there exists a set $R\subset \bA(c;\rho_{1}+C\e+\d,\rho_{2}-C\e-\d)$ of Lebesgue measure 
$\lesssim \d^{2}$, which is a countable union of disks centered    on the real axis,  such that one has for any $r\in \bA(c;\rho_{1}+C\e+\d,\rho_{2}-C\e-\d)\setminus R$ and any $k\in\Z^*$
\be \min_{l\in\Z}|\omega_{1}(r)-2\pi\frac{l}{k}|\geq  \frac{\d^{2}}{k^3}\label{e12.281}
\ee
so that the second identity in (\ref{eq:6.64}) gives for any $r\in \bA(c;\rho_{1}+C\e+\d,\rho_{2}-C\e-\d)\setminus R$ the following inequality on $\T_{h_{1}-2\d}$ (where $h_{1}=h-C\e-B\rho_{2}$)
\be \|v(\cdot,r)-\int_{\T}v(\th,r)d\th\|_{h_{1}-2\d}\lesssim \d^{-3}\d^{-b}\nu.\label{eq:6.66}
\ee
We now notice that  there exists $0\leq t\leq \d^2$ such that $R\cap \pa \bA(c;\rho_{1}+C\e+\d+t,\rho_{2}-C\e-\d-t)=\emptyset$. The maximum principle applied,  for any $\ph\in\T_{h_{1}-2\d}$,  to the holomorphic function $v(\ph,\cdot)-\int_{\T}v(\th,\cdot)d\th$  defined on $\bA(c;\rho_{1}+C\e+\d+t,\rho_{2}-C\e-\d-t)$ shows that (\ref{eq:6.66}) holds for any $r\in \bA_{2\d}:=\bA(c;\rho_{1}+C\e+2\d,\rho_{2}-C\e-2\d)$. We thus have 
 \be\|\pa_{\th}v\|_{h_{1}-3\d, \bA_{3\d}}=O(\d^{-(4+b)}\nu).\label{eq:6.66'}
 \ee
Taking the $\pa_{\th}$ derivative of the first line of  (\ref{eq:6.64})  and using  the previous inequality show that  (from now on the value of $b$ may change from line to line)
$$\pa_{\th}u(\th+\omega_{1}(r),r)-\pa_{\th}u(\th,r)=O(\d^{-b}\nu).
$$
By the same argument used to establish (\ref{eq:6.66'}) we get
\be\|\pa_{\th}u\|_{h_{1}-4\d, \bA_{4\d}}=O(\d^{-b}\nu)\label{eq:6.66'u}
 \ee
 (we have used the fact that $\int_{\T}\pa_{\th}u(\th,r)d\th=0$).
 Since $g$ is symplectic on $W_{1}$, $\det Dg(\th,r)\equiv 1$ hence
 $$(1+\pa_{\th}u(\th,r))(1+\pa_{r}v(\th,r))-\pa_{r}u(\th,r)\pa_{\th}v(\th,r)=1
 $$
 and in view of (\ref{eq:6.66'}), (\ref{eq:6.66'u})
$$ \|\pa_{r}v\|_{h_{1}-4\d,\bA_{4\d}}=O(\d^{-b}\nu)
$$
 which combined with (\ref{eq:6.66'})  implies,
 \be \|v-\g\|_{h_{1}-4\d,\bA_{4\d}}=O(\d^{-b}\nu),\qquad \g=v(0,0)\in\R.\label{12.281}
\ee
The first equation of (\ref{eq:6.64}) implies that
$$ \|\omega_{2}(\cdot+\g)-\omega_{1}(\cdot)\|_{\bA_{4\d}}=O(\d^{-b}\nu).
$$

If $g_{1}$ and $g_{2}$ are exact symplectic, $g$ is also exact symplectic  and one can write $g=f_{Z}$ for some $Z=\fO_{1}(g-id)$ which means  $g(\th,r)=(\ph,R)$ if and only if $r=R+\pa_{\th}Z(\th,R)$, $\ph=\th+\pa_{R}Z(\th,R)$. In particular
$$ r=r+v(\th,r)+\pa_{\th}Z(\th,r+v(\th,r))
$$
and  since 
$$\frac{d}{d\th}Z(\th,r+v(\th,r))=\pa_{\th}Z(\th,r+v(\th,r))+\pa_{R}Z(\th,r+v(\th,r)\pa_{\th}v(\th,r)
$$
we get from (\ref{eq:6.66'})
$$v(\th,r)=-\frac{d}{d\th}Z(\th,r+v(\th,r))+O(\d^{-b}\nu)
$$
which after integration in $\th$ yields 
$$\int_{\T}v(\th,r)d\th=O(\d^{-b}\nu).$$
We can now conclude from (\ref{12.281}) that 
$$\g=O(\d^{-b}\nu).$$

\end{proof}
\begin{prop}\label{comparingBNFCC}[(CC)- Case] Under the assumptions of the previous Proposition \ref{comparingBNF}:
\begin{enumerate}
\item\label{i11} If  $c=0$, $\rho=\rho_{2}$, $\rho_{1}=0$ and $g_{1},g_{2}$ are exact symplectic then 
$$\|\pa\Omega_{1}(\cdot)-\pa\Omega_{2}(\cdot)\|_{\bD(0,\rho_{2}-\d)}\leq C\d^{-\bar a_{6}} \nu.
$$
\item\label{i12} If $\rho_{2}<|c|/4$ then all the conclusions of the previous Proposition \ref{comparingBNF} are valid. 
\end{enumerate}
\end{prop}
\begin{proof}
The proof of Item  (\ref{i12}) follows from Item (\ref{item2:4.80}) of Lemma \ref{lemma:4.5} applied to Proposition \ref{comparingBNF}.

So we concentrate on the proof of  Item (\ref{i11}), $c=0$, $\rho=\rho_{2}$, $\rho_{1}=0$. We use the symplectic change of coordinates of Section \ref{sec:4.6} $(\th,r)=\psi_{\pm}^{-1}(z,w)$, 
$$\psi_{\pm}:\T_{h}\times \Delta_{\a}^\pm(0,\rho)\to W_{h,\Delta_{\a}^\pm(0,\rho)}\cap \{e^{-2h}<|z|/|w|<e^{2h}\}$$
where $\a<\pi/10$.
Setting $g_{j,\pm}=\psi_{\pm}^{-1}\circ g_{j}\circ \psi_{\pm}$, $g_{\pm}=g_{2,\pm}^{-1}\circ g_{1,\pm}$, $g_{\pm}(\th,r)=(\th+u_{\pm}(\th,r),r+v_{\pm}(\th,r))$
 we  are then reduced to the preceding situation where $g$ is replaced by $g_{\pm}$,  the annulus $\bA(c_{2};\rho_{1},\rho_{2})$ is  replaced by the angular sector $\Delta_{\a+4\d}^\pm(\rho-4\d)$ and $h$ by $h-4\d$, so that (\ref{eq:6.64}) holds on  $\T_{h-4\d}\times \Delta_{\a-4\d}^\pm(\rho-4\d)$. Like in the previous case, one can find $0\leq t\leq \d^2$ such that the diophantine condition   (\ref{e12.281}) holds for any $r\in \Delta^\pm_{\a+4\d}(0;t,\rho-4\d-t):= \Delta_{\a+4\d}^\pm(\rho-4\d)\cap \bA(0;t,\rho-4\d-t)$. Still by the Maximum Principle (\ref{eq:6.66'}) holds on $\T_{h-5\d}\times  \Delta^\pm_{\a+5\d}(0;t,\rho-5\d-t)$ with $v$ replaced by $v_{\pm}$ and one can conclude as we've done before that (\ref{eq:6.66'u}) holds with $u$ replaced by $u_{\pm}$ as well.
 Finally this gives the existence of $\g_{\pm}=v_{\pm}(0,0)\in\R$ such that on $\Delta^\pm_{\a+5\d}(0;t,\rho-5\d-t)$
\be \|\omega_{2}(\cdot+\g_{\pm})-\omega_{1}(\cdot)\|_{\bA_{4\d}}=O(\d^{-b}\nu).\label{9.255}
\ee 
 Now, if $g_{1}$ and $g_{2}$ are exact symplectic the same is true for $g_{1,\pm},g_{2,\pm}$ ({\it cf.} Remark \ref{sec:rem4.1}) and hence $g_{\pm}$ is also exact symplectic; we can thus   prove, like in the proof of Proposition \ref{comparingBNF}, that $\pm\gamma=O(\d^{-b}\nu)$. We can hence assume that  $\g_{\pm}=0$ in equation (\ref{9.255}). Since $\a<\pi/10$ we deduce that on $\bA(0;t,\rho-5\d-t)=\Delta^+_{\a+5\d}(0;t,\rho-5\d-t)\cup \Delta^-_{\a+5\d}(0;t,\rho-5\d-t)$ one has 
$$ \|\omega_{2}(\cdot)-\omega_{1}(\cdot)\|_{\bA(0;t,\rho-5\d-t)}=O(\d^{-b}\nu).
$$ 
 But $\omega_{1},\omega_{2}\in \cO(\bD(0,\rho))$, hence by the Maximum Principle
 $$ \|\omega_{2}(\cdot)-\omega_{1}(\cdot)\|_{\bD(0,\rho-5\d-t)}=O(\d^{-b}\nu).
$$ 
\end{proof}

\section{Adapted Normal Forms: $\omega_{0}$ diophantine} \label{sec:adaptedkam}

Recall that 
$$DC(\kappa,\tau)=\{\omega_{0}\in\R,\ \forall\ k\in\Z^*,\ \min_{l\in\Z}|\omega_{0}-\frac{l}{k}|\geq \frac{\kappa}{|k|^{1+\tau}}\} \qquad DC(\tau)=\bigcup_{\kappa>0}DC(\kappa,\tau).$$

Let $h> 0$, $0<\bar \rho<1$,  $\Omega\in\ti \cO_{\s}(e^{10h}\bD(0,\bar\rho))$, $F\in \cO_{\s}(e^{10h}W_{h,\bD(0,\bar \rho)})$ such that 
\begin{align}& \forall \ r\in\R,\  A^{-1}\leq  (2\pi)^{-1}\pa^2\Omega(r)\leq A,\quad \textrm{and}\ \ \|(2\pi)^{-1}D^3\Omega\|_{\C}\leq B.\label{eq:11.257}\\
&\omega_{0}:=(2\pi)^{-1}\pa^2\Omega(0)\in DC(\tau)\label{eq:11.257,5}\\
&\forall\ 0<\rho\leq \bar\rho,\quad   \|F\|_{e^{10h}W_{h,\bD(0,\rho)}}\leq \rho^{m},\qquad m=\max(\bar a_{1,\tau},\bar a_{2}+4,\bar a_{3},\bar a_{5})\label{eq:11.258}
\end{align}
($\bar a_{1},\bar a_{2},\bar a_{3},\bar a_{5}$ are the constants appearing in Propositions \ref{nprop:9.2}, \ref{prop:1.enonce}, \ref{proppropHJ}, \ref{comparingBNF}).

We as usual denote  $\omega=(1/2\pi)\pa\Omega$ ($\omega(0)=\omega_{0}$).

\subsection{Adapted  KAM domains}

We use in this section the notations of section \ref{sec:5}, in particular we denote
\be \bar \e:= \max_{0\leq j\leq 3}\|D^j F\|_{h,\bD_{\bar \rho}}\leq \bar \rho^{\hs \bar a_{2}}.\label{8.124}\ee
Assumption (\ref{eq:11.258}) allows us to   apply Proposition \ref{prop:1.enonce} on the existence of a KAM Normal Form on the domain $W_{2h,\bD(0,\bar\rho)}$. We can thus define holed  domains $U_{n}$ and maps $F_{n}$, $\Omega_{n}$, $g_{m,n}$ satisfying the conclusions of Proposition  \ref{prop:1.enonce}.

\subsubsection{Definition of the domains $U_{i}^{(\rho)}$}

Let $0<\b\ll 1$ and $\mu\in ]1,2[$
 $$\mu=2(1-(\b/10))\in ]1,2[.$$
 We define for $\rho<\bar \rho/4$ two indices $i_{-}(\rho),\ i_{+}(\rho)\in \N$ as follows:
\be i_{-}(\rho)=\max\{i\geq 1,\ \bD(0,2\rho)\cap U_{i}=\bD(0,2\rho)\}.\label{eq:9.296}
\ee
and $i_{+}(\rho)$ is the unique index such that 
\be (N_{i_{-}(\rho)})^\mu\leq N_{i_{+}(\rho)}< (4/3)^{\mu}(N_{i_{-}(\rho)})^\mu\leq N_{i_{-}(\rho)}^2 .\label{e9.155}
\ee
We also define $\iota(\rho)\in\R^*_{+}$ by
\be  \rho=(N_{i_{-}(\rho)})^{-\iota(\rho)},\qquad N_{i_{-}(\rho)}=\rho^{-1/\iota(\rho)}. \label{eq:5.299ante}\ee
The next lemma shows how $N_{i_{-}(\rho)}$ and $N_{i_{+}(\rho)}$ compare with $\rho$.

\begin{lemma}\label{aalemma:10.1}One has 
\be 2+O(|\ln\rho|^{-1})\leq \iota(\rho)\leq (1+\tau)+O(|\ln\rho|^{-1}). \label{eq:5.299}\ee
In particular,
\be N_{i_{+}(\rho)}\asymp \rho^{-\mu/\iota(\rho)}.\label{eq:5.101}
\ee
where for $\rho\ll_{\beta} 1$
\be  \frac{2}{1+\tau} -(\b/2)\leq  \frac{\mu}{\iota(\rho)}\leq  1-(\b/2).\label{502}
\ee
\end{lemma}
\begin{proof}
To prove (\ref{eq:5.299}) we just have to check that
\be (N_{i_{-}(\rho)})^{-(1+\tau)}\lesssim \rho\lesssim  (N_{i_{-}(\rho)})^{-2}.\label{e10.204}
\ee
See the details in  Appendix \ref{sec:appendixholes1}.

\end{proof}
We shall say that the domains $U_{i}$, $i_{-}(\rho)\leq i\leq i_{+}(\rho)$, are $\rho$-adapted KAM domains.

For $t>0$ and $i_{-}(\rho)\leq i\leq i_{+}(\rho)$ we define
$$U_{i}^{(t)}=U_{i}\cap \bD(0,t),\qquad \cD_{t}(U_{i})=\cD(U_{i}^{(t)})=\{D\in\cD(U_{i}),\ D\cap \bD(0,t)\ne \emptyset\}$$  $U_{i}$ being  the domains of Proposition  \ref{prop:1.enonce} and where as usual   $\cD(U)$ denotes the holes of the holed domain $U$  (see Subsection \ref{se:2.5.1}).
By (\ref{formUn})  
\be\begin{cases} &U_{i}^{(t)}:=U_{i}\cap\bD(0,t)=\bD(0,t)\setminus\bigcup_{j=1}^{i-1}\bigcup_{(k,l)\in E_{j} }\bD(c_{l/k}^{(j)},s_{j,i-1}K_{j}^{-1}),\\
&s_{j,i-1}=e^{\sum_{m=j}^{i-1}{\d_{m}}}\in [1,2]\end{cases}\label{abc10.207}\ee
where
$$E_{j}\subset \{(k,l)\in\Z^2,\ 0<k<N_{j},\ 0\leq |l|\leq N_{j}\},\qquad \omega_{j}^{}(c_{l/k}^{(j)}) =l/k.$$
One can in fact in formula (\ref{abc10.207}) restrict the union indexed by $j$ to the set $j\in [i_{-}(\rho),i-1]\cap \N$; {\it cf.}  Lemma \ref{ji-rho} of the Appendix \ref{sec:appendixholes}.

One can also describe $U_{i}^{(t)}$ by means of its holes:\be U_{i}^{(t)}:=U_{i}\cap\bD(0,t)= \bD(0,t)\setminus \bigcup_{D\in\cD_{t}(U_{i})}D\label{10.207ante}\ee
this decomposition being minimal. In particular, if $D,D'\in\cD_{t}(U_{i})$ the inclusions $D\subset D'$, $D'\subset D$ do not occur.
\begin{prop}\label{prop:a10.4}Let $i_{-}(\rho)\leq i'<i\leq i_{+}(\rho)$.
\begin{enumerate}
\item\label{ab1} The holes  $D\in\cD_{(3/2)\rho}(U_{i})$ are pairwise disjoint.
\item\label{ab2} If $D\in \cD_{(3/2)\rho}(U_{i})$, $D'\in \cD_{(3/2)\rho}(U_{i' })$ one has either $D\cap D'=\emptyset$ or $D'\subset D$.
\item\label{ab3}  The number of holes of $U_{i}$ intersecting 
$\bD(0,\rho)$  satisfies 
\be \# \{D\in\cD(U_{i}),\ D\cap \bD(0,\rho)\ne \emptyset\}\lesssim \rho N_{i}^2 .\label{numberofholes}
\ee
\item \label{ab4}Let $D\in \cD_{\rho}(U_{i_{+}(\rho)})$ and define $$ i_{D}=-1+\min\{i:\ i_{-}(\rho)< i\leq i_{+}(\rho),\ \exists D'\in \cD_{\rho}(U_{i_{+}(\rho)}),\ D'\subset D\}.$$
Then, $D$ is of the form  $D= \bD(c_{D},s_{D}K_{i_{D}}^{-1}),\quad  s_{D}\in [1,2],\ c_{D}\in\R,\ \omega_{i_{D}}(c_{D})\in \{l/k,\ (k,l)\in E_{i_{D}}\}$ and one has $D\subset U_{i_{D}}$.
\item\label{ab5} With $\tau=\tau(\omega_{0})$ and $b_{\tau}$ as defined in  (\ref {e6.89}) one has
\be\bD(0,\rho^{b_{\tau}})\subset  U_{i_{+}(\rho)}.\label{eq:disk}
\ee

\end{enumerate}
\end{prop}
\begin{proof}We refer to the Appendix \ref{sec:appendixholes2} for the proofs of Items \ref{ab1}, \ref{ab2} and \ref{ab4}.

\medskip\noindent{\it Proof of Item \ref{ab3} on the number of holes.}
From   (\ref{abc10.207}) we  just have to check that for $N\in \N$
$$\#\{(k,l)\in\Z^2,\ l/k\in ]\omega_{0}-s,\omega_{0}+s[,\ 0<k<N, 0\leq |l|\leq N \}\lesssim s N^2.$$
If $(k,l)$ belongs to the preceding set one has  $|l-k\omega_{0}|<s N$ and thus $(k,l)$ belongs to $[-N,N]^2\cap \{(x,y)\in\R^2,\ |x-\omega_{0} y|\leq sN\}$ a set which has Lebesgue measure $\lesssim sN^2$. We thus have for large $N$,  $\#(\Z^2\cap[-N,N]^2\cap \{(x,y)\in\R^2,\ |x-\omega_{0} y|\leq sN\}\lesssim sN^2$.

\medskip\noindent{\it Proof of Item \ref{ab5}, inclusion(\ref{eq:disk}).}
Recall that $b_{\tau}\geq \tau+2$. Since $\omega_{0}$ is diophantine, for $(k,l)\in E_{j}$, $j\leq i_{+}(\rho)-1$ one has $|l/k-\omega_{0}| \gtrsim N_{i_{+}(\rho)}^{-(1+\tau)}$. Since $\Omega_{j}$ satisfies a $(2A,2B)$-twist condition $(2A)^{-1}\leq \pa\omega^{}_{j}\leq 2A$ one has $|c_{l/k}^{(j)}|\gtrsim N_{i_{+}(\rho)}^{-(1+\tau)}$. Now  (\ref{formUn}) shows that that $U_{i_{+}(\rho)}$ contains a disk $\bD(0,N_{i_{+}(\rho)}^{-(\tau+2)})$ and  we observe that from (\ref{502}), $ (\tau+2)(\mu/\iota(\rho))<\tau+2\leq b_{\tau}$ hence \be\bD(0,\rho^{b_{\tau}})\subset  \bD(0,\rho^{ (\tau+2)\mu/\iota(\rho)})\subset U_{i_{+}(\rho)}. \label{eq:diskbis}
\ee
\end{proof}

\subsubsection{Covering the holes with bigger disks} 
 Let us define (compare with (\ref{requirements6.76}))
\be \hat K_{i}=N_{i}^{\ln N_{i}}\ll K_{i}\ll e^{N_{i}/(\ln N_{i})^3} \label{12.330}\ee
and for any 
$D\in\cD_{\rho}:=\cD_{\rho}(U_{i_{+}(\rho)})$ set 
$$\hat D=\bD(c_{D},\hat K_{i_{D}}^{-1}),\qquad \hat \cD_{\rho}=\{\hat D,\ D\in \cD_{\rho}\}.
$$
Notice that for any $a>0$, $\rho\ll_{a} 1$ and $i_{-}(\rho)\leq i_{D}\leq i_{+}(\rho)$ one has 
\be \bar \e_{i_{D}}^{1/a}\ll \hat K_{i_{D}}^{-1}\ll |c_{D}|/4.\label{comphatKi}\ee
Indeed, the inequality of the RHS is due to the fact that  $|c_{D}|>\rho^{b_{\tau}}$ ({\it cf.} Proposition \ref{prop:a10.4}, Item \ref{ab5}) combined with the fact that  $N_{i_{-}(\rho)}^{-1}\lesssim \rho^{1/(1+\tau)}$ ({\it cf.}(\ref{e10.204})).   The inequality of the LHS  is a consequence of (\ref{requirements6.76}).

Let us mention that these disks $\hat D$ are the ones on which we shall later perform a Hamilton-Jacobi Normal Form as described in Proposition \ref{proppropHJ}.

\begin{lemma}\label{lemma:12.4}The elements of $\hat \cD_{\rho}$ are pairwise disjoint and for any $D\in\cD_{\rho}$ one has 
$$ D\subset (1/10)\hat D\subset 6 \hat D\subset U_{i_{D}}, \qquad\hat D\setminus(1/10)\hat D\subset U_{i_{+}(\rho)}.
$$
\end{lemma}
\begin{proof}
Let $D$ and $D'$ be two distinct elements of $\cD_{\rho}$. By Proposition \ref{prop:a10.4},  Item \ref{ab1}, $D\cap D'= \emptyset$  hence from Lemma \ref{cor:6.5}, Item \ref{ni1} $|c_{D}-c_{D'}|\gtrsim N_{i_{+}(\rho)}^{-2}$.  Since $\hat K_{i_{D}}^{-1}+\hat K_{i_{D'}}^{-1}\ll N_{i_{+}(\rho)}^{-2}$ we get that $\bD(c_{D},\hat K_{i_{D}}^{-1})\cap \bD(c_{D'},\hat K_{i_{D'}}^{-1})=\emptyset$.

 Let us now prove $6\hat D\subset U_{i_{D}}$. If $6\hat D$ is not a subset of $U_{i_{D}}$ one has for some $D'\in \cD(U_{i_{D}})$,   $(6\hat D)\cap D'\ne\emptyset$  hence  $|c_{D}-c_{D'}|\leq 6\hat K_{i_{D}}^{-1}+ K_{i_{D'}}^{-1}\ll N_{i_{+}(\rho)}^{-2}$. We can apply Lemma \ref{cor:6.5}, Item \ref{ni1} to deduce $|c_{D}-c_{D'}|\lesssim \bar \e_{i_{-}(\rho)}^{1/2}$; but this implies that $D\cap D'\ne \emptyset$, hence  $D=D'$ (we can apply Proposition \ref{prop:a10.4}, Item \ref{ab1},  since $D,D'\in\cD_{(3/2)\rho}$) and by Proposition \ref{prop:a10.4}, Item \ref{ab4} we obtain $D'\subset U_{i_{D}}$: a contradiction.
 
 Let us prove the second inclusion $\hat D\setminus(1/10)\hat D\subset U_{i_{+}(\rho)}$. If this is not the case then for some $D'\in \cD(U_{i_{+}(\rho)})$ one has $D'\cap (\hat D\setminus(1/10)\hat D)\ne\emptyset$ hence $|c_{D}-c_{D'}|\lesssim K_{i_{D}}^{-1}\ll N_{i_{+}(\rho)}^{-2}$ which implies as before using Lemma \ref{cor:6.5} that $D=D'$. But since $D\subset (1/10)\hat D$ this is leads to a contradiction (otherwise $D'\cap (\hat D\setminus(1/10)\hat D)=\emptyset$). 
 \end{proof}
\begin{rem}\label{rem:lemma3.1}Let us mention (this will be useful in the proof of Theorem \ref{prop:12.7}) that 
$$\sum_{\hat D\in\hat \cD_{\rho}}|\hat D\cap \R|^{1/2}\leq 1.
$$
\end{rem}

\subsubsection{No-Screening Property}
Our key proposition is the following.

\begin{prop} \label{prop:10.4noscreening} For any $D\in \cD(U_{i_{+}(\rho)})$ such that $D\cap \bD(0,\rho)\ne \emptyset$  the triple $(U_{i_{+}(\rho)},\hat D\setminus (1/10)\hat D,\bD(0,\rho^{b_{\tau}} /2) )$ is $(10b_{\tau})^{-1}|\ln \rho|^{-1}$-good (in the sense of Definition \ref{defin:3.1}).
\end{prop}
\begin{proof}
From  Remark (\ref{rem:3.1}) it is enough to prove that for some  $U'\subset U_{i_{+}(\rho)}$ containing both $\bD(0,\rho^{b_{\tau}})$ and $\hat D\setminus(1/10)\hat D$,  the triple $(U',\hat D\setminus(1/10)\hat D,\bD(0,\rho^{b_{\tau}}/2 ))$ is  $(10b_{\tau})^{-1}|\ln \rho|^{-1}$-good.

\begin{lemma}\label{lemma:12.5}There exists a constant $C>0$ such that for any $1\leq s\leq 4/3$, there exists $\rho'\in [s\rho,s\rho+ 10C\rho^2]$ such that 
$$\bD(0,\rho')\cap U_{i_{+}(\rho)}=\bD(0,\rho')\setminus\bigcup_{\substack{D\in \cD(U_{i_{+}(\rho)})\\ D\subset \bD(0,\rho')  }}D.$$
\end{lemma}
\begin{proof}From Lemma \ref{cor:6.5}  the holes of $\cD(U_{i_{+}(\rho)})$ are    $C^{-1}N_{i_{+}(\rho)}^{-2}$-separated (some $C>0$), hence $C^{-2}N_{i_{-}(\rho)}^{-4}$-separated and from  (\ref{e10.204})  they are $C^{-1}\rho^2$-separated (some $C>0$).  On the other hand each of these disks has a radius $\leq 2K_{i_{-}(\rho)}^{-1}\ll \rho^{4}$. Since they are centered on the real line  the conclusion follows.
\end{proof}
From the previous lemma we deduce   the existence of a $\rho'\in [(5/4)\rho,(4/3)\rho]$ such that all the holes $D\in \cD(U_{i_{+}(\rho)})$ of $U_{i_{+}(\rho)}$ intersecting $\bD(0,\rho')$ are indeed included in $\bD(0,\rho')$. We then set
$$U'=U_{i_{+}(\rho)}\cap \bD(0,\rho')=\bD(0,\rho')\setminus\bigcup_{\substack{D\in \cD(U_{i_{+}(\rho)})\\ D\subset \bD(0,\rho')  }}D$$
From (\ref{eq:disk}) we have $\bD(0,\rho^{b_{\tau}})\subset U'$ and for any $D=\bD(c_{D},K_{i_{D}}^{-1})\in \cD(U_{i_{+}(\rho)})$ such that $D\cap \bD(0,\rho)\ne \emptyset$  one has  $\hat D\subset \bD(0,(5/6)\rho')$: indeed, since $D\cap\bD(0,\rho)\ne \emptyset$, $|c_{D}|<\rho+K_{i_{D}}^{-1}<\rho+\rho^{4}$ hence $|c_{D}|+\hat K_{i_{D}}^{-1}<\rho+2\rho^4<(5/6)\rho'$. On the other hand, from   Lemma \ref{lemma:12.4} $\hat D\setminus(1/10)\hat D\subset U'$ ($\hat D\subset \bD(0,\rho')$). 
In this situation we can apply
 Corollary \ref{cor:3.2} with $U=U'$, $B=\bD(0,\rho^{b_{\tau}}/2  )$, $d_{i}=\hat K_{i_{D}}^{-1}$, $\e_{i}=2K_{i_{D}}^{-1}$:  the triple $(U',\hat D\setminus (1/10)\hat D,\bD(0,\rho^{b_{\tau}}/2)  )$ is $A$-good with 
\be A=\frac{\ln (6/5)}{b_{\tau}|\ln \rho|}-(I)\label{510}\ee
where
$$(I):=\sum_{i=i_{-}(\rho)}^{i_{+}(\rho)-1}\#\cC_{i}(\rho) \frac{\ln(\hat K_{i}^{-1}/(20\rho'))}{\ln (2K_{i}^{-1}/(\rho'))}$$
with
$$\cC_{i}(\rho)=\#\{D\in\cD(U_{i_{+}(\rho)}),\ D\cap \bD(0,\rho)\ne \emptyset, \ i_{D}=i\}.
$$

From  (\ref{numberofholes}) of Proposition \ref{prop:a10.4}, (\ref{eq:5.299ante}),
 (\ref{eq:5.299}), (\ref{12.330}), (\ref{requirements6.76}) one has
\begin{align*} (I)&\leq \rho \sum_{i=i_{-}(\rho)}^{i_{+}(\rho)-1} N_{i}^2\frac{\ln(\hat K_{i}^{-1}N_{i_{-}(\rho)}^{\iota(\rho)}/30)}{\ln (2K_{i}^{-1}N_{i_{-}(\rho)}^{\iota(\rho)})}\\
&\leq \rho\sum_{i=i_{-}(\rho)}^{i_{+}(\rho)-1} N_{i}^2\frac{-(\ln N_{i})^2+\ln (N_{i_{-}(\rho)}^{\iota(\rho)}/30)}{-(1/(2(\bar a_{0}+2)))hN_{i}/(\ln N_{i})^2+\ln (N_{i_{-}(\rho)}^{\iota(\rho)}/2)}\\
&\lesssim \rho \sum_{i=i_{-}(\rho)}^{i_{+}(\rho)-1}( N_{i})^{1+ \b/2}\qquad (\rho\ll_{\b}1)
\end{align*}
and since $N_{i}$ is exponentially growing with $i$,
$$(I)\lesssim \rho\times  (N_{i_{+}(\rho)})^{1+\b/2}.$$
From (\ref{eq:5.101}) we thus get
\be (I)\lesssim \rho^{1-(1+\b/2)\mu/\iota(\rho)} \leq \rho^{\b^2/4}
\ee
and from (\ref{510}), 
if $\rho\ll_{\b}1$
$$ \frac{1}{10 b_{\tau}}\frac{1}{|\ln \rho|}\leq A\qquad \textrm{(some\ } C>0).$$
\end{proof}

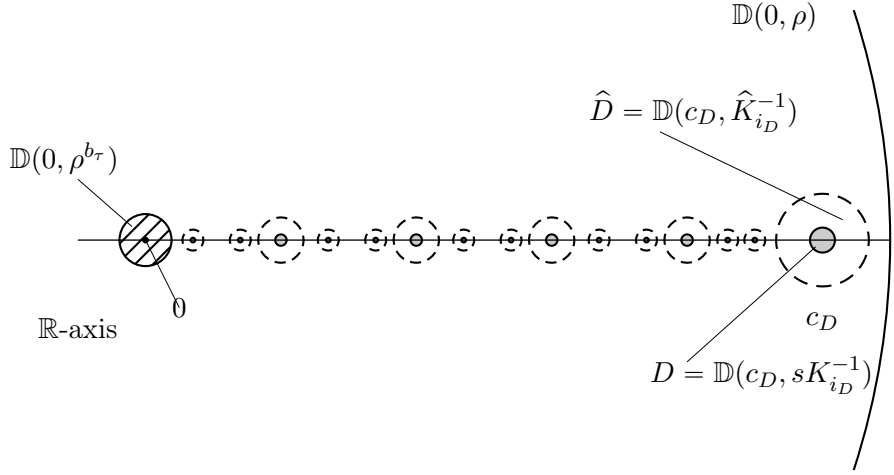
\begin{figure}
\begin{center}
\definecolor{mygray}{gray}{0.8}
\psset{unit=.9cm}
\begin{pspicture}(-6,-4)(6,4)

%
\psarc(-5,0){11}{-18}{18}
\pscircle[fillstyle=hlines](-5,0){0.4}

\pscircle[linestyle=dashed](5,0){0.7}
\pscircle[fillstyle=solid,fillcolor=mygray](5,0){0.2}

\pscircle[linestyle=dashed](3,0){0.35}
\pscircle[fillstyle=solid,fillcolor=mygray](3,0){0.1}

\pscircle[linestyle=dashed](1,0){0.35}
\pscircle[fillstyle=solid,fillcolor=mygray](1,0){0.1}

\pscircle[linestyle=dashed](-1,0){0.35}
\pscircle[fillstyle=solid,fillcolor=mygray](-1,0){0.1}

\pscircle[linestyle=dashed](-3,0){0.35}
\pscircle[fillstyle=solid,fillcolor=mygray](-3,0){0.1}

\pscircle[linestyle=dashed](3.6,0){0.17}
\pscircle[fillstyle=solid,fillcolor=mygray](3.6,0){0.05}

\pscircle[linestyle=dashed](4,0){0.17}
\pscircle[fillstyle=solid,fillcolor=mygray](4,0){0.05}

\pscircle[linestyle=dashed](1.7,0){0.17}
\pscircle[fillstyle=solid,fillcolor=mygray](1.7,0){0.05}

\pscircle[linestyle=dashed](2.4,0){0.17}
\pscircle[fillstyle=solid,fillcolor=mygray](2.4,0){0.05}

\pscircle[linestyle=dashed](-0.3,0){0.17}
\pscircle[fillstyle=solid,fillcolor=mygray](-0.3,0){0.05}

\pscircle[linestyle=dashed](0.4,0){0.17}
\pscircle[fillstyle=solid,fillcolor=mygray](0.4,0){0.05}

\pscircle[linestyle=dashed](-2.3,0){0.17}
\pscircle[fillstyle=solid,fillcolor=mygray](-2.3,0){0.05}

\pscircle[linestyle=dashed](-1.6,0){0.17}
\pscircle[fillstyle=solid,fillcolor=mygray](-1.6,0){0.05}

\pscircle[linestyle=dashed](-3.6,0){0.17}
\pscircle[fillstyle=solid,fillcolor=mygray](-3.6,0){0.05}

\pscircle[linestyle=dashed](-4.3,0){0.17}
\pscircle[fillstyle=solid,fillcolor=mygray](-4.3,0){0.05}

\psdot[dotsize=0.1](-5,0)
\rput(-4.5,-1){$0$}
\rput(3.1,1.9){$\hat D=\bD(c_{D},\hat K_{i_{D}}^{-1})$}
\rput(4.1,-2){$D=\bD(c_{D},sK_{i_{D}}^{-1})$}
\rput(4.3,3.3){$\bD(0,\rho)$}
\rput(-6.2,1.2){$\bD(0,\rho^{b_{\tau}})$}
\rput(-6,-1.3){$\bR$-axis}
\rput(5,-1.2){$c_{D}$}
\psline[linewidth=0.5pt](-6,0)(6,0)
\psline[linewidth=0.1pt](-4.5,-1)(-5,0)
\psline[linewidth=0.1pt](2.6,1.6)(5.3,0.3)
\psline[linewidth=0.1pt](-6,0.9)(-5.2,0.2)
\psline[linewidth=0.1pt](3,-1.8)(4.9,-0.1)


%
\end{pspicture}
\end{center}
\caption{Adapted KAM Normal Forms ($\omega_{0}$ diophantine) in the complex $r$-plane.  The triple $(U^{(\rho)},\hat D\setminus (1/10)\hat D,\bD(0,\rho^{b_{\tau})}))$ is $C_{b}|\ln\rho|^{-1}$-good}\label{Figure2}
\end{figure}

\subsection{Coexistence of KAM, BNF and HJ Normal Forms on the adapted KAM domain}
\begin{notation}If $W_{h,U}$ is a $\s$-symmetric holed domain, we denote by $\cN\cF_{\s}(W_{h,U})$ (resp.  $\cN\cF_{ex,\s}(W_{h,U})$) the set of triples $(\Omega,F,g)$ with $\Omega\in\ti\cO_{\s}(U)$, $F\in \cO_{\s}(W_{h,U})$, $g\in\ti{\rm Symp}_{\s}(W_{h,U})$ (resp. $g\in\ti{\rm Symp}_{ex,\s}(W_{h,U})$).
\end{notation}
\begin{prop}[Adapted Normal Forms]\label{adaptedNFAA}Let $\Omega\in\ti\cO_{\s}(U)$ and $F\in\cO_{\s}(W_{h,U})$ satisfy (\ref{eq:11.257}), (\ref{eq:11.257,5}), (\ref{eq:11.258}).  For any $\b\ll 1$ and $\rho\ll_{\b}1$ the following holds:

\smallskip\noindent{\bf(KAM):}
Adapted KAM Normal Form (Proposition \ref{prop:1.enonce}). Let $D\in\cD_{\rho}(U_{i_{+}(\rho)})$.
\begin{align}& [W_{h, U_{i_{\pm}(\rho)}}]\quad g_{1,i_{\pm}(\rho)}^{-1}\circ \Phi_{\Omega}\circ f_{F}\circ g_{1,i_{\pm}(\rho)}=\Phi_{\Omega_{i_{\pm}(\rho)}}\circ f_{F_{i_{\pm}(\rho)}}\label{12348}\\
 & [W_{h,U_{i_{+}(\rho)}}]\quad g_{i_{D},i_{+}(\rho)}^{-1}\circ \Phi_{\Omega_{i_{D}}}\circ f_{F_{i_{D}}}\circ g_{i_{D},i_{+}(\rho)}=\Phi_{\Omega_{i_{+}(\rho)}}\circ f_{F_{i_{+}(\rho)}}\label{12352}\\
 & [W_{h,U_{i_{D}}}]\quad g_{i_{-}(\rho),i_{D}}^{-1}\circ \Phi_{\Omega_{i_{-}(\rho)}}\circ f_{F_{i_{-}(\rho)}}\circ g_{i_{-}(\rho),i_{D}}=\Phi_{\Omega_{i_{D}}}\circ f_{F_{i_{D}}}.\label{12352bis}\\
&\|g_{1,i_{+}(\rho)}-id\|_{C^1}\lesssim \bar\e^{1/2}\leq \rho^{m/2}\label{12.353}\\
&\|g_{i_{D},i_{+}(\rho)}-id\|_{C^1}\leq \bar \e_{i_{D}}^{1/2}\label{12354}\\
& \|F_{i_{+}(\rho)}\|_{W_{h,U^{(\rho)}}}\lesssim  \exp(-(1/\rho)^{\frac{2}{1+\tau} -\b}).\label{12.355}
\end{align}
Note that  $(\Omega_i, F_i, g_i)\in\cN\cF_{ex,\s}(W_{h,U_{i}})$ and $\Omega_{i}\in\cT\cC(2A,2B)$ .

\smallskip\noindent{\bf(HJ):} Hamilton-Jacobi Normal Form. (Proposition \ref{proppropHJ}).  For any  $D\in\cD_{\rho}(U_{i_{+}(\rho)})$  there exists $\check D=\subset \hat D$ and $(\Omega_{\hat D}^{HJ},F_{\hat D}^{HJ},g_{\hat D}^{HJ})\in \cN\cF_{\s}(W_{h/9,\hat D\setminus\check D})$ such that 
\begin{align}&(g^{HJ}_{\hat D})^{-1}\circ \Phi_{\Omega_{i_{D}}}\circ f_{F_{i_{D}}}\circ g^{HJ}_{\hat D}=\Phi_{\Omega^{HJ}_{\hat D}}\circ f_{F^{HJ}_{\hat D}} \qquad [W_{h/9,\hat D\setminus \check D}]\label{12356}\\ 
& \|g_{\hat D}^{HJ}-id\|_{C^1}\lesssim \bar\e_{i_{D}}^{1/9}\label{12357}\\
&\Omega^{HJ}_{\hat D}\in\cT\cC(2A,2A)\\
&\|F^{HJ}_{\hat D}\|_{W_{{h/9}, (\hat D\setminus \check D)}}\lesssim \exp(-(1/\rho)).\label{12358}
\end{align}
The triple $(\Omega^{HJ}_{D},\hat D,\check D)$ satisfies the Extension Principle of Proposition \ref{proppropHJ}.

\smallskip\noindent{\bf(BNF):} Birkhoff Normal Form (Proposition \ref{BNFprop}): 

There exists $(\Omega_{\rho}^{BNF},F_{\rho}^{BNF},g_{\rho}^{BNF})\in\cN\cF_{ex,\s}(W_{h,\bD(0,\rho^{b_{\tau}})})$ such that  
\begin{align} &  (g_{\rho}^{BNF})^{-1}\circ \Phi_{\Omega}\circ f_{F}\circ g_{\rho}^{BNF}= \Phi_{\Omega_{\rho}^{BNF}}\circ f_{F_{\rho}^{BNF}},\qquad (W_{h,\bD(0,\rho^{b_{\tau}})})   \label{12347}\\
& \|g_{\rho}^{BNF}-id\|_{ C^1 }\lesssim \rho^{m-1}.\label{12350}\\
&\Omega_{\rho}^{BNF}\in\cT\cC(2A,2B)\\
&\|F_{\rho}^{BNF}\|_{W_{h,\bD(0,\rho^{b_{\tau}})}}\lesssim \exp(-(1/\rho)^{1-\b})\label{12349}
\end{align}
\end{prop}
\begin{proof}

\smallskip\noindent {\it KAM:} This is just the content of Proposition \ref{prop:1.enonce}. For inequality (\ref{12.355}) we note that 
 from (\ref{8.114}), (\ref{requirements6.76}), (\ref{eq:5.101})
\begin{align*} \|F_{i_{+}(\rho)}\|_{h,U_{i_{+}(\rho)}}&\lesssim  \exp(-N_{i_{+}(\rho)}/(\ln (N_{i_{+}(\rho)}))^2)\\
&\lesssim  \exp(-\rho^{-(\mu/\iota(\rho))^{-}})\\
&\lesssim \exp(-(1/\rho)^{\frac{2}{1+\tau} -\b}). 
\end{align*}

\smallskip\noindent {\it HJ:} Let $D\in\cD_{\rho}(U_{i_{+}(\rho)})$ where  $D=\bD(c_{D},s_{D} K_{i_{D}}^{-1})$, $\omega_{i_{D}}(c_{D})=p/q$, $q\leq N_{i_{D}}$, $p\wedge q=1$,  be one of the disk obtained in Proposition \ref{prop:a10.4}, Item \ref{ab4}. By Lemma \ref{lemma:12.4} the disk $6\hat D=\bD(c_{D},6\hat K_{i_{D}}^{-1})$ is included in $U_{i_{D}}$. We observe that  $6\hat K_{i_{D}}^{-1}<|c_{D}|/4$ ({\it cf.} \ref{comphatKi})).  Since
$$\min(6\hat K_{i_{D}}^{-1},|c_{D}|/4)=6\hat K_{i_{D}}^{-1}<(Aq)^{-8}\quad \textrm{and}\quad \|F_{i_{D}}\|_{h,6\hat D}\lesssim \bar \e_{i_{D}}<(6\hat K_{i_{D}}^{-1})^{\bar a_{3}}
 $$
 (the last inequality comes also from (\ref{comphatKi})) 
condition (\ref{8.179}), (\ref{8.177}) are satisfied and we can apply  Proposition \ref{proppropHJ}  on Hamilton-Jacobi Normal Forms to $\Phi_{\Omega_{i_{D}}}\circ f_{F_{i_{D}}}$ on the domain $W_{h, \hat D}\subset W_{h, U_{i_{D}}}$ with 
$\hat \rho=\hat K_{i_{D}}^{-1}$:
there exists a disk $\check D\subset \hat D$ 
\be\check D:=\bD(c_{\check D},\rho_{\check D})\subset (1/10) \hat D:=\bD(c_{D},(1/10)\hat K_{i_{D}}^{-1})\subset U_{i_{D}}\label{disks}\ee
and $(\Omega_{\hat D}^{HJ},F_{\hat D}^{HJ},g_{\hat D}^{HJ})\in \cN\cF_{\s}(W_{h/9,\hat D\setminus\check D})$ satisfying (\ref{12356})
 \begin{align} 
&\|g_{\hat D}^{HJ}-id\|_{C^1}   \lesssim q \bar\e_{i_{D}}^{1/8}\leq \bar\e_{i_{D}}^{1/9}\label{14.257b}\\
&\|F^{HJ}_{\hat D}\|_{W_{h/9, (\hat D\setminus \check D)}}
\lesssim \exp(-(\hat K_{i_{D}}/N_{i_{D} }  )^{1/4}).\label{14.270}
\end{align}
To obtain inequality (\ref{12358}) we observe that  since $\hat K_{i_{D}}=N_{i_{D}}^{\ln N_{i_{D}}}$ with $i_{+}(\rho)\geq i_{D}\geq i_{-}(\rho)$ we get 
\begin{align*}-(\hat K_{i_{D}}/N_{i_{D}} )&\lesssim  -N_{i_{-}(\rho)}^{\ln N_{i_{-}(\rho)}}N_{i_{-}(\rho)}^{-2} \qquad (\ref{e9.155}) \\
&\lesssim -(1/\rho)^{\iota(\rho)^{-1}(-2+|\ln \rho|/\iota(\rho))}\qquad (\ref{eq:5.299ante})\\
&\lesssim -(1/\rho)^{|\ln \rho|/(2(1+\tau)^2)}\qquad (\ref{eq:5.299}), \rho \ll 1\\
&\lesssim -(1/\rho),\qquad \rho\ll_{\tau} 1.
\end{align*}

\smallskip\noindent {\it BNF:} We observe that    $\bD(0,\rho^{b_{\tau}})\subset \bD(0,\rho)$ and apply Proposition \ref{BNFprop} to $(\Omega,F)$ on $e^{h}W_{h,\bD(0,\rho)}$  (we use the smallness condition 
(\ref{eq:11.258})).
\end{proof}

\subsection{Comparision Principle}
We now use the result of Section \ref{sec:comparingnf} to show that these various Normal Forms match to some very good order of approximation.
\begin{lemma}[Comparing Adapted Normal Forms] For any  $\b\ll 1$, and $\rho\ll_{\b}1$ 
\be \|\Omega_{i_{+}(\rho)}-\Omega^{BNF}_{\rho}\|_{(1/2)\bD(0,\rho^{b_{\tau}})}\leq \exp(-(1/\rho)^{\frac{2}{1+\tau}-\b}).\label{14.257}
\ee
and for any $D\in\cD_{\rho}$ there exists $\g_{D}\leq \hat K_{i_{D}}^{-2}$
\be\|\Omega_{i_{+}(\rho)}-\Omega_{\hat D}^{HJ}(\cdot+\g_{D})\|_{(4/5)\hat D\setminus (1/5)\hat D}\leq \exp(-(1/\rho)^{\frac{2}{1+\tau}-\b}). \label{14.258}
\ee
\end{lemma}
\begin{proof}

\smallskip\noindent 1) {\it Proof of  (\ref{14.257}).} From (\ref{12347}),  (\ref{12348}) and the fact that 
$$W_{h,\bD(0,\rho^{b_{\tau}})}\subset W_{h,\bD(0,\rho^{b_{\tau}})}\cap W_{h, U_{i_{+}(\rho)} }
$$
one has on $g_{1,i_{+}(\rho)}(  W_{h,\bD(0,\rho^{b_{\tau}})})\cap g_{\rho}^{BNF}(  W_{h,\bD(0,\rho^{b_{\tau}})})$
$$g_{1,i_{+}(\rho)}\circ \Phi_{\Omega_{i_{+}(\rho)}}\circ f_{F_{i_{+}(\rho)}}\circ (g_{1,i_{+}(\rho)})^{-1} = g_{\rho}^{BNF}\circ \Phi_{\Omega_{\rho}^{BNF}}\circ f_{F_{\rho}^{BNF}}\circ (g_{\rho}^{BNF})^{-1}.
$$
We can then apply Propositions \ref{comparingBNF}-\ref{comparingBNFCC} with $\rho_{2}=\rho^{b_{\tau}}$, $\rho_{1}=0$, $\d=\rho^{b_{\tau}}/2$, $\e=\rho^{(m-1)/2}$, $\nu=\exp(-(1/\rho)^{2/(1+\tau)-\beta})$ since from (\ref{12349}), (\ref{12350}), (\ref{12.353}), (\ref{12.355}) one sees that condition (\ref{11308}) reads
$$\bar C\rho^{(m-1)/2}\leq \rho^{b_{\tau}}/4<\rho^{b_{\tau}}\quad  \textrm{and}\quad \bar C(\rho^{\hs b_{\tau}} /2 )^{-\bar a_{5}}\exp(-(1/\rho)^{2/(1+\tau)-\beta})<1
$$
and is satisfied for $\rho\ll 1$.  Since $g_{1,i_{+}(\rho)}$ and $g_{\rho}^{BNF}$ are exact  symplectic we then get $\|\Omega_{i_{+}(\rho)}-\Omega^{BNF}_{\rho}\|_{\bD(0,(1/2)\rho^{b_{\tau}})}\leq \bar C \rho^{-(b_{\tau}+1)\bar a_{6}}\exp(-(1/\rho)^{\frac{2}{1+\tau}-\b})$ which is $\leq \exp(-(1/\rho)^{\frac{2}{1+\tau}-2\b})$ if $\rho$ is small enough.

\smallskip\noindent 2) {\it Proof of  (\ref{14.258}).}
Similarly, from (\ref{12352}), (\ref{12356}) 
one has on the set $$g^{HJ}_{\hat D}(W_{h/9,\hat D\setminus(1/5)\hat D})\cap g_{i_{D},i_{+}(\rho)}( W_{h/9,\hat D\setminus (1/5)\hat D})$$
$$
g^{HJ}_{\hat D}\circ \Phi_{\Omega^{HJ}_{\hat D}}\circ f_{F^{HJ}_{\hat D}}\circ (g^{HJ}_{\hat D})^{-1}=g_{i_{D},i_{+}(\rho)} \circ \Phi_{\Omega_{i_{+}(\rho)}}\circ f_{F_{i_{+}(\rho)}}\circ (g_{i_{D},i_{+}(\rho)})^{-1}$$
and from
(\ref{12354}) (\ref{12.355}), (\ref{12357}), (\ref{12358}),  we see that  Propositions \ref{comparingBNF}-\ref{comparingBNFCC}   apply with $c=c_{i_{D}}$, $\e=\bar\e_{i_{D}}^{1/9}$, $\d=K_{i_{D}}^{-1}/20$, $\rho_{1}=(1/10)\hat  K_{i_{D}}^{-1}$, $\rho_{2}=K_{i_{D}}^{-1}<|c_{i_{D}}|/4$ since condition (\ref{11308}) is implied by 
$$\bar C\bar \e_{i_{D}}^{1/9}\leq \hat K_{i_{D}}^{-1}/80<\hat K_{i_{D}}^{-1}/10\quad  \textrm{and}\quad \bar C(20\hat K_{i_{D}})^{\hs\bar a_{5}}\exp(-(1/\rho)^{2/(1+\tau)-\beta})<1
$$
which is  satisfied ({\it cf.} (\ref{requirements6.76}), (\ref{12.330})) if $\rho$ is small enough. We then get for some $\gamma_{D}\in\R$, $|\gamma_{D}|\lesssim  \bar C\bar\e_{i_{D}}^{1/9} \leq \hat K_{i_{D}}^{-2}$ ({\it cf.} (\ref{comphatKi}))  that on the annulus $(4/5)\hat D\setminus (1/5)\hat D$ one has $|\Omega_{i_{+}(\rho)}-\Omega_{\hat D}^{HJ}(\cdot+\g_{D})|\leq \exp(-(1/\rho)^{\frac{2}{1+\tau}-2\b})$.
\end{proof}

\section{Adapted Normal Forms: $\omega_{0}$ Liouvillian (CC case)}\label{sec:10.2}
Let $h> 0$, $0<\bar \rho<1$,  $\Omega\in\ti \cO_{\s}(e^{10h}\bD(0,\bar\rho))$, $F\in \cO_{\s}(e^{10h}W_{h,\bD(0,\bar \rho)})$ such that 
\begin{align}& \forall \ r\in\R,\  A^{-1}\leq  (2\pi)^{-1}\pa^2\Omega(r)\leq A,\quad \textrm{and}\ \ \|(2\pi)^{-1}D^3\Omega\|_{\C}\leq B.\label{eq:11.257bis}\\
&\omega_{0}:=(2\pi)^{-1}\pa^2\Omega(0)\in \R\setminus\Q\label{eq:11.257,5bis}\\
&\forall\ 0<\rho\leq \bar\rho,\quad   \|F\|_{e^{10h}W_{h,\bD(0,\rho)}}\leq \rho^{m},\label{eq:11.258bis}
\end{align}
where  \be m=4+\max(\bar a_{1},2000A\bar a_{2},\bar a_{3},\bar a_{5})\label{defmfinal}\ee
($\bar a_{1},\bar a_{2},\bar a_{3},\bar a_{5}$ are the constants appearing in Propositions \ref{nprop:9.2}, \ref{prop:1.enonce}, \ref{proppropHJ}, \ref{comparingBNF}).

Using  the notations of Subsection \ref{sec:6.2.1}, let $(p_{n}/q_{n})_{n}$ be the sequence of convergents of $\omega_{0}$: 
\be \frac{1}{2q_{n+1}q_{n}}\leq |\omega_{0}-(p_{n}/q_{n})|\leq \frac{1}{q_{n+1}q_{n}}.\label{e11.228}\ee
 \be  \forall \ 0<k<q_{n},\ \forall \ l\in\Z,\   |\omega_{0}-(l/k)|>\frac{1}{2kq_{n}}\label{aBNF7.122}.\ee

We assume that $n$ is large enough and we set 
 \be \rho_{n}=\frac{10A}{q_{n+1}q_{n}} \leq \bar \rho/10.\label{defrhonbis}\ee

 We introduce
 \be \bar \e:= \max_{0\leq j\leq 3}\|D^jF\|_{W_{2 h,\bD(0,10\rho_{n})}}\lesssim (10 \rho_{n})^{\hs m-3} \leq \rho_{n}^{2000A \bar a_{2}}.\label{e11.234}\ee
\subsection{Adapted KAM domains} 
Since 
 Condition   (\ref{enew7.135}) is  satisfied 
 we can  apply Proposition \ref{prop:1.enonce} (with $\bar \rho_{\textrm{Prop.\ \ref{prop:1.enonce} }}=10\rho_{n}$) and define holed domains $U_{i}$, functions $\Omega_{i}$, $F_{i}$, $\omega^{}_{i}$ {\it etc.} In particular for $0<t$
\be\begin{cases} &U_{i}\cap\bD(0,t)=\bD(0,t)\setminus\bigcup_{j=1}^{i-1}\bigcup_{(k,l)\in E_{j} }\bD(c_{l/k}^{(j)},s_{j,i-1}K_{j}^{-1}),\\
&s_{j,i-1}=e^{\sum_{m=j}^{i-1}{\d_{m}}}\in [1,2]\end{cases}\label{abcd10.207}\ee
where
$$E_{j}\subset\{(k,l)\in\Z^2,\ 0<k<N_{j},\ 0\leq |l|\leq N_{j}\},\qquad \omega_{j}^{}(c_{l/k}^{(j)}) =l/k.$$
Note that from (\ref{e11.234}) and the definition (\ref{requirements6.76}) of $K_{j}$
\be K_{j}^{-1}\leq  \bar\e^{\hs \frac{1}{2(\bar a_{0}+2)}}\leq   \rho_{n}^{1000 A}.\label{e11.236}
\ee
\begin{lemma} Let $j$ be such that $N_{j}<q_{n+1}/(10A)^2$ and $(k,l)\in E_{j}$.
\begin{enumerate}
\item \label{ba1}If $(k,l)\in \Z(q_{n},p_{n})$ one has $l/k=p_{n}/q_{n}$ and 
\be (40A^2)^{-1}\rho_{n}\leq  \frac{(2A)^{-1}}{2q_{n+1}q_{n}}\leq  |c_{p_{n}/q_{n}}^{(j)}|\leq \frac{(2A)}{q_{n+1}q_{n}}\leq \rho_{n}/5. \label{10.147}\ee
\item\label{ba2} If $(k,l)\notin \Z(q_{n},p_{n})$
\be    |c_{l/k}^{(j)}|\geq 4\rho_{n}. \label{a10.147}\ee
\end{enumerate}
\end{lemma}
\begin{proof}
Item \ref{ba1} comes from  (\ref{e11.228}) and    the twist condition (\ref{7.139twist}). 

To prove Item \ref{ba2} we observe that if $(k,l)\notin \Z(q_{n},p_{n})$ 
$$ |\omega_{0}-\frac{l}{k} | \geq |\frac{l}{k}-\frac{p_{n}}{q_{n}}|-|\omega_{0}-\frac{p_{n}}{q_{n}}|\geq \frac{1}{kq_{n}}-\frac{1}{q_{n}q_{n+1}}\geq \frac{99A^2}{q_{n}q_{n+1}}\geq 9A\rho_{n}$$
and from the twist condition (\ref{7.139twist}) we get $|c^{(j)}_{l/k}|\geq 4\rho_{n}$.
\end{proof}

 For $n\in\N^*$ define $i_{n}^-$ as the unique index $i$ such that 
 \be N_{i_{n}^- -1}\leq  q_{n}< N_{i_{n}^-}
 \ee
 and $i_{n}^+$ as the unique index (see the definition of the sequence $N_{i}$ in (\ref{requirements6.76})) such that 
 \be\frac{(3/4)q_{n+1}}{(10A)^2}\leq N_{i_{n}^+}< \frac{q_{n+1}}{(10A)^2}.
 \ee
We define 
$$c_{n}=c_{p_{n}/q_{n}}^{(i_{n}^-) },\qquad D_{n}:=\bD(c_{p_{n}/q_{n}}^{(i_{n}^-)},s_{i_{n}^-,i_{n}^+-1}K_{i_{n}^-}^{-1}),\qquad \hat D_{n}=\bD(c_{n},|c_{n}|/24)$$
\be U^{(n)}:=U_{i_{n}^+}\cap\bD(0,\rho_{n}).\label{defU(n)}\ee
Note that from (\ref{10.147})
\be  (40A^2)^{-1}\rho_{n}\leq  |c_{n}| \leq \rho_{n}/5.\label{10.147bis}
\ee
\begin{prop}\label{prop:11.2CC}For $n$ large enough,
\begin{enumerate}
\item\label{aab0} $ \bD(0,\rho_{n})\subset U_{i_{n}^-}$.
\item\label{aab1} One has 
$\cD_{\rho_{n}}:=\cD(U^{(n)})=\{D_{n}\}\label{e11.233}$
\item \label{aab2}One has the following inclusion $6\hat D_{n}\subset U_{i_{n}^-}$.
\item \label{aab3}One has $\bD(0,q_{n+1}^{-6})\subset U_{i_{n}^+}$.
\item\label{aab4} The triple $(U^{(n)},\hat D_{n}\setminus(1/10)\hat D_{n}, \bD(0,q_{n+1}^{-6} /2))$ is $1/(10|\ln \rho_{n}|)$-good. 
\end{enumerate}
\end{prop}  
\begin{proof}
{\it of Item \ref{aab0}.} If $j<i_{n}^-$ and $(k,l)\in E_{j}$ one has $0<k<N_{i_{n}^- -1}\leq q_{n}$ hence from (\ref{a10.147}) $|c_{l/k}^{(j)}|\geq 4\rho_{n}$ and from (\ref{e11.236}) $|c_{l/k}^{(j)}|-2K_{j}^{-1}\geq 3\rho_{n}$. The conclusion then follows from (\ref{abcd10.207}) applied with $i=i_{n}^-$.

\medskip\noindent{\it Proof of  Item \ref{aab1}. }  From  Item \ref{aab0}, equality  (\ref{abcd10.207})  can be written 
$$U_{i_{n}^+}\cap\bD(0,\rho_{n})=\bD(0,\rho_{n})\setminus\bigcup_{j=i_{n}^-}^{i_{n}^+-1}\bigcup_{(k,l)\in E_{j} }\bD(c_{l/k}^{(j)},s_{j,i-1}K_{j}^{-1}).$$
We observe that $(q_{n},p_{n})\in E_{i_{n}^-}$ and from (\ref{10.147}), (\ref{e11.236}) one sees that $D_{n}=\bD(c_{p_{n}/q_{n}}^{(i_{n}^-)},s_{i_{n}^-,i_{n}^+-1} K_{i_{n}^-}^{-1}) \subset \bD(0,\rho_{n})$.  More generally, if $(k,l)\in E_{j}$, $i_{n}^-\leq j\leq i_{n}^+-1$ and $(k,l)\notin \Z(q_{n},p_{n})$,  one has $N_{j}\leq q_{n+1}/(10A)^2$  and (\ref{a10.147}), (\ref{e11.236}) give that $\bD(0,c^{(j)}_{l/k},2K_{j}^{-1})\cap \bD(0,\rho_{n})=\emptyset$. Since the sets $\bD(0,c^{(j)}_{p_{n}/q_{n}},s_{q_{n},i_{+}^n-1}K_{j}^{-1})$, $i_{n}^-\leq j\leq i_{n}^+$, form a nested decreasing (for the inclusion) sequence of disks one gets 
$$U_{i_{n}^+}\cap \bD(0,\rho_{n})=\bD(0,\rho_{n})\setminus D_{n}.$$

\medskip\noindent{\it Proof of  Item \ref{aab2}. }  This comes from the fact that $|c_{n}|+6|c_{n}|/4\leq \rho_{n}$.

\medskip\noindent{\it Proof of  Item \ref{aab3}. } This comes from Item \ref{ba1} and the fact that $|c_{n}|-|c_{n}|/4\geq q_{n+1}^{-6}$ as is clear from the LHS inequality of (\ref{10.147}).

\medskip\noindent{\it Proof of  Item \ref{aab4}. } 
Notice that from (\ref{10.147}) $5\leq \rho_{n}/|c_{n}|\leq 40A^2$ and that $2K_{i_{n}^-}^{-1}\leq \rho_{n}^{1000A}$.
We use Corollary  \ref{cor:3.2};  we have to evaluate  
\begin{align*} I&=\frac{\ln (|c_{n}|/(4\rho_{n}))}{\ln (q_{n+1}^{-6}/ (2\rho_{n}))}-\frac{\ln(|c_{n}|/8\rho_{n})}{\ln (2K_{i_{n}^-}^{-1}/\rho_{n})}\\
&\geq\frac{\ln(20)}{7|\ln \rho_{n}|}-\frac{\ln(320A^{2})}{(1000A-1)|\ln \rho_{n}|}\\
&\geq \frac{1}{10|\ln\rho_{n}|}.
\end{align*}

\end{proof}
\subsection{Adapted Normal Forms}

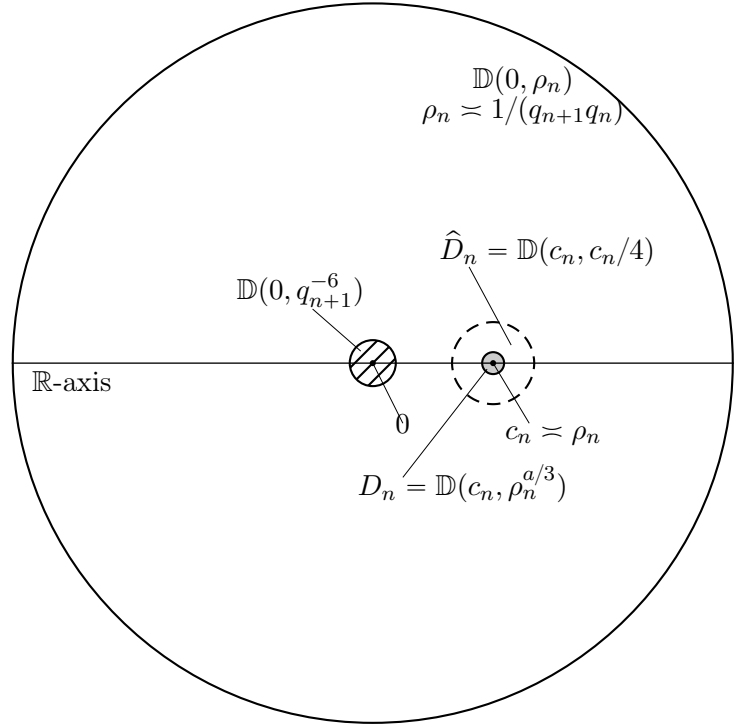
\begin{figure}
\begin{center}
\definecolor{mygray}{gray}{0.8}
\def\pictureLiouville{
\psset{unit=.8cm}
\begin{pspicture}(-6,-6)(6,6)

%
\pscircle(0,0){6}
\pscircle[fillstyle=hlines](0,0){0.4}

\pscircle[linestyle=dashed](2,0){0.7}
\pscircle[fillstyle=solid,fillcolor=mygray](2,0){0.2}
\psdot[dotsize=0.1](2,0)
\psdot[dotsize=0.1](0,0)
\rput(0.5,-1){$0$}
\rput(2.9,1.9){$\hat D_{n}=\bD(c_{n},c_{n}/4)$}
\rput(1.5,-2){$D_{n}=\bD(c_{n},\rho_{n}^{a/3})$}
\rput(2.5,4.7){$\bD(0,\rho_{n})$}
\rput(2.5,4.2){$\rho_{n}\asymp 1/(q_{n+1}q_{n})$}
\rput(-1.2,1.2){$\bD(0,q_{n+1}^{-6})$}
\rput(-5,-0.3){$\bR$-axis}
\rput(3,-1.2){$c_{n}\asymp \rho_{n}$}
\psline[linewidth=0.5pt](-6,0)(6,0)
\psline[linewidth=0.1pt](2,0)(2.6,-1)
\psline[linewidth=0.1pt](0.5,-1)(0,0)
\psline[linewidth=0.1pt](0.5,-1.9)(1.9,-0.1)
\psline[linewidth=0.1pt](1.6,1.6)(2.3,0.3)
\psline[linewidth=0.1pt](-1,0.9)(-0.2,0.2)


%
\end{pspicture}}
\psscalebox{1}\pictureLiouville
\end{center}
\caption{Adapted KAM Normal Forms (CC Case) in the complex $r$-plane. The triple $(U^{n},\hat U^{n}, \bD(0,q_{n+1}^{-6}))$ is $1/(10|\ln \rho_{n}|)$-good.}
\end{figure}

\begin{prop}\label{prop:adaptedCC}Let $\Omega\in\ti\cO_{\s}(U)$ and $F\in\cO_{\s}(W_{h,U})$ satisfy (\ref{eq:11.257}), (\ref{eq:11.257,5}), (\ref{eq:11.258}). Let  $0<\b\ll 1$ and $n\gg_{\b} 1$ such that  
\be q_{n+1}\geq q_{n}^{10}.\label{condqnqn+1}
\ee

\smallskip\noindent {\bf(KAM):}
Adapted KAM Normal Form ((Proposition \ref{prop:1.enonce})): One has $(\Omega_i, F_i, g_i\in\cN\cF_{ex,\s}(W_{h,U_{i}})$, $\Omega_{i}\in\cT\cC(2A,2B)$ and   
\begin{align}&   g_{1,i_{n}^\pm}^{-1}\circ \Phi_{\Omega}\circ f_{F}\circ g_{1,i_{n}^\pm}=\Phi_{\Omega_{i_{n}^\pm}}\circ f_{F_{i_{n}^\pm}}\qquad [W_{h,U_{i^\pm_{n}}}]\label{13.264ccante}\\
&  g_{i_{n}^-,i_{n}^+}^{-1}\circ  \Phi_{\Omega_{i_{n}^-}}\circ f_{F_{i_{n}^-}}\circ g_{i_{n}^-,i_{n}^+}=\Phi_{\Omega_{i_{n}^+}}\circ f_{F_{i_{n}^+}} \qquad  [W_{h,U^{(n)}}] \label{13.264cc}\\
&\|g_{1,i_{n}^+}-id\|_{C^1}, \|g_{i_{n}^-,i_{n}^+}-id\|_{C^1}\lesssim \bar \e^{1/2}\leq \rho_{n}^{m/3}\label{F255a}\\
&\|F_{i_{n}^+}\|_{W_{h,U^{(n)}}}\lesssim \exp(-q_{n+1}^{1-\b}).\label{F255}
\end{align}

\smallskip\noindent{\bf (HJ):} 
Hamilton-Jacobi Normal Form  (Proposition \ref{proppropHJ}). 

There exists $(\Omega_{{n}}^{HJ}, F_{{n}}^{HJ}, g_{{n}}^{HJ})\in\cN\cF_{\s}(W_{h/9,\hat D_{n}\setminus\check D_{n}})$  such that 
\begin{align}&  (g^{HJ}_{n})^{-1}\circ \Phi_{\Omega_{i_{n}^-}}\circ f_{F_{i_{n}^-}}\circ g^{HJ}_{n}=\Phi_{\Omega^{HJ}_{n}}\circ f_{F^{HJ}_{n}} \qquad  [W_{{h/9}, (\hat D_{n}\setminus \check D_{n})}] \label{13.265cc}\\
& \|g_{n}^{HJ}-id\|_{C^1}\lesssim \bar\e_{i_{n}^-}^{1/9}\leq \rho_{n}^{m/9}\label{F258}\\
&\Omega_{n}^{HJ}\in\cT\cC(2A,2A)\\
&\|F^{HJ}_{n}\|_{W_{{h/9}, (\hat D_{n}\setminus \check D_{n})}}\lesssim \exp(-q_{n+1}^{(1/4)-\b}).\label{F258a} \end{align}
The triple $(\Omega^{HJ}_{D},\hat D,\check D)$ satisfies the Extension Principle of Proposition \ref{proppropHJ}.

\smallskip\noindent{\bf (BNF):} Birkhoff Normal (Proposition \ref{nprop:9.2}):  

There exists $(\Omega_{q_{n+1}^{-1}}^{BNF}, F_{q_{n+1}^{-1}}^{BNF}, g_{q_{n+1}^{-1}}^{BNF})\in\cN\cF_{ex,\s}(W_{h,\bD(0,q_{n+1}^{-6})})$ such that 
\begin{align}& (g_{q_{n+1}^{-1}}^{BNF})^{-1}\circ \Phi_{\Omega}\circ f_{F}\circ g_{q_{n+1}^{-1}}^{BNF}= \Phi_{\Omega_{q_{n+1}^{-1}}^{BNF}}\circ f_{F_{q_{n+1}^{-1}}^{BNF}}   \qquad [W_{h,\bD(0,q_{n+1}^{-6})}]  \label{13.262cc}\\
&\|g_{q_{n+1}^{-1}}^{BNF}-id\|_{W_{h,\bD(0,q_{n+1}^{-6})}}\lesssim q_{n+1}^{-m}\label{F251a}\\
&\Omega_{q_{n+1}^{-1}}^{BNF}\in\cT\cC(2A,2B)\\
&\|F_{q_{n+1}^{-1}}^{BNF}\|_{W_{h,\bD(0,q_{n+1}^{-6})}}\leq \exp(-q_{n+1}^{1-\b}). \label{F251}
\end{align}
\end{prop}
\begin{proof}

\smallskip\noindent {\it KAM:} 
This is Proposition \ref{prop:1.enonce}. Inequality (\ref{F255}) comes from the corresponding (\ref{8.114})  $\bar \e_{i_{n}^+}\leq \exp(-N_{i_{n}^+}/(\ln N_{i_{n}^+})^3)$ and  the fact that $N_{i_{n}^+}\asymp q_{n+1}$.

\smallskip\noindent {\it HJ:} By Proposition \ref{prop:11.2CC}, Item \ref{aab2}, the disk $6\hat D_{n}=\bD(c_{n},|c_{n}|/4)$ is included in $U_{i_{n}^-}$. Since
$$ (6(|c_{n}|/24))^{1/8}<(Aq_{n})^{-1},\quad \textrm{and}\quad \|F_{i_{n}^-}\|_{h,6\hat D_{n}}\lesssim \bar \e_{i_{n}^-}<(|c_{n}|/4)^{\bar a_{3}}
 $$
(the first inequality is a consequence of (\ref{condqnqn+1}) and the second of  (\ref{e11.234}) and the fact that $|c_{n}|\asymp \rho_{n}$)
(\ref{8.179},  (\ref{8.177}) are satisfied and  we can apply  Proposition \ref{proppropHJ}  on Hamilton-Jacobi Normal Forms to $\Phi_{\Omega_{i_{n}^-}}\circ f_{F_{i_{n}^-}}$ on the domain $W_{h, \hat D_{n}}\subset W_{h, U_{i_{n}^-}}$ with 
$\hat \rho=|c_{n}|/24$:
there exists a disk $\check D_{n}\subset \hat D_{n}$ 
\be\check D_{n}:=\bD(c_{\check D_{n}},\rho_{\check D_{n}})\subset (1/10) \hat D_{n}\subset \hat D_{n}=\bD(c_{n},|c_{n}|/24)\subset U_{i_{n}^-}\label{disks}\ee
 and $(\Omega_{{n}}^{HJ}, F_{{n}}^{HJ}, g_{{n}}^{HJ})\in\cN\cF_{\s}(W_{h/9,\hat D_{n}\setminus\check D_{n}})$ such that  one has (\ref{13.265cc}) and 
 \begin{align}  
  & \|g_{\hat D_{n}}^{HJ}-id\|_{W_{h/9,(\hat D_{n}\setminus\check D_{n})}}   \lesssim q_{n}\bar\e_{i_{n}^-}^{1/8}\leq \bar \e_{i_{n}^-}^{1/9}\label{14.257b}\\
&\|F^{HJ}_{\hat D_{n}}\|_{W_{h/9, (\hat D_{n}\setminus \check D_{n})}}
\lesssim \exp(-1/(q_{n}|c_{n}|/24)^{1/4})\label{14.270}
\end{align}
and since $|c_{n}|\asymp (q_{n}q_{n+1})^{-1}$  ($n\gg_{\b}1$)
$$\|F^{HJ}_{\hat D_{n}}\|_{W_{h/40, (\hat D_{n}\setminus \check D_{n})}}\lesssim \exp(-q_{n+1}^{(1/4)-\b}).$$

\smallskip\noindent {\it BNF:} Since $\|F\|_{e^{1/10}W_{h,\bD(0,\rho_{n})}}\leq \rho_{n}^{\bar a_{1}}$ ({\it cf.} (\ref{eq:11.258bis})) we can apply Proposition \ref{nprop:9.2} on the existence of approximate BNF in the CC case (with $n+1$ in place of $n$): for $0<\b\ll 1$ and $n\gg_{\b} 1$:  there exists $(\Omega_{q_{n+1}^{-1}}^{BNF}, F_{q_{n+1}^{-1}}^{BNF}, g_{q_{n+1}^{-1}}^{BNF})\in\cN\cF_{ex,\s}(W_{h,\bD(0,q_{n+1}^{-6})})$ such that 
$$[W_{h,\bD(0,q_{n+1}^{-6})}]\qquad  (g_{q_{n+1}^{-1}}^{BNF})^{-1}\circ \Phi_{\Omega}\circ f_{F}\circ g_{q_{n+1}^{-1}}^{BNF}=\Phi_{\Omega^{BNF}_{q_{n+1}^{-1}}}\circ f_{F^{BNF}_{q_{n+1}^{-1}}} $$
with 
$$\|F^{BNF}_{q_{n+1}^{-1}}\|_{W_{h,q_{n+1}^{-6}}}\leq \exp({-q_{n+1}^{1-\b}}).$$

\end{proof}
\subsection{Comparision Principle}
These various Normal Forms match to some very good order of approximation.
\begin{lemma}[Comparing Adapted Normal Forms] One has for  any $\b\ll 1$, $n\gg_{\b}1$ 
\be \|\Omega_{i_{n}^+}-\Omega^{BNF}_{q_{n+1}^{-1}}\|_{(1/2)\bD(0,q_{n+1}^{-6} )}\lesssim \exp(-q_{n+1}^{1-\b}).\label{14.257cc}
\ee
and there exists $\g_{n} \lesssim q_{n+1}^{-m}\lesssim |c_{n}|^{m/2}$ such that 
\be\|\Omega_{i_{n}^+}-\Omega_{\hat D_{n}}^{HJ}(\cdot+\g_{n})\|_{(4/5)\hat D_{n}\setminus (1/5)\hat D_{n}}\lesssim \exp(-q_{n+1}^{(1/4)-\b})\label{14.258cc}
\ee
\end{lemma}
\begin{proof} Let us prove estimate (\ref{14.257cc}).  From (\ref{13.262cc})-(\ref{13.264ccante}), we see that on $ g_{1,i_{n}^+}(W_{h,\bD(0,q_{n+1}^{-6}})\cap g_{q_{n+1}^{-1}}^{BNF}(W_{h,\bD(0,q_{n+1}^{-6}})$ one has 
$$g_{q_{n+1}^{-1}}^{BNF}\circ  \Phi_{\Omega_{q_{n+1}^{-1}}^{BNF}}\circ f_{F_{q_{n+1}^{-1}}^{BNF}}\circ (g_{q_{n+1}^{-1}}^{BNF})^{-1} =   g_{1,i_{n}^+}\circ \Phi_{\Omega_{i_{n}^+}}\circ f_{F_{i_{n}^+}}\circ (g_{1,i_{n}^+})^{-1}.$$
We  then apply Proposition \ref{comparingBNFCC}
with $c=0$, $\rho_{1}=0$, $\rho_{2}=q_{n+1}^{-6}$, $\d=q_{n+1}^{-7}$, $\nu=\exp(-q_{n+1}^{1-\b})$ ({\it cf.} (\ref{F251}), (\ref{F255})), $\e=q_{n+1}^{-m}$ ({\it cf.} (\ref{F251a}), (\ref{F255a}))
 (estimates (\ref{estgj}) and (\ref{11308}) are satisfied since $ \bar Cq_{n+1}^{-m}	\leq q_{n+1}^{-7}/4\ll q_{n+1}^{-6}$ and $q_{n+1}^{7\hs \bar a_{5}}\exp(-q_{n+1}^{1/2})\ll 1$). 

\medskip
Estimate (\ref{14.258cc})  is a consequence of Proposition \ref{comparingBNFCC} applied to  (\ref{13.264cc}) and (\ref{13.265cc})
$$g_{i_{n}^-,i_{n}^+}\circ \Phi_{\Omega_{i_{n}^+}}\circ f_{F_{i_{n}^+}}\circ (g_{i_{n}^-,i_{n}^+})^{-1} =g^{HJ}_{n}\circ \Phi_{\Omega^{HJ}_{n}}\circ f_{F^{HJ}_{n}}\circ (g^{HJ}_{n})^{-1}
$$
 with $\bA(c;\rho_{1},\rho_{2})=\hat D_{n}\setminus (1/10)\hat D_{n}$, $c=c_{n}$, $\rho_{1}=|c_{n}|/40$, $\rho_{2}=|c_{n}|/4$, $\d_{n}=|c_{n}|/10$, $\nu=\exp(-q_{n+1}^{1/5})$ ({\it cf.} (\ref{F258a}), (\ref{F255}) ), $\e=\rho_{n}^{m/8}$ ({\it cf.} (\ref{F255a}), (\ref{F258})). Estimates  (\ref{estgj}) and (\ref{11308}) are satisfied since $\bar C \rho_{n}^{m/8}\leq |c_{n}|/40\ll |c_{n}|/5$ ({\it cf.} (\ref{10.147bis})) and $\bar C (|c_{n}|/10)^{-\bar a_{5}}\exp(-q_{n+1}^{(1/4)-})<1$ (recall that $|c_{n}|\asymp (q_{n}q_{n+1})^{-1}$).
\end{proof}

\section{Estimates on the measure of the set of KAM circles}  \label{sec:8curves}
We refer to Subsection \ref{sec:kamcircles} for the notations of this section. We observe
$$(W_{h,U})_{\R}:=W_{h,U}\cap M_{\R}=W_{U\cap \R}:=\{r\in U\cap\R\}\cap M_{\R}.$$
In particular in the (AA)-case $(W_{h,U})_{\R}=W_{U\cap \R}=\T\times (U\cap \R)$ and in the (CC*)-case $(W_{h,U})_{\R}=W_{U\cap \R}=\{(x,y)\in\R^2,\ (1/2)(x^2+y^2)\in U\cap \R_{+}\}$.

\subsection{Classical KAM estimates}
We first state a variant of the   classical KAM  theorem on abundance of invariant circles which is a consequence of Propositions \ref{prop:1.enonce},\ref{prop:1.enoncebis}, \ref{prop:7.5} and  Remark \ref{rem:7.1} on KAM Normal Forms.
\begin{theo}\label{measureestimates}  Let $U$ be holed domain with disjoint holes $D\in\cD(U)$ such that 
\be\sum_{D\in\cD(U)}|D\cap \R|^{1/2}\leq 1\label{cond12.265}
\ee
 and $\Omega\in \ti \cO_{\s}(U)\cap \cT\cC(A,B)$ ({\it cf.  (\ref{7.119})}) with $A,B$ satisfying (\ref{condUnew}), $F\in\cO_{\s}(W_{h,U})$
 $$\bar\e:=\|F\|_{W_{h,U}}\leq \ud(U)^{\hs \bar a_{2}}.
$$
Then, if $f=\Phi_{\Omega}\circ f_{F}$ one has
$${\rm Leb}_{M_{\R}}(W_{e^{-1/10}U\cap \R}\setminus \cL(f,(W_{U\cap \R} ))\lesssim (\|F\|_{W_{h,U}})^{1/(2(\bar a_{0}+3))}.
$$
\end{theo}
\begin{proof}
See the Appendix \ref{KAMestappendix}.
\end{proof}

We define for $\rho>0$, $\bD_{\R}(0,\rho)=\bD(0,\rho)\cap\R=]-\rho,\rho[$ and 
$$\ti m_{f}(\rho)={\Leb}_{M_{\R}}(W_{\bD_{\R}(0,\rho)}\setminus\cL(f,\bD_{\R}(0,e^{1/2}\rho))).$$

\subsection{Estimates on the measure of the set of invariant circles: $\omega_{0}$ diophantine (AA) or (CC) Case}\label{sec:12.2}
We assume that (both in the (AA) or (CC)-cases) (\ref{eq:11.257}),  (\ref{eq:11.257,5}) (\ref{eq:11.258}) hold.
\begin{theo}\label{prop:12.7}For any $\b>0$, $\rho\ll_{\b}1$
$$\textrm{(AA)-case}\qquad \ti m_{\Phi_{\Omega}\circ f_{F}}(\rho)\lesssim \exp(-(1/\rho)^{\frac{2}{1+\tau}-\b} )+\sum_{D\in\cD_{\rho}}|\check D\cap \R|.
$$
$$\textrm{(CC)\ or (CC*)-case}\qquad \ti m_{\Phi_{\Omega}\circ f_{F}}(\rho)\lesssim \exp(-(1/\rho)^{\frac{2}{1+\tau}-\b} )+\sum_{D\in\cD_{\rho}}|\check D\cap \R_{+}|.
$$
\end{theo}
\begin{proof}
If $S\subset \C$ we denote $S_{\R}=S\cap\R$ (if $c\in\R$, $\bD_{\R}(c,t)=\bD(c,t)\cap\R=]c-t,c+t[$).
 
Choose  ({\it cf.} Lemma \ref{lemma:12.5}) $\rho'\in [e^{1/4}\rho,e^{1/3}\rho]$ ($\rho\ll 1$)  such that 
$$U^{(\rho')}:=\bD(0,\rho')\cap U_{i_{+}(\rho)}=\bD(0,\rho')\setminus\bigcup_{\substack{D\in \cD(U_{i_{+}(\rho)})\\ D\subset \bD(0,\rho')  }}D$$
hence
\be e^{-1/10}\bD_{\R}(0,\rho')\subset e^{-1/10}U_{\R}^{(\rho')}\cup\bigcup_{D\in\cD_{\rho}}  (1/4)\hat D_{\R}.\label{a12356}\ee

Let us denote for short $f_{i}=\Phi_{\Omega_{i}}\circ f_{F_{i}}$, $f_{D}^{HJ}=\Phi_{\Omega_{D}^{HJ } }\circ f_{F_{D}^{HJ}}$ and 
$$\cL_{i_{+}(\rho)}=\cL(f_{i_{+}(\rho)},W_{U_{\R}^{(\rho')}} ),\quad \cL_{\hat D}=\cL(f_{D}^{HJ},W_{\hat D_{\R}\setminus\check D_{\R}}).
$$
We have from (\ref{12352bis}) (\ref{12356}) (\ref{12348})
\begin{align}&W_{h,U_{i_{D}}},\quad g_{i_{-}(\rho),i_{D}}^{-1}\circ f_{i_{-}(\rho)}\circ g_{i_{-}(\rho),i_{D}}=f_{i_{D}}\\
&W_{h/9,\hat D\setminus D},\quad (g^{HJ}_{\hat D})^{-1}\circ f_{i_{D}}\circ g^{HJ}_{\hat D}=f_{D}^{HJ}\\
&W_{h, U_{i_{+}(\rho)}},\quad g_{i_{-}(\rho),i_{+}(\rho)}^{-1}\circ f_{i_{-}(\rho)}\circ g_{i_{-}(\rho),i_{+}(\rho)}=f_{{i_{+}(\rho)}}\\
&W_{h, U_{i_{-}(\rho)}},\quad g_{1,i_{-}(\rho)}^{-1}\circ f\circ g_{1,i_{-}(\rho)}=f_{{i_{-}(\rho)}}\label{e12.271}.\end{align}
From Lemma \ref{lemma:12.4}, Remark \ref{rem:lemma3.1} and estimate (\ref{12.355}) on the one hand, and estimate (\ref{12358}) on the other hand, we see that we can apply Theorem (\ref{measureestimates}) to $f_{i_{+}(\rho)}$ and $f_{D}^{HJ}$ to get  the following decompositions
\be W_{e^{-1/10}U_{\R}^{(\rho')} } \setminus\cL_{i_{+}(\rho)}\subset B_{i_{+}(\rho)},\quad W_{e^{-1/10}\hat D_{\R}}\setminus\cL_{{\hat D}}\subset  B_{\hat D\setminus\check D}\cup  E_{\check D}\label{12.298e}
\ee
with $B_{\hat D\setminus\check D}\subset W_{\hat D_{\R}\setminus \check D_{\R}}$, $E_{\check D}=W_{e^{-1/10}\check D_{\R}}$
and 
$$\max\biggl({\rm Leb}_{M_{\R}}(B_{i_{+}(\rho)}), {\rm Leb}_{M_{\R}}(B_{{\hat D\setminus\check D}})\biggr)\lesssim  \exp(-(1/\rho)^{\frac{2}{1+\tau}-\b/2})
$$
\be {\rm Leb}_{M_{\R}}(E_{\check D})\lesssim {\rm Leb}_{M_{\R}}(W_{e^{-1/10}\check D_{\R}}).\label{12.275}
\ee
We now introduce
\be\ti \cL_{i_{+}(\rho)}:=g_{i_{-}(\rho),i_{+}(\rho)}(\cL_{i_{+}(\rho)}),\qquad \ti \cL_{\hat D}:=g_{i_{-}(\rho),i_{D}}\circ g_{D}^{HJ}(\cL_{\hat D})\label{12358a}\ee
$$\ti B_{i_{+}(\rho)}=g_{i_{-}(\rho),i_{+}(\rho)}(B_{i_{+}(\rho)}) ,\quad \ti B_{\hat D\setminus\check D}=g_{i_{-}(\rho),i_{D}}\circ g_{D}^{HJ}(B_{\hat D\setminus\check D}),$$
$$\ti E_{\check D}=g^{}_{i_{-}(\rho),i_{D}}\circ g_{D}^{HJ}(E_{\check D}).$$

\begin{lemma}\label{lemma:12.9}One has 
$$g_{i_{-}(\rho),i_{+}(\rho)}^{}(W_{ e^{-1/10} \bD_{\R}(0,\rho')})\setminus\ti \cL\subset \ti B$$
with 
$$\ti \cL= \ti\cL_{i_{+}(\rho)}\cup \bigcup_{D\in\cD_{\rho}}\ti \cL_{\hat D}\qquad \ti B= \ti B_{i_{+}(\rho)}\cup\bigcup_{D\in\cD_{\rho}} \ti B_{\hat D\setminus\check D}\cup \ti E_{\check D}.$$
\end{lemma}
\begin{proof}We observe that from (\ref{a12356}) one has 
\be W_{e^{-1/10}\bD_{\R}(0,\rho')}\subset W_{e^{-1/10}U_{\R}^{(\rho')}}\cup\bigcup_{D\in\cD_{\rho}}  W_{(1/4)\hat D_{\R}}\label{12.298d}
\ee
hence 
\begin{multline*}g_{i_{-}(\rho),i_{+}(\rho)}^{}(W_{e^{-1/10}\bD_{\R}(0,\rho')})\\
\subset g_{i_{-}(\rho),i_{+}(\rho)}^{}(W_{e^{-1/10}U_{\R}^{(\rho')}})\cup\bigcup_{D\in\cD_{\rho}}g_{i_{-}(\rho),i_{+}(\rho)}^{}(W_{(1/4)\hat D_{\R}}).
\end{multline*}
Note that by Proposition \ref{adaptedNFAA} one has  $\max(\|g_{i_{-}(\rho),i_{+}(\rho)}-id\|_{C^1}, \|g_{\hat D}^{HJ}-id\|_{C^1})\|\leq \bar \e_{i_{-}(\rho)}^{1/9}\ll \hat K_{i_{D}}^{-1}$ (since $N_{i_D}\leq N_{i_{-}(\rho)}^2$) hence
$$g_{i_{-}(\rho),i_{+}(\rho)}^{}(W_{ (1/4)\hat D_{\R}})\subset W_{ (1/2)\hat D_{\R}}\subset g^{}_{i_{-}(\rho),i_{D}}\circ g^{HJ}_{ \hat D}(W_{e^{-1/10}\hat D_{\R}})
$$
which yields
\begin{multline*}g_{i_{-}(\rho),i_{+}(\rho)}^{}(W_{e^{-1/10}\bD_{\R}(0,\rho')})\subset \\ g^{}_{i_{-}(\rho),i_{+}(\rho)}(W_{e^{-1/10}U_{\R}^{(\rho')}})\cup \bigcup_{D\in\cD_{\rho}} \biggl(g^{}_{i_{-}(\rho),i_{D}}\circ g^{HJ}_{ \hat D}(W_{e^{-1/10}\hat D_{\R} \setminus\check D_{\R}})\cup \ti E_{\check D}\biggr).\end{multline*}
We then conclude using (\ref{12.298e}) and the notations (\ref{12358a}).
\end{proof}
 \begin{lemma}\label{lemma:12.10}For some  
 $G\subset W_{e^{1/10}\bD_{\R}(0,\rho')}$ one has 
 $ \ti\cL=\cL(f_{i_{-}(\rho)},G)$ and 
 \be  {\rm Leb}_{M_{\R}}(\ti B)\lesssim \exp(-(1/\rho)^{\frac{2}{1+\tau}-\b })+ \sum_{D\in\cD_{\rho}}{\Leb}_{M_{\R}}(W_{e^{-1/10}\check D_{\R}}). \label{estlemma122}
\ee
 \end{lemma}
 \begin{proof}
We observe that from  (\ref{4.47ter})
\be\ti \cL_{i_{+}(\rho)}:=g_{i_{-}(\rho),i_{+}(\rho)}(\cL_{i_{+}(\rho)})=\cL(f_{i_{-}(\rho)},g_{i_{-}(\rho),i_{+}(\rho)}(W_{U_{\R}^{(\rho')}}))
\label{13.310}\ee
\be \ti \cL_{\hat D}:=g_{i_{-}(\rho),i_{D}}\circ g_{D}^{HJ}(\cL_{\hat D})=\cL(f_{i_{-}(\rho)},g_{i_{-}(\rho),i_{D}}\circ g_{D}^{HJ}(W_{\hat D_{\R}\setminus \check D_{\R}}))
\label{13.311}\ee
hence, 
$$\ti \cL=\cL(f_{i_{-}(\rho)},G)$$
with 
$$G=g_{i_{-}(\rho),i_{+}(\rho)}(W_{U_{\R}^{(\rho')}})\cup \bigcup_{D\in\cD_{\rho}}g_{i_{-}(\rho),i_{D}}\circ g_{D}^{HJ}(W_{\hat D_{\R}\setminus \check D_{\R}})$$
and clearly
$G\subset W_{e^{1/10}\bD_{\R}(0,\rho')}.
$

To get the estimate on the measure of $\ti B$ we use 
$${\rm Leb}_{M_{\R}}(\ti B_{i_{+}(\rho}))\lesssim \exp(-(1/\rho)^{\frac{2}{1+\tau}-\b/2 }),$$
$${\rm Leb}_{M_{\R}} \biggl(\bigcup_{D\in\cD_{\rho}}\ti B_{\hat D\setminus\check D}\biggr)\lesssim N_{i_{+}(\rho)}^2\exp(-(1/\rho)^{\frac{2}{1+\tau}-\b/2 }))
\lesssim \exp(-(1/\rho)^{\frac{2}{1+\tau}-\b })
$$
and (see (\ref{12.275}))
$${\rm Leb}_{M_{\R}} \biggl(\bigcup_{D\in\cD_{\rho}}\ti E_{\check D}\biggr)\lesssim \sum_{D\in\cD_{\rho}} {\Leb}_{M_{\R}}(W_{e^{-1/10}\check D_{\R}}).
$$
 \end{proof}

Lemmata \ref{lemma:12.9} and \ref{lemma:12.10} give 
$$g_{i_{-}(\rho),i_{+}(\rho)}^{}(W_{\bD_{\R}(0,\rho')})\setminus \cL(f_{i_{-}(\rho)},W_{e^{1/10}\bD_{\R}(0,\rho')})\subset \ti B$$
hence
\be g_{1,i_{-}(\rho)}^{}\circ g_{i_{-}(\rho),i_{+}(\rho)}^{}(W_{\bD_{\R}(0,\rho')})\setminus g_{1,i_{-}(\rho)}^{}( \cL(f_{i_{-}(\rho)},W_{e^{1/10}\bD_{\R}(0,\rho')}))\subset g_{1,i_{-}(\rho)}^{}(\ti B). \label{e12.280}\ee
Since the conjugation relation  $g_{1,i_{-}(\rho)}^{-1}\circ f\circ g_{1,i_{-}(\rho)}= f_{{i_{-}(\rho)}}$  holds on $W_{U_{i_{-}(\rho)}\cap \R}$ ({\it cf.} (\ref{e12.271})) and since  $W_{e^{1/10}\bD_{\R}(0,\rho')}\subset W_{U_{i_{-}(\rho)}\cap \R}$ (recall that by definition (\ref{eq:9.296}) $\bD(0,2\rho)\subset W_{h,U_{i_{-}(\rho)}}$ and that $\|g_{1,i_{-}(\rho)}-id\|\ll \rho$) one has by (\ref{4.47ter})
$$\cL(f,g_{1,i_{-}(\rho)}^{}(W_{e^{1/10}\bD_{\R}(0,\rho')}))=g_{1,i_{-}(\rho)}^{}\cL(f_{i_{-}(\rho)},W_{e^{1/10}\bD_{\R}(0,\rho')}).
$$
Equation (\ref{e12.280}) then implies that
$$ g_{1,i_{-}(\rho)}^{}\circ g_{i_{-}(\rho),i_{+}(\rho)}^{}(W_{\bD_{\R}(0,\rho')})\setminus\cL(f,g_{1,i_{-}(\rho)}^{}(W_{e^{1/10}\bD_{\R}(0,\rho')}))\subset g_{1,i_{-}(\rho)}^{}(\ti B).$$
Finally, inclusions $W_{\bD_{\R}(0,\rho)} \subset g_{1,i_{-}(\rho)}^{}\circ g_{i_{-}(\rho),i_{+}(\rho)}^{}(W_{\bD_{\R}(0,\rho')})$ and $g_{1,i_{-}(\rho)}^{}(W_{e^{1/10}\bD_{\R}(0,\rho')})\subset W_{e^{1/2}\bD(0,\rho)}$ yield
$${\Leb}_{M_{\R}}(W_{\bD_{\R}(0,\rho)} \setminus \cL(f,W_{e^{1/2}\bD(0,\rho)}))\lesssim {\Leb}_{M_{\R}}(\ti B).$$
We conclude by using the estimate (\ref{estlemma122}) and the fact that 
$${\Leb}_{M_{\R}}(W_{e^{-1/10}\check D_{\R}})\leq \begin{cases}&|\check D\cap\R |,\quad \textrm{(AA)-case}\\
&|\check D\cap\R_{+} |,\quad \textrm{(CC)\ or\ (CC*)-case}.
\end{cases}$$

\end{proof}

\subsection{Estimates on the measure of the set of invariant circles: $\omega_{0}$ Liouvillian, (CC)-Case}
We now assume that (\ref{eq:11.257bis}),  (\ref{eq:11.257,5bis}) (\ref{eq:11.258bis}) hold. 
\begin{theo}\label{prop:12.11}Let $\rho_{n}=(10A)/(q_{n}q_{n+1})$ and assume that $q_{n+1}\geq q_{n}^{10}$. Then, for all $\b\ll 1$ and $n\gg_{\b} 1$ one has  
$$\ti m_{\Phi_{\Omega}\circ f_{F}}(\rho_{n})\lesssim \exp(-q_{n+1}^{1/4-\b})+|(\check D_{n}\cap \R_{+})|.
$$
\end{theo}
\begin{proof}The principle of the proof is the same as that of Proposition \ref{prop:12.7} with the following modifications in the notations: we  set $f_{i_{n}^\pm}=f_{F_{i_{n}^\pm}}$, $f_{D_{n}}^{HJ}=f_{F_{D_{n}}^{HJ}}$  and  we  replace in the proof  the indices  $i_{\pm}(\rho)$ by $i_{n}^\pm$, $\hat D$, $D$, $\check D$ by  $\hat D_{n}$, $D_{n}$ $\check D_{n}$,   $i_{D}$ by $i_{n}^-$,  $\rho$ by $(4/3)\rho_{n}$, $\rho'$ by $2\rho_{n}$, $U^{\rho'}$ by $U^{(n)}$ and $\exp(-(1/\rho)^{\frac{2}{1+\tau} -\b})$ by $\exp(-q_{n+1}^{(1/4)-})$. Instead of using the conjugation relations of Proposition \ref{adaptedNFAA} (Adapted Normal Forms in the (CC) or (CC*)-case) we use those of Proposition \ref{prop:adaptedCC}.
\end{proof}
\begin{rem}\label{rem:12.1}Note that if the twist condition (\ref{eq:11.257bis}) is satisfied, then  any twist condition $\cT\cC(A',B)$ is satisfied with $A'\geq A$. We can thus replace in Theorem \ref{prop:12.11} $\rho_{n}=(10A)/(q_{n}q_{n+1})$ by  $\rho_{n}=(10A')/(q_{n}q_{n+1})$ for any fixed $A'\geq A$ (then $n$ has to be chosen larger).
\end{rem}

\section{Convergent  BNF implies small holes}\label{sec:smallholes13}

\subsection{Case where $\omega_{0}$ Diophantine in the  (AA) of (CC) setting}
We keep here the notations of Section \ref{sec:adaptedkam} in particular we assume  $\omega_{0}$ is $\tau$-diophantine and that (\ref{eq:11.257}), (\ref{eq:11.257,5}),  (\ref{eq:11.258}) hold. 
\begin{lemma}If $BNF(\Phi_{\Omega}\circ f_{F})$ converges and is equal to a holomorphic function $\Xi\in\cO(\bD(0,1))$ then  for all $\b>0$, $\rho\ll_{\b} 1$ and  for any $D\in\cD_{\rho}$
\be \|\Omega_{i_{+}(\rho)}-\Xi\|_{\hat D\setminus(1/10)\hat D}\lesssim\exp\biggl(-(1/\rho)^{(\frac{2}{1+\tau})-\b}\biggr).\label{14.261}
\ee
As a corollary,  for any $D\in\cD_{\rho}$ and $\g_{D}\leq \hat K_{i_{D}}^{-2}$
\be \|\Omega_{\hat D}^{HJ}-\Xi(\cdot-\gamma_{\hat D})\|_{(4/5)\hat D\setminus (1/5)\hat D}\lesssim \exp\biggl(-(1/\rho)^{(\frac{2}{1+\tau})-\b}\biggr).\label{14.262}
\ee
\end{lemma}
\begin{proof}Let us prove inequality  (\ref{14.261}). From (\ref{14.257})  and Proposition \ref{theo:compBNF} one gets
\begin{align*}\|\Omega_{i_{+}(\rho)}-\Xi\|_{(1/2)\bD(0,\rho^{b_{\tau}})}&\lesssim \exp(-(1/\rho)^{\frac{2}{1+\tau}-\b/2})+\exp(-(1/\rho)^{1-\b})\\
&\lesssim \exp(-(1/\rho)^{\frac{2}{1+\tau}-\b/2}).
\end{align*}
Since the function $U_{i_{+}(\rho)}-\Xi$ is holomorphic on $U^{(\rho)}$ and since    the triple
$(U_{i_{+}(\rho)},\hat D\setminus(1/10)\hat D,\bD(0,\rho^{b_{\tau}} /2))$ is $(10b_{\tau})^{-1}|\ln\rho|^{-1}$-good, {\it cf.} Proposition \ref{prop:10.4noscreening},  we have by definition
\begin{align*}\|\Omega_{i_{+}(\rho)}-\Xi\|_{\hat D\setminus(1/10)\hat D}&\lesssim\exp(-(10b_{\tau}|\ln \rho|)^{-1}(1/\rho)^{\frac{2}{1+\tau}-\b/2})\\
&\lesssim  \exp(-(1/\rho)^{\frac{2}{1+\tau}-\b}).
\end{align*}

The inequality (\ref{14.262}) is then a consequence of (\ref{14.261}) and (\ref{14.258}).
\end{proof}
\begin{cor}\label{cor:13.2}If $BNF(f)=\Xi$ then for all $\b>0$, $\rho\ll_{\b} 1$, and any $D\in\cD_{\rho}$ the radius $\rho_{\check D}$ of the disk $\check D$ satisfies
$$\rho_{\check D}\lesssim\exp\biggl(-(1/\rho)^{(\frac{2}{1+\tau })-\b}\biggr).
$$
\end{cor}
\begin{proof} This results from (\ref{14.262}) and the Extension Property in Proposition \ref{proppropHJ}.
\end{proof}

\begin{cor}\label{cor:13.3}If $BNF(\Phi_{\Omega}\circ f_{F})$ converges then for all $\b>0$, $\rho\ll_{\b} 1$
$$\ti m_{\Phi_{\Omega}\circ f_{F}}(\rho)\lesssim \exp\biggl(-(1/\rho)^{(\frac{2}{1+\tau })-\b}\biggr).
$$
\end{cor}
\begin{proof}This is a consequence of the previous Corollary \ref{cor:13.2} and of Proposition \ref{prop:12.7} since $\#\cD_{\rho}\lesssim \rho^{1-2(\mu/{\iota(\rho))}}\lesssim \rho^{-1}$ ({\it cf.} (\ref{numberofholes}), (\ref{eq:5.101}), (\ref{502})).
 \end{proof}
\subsection{Case $\omega_{0}$ is irrational in the (CC) setting}
The notations here are those of Section \ref{sec:10.2}. In particular we assume that (\ref{eq:11.257bis}) (\ref{eq:11.257,5bis}), {\ref{eq:11.258bis}) hold.

\begin{lemma}If $BNF(\Phi_{\Omega}\circ f_{F})$ converges and is equal to $\Xi\in\cO(\bD(0,1))$ then for all $\b\ll 1$, $n\gg_{\b}1$ such that $q_{n+1}\geq q_{n}^{10}$
\be \|\Omega_{i_{n}^+}-\Xi\|_{\hat D_{n}\setminus (1/10)\hat D_{n}}\lesssim \exp(-q_{n+1}^{1-\b}).\label{14.261cc}
\ee
As a corollary,  for $\g_{n} \lesssim q_{n+1}^{-m}\lesssim |c_{n}|^{m/2}$ 
\be \|\Omega_{\hat D_{n}}^{HJ}-\Xi(\cdot-\g_{n}))\|_{(4/5)\hat D_{n}\setminus (1/5)\hat D_{n}}\lesssim \exp(-q_{n+1}^{(1/4)-\b}).\label{14.262cc}
\ee
\end{lemma}
\begin{proof}
Let us prove (\ref{14.261cc}). From (\ref{14.257cc})  and Proposition \ref{theo:compBNF} one gets 
$$\|\Omega_{i_{n}^+}-\Xi\|_{\bD(0,q_{n+1}^{-6}/2)}
\lesssim \exp(-q_{n+1}^{1-\b/2}).
$$
Since the function $\Omega_{i_{n}^+}-\Xi$ is holomorphic on $U^{(n)}$ and since  the triple  $(U^{(n)},\hat D_{n}\setminus(1/10)\hat D_{n}, \bD(0,q_{n+1}^{-6} /2))$ is $1/(10|\ln \rho_{n}|)$-good (see Proposition \ref{prop:11.2CC}, Item (\ref{aab4})),  we have
\begin{align*}\|\Omega_{i_{n}^+}-\Xi\|_{\hat D_{n}\setminus(1/10)\hat D_{n}}&\lesssim\exp(-(10|\ln \rho_{n}|)^{-1}(q_{n+1})^{1-\b/2})\\
&\lesssim \exp(-q_{n+1}^{1-\b}).
\end{align*}

The inequality (\ref{14.262cc}) is then a consequence of (\ref{14.261cc}) and  (\ref{14.258cc}).
\end{proof}
\begin{cor}\label{cor:13.5}If $BNF(f)=\Xi$ then for any $\b>0$, $n\gg_{\b} 1$ such that $q_{n+1}\geq q_{n}^{10}$, the radius $\rho_{\check D_{n}}$ of the disk $\check D_{n}$ satisfies
$$\rho_{\check D_{n}}\lesssim \exp(-q_{n+1}^{(1/4)-\b}).
$$
\end{cor}
\begin{proof} This results from (\ref{14.262cc}) and  the Extension Property of Proposition \ref{proppropHJ}.
\end{proof}

\begin{cor}\label{cor:13.6}If $BNF(\Phi_{\Omega}\circ f_{F})$ converges, then for any $\b>0$, $A'\geq A$ and $n\gg_{\b,A'} 1$ such that $q_{n+1}\geq q_{n}^{10}$ one has  
$$\ti m_{\Phi_{\Omega}\circ f_{F}}(\rho_{n})\lesssim \exp(-q_{n+1}^{(1/4) -\b}),\qquad \rho_{n}=10A'/(q_{n+1}q_{n}).
$$
\end{cor}
\begin{proof}This follows from the previous Corollary \ref{cor:13.5} and Proposition \ref{prop:12.11} and Remark \ref{rem:12.1}.
\end{proof}

\section{Proof of Theorems \ref{theo:main1} and \ref{theo:mainprime1} }\label{sec:conclAA'}
\subsection{Proof of Theorem \ref{theo:main1}}
\subsubsection{(AA) Case}\label{proofThA-AA} Let $f(\th,r)=(\th+\omega_{0},r)+(O(r),O^2(r))$ be a real analytic symplectic diffeomorphism of the annulus $\T\times [-1,1]$ satisfying the twist condition (\ref{twistcond1.10}). We can perform some step of the classical Birkhoff Normal Form procedure, Proposition \ref{statementBNF}: for some $h>0, \rho_{0}>0$, there exists $\ti g=f_{\ti Z}=id+(O(r),O(r^2))$, $\ti \Omega\in\cO_{\s}(e^{10h}\bD(0,\rho_{0}))$, $Z, F\in\cO_{\s}(e^{10h}(\T_{h}\times \bD(0, \rho_{0})))\cap O(r^2)$, such that on $e^{10h}(\T_{h}\times \bD(0,\rho_{0}))$ one has 
\begin{align*}&\ti g^{-1}\circ f\circ \ti g=\Phi_{\ti \Omega}\circ f_{F},\\
&\forall 0\leq \rho\leq \rho_{0},\quad  \|F\|_{e^{10h}(\T_{h}\times \bD(0,\rho))}\leq \rho^{\hs m}\\
&(2\pi)^{-1}\ti\Omega(r)=\omega_{0}r+b_{2}(f)r^2+O(r^3)\\
&\ti Z(\th,r)=\sum_{j=2}^{9} \ti Z_{j}(\th)r^j+r^{10}\ti Z_{\geq 10}(\th,r)
\end{align*}
where $m$ is the constant appearing in (\ref{eq:11.258}).
Applying Lemma \ref{Whextension}   to $\ti\Omega(r)$ and Lemma \ref{lemma:2.3ee} to  $r^{10}\ti Z_{\geq 10}(\th,r)$  we can find, for some $0<\bar\rho\ll\rho_{0}$, $C^3$ Whitney extensions $\Omega\in\ti\cO_{\s}(e^{10h}\bD(0,\bar \rho))$  and  $Z\in\ti\cO_{\s}(e^{10h}W_{h,\bD(0,\bar \rho)})$   of  $(\ti \Omega,e^{10h}\bD(0, \bar \rho))$  and $(\ti Z,e^{h/10}W_{h,\bD(0,\bar\rho)})$ such that $g:=f_{Z}\in \ti{\rm Symp}_{ex,\s}(e^{h/10}W_{h,\bD(0,\bar\rho)})$, 
\begin{align}&\Omega\in\cT\cC(A,B), \quad A=3\min(b_{2}(f),b_{2}(f)^{-1}),\quad  B\geq 0 \label{14.353}    \\
&g(\{r=0\})=(\{r=0\}),\qquad   \|g-id\|_{C^1}\leq 1/100.\label{14.354}
\end{align}
We can then apply Corollary \ref{cor:13.3} to get 
$$\ti m_{\Phi_{\Omega}\circ f_{F}}(\rho)\lesssim \exp(-(1/\rho)^{\frac{2}{1+\tau}-\b}).
$$
Since 
$$ g^{-1}\circ f\circ g=\Phi_{\Omega}\circ f_{F} \qquad [e^{h/10}W_{h,\bD(0,\bar\rho)}]$$
one has from (\ref{4.47ter}), for any $\b>0$ and any  $0<\rho<\ll_{\b} 1$,
$$ \cL(f,g(W_{\bD(0,\rho)}))=g(\cL(\Phi_{\Omega}\circ f_{F}, W_{\bD(0,\rho)}))$$
hence, using the fact that $g(\{r=0\})=(\{ r=0\})$ and   $\|g-id\|_{C^1}\leq 1/100$, the inequality 
$$m_{f}(\rho)\lesssim \ti m_{\Phi_{\Omega}\circ f_{F}}(2\rho).$$
Now, if the BNF of $f$ converges, the same is true for $\Phi_{\Omega}\circ f_{F}$ and by Corollary \ref{cor:13.3}
$$m_{f}(\rho)\lesssim \exp\biggl(-(1/\rho)^{(\frac{2}{1+\tau })-\b}\biggr).$$
\hfill $\Box$
\subsubsection{(CC) Case}\label{sec:14.1.2} Let $f$ is a real analytic twist symplectic map of the real disk admitting the origin as an elliptic fixed point with Diophantine frequency $\omega_{0}$, $(x,y)\mapsto \Phi_{2\pi\omega_{0}r(x,y)}(x,y)+O^2(x,y)$, $r(x,y)=(1/2)(x^2+y^2)$ and satisfying the twist condition (\ref{twistcond1.10}). We first make the symplectic change of variables (\ref{changecoordxyzw} $(z,w)=\ph(x,y)$,
$$\begin{cases}&z=\frac{1}{\sqrt{2}}{(x+iy)}\\ &w=\frac{i}{\sqrt{2}}{(x-iy)}\end{cases}\iff \begin{cases}&x=\frac{1}{\sqrt{2}}(z-iw)\\ &y=\frac{-i}{\sqrt{2}}(z+iw)\end{cases}
$$
and we write the thus obtained symplectic map $(z,w)\mapsto \ti f(z,w)$, $\ti f=\ph\circ f\circ \ph^{-1}$ as 
$$\ti f=\Phi_{2\pi\omega_{0}r}\circ f_{F_{0}},\qquad r=-izw.
$$
We observe that 
\be \cL(f,W)=\cL(\ti f,\ph(W)).\label{14.320}\ee
Like in the (AA)-case (Subsection \ref{proofThA-AA})
we perform some steps of  Birkhoff Normal Form, Proposition \ref{statementBNFante} and make some Whitney extensions (Lemma \ref{lemma:2.3ee}) to obtain   for some  $h>0$, $\bar \rho>0$, maps $\Omega\in\ti\cO_{\s}(e^{10h}\bD(0,\bar \rho))$, $F\in\cO_{\s}(e^{10h}W_{h,\bD(0,\bar \rho)})$,  $g\in \ti{\rm Symp}_{ex,\s}(e^{h/10}W_{h,\bD(0,\bar\rho)})$ satisfying  
 \begin{align}&g^{-1}\circ \ti f\circ g=\Phi_{\Omega}\circ f_{F},\qquad [e^{10h}W_{h,\bD(0,\bar\rho)}]  \label{14.356}\\
 &g(\{r=0\})=(\{r=0\}),\qquad   \|g-id\|_{C^1}\leq 1/100.\label{14.354bis}\\
 &\Omega\in\cT\cC(A,B), \quad A=3\min(b_{2}(f),b_{2}(f)^{-1}),\quad  B\geq 0 \label{14.353bis}   \\
 &\forall\ \rho\leq \bar \rho,\quad \|F\|_{e^{10h}W_{h,\bD(0,\rho)}}\leq \rho^m\notag
\end{align}
where $m$ is the constant appearing in (\ref{eq:11.258}).

Applying  (\ref{14.320}), (\ref{14.356}), (\ref{4.47ter}) and Corollary \ref{cor:13.3} yields for any $\b>0$ and any $\rho\ll_{\b} 1$,
\be m_{f}(\rho)\leq m_{\ti f}(2\rho)\leq \ti m_{\Phi_{\Omega\circ f_{F}}}(4\rho)\lesssim \exp(-(1/\rho)^{\frac{2}{1+\tau}-\b}).\label{14.357}
\ee
\hfill $\Box$

\subsection{Proof of Theorem \ref{theo:mainprime1}} We proceed like in the previous Subsection \ref{sec:14.1.2} but we apply  Corollary \ref{cor:13.6} instead of Corollary \ref{cor:13.3}. 

Setting $\rho_{n}=10A/(q_{n}q_{n+1})$ with $A=3\min(b_{2}(f),b_{2}(f)^{-1})$ ({\it cf.} (\ref{14.353bis})) and assuming $q_{n+1}\geq q_{n}^{10}$ we now get 
$$\ti m_{\Phi_{\Omega}\circ f_{F}}(\rho_{n})\lesssim \exp(-q_{n+1}^{(1/4)-\b})
$$
hence if $t_{n}:=5\min(b_{2}(f),b_{2}(f)^{-1})/(q_{n}q_{n+1})\leq \rho_{n}/4$ one has  ({\it cf.} (\ref{14.357}))
$$m_{f}(t_{n})\leq m_{\ti f}(2t_{n})\leq \ti m_{\Phi_{\Omega\circ f_{F}}}(4t_{n})\lesssim \exp(-q_{n+1}^{1/5}).$$
\hfill $\Box$

\section{Opening hyperbolic eyes}\label{sec:openingtheeyes}
\bigskip
Let $\Omega\in \ti \cO_{\s}(\bD(0,\bar\rho))$ satisfy a twist condition,
\be \forall \ r\in\R,\  A^{-1}\leq (1/2\pi) \pa^2\Omega(r)\leq A,\quad \textrm{and}\ \ \|(1/2\pi)D^3\Omega\|_{\C}\leq B, \label{15.358}\ee
 $\bar a_{4}\in\N$, $\bar a_{4}\geq 10$,  be the constant appearing in Proposition \ref{prop:11.5} of the Appendix
and   $(p_{n}/q_{n})$   the sequence of convergents of $\omega_{0}=(2\pi)^{-1}\pa\Omega(0))$.  We introduce for $n\geq 1$, the sequence  $c_{n}$ defined by
\be(2\pi)^{-1}\pa\Omega(c_{n})=p_{n}/q_{n},\qquad \frac{(2A)^{-1}}{q_{n}q_{n+1} } \leq |c_{n}|\leq  \frac{A}{q_{n}q_{n+1}}. \label{e16.318}
\ee

\begin{prop}\label{prop:16.1}Let $h>0$, $n\in\N$  large enough and $F\in\cO_{\s}(\T_{h}\times \bD(c_{n},|c_{n}|^2))$ such that 
$$e^{-q_{n}h}<|c_{n}|^{10}$$
 $$\|F\|_{\T_{h}\times \bD(c_{n},|c_{n}|^2)}\lesssim  |c_{n}|^{\bar a_{4}}
 $$
and 
$$|\hat F(q_{n},c_{n})|\geq e^{-q_{n}h}|c_{n}|^{\bar a_{4}+1/2}.$$
Then,  
$$m_{\Phi_{\Omega}\circ f_{F}}(c_{n})\geq C_{h}^{-1}  |c_{n}|^{2\bar a_{4}+1}e^{-4q_{n}h}.$$
The constant $C_{h}$ can be chosen to be non increasing w.r.t. $h$.
\end{prop}
\begin{proof} The Proposition is a consequence of  the more precise statement given by Proposition \ref{prop:15.1} below. 

\medskip
Since  for $n$ large enough
$$e^{-q_{n}h}< \min(|c_{n}/4|,q_{n}^{-9})^{10}$$
$$|\hat F(q_{n},c_{n})|\geq e^{-q_{n}h}|c_{n}|^{1/2}\|F\|_{\T_{h}\times \bD(c_{n},|c_{n}|^2)}$$
$$\lim_{n\to\infty}|c_{n}|^{-1/2}q_{n}\min(|c_{n}/4|,q_{n}^{-9})=0$$
 we can apply 
Proposition \ref{prop:15.1} with $q=q_{n}$, $\e_{p/q}=\|F\|_{\T_{h}\times \bD(c_{n},|c_{n}|^2)}$,
$\nu_{q}=|c_{n}|^{1/2}$, $c_{p/q}=c_{n}$, $\rho_{p/q}=\min(|c_{n}/4|,q_{n}^{-9})$. We then get
$$m_{\Phi_{\Omega}\circ f_{F}}(c_{n})\gtrsim (|c_{n}|^{1/2} e^{-q_{n}h}\|F\|_{\T_{h}\times \bD(c_{n},|c_{n}|^2})^2.$$
But since $((2\pi)^{-1}\int_{0}^{2\pi}|F(\th,c_{n})|^2d\th)^{1/2}\geq |\hat F(q_{n},c_{n})|$ one has $\|F\|_{\T_{h}\times \bD(c_{n},|c_{n}|^2}\geq |\hat F(q_{n},c_{n})|$ hence
$$m_{\Phi_{\Omega}\circ f_{F}}(c_{n})\geq C_{h}^{-1}|c_{n}|^{2\bar a_{4}+1}e^{-4q_{n}h}.$$
\end{proof}
We now state Proposition \ref{prop:15.1}.

\medskip
For   $p\in\Z$, $q\in\N^*$, $p\wedge q=1$, $p/q$ small enough there exists a unique $c_{p/q}\in\bD(0,\bar\rho)$ such that 
 $$\omega(c_{p/q}):=(2\pi)^{-1}\pa\Omega(c_{p/q})=p/q.
 $$ 
 We define
 \be \rho_{p/q}=\min(|c_{p/q}/4|,q^{-9})\label{defrhop/q}
 \ee
 and assume that
 \be \e_{{p/q}}:=\|F\|_{\bD(c_{p/q},\rho_{p/q})}\leq |c_{p/q}|^{\bar a_{4}}.\label{16.320}
 \ee
The $\pm q$-th Fourier coefficients of  $F(\cdot,r)$, $\hat F(\pm q,r)=(2\pi)^{-1}\int_{0}^{2\pi}F(\th,r)e^{\mp iq\th}d\th$ satisfy 
$$|\hat F(\pm q,r)|\lesssim e^{-qh}\e_{{p/q}}.
$$
and since $F$ is $\s$-symmetric, for every $r\in\bD(0,\bar\rho)$, $\overline{\hat F( q,r)}=\hat F(-q,r)$. 
\begin{prop}\label{prop:15.1}Assume (\ref{16.320}) is satisfied and 
\be e^{-qh}<\rho_{p/q}^{10}\label{condexpqh}\ee
\be |\hat F(\pm q,c_{p/q})|=\nu_{q}e^{-qh} \e_{p/q}.
\label{nu16.321}
\ee
\be  
 \nu_{q}^{-1}q\rho_{p/q}\leq 1/q.\label{15.255a}
\ee
Then, there exists  in a neighborhood of $\T\times \{c_{p/q}\}\subset\T\times\R$  an open set of area $\geq C_{h}^{-1}(\nu_{q}\e_{p/q} e^{-qh})^{3/2}$, $C_{h}>0$, that has an empty intersection with any possible (horizontal) invariant circle of  the  symplectic diffeomorphism $\Phi_{\Omega}\circ f_{F}$.
\end{prop}
\begin{rem}One can choose the constant $C_{h}$ to be non increasing  with respect to  $h$. 
\end{rem}
The proof of this Proposition will occupy the next subsections.
\subsection{Putting the system into $q$-resonant Normal Form}
Conditions (\ref{defrhop/q}) and (\ref{16.320}) show that we can apply  Proposition \ref{prop:11.5}: it  provides us with the following $q$-resonant Normal Form
\be  g_{RNF}^{-1}\circ \Phi_{\Omega}\circ  f_{F}\circ g_{RNF}= \Phi_{2\pi(p/q)r}\circ \Phi_{\bar\Omega}\circ  f_{\bar F^{res}}\circ f_{ F^{cor}}\label{15.246}\ee
$$\begin{cases}&\bar \Omega=\Omega-2\pi(p/q)r+\cM_{0}(F^{res})\\
&\bar F^{res}=F^{res}-\cM_{0}(F^{res})\end{cases}$$
 $F^{res}\in\cO_{\s}(\T_{h-1/q}\times \bD(c_{p/q},e^{-1/q}\rho_{p/q}))$, $\bar F^{res}=F^{res}-\cM_{0}(F^{res})$ which are $1/q$-periodic (in the $\th$-variable) and such that 
$$ \|F^{res}\|_{\T_{h-1/q}\times \bD(c_{p/q},e^{-1/q}\rho_{p/q})}\lesssim \e_{p/q}\label{15.296-}
$$
$$ F^{res}=T_{N}^{res}(F+O(q\rho_{p/q}\|F\|_{W_{h,\bD(c_{p/q}, \rho_{p/q})}}))
$$
\begin{lemma}
On $\T_{1/q}\times \bD(c_{p/q},e^{-1/q}\rho_{p/q}/2)$ one has 
\be F^{res}(\th,r)=u_{0}^{res}(r)+\sum_{\pm}u_{1,\pm}^{res}(r)e^{\pm iq\th}+u^{res}_{\geq 2}(\th,r)\label{15.255}\ee
where on $\bD(c_{p/q},e^{-1/q}\rho_{p/q}/2)$ one has
$$u_{0}^{res}(r)=\cM_{0}(F^{res})=\hat F(0,r)+O(q\rho_{p/q} \e_{p/q})
$$
\be  u_{1,\pm}^{res}(r)=\hat F(\pm q,r)+O(e^{-qh} q\rho_{p/q} \e_{p/q})=O(e^{-qh}\e_{p/q})\label{f1pm}\ee
and
$$\|u_{\geq 2}^{res}\|_{\bT_{1/q}\times \bD(c_{p/q},e^{-1/q}\rho_{p/q}/2)}\lesssim e^{-2qh}\e_{p/q} 
$$
\end{lemma}
\begin{proof}
We recall that from (\ref{Fres})
$$ F^{res}=T_{N}^{q-res}(F+G)$$
where 
$$\|G\|_{h-1/q,e^{-1/q}\rho_{p/q}/2}=O(q\rho_{p/q}\|F\|_{h,\bD(c_{p/q},\rho_{p/q}/2)}).
$$
Hence
\be|\hat G(0,r)|\lesssim q\rho_{p.q}\e_{p/q},\qquad |\hat G(\pm q,r)|\lesssim e^{-q(h-2/q)}q\rho_{p.q}\e_{p/q}\lesssim e^{-qh}q\rho_{p.q}\e_{p/q}.\label{15.256}
\ee
On the other hand since $e^{-2q(h-3/q)}\lesssim e^{-2qh}$
$$\|T_{N}^{q-res}F-\hat F(0,r)-\sum_{\pm}\hat F(\pm q,r)e^{\pm iq \th}\|_{1/q,\rho_{p/q}/2}\lesssim e^{-2qh}\e_{p/q}
$$
and 
$$\|T_{N}^{q-res}G-\hat G(0,r)-\sum_{\pm}\hat G(\pm q,r)e^{\pm iq\th}\|_{1/q,e^{-1/q}\rho_{p/q}/2}\lesssim q \rho_{p/q}\e_{p/q} e^{-2qh}.
$$
Summing these two inequalities and using (\ref{15.256}) gives (\ref{15.255})
\end{proof}
With these notations
$$\begin{cases}&\bar \Omega=\Omega-2\pi(p/q)r+u_{0}^{res}(r)\\
&\bar F^{res}=F^{res}-u_{0}^{res}(r).\end{cases}
$$
We denote by $\bar c\in\R$ the point  where 
$$\pa\bar\Omega(\bar c)=0;$$
since $\|u_{0}^{res}\|_{\bD(c_{p/q},\rho_{p/q})}\lesssim \e_{p/q}$ and $\Omega$ satisfies the twist condition (\ref{15.358}) one has
$$ \bar c=c+O(\e_{p/q})\in \bD(c,(3/4)\rho_{p/q}),\qquad \bar \Omega(r)={\rm cst}+(\varpi/2) (r-c)^2+O((r-c)^3)
$$
for some $\varpi\gtrsim A^{-1}$.
Since  $F^{res}$ is $\s$-symmetric we can write
$$\sum_{\pm}u_{1,\pm}^{res}(r)e^{\pm iq\th}=a(r)\cos(q\th)+b(r)\sin(q\th)
$$
and from 
(\ref{nu16.321}), (\ref{f1pm}), (\ref{15.255a})
we can assume, shifting the variable $\th\in\T_{h}$ by  translation $\th\mapsto \th+\a_{c}$ ($\a_{c}\in\T$) if necessary, that 
$$ b(\bar c)=0,\quad a(\bar c)=\bar \nu_{q}e^{-qh}\e_{p/q},\quad \bar\nu_{q}=\nu_{q}-O(q\rho_{p/q})=\nu_{q}(1+o_{q^{-1}}(1))\label{16.328}
$$

with 
$$ \max(\|a\|_{\bD(c,\rho_{p/q})},\|b\|_{\bD(c,\rho_{p/q})})\lesssim e^{-qh}\e_{p/q}.\label{16.329}
$$
Thus,
$$\begin{cases}&\bar \Omega(r)=\Omega(r)-2\pi(p/q)r+u_{0}^{res}(r)={\rm cst}+(\varpi/2) (r-\bar c)^2+O((r-\bar c)^3)\\
&\bar F^{res}(\th,r)=a(r)\cos(q\th)+b(r)\sin(q\th)+u^{res}_{\geq 2}(\th,r).\end{cases}$$

\subsection{Coverings}Like in  subsection \ref{sec:8.2} ({\it cf.} (\ref{10.163})) we define 
$$\ti\Omega^{res}\in\cO_{\s}(\bD(0,qe^{-2/q}\rho_{p/q}/2))),\qquad \ti F^{res}\in \cO_{\s}(\T_{qh-2}\times \bD(0,qe^{-2/q}\rho_{p/q}/2))$$
\be \begin{cases}&\ti \Omega^{res}(r)=q^2\bar\Omega(\bar c+r/q)\\
& \ti F^{res}(\th,r)=q^2\bar F^{res}([\th/q]_{\textrm{mod\ }(2\pi/q)\Z},\bar c+r/q)
\end{cases}\label{15.298a}
\ee
hence
$$\begin{cases}&\ti\Omega^{res}(r)={\rm cst}+\varpi r^2/2+O(r^3)=\varpi r^2/2+\ti\omega(r)\\
&\ti F^{res}(\th,r)=\ti a(r)\cos(q\th)+\ti b(r)\sin(q\th)+\ti u^{res}_{\geq 2}(\th,r)\end{cases}$$
with 
$$\ti a(r)=q^2a(\bar c+r/q),\qquad \ti b(r)=q^2b(\bar c+r/q), \qquad  \ti u^{res}_{\geq 2}(\th,r)=q^2u^{res}_{\geq 2}(\th/q,\bar c+r/q).\label{22.23}
$$
Let us define
\begin{align}\ti H^{res}(\th,r):=&\ti\Omega^{res}(r)+\ti F^{res}(\th,r)\notag\\
=&{\rm cst}+(1/2)\varpi r^2 +\ti a(r)\cos\th+\ti b(r)\sin\th+\ti\omega(r)+\ti u^{res}_{\geq 2}(\th,r).\label{16.331}
\end{align}
Making explicit the linear plus quadratic part  $H_{Q}(\th,r)$ of $(1/2)\varpi r^2 +\ti a(r)\cos\th+\ti b(r)\sin\th$ at $(\th,r)=(0,0)\in\R^2$  we can write 
\be  \ti H^{res}(\th,r)=H_{Q}(\th,r)+\ti\omega(r)+g(\th,r)\label{16.336}\ee
with
\be\begin{cases} &H_{Q}(\th,r)=\frac{1}{2}\biggl\<Q\bm\th \\ r\em,  \bm \th\\ r \em\biggr\>+\biggl\<\bm 0\\ \pa_{r}\ti a(0)\em,\bm\th\\ r \em\biggr\> \\
&  Q=\bm -\ti a(0)& \pa_{r}\ti b(0)\\ \pa_{r}\ti b(0)&\varpi+\pa_{r}^2\ti a(0)\em
\end{cases}
\ee
and
\be g(\th,r)=g_{0}(\th,r)+\ti u^{res}_{\geq 2}(\th,r),\qquad g_{0}(\th,r)=O_{3}(\th,r).
\label{16.331}
\ee
For further records we mention the following estimates.
\be  \max(\|\ti a\|_{\bD(0,q\rho_{p/q})},\|\ti b\|_{\bD(0,q\rho_{p/q})})\lesssim q^2e^{-qh}\e_{p/q}\label{16.329}\ee
\be  \ti b(0)=0,\qquad \ti a(0)=q^2\bar \nu_{q}e^{-qh}\e_{p/q},\qquad \bar\nu_{q}=\nu_{q}(1+o_{1/q}(1))\label{16.329bis}
\ee
\be\|\pa_{\th}^{l_{1}}\pa_{r}^{l_{2}} g_{0}(\th,r)\|_{\bD(0,t)\times \bD(0,t)}\lesssim q^2t^{3-k_{2}} e^{-qh}\e_{p/q}\label{16.338}
\ee
\be \|\ti u_{\geq 2}^{res}\|_{\bT_{1/q}\times \bD(0,e^{-1/q}q\rho_{p/q}/2)}\lesssim q^2e^{-2qh} \e_{p/q} \label{16.334} 
\ee
\be \|\pa_{\th}^{k_{1}}\pa_{r}^{k_{2}} g(\th,r)\|_{\bD(0,t)\times \bD(0,t)}\lesssim [q^2t^{3-k_{2}} e^{-qh}\e_{p/q}+q^{k_{1}}q^2e^{-2qh} \e_{p/q} ].\label{16.338bis}
\ee
\subsection{Existence of a hyperbolic point for $f_{H_{Q}+\ti \omega}$}

We refer to the Appendix \ref{sec:K} for the definition of the notion of a $(\kappa,\d)$-hyperbolic fixed point. 
\begin{lemma}\label{lemma:15.4}The affine symplectic map $f_{H_{Q}+\ti\omega(r)}$ has a $(\kappa,\delta)$-hyperbolic fixed point $(\th_{0},r_{0})\in \bD(0,\rho_{p/q}^5)^2\cap\R^2$  with 
$$ \delta=\kappa=q(\varpi \nu_{q}^{}\e_{p/q}e^{-qh})^{1/2}(1+o_{1/q}(1))$$
with  stable and unstable directions at this point  of the form $\bm 1\\ m_{\pm}\em$
 where 
$$m_{\pm} =\pm q(  \nu_{q}e^{-qh}\e_{p/q}/\varpi)^{1/2}(1+o(1)).$$

\end{lemma}
\begin{proof} See the Appendix \ref{sec:K2}
\end{proof}

\subsection{Stable and unstable manifolds of $f_{\ti H^{res}}$}
\begin{lemma}\label{lem:15.3} The symplectic diffeomorphism $f_{\ti H^{res}}$ has a $(\kappa,\d)$-hyperbolic fixed point $(\th_{1},r_{1})\in \bD(0,\rho_{p/q}^4)^2\cap\R^2$
with
$$\kappa=\delta=q(\varpi  \nu_{q}e^{-qh}\e_{p/q})^{1/2}(1+o_{1/q}(1)).$$
The stable and unstable directions at this point are of the form $\bm 1\\ m_{\pm}\em$
 where 
$$m_{\pm} =\pm q(   \nu_{q}e^{-qh}\e_{p/q}/\varpi)^{1/2}(1+o(1)).$$
 \end{lemma}
\begin{proof}
From (\ref{16.336}),
\begin{align*}f_{\ti H^{res}}&=f_{H_{Q}+\ti\omega+g}\\
&=f_{H_{Q}+\ti\omega}\circ f_{\ti g},\qquad \ti g=\fO_{1}(g)
\end{align*}
and from (\ref{16.338bis}) we get 
\begin{align*}\|Df_{\ti g}-id\|_{\bD(0,10\th_{0})\times \bD(0,10r_{0})}&\lesssim  [q^2r_{0} e^{-qh}\e_{p/q}+q^4e^{-2qh} \e_{p/q} ]\\
&\lesssim (q^2\rho_{p/q}^5+q^{4}e^{-qh})\e_{p/q}e^{-qh}.
\end{align*}

The Stable Manifold Theorem  \ref{theo:stableunstabletheorem} of the Appendix shows that the conclusion of the Lemma is true provided for some constant $C>0$ ({\it cf.} (\ref{F407}))
$$\|f_{\ti g}-id\|_{C^1(\bD(0,10\th_{0})\times \bD(0,10r_{0})})\leq C^{-1}\kappa\d\rho_{p/q}
$$
a condition that is implied by (recall that from (\ref{15.255a}) one has $\nu_{q}\gg q\rho_{p/q}$)
$$(q^2\rho_{p/q}^5+q^4e^{-qh})< q^2\rho_{p/q}^2\qquad (<C^{-1}q^2\rho_{p/q}\nu_{q})
$$
But (\ref{defrhop/q}), (\ref{condexpqh}) show that this last inequality is satisfied if $q\gg1$.

\end{proof}

\subsection{Stable and unstable manifolds of $\Phi_{\Omega}\circ f_{F}$}
\begin{lemma} The diffeomorphism $\Phi_{\Omega}\circ f_{F}$ has a hyperbolic $q$-periodic point $(\bar \th,\bar r)$  the local stable and unstable manifolds of  which  are graphs of $C^1$-functions $w_{-},w_{+}:]\bar \th-\rho,\bar \th+\rho)[\to\R$ such that
$$\begin{cases}&(m/2)\kappa|\th-\bar\th|\leq |w_{+}(\th)-w_{-}(\th)|\leq 2m|\th-\bar\th|\\ 
& m=q(  \nu_{q}e^{-qh}\e_{p/q}/\varpi)^{1/2}(1+o(1))\\
&\rho=C^{-1}\nu_{q}e^{-qh}\e_{p/q}.
\end{cases}
$$
\end{lemma}
\begin{proof}

Recall, {\it cf.}  (\ref{15.246}), that 
\be g_{RNF}^{-1}\circ \Phi_{\Omega}\circ  f_{F}\circ g_{RNF}= \Phi_{2\pi(p/q)r}\circ \Phi_{\bar\Omega}\circ  f_{\bar F^{res}}\circ f_{ F^{cor}}.
\ee
From (\ref{15.298a}), the pre-image of $(\th_{1},r_{1})$, by $(\th,r)\mapsto ([(\th-\a_c)/q]_{\textrm{mod\ }(2\pi/q)\Z},\bar c+r/q)$ is a $q$-periodic orbit  $O_{q}\subset\T\times]c_{p/q}-\rho_{p/q}^4,c_{p/q}+\rho_{p/q}^4[\subset\T\times]c_{p/q}-\rho_{p/q}/3,c_{p/q}+\rho_{p/q}/3[$ of $\Phi_{\bar \Omega}\circ f_{\bar F^{res}}$ as well as of $\Phi_{2\pi(p/q)r}\circ \Phi_{\bar \Omega}\circ f_{\bar F^{res}}$ ($\bar F^{res}$ is $2\pi/q$-periodic); Lemma \ref{lem:15.3} tells us that this periodic orbit is hyperbolic. Let $u_{0}\in\T\times ]c_{p/q}-\rho_{p/q}^4,c_{p/q}+\rho_{p/q}^4[$ be a point of $O_{q}$ and denote   $\ph=\Phi_{2\pi(p/q)r}\circ \Phi_{\bar \Omega}\circ f_{\bar F^{res}}$. One has $\ph^q(u_{0})=u_{0}$ and we want to find a hyperbolic  fixed  point for $(\ph\circ f_{F^{cor}})^q$ (the $q$-th iterate of  $\ph\circ f_{F^{cor}}$) close to $u_{0}$. 

 We can write 
$$(\ph\circ  f_{ F^{cor}})^q=\ph^q\circ j$$
where 
$$j=(\ph^{-(q-1)}\circ f_{F^{cor}}\circ \ph^{q-1})\circ \cdots\circ (\ph^{-1}\circ  f_{F^{cor}}\circ \ph)\circ f_{F^{cor}}.$$
Since $\|(\Phi_{2\pi(p/q)r}\circ \Phi_{\bar\Omega})^n\|_{C^2(\T\times]c_{p/q}-\rho_{p/q}/3,c_{p/q}+\rho_{p/q}/3[) }\lesssim 1$ uniformly in $n$ and 
$$\| \bar F^{res}\|_{C^2(\T\times]c_{p/q}-\rho_{p/q}/3,c_{p/q}+\rho_{p/q}/3[)}\lesssim \e_{p/q} \rho_{p/q}^{-2}\lesssim 1$$ one has for $n\leq  \rho_{p/q}^2/\e_{p/q}$, 
\be\|\ph^n\|_{C^2(\T\times]c_{p/q}-\rho_{p/q}/3,c_{p/q}+\rho_{p/q}/3[)}\lesssim 1\label{15.13}
\ee
and consequently ($q\ll \bar\e^{-1}$)
\begin{align*}\|j-id\|_{C^1(\T\times]c_{p/q}-\rho_{p/q}/3,c_{p/q}+\rho_{p/q}/3[)}&\lesssim q\|F^{cor}\|_{C^1(\T\times]c_{p/q}-\rho_{p/q}/3,c_{p/q}+\rho_{p/q}/3[)}\\
&\lesssim \exp(-\rho_{p/q}^{\ -1/3}).
\end{align*}
Replacing $\ph^q$ and $j$ by $T\circ \ph^q\circ T^{-1}$ and  $T\circ j\circ T^{-1}$  where $T:u\mapsto u-u_{0}$ we can assume that $u_{0}=0\in\bD_{\R}(\ti c,\rho_{p/q}^4)^2\subset \bD_{\R}(\ti c,\rho_{p/q}/3)^2\subset \T\times ]\ti c-|c_{p/q}/3|,\ti c+\rho_{p/q}/3[$. We then have $\ph^q(0)=0$ and the matrix $D\ph^q(0)$ is $(\kappa,\delta)$-hyperbolic with 
$$\d \kappa=q^2\nu_{q}e^{-qh}\e_{p/q}(1+o(1)).
$$
 Write $\ph^q(u)=D\ph^q(0)\xi(u)$ with $\xi(0)=0$, $D\xi(0)=id$ so that 
$$\ph^q\circ j=D\ph^q(0)\circ \xi\circ j.
$$
Observe that for $0<\rho<\rho_{p/q}/4$ and $k=0,1$
\begin{align*}\|D^k(\xi\circ j-id)\|_{C^0(\bD_{\R}(0,\rho))^2}&\lesssim \|D^k(\xi-id)\|_{C^0(\bD_{\R}(0,\rho))^2}+\|j-id\|_{C^1(\bD_{\R}(0,\rho))^2}\\
&\lesssim  \rho^{2-k}+ q\exp(-\rho_{p/q}^{\ -1/3}).
\end{align*}
Let us choose $$\rho=C^{-1}\nu_{q}e^{-qh}\e_{p/q}
$$
with $C$ large enough.
The Stable Manifold Theorem ({\it cf.} Appendix, Theorem \ref{theo:stableunstabletheorem}) shows that the diffeomorphism 
$\ph^q\circ j$ has a hyperbolic fixed point the stable and unstable manifolds of which are graphs of $C^1$ functions of the form  $\ti w_{-},\ti w_{+}:]-\rho/2,\rho/2[\to\R$, $\ti w_{-}<0<\ti w_{+}$, such that
\begin{multline*}(3/2)m_{-}\th\leq \ti w_{-}(\th)\leq (2/3)m_{-}\th\leq 0\leq  (2/3)m_{+}\th<\ti w_{+}(\th)\leq (3/2)m_{+}\th.\end{multline*}
To conclude the proof of the Lemma we set $w_{\pm}=\ti w_{\pm}\circ g_{RNF}^{-1}$ and  note that 
$$ g_{RNF}^{-1}\circ \Phi_{\Omega}\circ  f_{F}\circ g_{RNF}=\ph^q\circ j$$
with $\|g_{RNF}^{-1}-id\|_{C^1}\leq 1/10$.

\end{proof}

\subsection{End of the proof of Proposition \ref{prop:15.1} }
Let $V$ be the set
$$V=\{(\th,r),\ \th\in[\bar\th,\bar\th+\rho/2], w_{-}(\th)\leq r\leq w_{+}(\th)\}$$
the boundary of which is made by  two pieces of stable and unstable manifolds and the vertical segment $L:=\{\bar\th+\rho/2\}\times [w_{-}(\bar\th+\rho/2),w_{+}(\bar\th+\rho/2]$. By a theorem of Birkhoff  \cite{Bi2} (cf. also \cite{He}), any invariant curve of the  twist diffeomorphism $\Phi_{\Omega}\circ f_{F}$ is the graph of a Lipschitz function $\gamma:\T\to [-1,1]$;  if this curve intersects the stable or  unstable manifold of $(\bar\th,\bar r)$ it  must be included in the union of these stable and unstable manifolds which is impossible. So if  this  invariant curve intersects the interior of  $V$ it has to enter in $V$ by first entering  the vertical segment $L$ by the right. But this is clearly impossible also (see  Figure \ref{Pict:3}).

Now the domain $V$ has an area which is 
\begin{align*}\area(V)&\gtrsim \rho\times  (m_{+}-m_{-}) \\
&\gtrsim ( \nu_{q}e^{-qh}\e_{p/q})^{3/2}.
\end{align*}
This concludes the proof of Proposition \ref{prop:15.1} if we notice that the  dependence on $h$ of the implicit constant in the symbol $\gtrsim$ appears only when we apply  Proposition \ref{prop:11.5} ({\it cf.} Remark \ref{remE1}).

\begin{figure}
\begin{center}
\definecolor{mygray}{gray}{0.8}
\begin{pspicture}(-3,-2)(3,3)

%
\psdot[dotsize=0.1](0,0)

\rput(0,-0.5){$(\bar\th,\bar r)$}
\rput(2,0){$w_{-}(\cdot)$}
\rput(1,1){$w_{+}(\cdot)$}
\rput(3,-2.2){$\bar\th+\rho/2$}
\rput(-0.5,2.5){$\gamma(\cdot)$}

\rput(1,-1){$V$}
\psline[linewidth=0.5pt](1,-0.8)(1.3,0.3)

\pscustom[fillstyle=solid,fillcolor=mygray]{%
\pscurve[linewidth=1pt,liftpen=0](3,1)(1.5,0.25)(0,0)
\pscurve[linewidth=1pt,liftpen=0](0,0)(1.5,0.8)(3,2)}

\psline[linewidth=1pt](3,1)(3,2)

\pscurve[linewidth=0.5pt](-3,2.5)(-1,2)(1,2)(2,2.5)(3,2.5)(4,1.5)(3,1.5)(2.5,1)

\rput(1,-1){$V$}
\psline[linewidth=0.5pt](1,-0.8)(1.3,0.3)

\psline[linewidth=0.5pt,linestyle=dashed](3,-1.8)(3,4)

\psline[linewidth=0.5pt](-3,-1.8)(3.3,-1.81)
\rput(-2.8,-1.5){$\T$}

\psline[linewidth=0.5pt](-3.5,-1.8)(-3.5,3)
\rput(-3.8,1.5){$\R$}

\rput(3.5,0.8){$L$}
\psline[linewidth=0.5pt](3,1.2)(3.4,0.9)

\end{pspicture}
\end{center}
\caption{Invariant graphs cannot intersect the interior of $V$.}\label{Pict:3}
\end{figure}
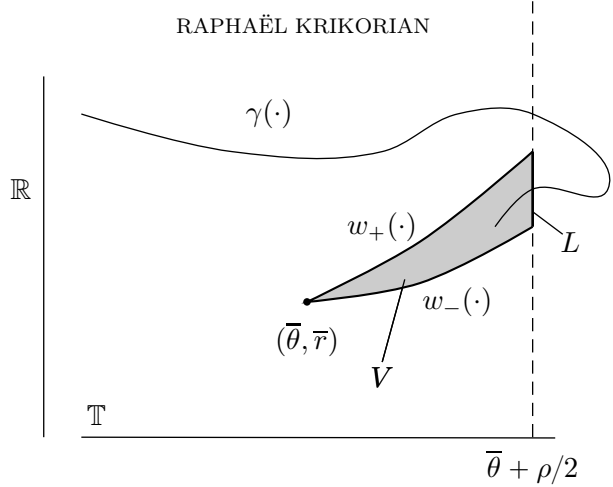

\section{Divergent BNF: proof of Theorems \ref{theo:MainD},  \ref{theo:main2} and \ref{theo:main2prime}}\label{sec:16}
We now use the result of the previous Section to construct examples of  real analytic symplectic diffeomorphisms of the disk and the annulus  with divergent BNF.

\subsection{Proof of Theorems \ref{theo:MainD} and \ref{theo:main2}: the (AA) Case}\label{sec:17.1}

Let $f=\Phi_{2\pi\omega_{0}r}\circ f_{O(r^2)}$ be a a real-analytic symplectic  twist map of the annulus of the form  (\ref{newn1.1bis})  and  satisfying the twist condition (\ref{twistcond1.10}). 
We perform a Birkhoff Normal Form, {\it cf.} Proposition \ref{statementBNF}, on $f$ up to order $\bar a_{4}$, where $\bar a_{4}$ is the integer of Proposition \ref{prop:16.1}:
 there exist $\bar \rho>0$,  $g\in{\rm Symp}_{ex,\s}(\T_{h}\times \bD(0,\bar\rho))$ exact symplectic, $\Omega\in \cO_{\s}(\bD(0,\bar\rho))$, $F\in \cO_{\s}(\T_{h}\times \bD(0,\bar\rho))$  such that 
$$g^{-1}\circ f \circ g=\Phi_{\Omega}\circ f_{F}$$
where ($b_{2}\ne 0$)
$$(2\pi)^{-1}\Omega(r)=\omega_{0}r+b_{2}r^2+O(r^3),\quad F(\th,r)=O(r^{\bar a_{4}}),\quad g-id=O(r^2).$$
Note that for $\bar \rho$ small enough $\Omega$ satisfies a $((5/2)\min(b_{2},b_{2}^{-1}) ,B)$-twist condition on $\bD(0,\bar\rho)$.
In particular, if $(p_{n}/q_{n})_{n\geq 1}$ are the convergents of $\omega_{0}$ and  $c_{n}\in\R$ ($n$ large enough) is  the point where 
\be (2\pi)^{-1}\pa\Omega(c_{n})=p_{n}/q_{n},\qquad |c_{n}|\leq \frac{(5/2)\max(b_{2}(f),b_{2}(f)^{-1})}{q_{n}q_{n+1}}\label{16.381}\ee
({\it cf.} (\ref{e16.318}) and the twist condition satisfied by $\Omega$)  one has 
$$\|F\|_{\T_{h}\times \bD(c_{n},|c_{n}|^2)}\lesssim |c_{n}|^{\bar a_{4}}.
$$
For $(\zeta_{1,k})_{k\geq 1}, (\zeta_{2,k})_{k\geq 1}\in [-1,1]^{\N^*}$, let $G_{\zeta}\in \T_{h}\times \bD(0,1)$ defined by 
$$G_{\zeta}(\th,r)=r^{\bar a_{4}}\sum_{k\geq 1}\zeta_{1,k} e^{-q_{k}h}\cos(q_{k}\th)+\zeta_{2,k} e^{-q_{k}h}\sin(q_{k}\th).$$
We now define $f_{\zeta}\in {\rm Symp}^\cO_{\s}(\T\times \bD(0,1))$, $F_{\zeta}\in \cO_{\s}(\T_{h}\times \bD(0,\bar\rho))$ by 
\begin{align*} &f_{\zeta}=f\circ f_{G_{\zeta}},\\
&\Phi_{\Omega}\circ f_{F_{\zeta}}:=g^{-1}\circ f_{\zeta}\circ g=\Phi_{\Omega}\circ f_{F}\circ g^{-1}\circ  f_{G_{\zeta}}\circ g.
\end{align*}

\begin{lemma}\label{lem:15.1bis}For $n$, $c_{n}$ as above, there exists a set $J_{n}(F)\subset [-1,1]^2$ of 2-dimensional Lebesgue measure $\lesssim |c_{n}|^2$ such that for any $\zeta\in ([-1,1]^2)^{\N^*}$, such that 
$\zeta_{n}\in [-1,1]^2\setminus J_{n}(F)$ one has
$$m_{\Phi_{\Omega}\circ f_{F_{\zeta}}}(c_{n})\gtrsim  |c_{n}|^{2\bar a_{4}+1}e^{-4q_{n}h}.
$$
\end{lemma}
\begin{proof}
Let $\a_{n}\in\T$  and $\nu_{q_{n}}\geq 0$ be such that 
$$\hat F(q_{n},c_{n})e^{iq_{n}\th}+\hat F(-q_{n},c_{n})e^{-iq_{n}\th}=|c_{n}|^{\bar a_{4}}\nu_{q_{n}}e^{-q_{n}h}\cos(q_{n}\th+\a_{q_{n}}).$$
Since $\Phi_{\Omega}\circ f_{F_{\zeta}}=\Phi_{\Omega}\circ f_{F}\circ g^{-1}\circ  f_{G_{\zeta}}\circ g$, $F=O(r^{\bar a_{4}})$ 
and $g-id=O(r^2)$ we see that 
\be F_{\zeta}=F+G_{\zeta}+O(r^{\bar a_{4}+1}).\label{17.349}\ee
We can write 
$$G_{\zeta}(\th,r)=r^{\bar a_{4}}\sum_{k\geq 1}\ti \zeta_{1,k} e^{-q_{k}h}\cos(q_{k}\th+\a_{q_{k}})+\ti \zeta_{2,k} e^{-q_{k}h}\sin(q_{k}\th+\a_{q_{k}})$$
with $\ti \zeta_{1,k}-i\ti\zeta_{2,k}=e^{-i\a_{q_{k}}}(\zeta_{1,k}-i\zeta_{2,k})$
and  from (\ref{17.349}) we see that 
\begin{multline*}\hat F_{\zeta}(q_{n},c_{n})e^{iq_{n}\th}+\hat F_{\zeta}(-q_{n},c_{n})e^{-iq_{n}\th}=\\ |c_{n}|^{\bar a_{4}}e^{-q_{n}h} \biggl((\nu_{q_{n}}+\ti \zeta_{1,n}+u_{n}(\zeta))\cos(q_{n}\th+\a_{q_{n}})+(\ti \zeta_{2,n}+v_{n}(\zeta))\sin(q_{n}\th+\a_{q_{n}})\biggr)
\end{multline*}
where 
$$\sup_{\zeta\in [-1,1]^{2\N^*}}(|u_{n}(\zeta),|v_{n}(\zeta))| \lesssim |c_{n}|.
$$ 
We can thus write for $\zeta_{1},\zeta_{2}\in ]-1,1[^{\N^*}$
$$2|\hat F_{\zeta}(q_{n},c_{n})|=\nu_{n}(\zeta)|c_{n}|^{\bar a_{4}}e^{-q_{n}h}$$
with
\begin{align*}\nu_{n}(\zeta)^2&=(\nu_{q_{n}}+\ti \zeta_{1,n}+u_{n}(\zeta))^2+(\ti \zeta_{2,n}+v_{n}(\zeta))^2\\
&\geq  |\ti \zeta_{2,n}-O(c_{n})|^2.
\end{align*}
Since $\ti \zeta_{1,k}-i\ti\zeta_{2,k}=e^{-i\a_{q_{k}}}(\zeta_{1,k}-i\zeta_{2,k})$, one can hence find a set $J_{n}(F)\subset [-1,1]^2$ of 2-dimensional Lebesgue measure 
$$|J_{n}(F)|\lesssim |c_{n}|^{1/2}
$$
such that 
$$(\zeta_{1,n},\zeta_{2,n})\in [-1,1]^2\setminus J_{n}(F)\implies |\nu_{n}(\zeta)| \gtrsim |c_{n}|^{1/2}.$$
By Proposition \ref{prop:16.1} we thus have
$$m_{\Phi_{\Omega}\circ f_{F_{\zeta}}}(c_{n})\gtrsim  |c_{n}|^{2\bar a_{4}+1}e^{-4q_{n}h}.
$$

\end{proof}
\begin{lemma}\label{lemma:17.1}Let  $\cN\subset\N$ be infinite. Then, for almost every $\zeta\in ([-1,1]^2)^{\N^{*}}$ for the product measure  $\mu_{\infty}=  ({\rm Leb}_{[-1,1]^2})^{\otimes \N^*}$, there exists an infinite subset $\ti\cN\subset\cN$ such that  for all $n\in\ti\cN$
$$m_{\Phi_{\Omega}\circ f_{F_{\zeta}}}(c_{n})\gtrsim   |c_{n}|^{2\bar a_{4}+1}e^{-4q_{n}h}.
$$
\end{lemma}
\begin{proof}
Since the random variables $\zeta_{n}$, $n\in\cN$ are independent, for any $m\in\N$, the  event $\{\zeta_{n}\in J_{n}(F),\ \forall\ n\geq m\}$  has zero $\mu_{\infty}$-probability as well as their union. Hence for $\mu_{\infty}$-almost every $\zeta\in\cX$, one has for infinitely many $n\in\cN$, $\zeta_{n}\notin J_{n}(F)$ and we conclude by Lemma (\ref{lem:15.1bis}).
\end{proof}

\subsubsection{Proof of Theorem  \ref{theo:MainD} }\label{sec:16.1.1}

We now observe that if  $\omega_{0}$ is diophantine with exponent $\tau$  
$$\tau=\limsup\frac{\ln q_{n+1}}{\ln q_{n}}$$
and for any $\b>0$ there exists a infinite set $\cN_{\b}$ such that for all $n\in\cN_{\b}$
$$q_{n+1}\geq q_{n}^{\tau-\b/4}.$$
On the other hand
$$|c_{n}|\asymp |\omega_{0}-\frac{p_{n}}{q_{n}}|\asymp \frac{1}{q_{n}q_{n+1}}\lesssim \frac{1}{q_{n}^{1+\tau-\b/4}}$$
hence
$$q_{n}\lesssim (1/|c_{n}|)^{(1/(1+\tau))+\b/4}
$$
and consequently, from Lemma \ref{lemma:17.1}, for an infinite number of $n\in\cN_{\b}$
\be m_{\Phi_{\Omega}\circ f_{F_{\zeta}}}(c_{n})\gtrsim  |c_{n}|^{2\bar a_{4}+1}e^{-4q_{n}h}\gtrsim \exp\biggl(-\biggl(\frac{1}{|c_{n}|}\biggr)^{(\frac{1}{1+\tau})+\b/2}\biggr).\label{16.354}
\ee
\ \hfill $\Box$
\subsubsection{Proof of Theorem \ref{theo:main2}}\label{sec:16.1.2}
We observe that since $t_{n}\geq 2|c_{n}|$ ({\it cf.} (\ref{ee1.20}) and (\ref{16.381}))   one has 
$$m_{f_{\zeta}}(t_{n})\gtrsim m_{\Phi_{\Omega}\circ f_{F_{\zeta}}}(c_{n})\gtrsim \exp\biggl(-\biggl(\frac{1}{|t_{n}|}\biggr)^{(\frac{1}{1+\tau})+\b}\biggr).\
$$
\ \hfill $\Box$

\subsection{Proof of  Theorems  \ref{theo:main2} and \ref{theo:main2prime}: (CC) Case}\label{sec:16.2}
Let $f$ be a real-analytic symplectic   diffeomorphism of the disk admitting the origin 0 as an elliptic equilibrium with irrational  frequency $\omega_{0}$   and satisfying the twist condition (\ref{twistcond1.10}); we {\it assume} that it is of the form 
$$f=\Phi_{\Omega((1/2)(x^2+y^2))}+O((x^2+y^2)^{\bar a_{4}})
$$
with $\Omega\in \cO_{\s}(\bD(0,1))$.  Passing to  the $(z,w)$-variables ({\it cf.} (\ref{changecoordxyzw})) we can write
$$\ph\circ f\circ \ph^{-1} =\Phi_{\Omega}\circ f_{F}$$
where  $F\in \cO_{\s}(\bD(0,1)^2)$
$$F(z,w)=O((zw)^{\bar a_{4}})\ (\textrm{and \ not\ only }O^{\bar a_{4}}(z,w)).$$
Let as before  $(p_{n}/q_{n})_{n\geq 1}$ be the convergents of $\omega_{0}$ and  $c_{n}\in\R$  the point where 
$(2\pi)^{-1}\pa\Omega(c_{n})=p_{n}/q_{n}$ ({\it cf.} (\ref{e16.318})).

For $(\zeta_{n})_{n\in\N^*}\in ([-1,1]^2)^{\N^*}$, let $G_{\zeta}\in\cO_{\s}(\bD(0,1)^2)$
\begin{multline*}G_{\zeta}(z,w)=(-izw)^{\bar a_{4}}\sum_{k=1}^\infty \frac{\zeta_{1,k}}{2}\times ((i^{-1/2}z)^{q_{k}}+(i^{-1/2}w)^{q_{k}})+\\ \frac{\zeta_{2,k}}{2i} ((i^{-1/2}z)^{q_{k}}-(i^{-1/2}w)^{q_{k}})
\end{multline*}

We now define
\begin{align*} &\Phi_{\Omega}\circ f_{F_{\zeta}}=\Phi_{\Omega}\circ f_{F}\circ \Phi_{G_{\zeta}}\\
&f_{\zeta}=\ph^{-1}\circ (\Phi_{\Omega}\circ f_{F_{\zeta}}) \circ \ph=f\circ \Phi_{G_{\zeta}\circ \ph}. \qquad ({\it cf.}\ (\ref{eq:4.4}))
\end{align*}
\begin{lemma}
Assume that for some $n$ large enough, $c_{n}$ {\it is positive}. Then, there exists $J_{n}(F)\subset [-1,1]^2$ of Lebesgue measure $\lesssim c_{n}^{1/2}$ such that if $\zeta_{n}=(\zeta_{1,n},\zeta_{2,n})\notin J_{n}(F)$ one has
$$m_{f_{\zeta}}(2c_{n})\gtrsim m_{\Phi_{\Omega}\circ f_{F_{\zeta}}}(c_{n})\gtrsim c_{n}^{2\bar a_{4}+1}e^{-4q_{n}h}.
$$
\end{lemma}
\begin{proof}
We define
\begin{align*}h_{n}&=-(1/2)\ln (c_{n}+c_{n}^2),
\end{align*}
and since 
$|\omega_{0}-\frac{p_{n}}{q_{n}}|\asymp \frac{1}{q_{n}q_{n+1}}\asymp c_{n}$
one has \begin{align*}
h_{n}&=(-1/2)\ln c_{n}+O(c_{n})\\
&=(-1/2)\ln c_{n}-O(1/q_{n}^{2})
\end{align*}
hence
\be e^{-q_{n}h_{n}}=c_{n}^{q_{n}/2}e^{O(1/q_{n})}<c_{n}^{10}.\label{cnqn/2}
\ee

Let $W_{n}^{CC}=W^{CC}_{h_{n},\bD(c_{n},c_{n}^2)}=\{(z,w)\in\C^2,\ \max(|z|,|w|)\leq e^{h_{n}}(c_{n}+c_{n}^2)^{1/2},\ -izw\in \bD(c_{n},c_{n}^2)\}$.
One has 
\be \|F\|_{W_{n}^{CC}}\lesssim |c_{n}|^{\bar a_{4}},\qquad \|G_{\zeta}\|_{W^{CC}_{n}}\lesssim c_{n}^{\bar a_{4}}. \label{estGzeta}\ee
Using Lemma \ref{lemma:4.5} we can pass to  (AA)-coordinates: if $\psi_{-}$ is the diffeomorphism defined in (\ref{defPsi})
$$\psi_{-}^{-1}(W^{CC}_{h_{n},\bD(c_{n},c_{n}^2)})   \supset W^{AA}_{h_{n},\bD(c_{n},c_{n}^2)}  =\T_{h_{n}}\times \bD(c_{n},c_{n}^2)
$$
and we can introduce $F^{AA}, F^{AA}_{\zeta}\in \cO_{\s}(\T_{h_{n}}\times \bD(c_{n},c_{n}^2))$ ({\it cf.}\ (\ref{eq:4.4}))
$$\psi_{-}^{-1}\circ f_{F}\circ \psi =f_{ F^{AA}},\qquad \psi_{-}^{-1}\circ f_{F_{\zeta}}\circ \psi =f_{ F_{\zeta}^{AA}}=f_{F^{AA}}\circ \Phi_{G_{\zeta}\circ \psi_{-}}.
$$
Since $F^{AA}=F\circ \psi_{-}+\fO_{2}(F)$ and  $F_{\zeta}^{AA}=F^{AA}+G_{\zeta}\circ \psi_{-}+\fO_{2}(F^{AA},G_{\zeta}\circ \psi_{-})$  ({\it cf.} (\ref{n4.64}), (\ref{l465})) one has on $W_{n}^{AA}$
\be\|F_{\zeta}^{AA}\|_{W_{n}^{AA}}\lesssim c_{n}^{\bar a_{4}},\qquad F_{\zeta}^{AA} =F^{AA}+G_{\zeta}\circ \psi_{-}+O(c_{n}^{(3/2)\bar a_{4}}). \label{estFAA}\ee
(we assume that $\bar a_{4}$ is large enough). If we define  $\nu_{n}$ and $\a_{n}\in\T$ by
$$\hat F^{AA}(q_{n},c_{n})e^{iq_{n}\th}+\hat F^{AA}(-q_{n},c_{n})e^{-iq_{n}\th}=|c_{n}|^{\bar a_{4}}\nu_{q_{n}}e^{-q_{n}h_{n}}\cos(q_{n}\th+\a_{q_{n}})$$
we see that 
on $\T_{h_{n}-1}\times \bD(c_{n},c_{n}^2/2)$ ({\it cf.} (\ref{estFAA}))
$$F^{AA}_{\zeta}=F^{AA}(\th,r)+r^{\bar a_{4}}\sum_{k=1}^\infty r^{q_{k}/2}( \zeta_{1,k}\cos(q_{k}\th)+\zeta_{2,k} \sin(q_{k}\th))+O(c_{n}^{(3/2) \bar a_{4}}).
$$
Hence
\begin{multline*}\hat F_{\zeta}^{AA}(q_{n},c_{n})e^{iq_{n}\th}+\hat F_{\zeta}(-q_{n},c_{n})e^{-iq_{n}\th}=\\ c_{n}^{\bar a_{4}} \biggl(\biggl((\nu_{q_{n}}+O(c_{n}^{(1/2)\bar a_{4}}))e^{-q_{n}h_{n}} +c_{n}^{q_{n}/2}\ti \zeta_{1,n}\biggr)\cos(q_{n}\th+\a_{q_{n}})\\+\biggl(c_{n}^{q_{n}/2}\ti \zeta_{2,n}+O(c_{n}^{(1/2)\bar a_{4}}) e^{-q_{n}h_{n}}\biggr)\sin(q_{n}\th+\a_{q_{n}})\biggr)
\end{multline*}
with $\ti \zeta_{1,k}-i\ti\zeta_{2,k}=e^{-i\a_{q_{k}}}(\zeta_{1,k}-i\zeta_{2,k})$.
We thus have ({\it cf.} (\ref{cnqn/2}))
\begin{align*}2|\hat F_{\zeta}^{AA}(q_{n},c_{n})|&\geq c_{n}^{\bar a_{4}}|c_{n}^{q_{n}/2}\ti \zeta_{2,n}+O(c_{n}^{(1/2)\bar a_{4}}) e^{-q_{n}h_{n}}|\\
&\geq c_{n}^{\bar a_{4}} |e^{-q_{n}h_{n}}e^{O(1/q_{n})}\ti \zeta_{2,n}+O(c_{n}^{(1/2)\bar a_{4}}) e^{-q_{n}h_{n}}|\\
&\gtrsim c_{n}^{\bar a_{4}}e^{-q_{n}h_{n}} |\ti \zeta_{2,n}+O(c_{n}^{(1/2)\bar a_{4}})|
\end{align*}
and we see that if 
$$|\ti\zeta_{2,n}|\geq c_{n}^{1/2}
$$
one can apply Proposition \ref{prop:16.1} ({\it cf.} (\ref{cnqn/2})): 
$$m_{\Phi_{\Omega}\circ f_{F_{\zeta}^{AA}}}(c_{n})\gtrsim C_{h_{n}}^{-1}  c_{n}^{2\bar a_{4}+1}e^{-4q_{n}h}\gtrsim c_{n}^{2\bar a_{4}+1}e^{-4q_{n}h}.$$
Now, since {\it $c_{n}$ is positive} and $m_{f_{\zeta}}(2c_{n})\gtrsim m_{\Phi_{\Omega}\circ F_{\zeta}}(c_{n})$ this provides
$$ m_{f_{\zeta}}(2c_{n})\gtrsim m_{\Phi_{\Omega}\circ f_{F}\circ \Phi_{G_{\zeta}}}(c_{n})\gtrsim c_{n}^{2\bar a_{4}+1}e^{-4q_{n}h}.
$$
\end{proof}
We can deduce the analogue of Lemma \ref{lemma:17.1}
\begin{lemma}\label{lemma:17.2} Let $\cN$ be an infinite  set of $n\in\N$ for which $c_{n}>0$. Then, for almost every $\zeta\in ([-1,1]^2)^{\N^*}$, there exists an infinite subset $\ti\cN\subset\cN$ such that  for all $n\in\ti\cN$
$$m_{f_{\zeta}}(2c_{n})\gtrsim m_{\Phi_{\Omega}\circ f_{F_{\zeta}}}(c_{n})\gtrsim c_{n}^{2\bar a_{4}+1}e^{-4q_{n}h}.
$$
\end{lemma}

\subsubsection{Proof of Theorem \ref{theo:main2} (CC) Case, $\omega_{0}$ diophantine} \label{sec:16.2.1}
We want to apply the previous Lemma \ref{lemma:17.2} to an infinite  set $\cN$ such that for all $n\in\cN$ one has both
\be c_{n}>0 \quad \textrm{and}\quad q_{n+1}\geq q_{n}^{\tau-}.\label{16.355}
\ee
Such a set may not exist for arbitrary choices of $\omega_{0}$ (Diophantine) and $\Omega$. On the other hand, if one chooses the sign of $\pa^2\Omega(0)$  depending on $\omega_{0}$ (or more precisely its sequence of convergents) this is possible. 

Assume that $\omega(r):=(2\pi)^{-1}\pa\Omega(r)$ is of the form 
$$\omega(r)=\omega_{0}+b_{2}r+O(r^2),\qquad b_{2}>0$$
(the case $b_{2}<0$ is treated similarly)
and let $\beta>0$ and 
$$ \cN_{\beta }=\{n\in\N,\ q_{n+1}\geq q_{n}^{\tau-\beta/2}\},\qquad \cQ_{\b}=\{p_{n}/q_{n},\ n\in \cN_{\b}\}.$$
Since $\cN_{\beta}$ is infinite, one of the two sets $\cQ_{\b}^\pm=\cQ_{\b}\cap (\pm ]\omega_{0},\infty[)$ is infinite. Assume it is $\cQ_{\b}^+$ (the other case is treated in a similar manner) and define
$$\cN_{\beta}^+=\{n\in\cN_{\b},\ p_{n}/q_{n}>\omega_{0}\}, \qquad \cC_{\b}=\omega^{-1}(\cQ_{\b}^+).
$$
Since $b_{2}>0$,  $\cC_{\b}$ is infinite, $\subset ]0,\infty[$ and its points $c_{n}$, $n\in\cN_{\beta}^+$ accumulate zero.

We then choose (we just have to produce an example)
$$\Omega(r)=2\pi\omega_{0}r+(1/2)r^2,\qquad F=0$$ and applying Lemma  \ref{lemma:17.2}  we see that for almost every $\zeta$ and infinitely many $n\in\cN_{\beta}^+$ 
$$m_{f_{\zeta}}(2c_{n})\gtrsim c_{n}^{2\bar a_{4}+1}e^{-4q_{n}h}$$
and, arguing like in Subsections (\ref{sec:16.1.1}), (\ref{sec:16.1.2}), we see that setting $t_{n}=2c_{n}$ we have for infinitely many $n$
$$m_{f_{\zeta}}(t_{n})\gtrsim \exp\biggl(-\biggl(\frac{1}{|t_{n}|}\biggr)^{(\frac{1}{1+\tau})+\b}\biggr).$$
\ \hfill $\Box$

\subsubsection{Proof of Theorem  \ref{theo:main2prime}: (CC) Case, $\omega_{0}$ Liouvillian} 
Since $\omega_{0}$ is Liouvillian, there exists an infinite set $\cN\subset \N$ such that 
$$\lim_{n\in\cN}\frac{\ln q_{n+1}}{\ln q_{n}}=\infty.$$
We define 
$$\cQ^\pm=\{p_{n}/q_{n},\ n\in \cN,\  p_{n}/q_{n}\in\pm ]\omega_{0},\infty[\}$$
and we observe that one of the two sets $\cQ^+$, $\cQ^-$ is infinite; we assume it is $\cQ^+$ (the other case is treated in a similar manner) and we set $\cN^+=\{n\in\cN,\ p_{n}/q_{n}>\omega_{0}\}$. Let us choose again $b_{2}>0$, for example
$$\Omega(r)=2\pi\omega_{0}r+(1/2)r^2,\qquad F=0$$
and observe that  $\cC=\omega^{-1}(\cQ^+)$ is infinite, contained in $]0,\infty[$ and its points $c_{n}$, $n\in\cN^+$ accumulate 0.

We now apply Lemma \ref{lemma:17.2}: for almost every $\zeta$ there exists  an infinite subset $\ti\cN^+_{\zeta}\subset \cN^+$ such that one has for $n\in\ti\cN_{\zeta}^+$
$$m_{f_{\zeta}}(2c_{n})\gtrsim c_{n}^{2\bar a_{4}+1}e^{-4q_{n}h}.$$
Since for $n\in\ti\cN_{\zeta}^+$
$$c_{n}\asymp |\omega_{0}-\frac{p_{n}}{q_{n}}|\asymp \frac{1}{q_{n}q_{n+1}}$$
one has for $\e>0$ and  $n\in\ti\cN_{\zeta}^+$ large enough
$$c_{n}^{2\bar a_{4}+1}e^{-4q_{n}h}\gtrsim (\frac{1}{q_{n+1}})^{2(2\bar a_{4}+1)}\exp(-q_{n+1}^{\e/2})\gtrsim \exp(-q_{n+1}^\e).$$
If we set $t_{n}=2c_{n}$ we have for $n$ large enough ({\it cf.} (\ref{16.381}))
$$2c_{n}\leq t_{n}:=\frac{5(b_{2}+b_{2}^{-1})}{q_{n}q_{n+1}}$$ 
and 
$$m_{f_{\zeta}}(t_{n})\geq m_{f_{\zeta}}(2c_{n})\gtrsim \exp(-q_{n+1}^\e)$$
for infinitely many $n$.
\ \hfill $\Box$

\appendix

\section{Estimates on composition and inversion}

\subsection{Proof of Lemma \ref{domaindef}}\label{sec:A2eante}
We shall do the proof in the (AA)-case; the proof in the  (CC)-case follows the same lines.

By (\ref{eq:cauchy:derivativesbis}),   there exists a numerical constant $C>0$ such that if  $\d>0$ satisfies
\be C\|F\|_{W_{h,U}}\d^{-2}\ud(W_{h,U})^{-1}<1,\label{e:4.45}
\ee
then  for any fixed $\th\in \T_{h-2\d}$ and any fixed $r\in e^{-\d}U$, the map $e^{-\d}U\to e^{-\d}U$, $R\mapsto r-\pa_{\th}F(\th,R)$ is contracting and by the Contraction Mapping Principle there thus  exists a unique $R\in e^{-\d}U$ depending holomorphically on $(\th,r)\in\T_{h-\d/2}\times e^{-\d}U$ such that 
$$r=R+\pa_{\th}F(\th,R).$$

Under assumption (\ref{e:4.45}) we see that if $C$ is large enough
$$|\pa_{R}F(\th,R)|\lesssim \d^{-1}\times \|F\|_{h,U}<(1/2)\d \times \times \ud(U),$$
and by Item \ref{i3.L.2.1} of Lemma \ref{lemma2.1}
$$\ph:=\th+\pa_{R}F(\th,R)\in U.$$
We can thus define a holomorphic map 
$$f_{F}:\T_{h-\d/2}\times e^{-\d}U\to \T_{h-\d}\times U$$ by 
\be f_{F}(\th,r)=(\ph,R) \ \iff\  \begin{cases}&r=R+\pa_{\th}F(\th,R)\\ &\ph=\th+\pa_{R}F(\th,R)\label{4.43b}\end{cases}
\ee
Notice that the maps $(\th,r)\mapsto \ph(\th,r)-\th$, $(\th,r)\mapsto R(\th,r)-r$  such defined are Lipschitz with Lipschitz constant $\lesssim \d^{-2}\ud(U)^{-2}\|F\|_{h,U}$. Thus, if for some numerical constant large enough
\be C \d^{-2}\ud(U)^{-2}\|F\|_{h,U}<1\label{e:4.47}\ee
the map $f_{F}$ is a holomorphic diffeomorphism from $\T_{h-\d/2}\times e^{-\d}U$ onto its image.

Conversely, if (\ref{e:4.45})  is satisfied, given $(\ph,R)\in\T_{h-\d}\times e^{-2\d}U$ there exists a unique $(\th,r)\in \T_{h-\d/2}\times e^{-\d}U$ such that $f(\th,r)=(\ph,R)$. We thus have if (\ref{e:4.47}) is satisfied  
$$\bT_{h-\d}\times e^{-2\d}U\subset f_{F}(\T_{h-\d/2}\times e^{-\d}U).$$

Finally, we observe that the  diffeomorphism $f_{F}$ is {\it exact symplectic} which means that the  differential form $Rd\ph-rd\th$ is exact; in particular, it is symplectic. Indeed 
\begin{align} Rd\ph-rd\th&=-\ph dR+d(\ph R)-rd\th\\
&=-(\th+\pa_{R}F(\th,R))dR-(R+\pa_{\th}F(\th,R))d\th+d(\ph R)\\
&=-dF+d(\ph R)-d(\th R)\\
&=d(-F+(\ph-\th)R)
\end{align}
(observe that the function $-F+(\ph-\th)R=-F(\th,R)+\pa_{R}F(\th,R)R$ is well defined on $\T_{h}\times U$).
We have thus proven that
there exists a numerical constant $\bar C>0$ such that if 
\be \bar C\d^{-2}\ud(U)^{-2}\|F\|_{h,U}<1\label{e:4.47bis}\ee
the diffeomorphism  $f_{F}$ previously defined is  exact symplectic and 
\be e^{-2\d}W_{h,U}\subset  f_{F}(e^{-\d}W_{h,U})\subset W_{h,U}.\label{f-dom}\ee

Estimate  (\ref{EE427}) comes from (\ref{4.43b}) and 
$$\max_{i=1,2}|\pa_{i}F(\th,R)-\pa_{i}F(\ph,r)||\leq 2\|D^2F\|\|DF\|.
$$

\hfill $\Box$

\subsection{Proof of Lemma \ref{lemma:6.2}}\label{sec:A2e}

We illustrate the proof in the (AA)-Case (it is the same in the (CC)-case).

Since $f$ is close to the identity, the map $\ti f:(\th,R)\mapsto (\ph,r)\ \iff \  f(\th,r)=(\ph,R)$ defines a diffeomorphism such that 
$\ti f-id=\fO(f-id)
$
and since $f$ is exact symplectic we know ({\it cf.} Subsection \ref{sec:A2eante}) that  $\ph dR+rd\th=dF$ for some holomorphic function $F:(\th,R)\mapsto F(\th,R)$.  Since  $F(\th,R)=\int_{\gamma_{\th,R}} (\ph dR+rd\th)$ where $\gamma_{\th,R}$ is the path joining $(0,R_{0})\in \{\th\in\C,\ |\Im\th|<h\}\times U$ to $(\th,R)$, the function $F$, which is unique  up to the addition of a constant,  thus satisfies  $F=\fO(f-id)$.

The  estimate (\ref{l465})   is a consequence of (\ref{4.43}) and the fact that 
\begin{align*}&|\pa_{\th}F(\th,R)-\pa_{\th}F(\th,r)|\leq  \|D\pa_{\th}F\||R-r|\ \leq \|D\pa_{\th}F\|\|\pa_{\th }F\|\\
&|\pa_{R}F(\th,R)-\pa_{R}F(\th,r)|\leq  \|D\pa_{R}F\||\ph-\th|\ \leq \|D\pa_{R}F\|\|\pa_{R }F\|.
\end{align*}

\hfill $\Box$

\subsection{Proof of Lemma  \ref{lemma:6.2.bis} }\label{sec:A3}

\

\medskip
\noindent
{\it 1) Proof of  (\ref{eq:4.55}).}
One has 
$$ f_{F}(\th,r)=(\ph,R) \ \iff\  \begin{cases}&r=R+\pa_{\th}F(\th,R)\\ &\ph=\th+\pa_{R}F(\th,R)\end{cases}
$$
$$ f_{\Omega}(\ph,R)=(\psi,Q) \ \iff\  \begin{cases}&R=Q\\ &\psi=\ph+\pa_{Q}\Omega(Q) \end{cases}
$$
hence
 $Q=R$ and 
\begin{align*}&\psi=\ph+\pa_{R}\Omega(R)\\
&=\th+\pa_{R}F(\th,R)+\pa_{R}\Omega(R)\\
&=\th+\pa_{Q}(\Omega+F)(\th,Q)
\end{align*}
thus, since $r=R+\pa_{\th}F(\th,R)=Q+\pa_{\th}F(\th,Q)$ and $\Omega$ does not depend on the $\th$-variable
$$\begin{cases} &r=Q+\pa_{\th}(\Omega+F)(\th,Q)\\
&\psi=\th+\pa_{Q}(\Omega+F)(\th,Q)
\end{cases}
$$
which is equivalent to 
$$ f_{\Omega+F}(\th,r)=(\psi,Q) =f_{\Omega}\circ f_{F}(\th,r).
$$

\noindent
{\it  2) Proof of (\ref{eq:4.53})} Assume that  $f_{F}(\th,r)=(\ph,R)$ and $f_{G}(\ph,R)=(\psi,Q)$. Then
\be f_{F}(\th,r)=(\ph,R),\qquad \begin{cases}&r=R+\pa_{\th}F(\th,R)\\ &\ph=\th+\pa_{R}F(\th,R)\end{cases}\label{eqsymp1}
\ee
\be f_{G}(\ph,R)=(\psi,Q),\qquad \begin{cases}&R=Q+\pa_{\ph}G(\ph,Q)\\ &\psi=\ph+\pa_{Q}G(\ph,Q)\end{cases}\label{eqsymp2}
\ee

\begin{align*} Qd\psi-rd\th&=Qd\psi-Rd\ph+Rd\ph-rd\th\\
&=d(-F-G+(\ph-\th)R+(\psi-\ph)Q).
\end{align*}
If $f_{G}\circ f_{F}=f_{H}$ then one has   $Qd\psi-rd\th=d(-H+Q(\psi-\th))$ 
and then
\begin{align*} 0&=d(-H+F+G+Q(\psi-\th)-R(\ph-\th)-Q(\psi-\ph))\\
&=d(-H+F+G-(Q-R)(\ph-\th))
\end{align*}
and so
$$ H(\th,Q)={\rm cst}+F(\th,R)+G(\ph,Q)-(Q-R)(\ph-\th).
$$
Let us write
$H(\th,Q)=F(\th,Q)+G(\th,Q)+A(\th,Q)$ where 
\begin{align*} -A&=F(\th,Q)-F(\th,R)+G(\th,Q)-G(\ph,Q)+(Q-R)(\ph-\th)\\
&=F(\th,Q)-F(\th,R)+G(\th,Q)-G(\ph,Q)-\pa_{\ph}G(\ph,Q)\pa_{R}F(\th,R)\\
\end{align*}

We can now estimate
\begin{align*}\|A\|_{h-\delta,U_{\d}}&\leq \|\pa_{R}F\|_{h,U}\|Q-R\|_{h,U}+\|\pa_{\ph}G\|_{h,U}\|\ph-\th\|_{h,U}\\
&  +\|\pa_{\ph}G(\ph,Q)\|_{h,U}\|\pa_{R}F(\th,R)\|_{h,U}\\
&\leq  \|\pa_{R}F\|_{h,U}\|\pa_{\ph}G\|_{h,U}+\|\pa_{\ph}G\|_{h,U}\|\pa_{R}F\|\\&\ +\|\pa_{\ph}G\|_{h,U}\|\pa_{R}F\|_{h,U}
\end{align*}
and deduce (\ref{eq:4.53}).

\medskip
\noindent
{\it 3) Proof of (\ref{n4.64})}
We just write 
\begin{align*}f_{F+G}\circ f_{G}^{-1}&=f_{F+G}\circ f_{-G+O(|D^2G|DG|)}\\
&=f_{F+\|DF\|\cO_{1}(G)}\qquad(\textrm{using}\ (\ref{eq:4.53}))
\end{align*}
and a similar expression for $f_{F}^{-1}\circ f_{F+G}=f_{-F+O(|D^2F||DF|)}$.

The proof of (\ref{eq:4.53}) and  (\ref{n4.64})  is the same in the (CC)-case.
\hfill $\Box$

\subsection{Proof of Proposition \ref{lemma:6.4} }\label{sec:proofprop4.4}
We first state two lemmata. 
\begin{lemma}\label{item4}Let  $W$ be an open subset of $M=\C^2$ or $\T_{\infty}\times \C$, $v\in\cO(W)$ and $g-id\in \cO(W)$   such that  $\|g-id\|_{W}\lesssim 1$. Then if $\|v\|_{W}$ is small enough
\be (id+v)\circ g\circ (id+v)^{-1}=g\circ (id+[g]\cdot v+\fO_{2}(v))
\ee
where 
\be [g]\cdot v=-v+(Dg^{-1}\cdot v)\circ g.
\ee
\end{lemma}
\begin{proof}
One has 
\begin{align*} (id+v)\circ g\circ (id+v)^{-1}&=g\circ (id-v+\fO_{2}(v))+v\circ g\circ(id-v+\fO_{2}(v))\\
&=g-Dg\cdot v+v\circ g+\fO_{2}(v)
\end{align*}
hence
\begin{align*} g^{-1}\circ (id+v)\circ g\circ (id+v)^{-1}&=g^{-1}\circ \biggl(g-Dg\cdot v+v\circ g+\fO_{2}(v)\biggr)\\
&=id-Dg^{-1}\circ g \cdot Dg\cdot v+Dg^{-1}\circ g\cdot v\circ g+\fO_{2}(v)\\
&=id-v+(Dg^{-1}\cdot v)\circ g+\fO_{2}(v).
\end{align*}
\end{proof}

\begin{lemma}\label{lemma:11.9} If $\Omega\in \cO(U)$, $Y\in \cO(W_{h,U}\cup \Phi_{\Omega}(W_{h,U})))$ 
then 
\be f_{Y}\circ \Phi_{\Omega}\circ f_{Y}^{-1}=\Phi_{\Omega}\circ f_{[\Omega]\cdot Y+\fO_{2}(Y)}
\ee
where
$$[\Omega]\cdot Y=Y\circ \Phi_{\Omega}-Y.
$$
\end{lemma}
\begin{proof}
 From Lemma \ref{item4},  and  (\ref{l465}) we have 
\begin{align} f_{Y}\circ \Phi_{\Omega}\circ f_{Y}^{-1}&=\Phi_{\Omega}\circ (id+[\Phi_{\Omega}]\cdot (J\nabla Y)+\fO_{2}(Y))\notag\\
&=\Phi_{\Omega}\circ \biggl(id-J\nabla Y+(D\Phi_{\Omega}^{-1}\cdot(J\nabla Y))\circ \Phi_{\Omega}+\fO_{2}(Y)\biggr).\label{eqa3}
\end{align}
On the other hand
\be J\nabla(Y\circ \Phi_{\Omega})=J\ ({}^tD\Phi_{\Omega})\cdot(\nabla Y\circ \Phi_{\Omega}).\label{eqa4}\ee
Since $\Phi_{\Omega}$ is symplectic,
$J\ {}^t(D\Phi_{\Omega})=(D\Phi_{\Omega})^{-1}J$, we deduce from (\ref{eqa3}) and (\ref{eqa4}) that 
\begin{align*} f_{Y}\circ \Phi_{\Omega}\circ f_{Y}^{-1}&=\Phi_{\Omega}\circ (id+J\nabla Y-J\nabla(Y\circ \Phi_{\Omega})+\fO_{2}(Y))\\
&=\Phi_{\Omega}\circ f_{[\Omega]\cdot Y+\fO_{2}(Y)}
\end{align*}
where 
$$[\Omega]\cdot Y=Y\circ \Phi_{\Omega}-Y.
$$
\end{proof}
From (\ref{l465}), (\ref{eq:4.53}) (we use the fact that $D(O(\|DF\|G))=\|DF\|\fO_{1}(G)$)
\be f_{Y}\circ f_{F}\circ f_{Y}^{-1}=f_{F+\|DF\|\fO_{1}(Y)}
\ee
and on the other hand, from Lemma \ref{lemma:11.9}
\be f_{Y}\circ \Phi_{\Omega}\circ f_{Y}^{-1}=\Phi_{\Omega}\circ f_{[\Omega]\cdot Y+\fO_{2}(Y)}.
\ee
Hence (using (\ref{eq:4.53}) in the last line)
\begin{align}  f_{Y}\circ  \Phi_{\Omega} \circ f_{F}\circ f_{Y}^{-1}&=  f_{Y}\circ \Phi_{\Omega}\circ f_{Y}^{-1}\circ  f_{Y}\circ f_{F}\circ f_{Y}^{-1}\\
&=\Phi_{\Omega}\circ  f_{[\Omega]\cdot Y+\fO_{2}(Y)}\circ  f_{F+	\|DF\|\fO_{1}(Y)}\\
&= \Phi_{\Omega}\circ f_{F+[\Omega]\cdot Y+\|DF\| \fO_{1}(Y)}. 
\end{align}

\hfill $\Box$

\section{Whitney type extensions}\label{sec:appendixBante}
\subsection{Proof of Lemma \ref{lemma:2.3ee} }   \label{sec:appendixBbis}

 Let $\chi_{\d}:\R\to [0,1]$ be a smooth function with support in $[-1,1]$ and equal to 1 on $[-e^{-\d/2},e^{-\d/2}]$ such that 
\be \sup_{\R}|\pa^j\chi_{\d}|\lesssim \d^{-j}.\label{ee2.54}\ee
 We define for $r\in\C$,  $\eta(r)=\chi_{\d} ( (e^{\d/2}|r|)^2/\rho^2)$ and for $i\in J_{U}$, $\eta_{i}(r)=(1-\chi_{\d}((e^{-\d} |r-c_{i}|)^2/\rho_{i}^2))$.  Note that  $\eta$ is equal to 1 on $e^{-\d}\bD(0,\rho)$ and 0 on $\C\setminus e^{-\d/2}\bD(0,\rho)$ and  $\eta_{i}$ is equal to 1 on $\C\setminus e^{\d}\bD(c_{i},\rho_{i})$ and 0 on $e^{\d/2}\bD(c_{i},\rho_{i})$ hence $\zeta=\eta\prod_{i\in J_{U}}\eta_{i}$ is equal to 1 on $e^{-\d}U$ and 0 on $V:=(\C\setminus e^{-\d/2}\bD(0,\rho))\cup \bigcup_{i\in J_{U}} e^{\d/2}\bD(c_{i},\rho_{i})$. The union of the open sets $W_{h,e^{-\d/10}U}$ (resp. $e^{-1/10}U$) and $W_{h,V}$ (resp. $V$) is $W_{h,\C}$ (resp. $\C$) and on their intersection the functions $\zeta F$ and 0 coincide. As a consequence, one can extend $\zeta F$ by 0 on $W_{h,V}$ as a smooth function $F^{Wh}:W_{h,\C}\to W_{h,\C}$. Note that since $\zeta$ is $\s$-symmetric, the same is true for $F^{Wh}$ and that $F^{Wh}$ and $F$ coincide on $W_{h,e^{-\d}U}$ (which contains $e^{-\d}W_{h,U}$).
 
 To get the estimates on the derivatives of $F^{Wh}$ we observe from (\ref{ee2.54}) and the definitions of $\eta$, $\eta_{i}$ that  
$$\max_{i}(\max_{0\leq j\leq k}\sup_{\C}|D^j\eta|,\max_{0\leq j\leq k}\sup_{\C}|D^j\eta_{i}|)\lesssim \d^{-k}\max_{i}(\rho^{-2k},\rho_{i}^{-2k})
$$
and since $\max_{i}(\eta, \eta_{i})\leq 1$, one has by Leibniz formula
$$\max_{0\leq j\leq k}\sup_{\C}|D^j\zeta|\lesssim (\#J_{U}+1)^k\d^{-k}\max_{i}(\rho^{-2k},\rho_{i}^{-2k}).$$
 Hence $F^{Wh}:=\zeta F$ satisfies 
 $$\sup_{0\leq j\leq k}\|D^j F^{Wh} \|_{W_{h,\C}}\leq  C(1+\#J_{U})^k(\d\ud(U))^{-2k}\max_{0\leq j\leq k}\|D^jF\|_{W_{h,e^{-\d/10}U}}.$$
\hfill $\Box$

\subsection{Proof of Lemma \ref{Whextension}}\label{sec:appendixBbisbis}
Write $(2\pi)^{-1}\Omega(z)=\sum_{n=0}^\infty b_{n}z^n$ with  $|b_{n}|\leq  \rho_{0}^{ -n}$, $(2\pi)^{-1}\Omega_{2}(z)=b_{0}+b_{1}z+b_{2}z^2$, $(2\pi)^{-1}\Omega_{\geq 3}(z)=\sum_{n=3}^\infty b_{n}z^n$. For $0\leq j\leq 3$ and $\d>0$, there exists $C_{j}>0$ such that for any  $\rho\leq \rho_{0}/2$
\be \|D^j\Omega_{\geq 3}\|_{\bD(0,\rho)}\leq C_{j}\rho^{3-j}.\label{eB.416}
\ee
Let $\chi:\C\to[0,1]$ be a smooth function with support in $\bD(0,1)$ and equal to 1 on $\bar\bD(0,1/2)$.
We define
$$\Omega_{\rho}^{Wh}(z)=\Omega_{2}(z)+(1-\chi_{}(z/\rho))\Omega_{\geq 3}(z).
$$
For any $z\in\bD(0,\rho/2)$ one has $\Omega_{\rho}^{Wh}(z)=\Omega(z)$ and by (\ref{eB.416}) and Leibniz formula, for some constant $B$ depending only on $b_{0},b_{1},b_{2}$, $\|D^j\chi\|_{C^0}$, $\|D^j\Omega_{\geq 3}\|_{\bD(0, \rho_{0})}$, $0\leq j\leq 3$, one has 
$$\forall z\in\C,\quad |\frac{1}{2\pi}D^3\Omega_{\rho}^{Wh}(z)|\leq B.$$
On the other hand, for some constant $C$ depending only on, $\|\pa^j\chi\|_{C^0}$, $\|\pa^j\Omega\|_{\bD(0,\rho_{0})}$, $0\leq j\leq 2$
$$\forall\ t\in\R,\qquad |\frac{1}{2\pi}\pa^2\Omega_{\rho}^{Wh}(t)-2b_{2}|\leq C\rho
$$
and if $\rho=\bar\rho$ is chosen small enough so that $C\bar\rho<b_{2}$, one has  (we assume $b_{2}>0$)  $b_{2}\leq \frac{1}{2\pi}\pa^2\Omega_{\bar \rho}^{Wh}(t)\leq 3b_{2}$.

\ \hfill $\Box$

\subsection{Proof of Proposition \ref{proppramexclusion}}\label{sec:appendixB}
The proof will follow from the following two lemmas. 
 \begin{lemma}\label{lemma:nor6.2}Let $\b\in\R$, $\nu>0$; if for  $t+is\in U$ ($t,s\in\R$) one has
 $$|\omega(t+is)-\beta|<\nu,$$
 then 
 $$\begin{cases}&|\omega^{}(t)-\beta|\leq 5A \nu\\
 &|s|\leq 4A\nu.
 \end{cases}$$
 \end{lemma}
 \begin{proof}
 Since $\omega$ is holomorphic on $U$ one has for any $z\in U$, $\bar\pa\omega(z)=0$ (we use in this proof the usual  notations $\bar \pa=(1/2)(\pa_{t}+i\pa_{s})$ and $\pa=(1/2)(\pa_{t}-i\pa_{s})$). For any point  $z\in \bD(0,\rho)$ one has ({\it cf.} Lemma \ref{lemma2.1})
 $$\dist(z,U)\leq 2\ua(U)$$
 and from the fact that $\|D\bar \pa\omega\|\lesssim B$  we thus  get if $\bar C_{0}$ is large enough in  (\ref{condUnew})
 \begin{align}
 \|\bar \pa\omega^{}\|_{C^0(\bD(0,\rho))}&\lesssim  \ua(U)B\notag\\
& \leq (8A)^{-1}.\label{baromeganew}
 \end{align}

Now, for   $t+is\in \bD(0,\rho)$, $t,s\in\R$ we write
\begin{multline} \omega(t+is)-\b=\omega^{}(t)-\b+\pa\omega^{}(t)\cdot (is)+\bar \pa\omega^{}(t)\cdot(-is)+O(s^2)\label{imnullenew}\end{multline}
where 
\begin{align}|O(s^2)|&\leq \|D^2\omega^{}\|_{C^0(\bD(0,\rho))}\times s^2\notag\\&\leq B\times \rho\times s\notag \\
&\leq (8A)^{-1}\times s\label{6.86new}
\end{align} 
if $\bar C$ is large enough in (\ref{condUnew}). Note that since  $\omega^{}$ is real-symmetric, $\pa\omega^{Wh}(t)$ and $\bar \pa\omega^{}(t)$ are real when $t$ is real. Hence if $|\omega^{}(t+is)-\b|=|\omega(t+is)-\b|<\nu$ ($t+is\in U$) one gets by  taking the imaginary part in (\ref{imnullenew}), using (\ref{baromeganew}), $\pa\omega^{}(t)\in [A^{-1},A]$ ({\it cf.} (\ref{6.86new})) that
\be |s|\leq (4A) \nu.
\ee
This and (\ref{imnullenew}) show that 
\be |\omega^{}(t)-\b|\leq 5A\nu.
\ee
\end{proof}
Since $t\mapsto \omega^{}(t)$ is increasing with a derivative bounded by below (this is the twist condition) the set of $t\in ]-\rho,\rho[:=\bD(0,\rho)\cap \R$ such that $|\omega^{}(t)-\beta|\leq 5A \nu$ is a (possibly empty) interval  $I_{\b}$ of length $\leq 5A^2 \nu$.
\begin{lemma}If $I_{\b}$  is not empty  
 there exists a unique $c_{\b}\in ]-\rho-10A^2\b,\rho+10A^2\b[$ such that 
$$\omega^{}(c_{\b})=\b,\qquad \omega^{}(\bD(c_{\b},10A^2\nu))\supset \bD(\b,5A\nu).$$
\end{lemma}
\begin{proof} For the existence of $c_{\b}$ we just notice that if $I_{\b}\subset ]-\rho,\rho[$ there is nothing to prove (notice that $\omega^{}$ is increasing) and otherwise  $|\omega^{}(\pm \rho)-\b|\leq 5A\nu$. But then,  the fact that $A^{-1}\leq \pa\omega^{}(t) \leq A$ shows the existence of a unique $c_{\b}\in\bD(0,e^{1/10}\rho)$ ($\nu$ small enough) such that $\omega^{}(c_{\b})=\b$ with $|c_{\b}-(\pm \rho)|\leq 10A^2\nu$.
 We then observe that 
\begin{align*}\omega^{}(c_{\b}+w)&=\b+\pa\omega^{}(c_{\b})w+\bar\pa\omega^{}(c_{\b})\bar w+O(w^2)\\
&=\b+aw+b\bar w+g(w)\end{align*}
with $a=\pa \omega^{}(c_{\b})\geq A^{-1}$, $|b|=|\bar \pa \omega^{}(c_{\b})|\leq (7A)^{-1}$ and
where $w\mapsto g(w)$ is $B\Delta$-Lipschitz on $|w|\leq \Delta$.   For any $|u|\leq 5A\nu$,  the map $w\mapsto (1/a)u-(b/a)\bar w-(1/a)g(w)$ is $(1/2)$-contracting  on the disk $|w|\leq \bar C_{0}^{-1}$ provided $\bar C_{0}$ is large enough in (\ref{condUnew}). Picard Fixed Point Theorem shows that there exists (a unique) $w\in \bD(0,10A^2\nu)$ such that $\omega^{}(c_{\b}+w)=\b+u$ provided $\nu$ is small enough.
\end{proof}

\section{Proof of Lemma \ref{ncor:5.4}}\label{appendixC}
\begin{lemma}\label{lem:6.3}Let $U$ be a $\s$-symmetric  open connected set of $\bD$ and  $F\in\cO_{\s}(W_{h,U})$ such that 
\be \forall \ t\in\R,\ F\circ \phi^t_{J\nabla r}=F.\label{invF}\ee
Then, there exists $\ti F\in\cO_{\s}(U)$ such that on $W_{h,U}$ one has
$$F=\ti F\circ r.$$
\end{lemma}
\begin{proof}

The Lemma is clear when we are in the (AA) case since the identity  $\forall\ t\in\R \ F(\th+t,r)=F(\th,r)$ clearly implies that $F$ does not depend on $\th$. So we consider the (CC)-case.

We shall prove  that for every $(z,w)\in W_{h,U}$ there exists an open neighborhood $V_{z,w}$ of $(z,w)$ and a holomorphic function $f_{z,w}$ such that $F=f_{z,w}\circ r$ on $V_{z,w}$.

We consider three cases:

\noindent 1) If $(z,w)=(0,0)\in W_{h,U}$. One can write for $\mu$ small enough and $(z,w)\in \bD(0,\mu)^2$, 
$F(z,w)=\sum_{k,l\in\N}F_{k,l}z^kw^l$. The identity (\ref{invF}) implies $\cM_{n}(F)=0$ for $n\ne 0$ hence from (\ref{formDn}) one has 
$F(z,w)=\cM_{0}(F)(z,w)=\sum_{k\in\N}F_{k,k}(zw)^k$ and we can choose $f_{0,0}(r)=\sum_{k\in\N}(ir)^k$.

\noindent 2) If $zw=0$, with for example $w\ne 0$. Then, from (\ref{e5.94}), $t\mapsto F(0,e^{it}w)$ is holomorphic with respect to $t\in \R+i ]-\ln(e^h\rho^{1/2}/|w|), \infty[$
 and constant on the real axis; it is hence constant on
 $ \R+i ]-\ln(e^h\rho^{1/2}/|w|), \infty[$.  In particular taking $t=is$, $s\in\R_{+}$, gives $F(0,e^{-s}w)=F(0,w)$ and by making $s\to\infty$ we get $F(0,w)=F(0,0)$ (notice that $(0,0)\in W_{h,U}$ in that case). The same argument shows that for $(\ti z,\ti w)\in W_{h,U}$ the function $t\mapsto F(e^{-it}\ti z,e^{it}\ti w)$ is constant on   $t\in \R+i ]-\ln(e^h\rho^{1/2}/|\ti w|),\ln(e^h\rho^{1/2}/|\ti z|) [$. Now if $(\ti z,\ti w)$ is close enough to $(0,w)$, in particular if there exists $0<s<\ln(e^h\rho^{1/2}/|\ti z|)$ such that $|\ti w|/\mu<e^s<\mu/|\ti z|$, one has  with $t=is$, $(e^{-it}\ti z,e^{it}\ti w)=(e^{s}\ti z,e^{-s}\ti w)\in \bD(0,\mu)^2$. 
 By (\ref{invF}) and point 1), one gets $F(\ti z,\ti w)=F(e^{-it}\ti z,e^{it}\ti w)=f_{0,0}(-i\ti z\ti w)$.

\noindent 3)  Otherwise, we can assume that $zw\ne 0$.
As before, we can argue that the function $t\mapsto g_{z,w}(t):=F(e^{-it}z,e^{it}w)$ is constant  on the set $$ \R+i ]-\ln(e^h\rho^{1/2}/|w|),\ln(e^h\rho^{1/2}/|z|) [.$$
Any point $(\ti z,\ti w)\in W_{h,U}$ which is close enough to $(z,w)$ is of the form $\ti z=e^{-it}z$, $\ti w=e^{it}\l w$, $t$ close to 0 and $\l$ close to 1. We thus have
$$F(\ti z,\ti w)=F(e^{-it}z,\l e^{it}w)=F(z,\l w)=F(z,\ti z \ti w z^{-1})=f_{z}(\ti z \ti w)$$
where we have defined $f_{z}(r)=F(z,irz^{-1})$.

We have thus proven that for each $(z,w)\in W_{h,U}$ there exist a neighborhood $V_{z,w}$ and a holomorphic  function $f_{z,w}$ such that $F=f_{z,w}\circ r$ on $V_{z,w}$. Now if $f_{z,w}\circ r=f_{z',w'}\circ r$ on a nonempty open set, the function $f_{z,w}$ and $f_{z',w'}$ coincide on a  nonempty open set and thus there exists a holomorphic  extension of $f_{z,w,z',w'}$ of these two functions such that $f_{z,w,z',w'}\circ r=f_{z,w}\circ r=f_{z',w'}\circ r$ on $V_{z,w}\cap V_{z',w'}$.
We can now conclude by using the connectedness of $U$.

\end{proof}
 
 \section{(Formal) Birkhoff Normal Forms.} \label{formalD}
 Our aim in this Section is to recall the proof of the existence and uniqueness of the formal BNF, Propositions \ref{statementBNFante},  \ref{statementBNF}. This is of course  a standard topic but we tried to develop here a framework that is convenient for the proof of Lemma \ref{BNF:lemma6.3}. We mainly concentrate on the (AA)-case since the  formalism in the (CC)-case is very similar to the one developed by Pérez-Marco in \cite{PM}.

 \subsection{Formal preliminaries}\label{formalD1}
\subsubsection{Formal series}
Let $\mathbb{A}$ be a commutative ring   and   $\mathbb{A}[[X_{1},\ldots,X_{d'}]]$ ($d'\in\N^*$) the ring of formal power series $\sum_{n\in\N^{d'}}a_{n}X^n$, $a_{n}\in\mathbb{A}$, $X^n=X_{1}^{n_{1}}\cdots X_{d'}^{n_{d'}}$ (for short $X=(X_{1},\ldots,X_{d'})$). 
We denote by $v(A)=\min\{|n|,\ a_{n}\ne 0\}$ the valuation of an element $A=\sum_{n\in\N^{d'}}a_{n}X^{n}$ and if  $B=(B_{1},\ldots,B_{d'})\in(\mathbb{A}[[X]])^{d'}$ we define $v(B)=\min_{l} v(B_{l})$. 

For any $k\in\N$ we define $[A]_{k}=\sum_{|n|=k}a_{k}X^k$   the homogenous part of $A$ of degree $k$
and 
we set $[A]_{\leq k}=\sum_{l=0}^k[A]_{l}$ (resp. $[A]_{\geq k}=\sum_{l=k}^\infty[A]_{l}$). 

As usual the product of $A=\sum_{n\in\N^{d'}}a_{n}X^n$ and $B=\sum_{n\in\N^{d'}}b_{n}X^n$ is $AB=\sum_{n\in\N^{d'}}(\sum_{k+l=n}a_{k}b_{l})X^n$ and the derivative $\pa_{X_{l}}A=\sum_{n\in\N^{d'}}n_{l}a_{n}X^{n'}$ with $n'_{j}=n_{j}$ if $j\ne l$ and $n'_{l}=n_{l}-1$ is $n_{l}\geq 1$ (if $n_{l}=0$ the derivative of the corresponding monomial is zero). 
Note that if $A_{i}\in\mathbb{A}[[X]]$, $1\leq i\leq j$ one has  
\be [A_{1}\ldots A_{j}]_{k}=\sum_{k_{1}+\ldots k_{j}=k}[A]_{k_{1}}\cdots [A]_{k_{j}}.\label{D.420}\ee

When $A=\sum_{n\in\N^{d'}}a_{n}X^n\in\mathbb{A}[[X]]$ and $B\in(\mathbb{A}[[X]])^{d'}$, $v(B)\geq 1$ one can define
$$A\circ B=\sum_{n\in\N^{d'}}a_{n}B^n.$$

If moreover $\mathbb{A}$ is endowed with   {\it derivations}     $\d_{i}:\mathbb{A}\to \mathbb{A}$, $1\leq i\leq d'$,    ($\d_{i}(a+b)=\d_{i}a+\d_{i}b$, $\d_{i}(ab)=(\d_{i}a)b+a(\d_{i}b)$)  we    define (Taylor formula) \footnote{We use a multi-index notation, $k=(k_{1},\ldots,k_{d'})$, $B^k=B_{1}^{k_{1}}\cdots B_{d'}^{k_{d'}}$, $k!=k_{1}!\cdots k_{d'}!$, $\d^k=\d_{{1}}^{k_{1}} \cdots \d_{{d'}}^{k_{d'}}$.}
for each $a\in\mathbb{A}$ and $B\in(\mathbb{A}[[X]])^{d'}$, $v(B)\geq 1$ 
\be a\circ_{\d} B=\sum_{k\in\N^{d'}}(1/k!)(\d^{k}a) B^{k}\in \mathbb{A}[[X]].\label{defcircdelta}\ee
Similarly, if $A=\sum_{k\in\N^{d'}}a_{k} X^{k},B,C\in(\mathbb{A}[[X]])^{d'}$, $v(B)\geq 1$, $v(C)\geq 1$, we can also define 
\be A\circ_{\d} (B,C)=\sum_{n\in\N^{d'}}(a_{n}\circ_{\d} B)C^n. \label{defcircdeltabis}\ee
\begin{lemma}\label{lemma:D1ee}
For $k\in\N^*$, $v(A)\geq 1$, $v(B)\geq 1$, $v(C)\geq 2$, $[A\circ_{\d} (B,X+C) -A]_{k}$ is a polynomial 
in the coefficients of $[\d^{l}A]_{k_{1}}, [B]_{k_{2}},[C]_{k_{3}}$ for $k_{1}+k_{2}+k_{3}\leq k-1$, $|l|\leq k$ (this polynomial being with rational coefficients).
\end{lemma}
\begin{proof}
Since $(a_{n}\circ_{\d}B)(X+C)^n-a_{n}X^n=(a_{n}\circ_{\d}B)((X+C)^n-X^n)+X^n((a_{n}\circ_{\d}B)-a_{n})$
\begin{align*} &A\circ_{\d} (B,X+C)-A= (I)+(II) \\
&(I):=\sum_{\substack {|l|\geq 0 \\ |n|\geq 1\\  m\leq n,\ |m|\geq 1}} {n\choose m}\frac{\d^la_{n}}{l!} B^l C^mX^{n-m},\qquad  (II)=\sum_{\substack {|l|\geq 1 \\ |n|\geq 1}} \frac{\d^la_{n}}{l!} B^l X^n
\end{align*}
and one can conclude using (\ref{D.420}).
\end{proof}
Assume now that $(\mathbb{A},\d)$ is endowed with a {\it translation} by which we mean   an action $\tau$   of an abelian group  (we suppose it is $(\R^d,+)$) on $\mathbb{A}$ that commutes with the derivations $\d_{i}$.

\subsubsection{Formal diffeomorphisms}

A {\it formal diffeomorphism} of  $\mathbb{A}[[X]]$ is a triple  $(\a,A,B)$ (we denote it by $f_{\a,A,B}$) with   $A,B\in(\mathbb{A}[[X]])^d$ with $v(B)\geq 2$ and where $v(A)\geq 1$ and $\a\in\R^d$. We can define the composition of to such objects:
\begin{align*}&f_{\e,E,D}=f_{\gamma,C,D}\circ f_{\a,A,B}\quad \iff\\
&\e=\a+\gamma,\qquad E=A+(\tau_{-\a}C)\circ_{\d}(A,X+B),\quad F=B+(\tau_{-\a}D)\circ_{\d}(A,X+B)
\end{align*} 
with $v(E)\geq 1$ and $v(F)\geq 2$.
One can check that the usual algebraic rules for compositions are satisfied and that each such diffeomorphism has an  inverse for composition.

\begin{rem} \label{rem:D1}One of the  example we have in mind is the following. Take  $d'=d\in\N^*$, $\mathbb{A}=C^\omega(\T^d)$ the ring of real analytic functions on $\T^d$ (taking real values on the real axis) and the ring of formal power series is $\mathbb{A}[[r]]=\{\sum_{n\in\N^d}a_{n}(\th)r^n, \ a_{n}\in C^\omega(\T^d)\}$, $r=(r_{1},\ldots,r_{d})$. The derivations in this case are  
$\d_{i}a=\pa_{\th_{i}}a$ if $a:(\th_{1},\ldots,\th_{d})\to\R$ is in $C^\omega(\T^d)$, the translation is  $\tau_{\a}a=a(\cdot-\a)$ ($\a\in\R)$ and the formal map $f_{\a,A,B}$ can be written under the more suggestive form
$$f_{(\a,A,B)}(\th,r)=(\th+\a+A(\th,r),r+B(\th,r))
$$
as a formal diffeomorphism of $\T\times \R$.
\end{rem}
\subsubsection{Degree}
In case  we can assign a  {\it degree} $\deg(a)$ to each element $a$ of the ring $\mathbb{A}$ (it satisfies by definition $\deg(0)=-\infty$, for all $a,b\in \mathbb{A}$, $\deg(a+b)=\max(\deg(a),\deg(b))$ and $\deg(ab)=\deg(a)+\deg(b)$) we can associate to each {\it weight} $p:\N^{d'}\to\N$ the set 
$$\cC(p)=\biggl\{\sum_{n\in\N^{d'}} a_{n}X^{n}\in\mathbb{A}[[X]],\ \forall \ n\in\N^{d'},\ \deg(a_{n})\leq p(n)\biggr\}.$$
By extension if $B=(B_{1},\ldots,B_{d'})\in(\mathbb{A}[[X]])^{d'}$ we say that $B$ is in $\cC(p)$ if each $B_{l}\in\cC(p)$, $1\leq l\leq d'$.
If $p,q:\N^d\to\N$ we define
$$p*q(n)=\max_{(k,l)\in\N^{d'}, k+l=n}(p(k)+q(l)).$$   In particular if  
$$\bar p(n):=|n|=n_{1}+\ldots+n_{d'},\qquad n=(n_{1},\ldots,n_{d'})\in\N^{d'}$$ one has $\bar p*\bar p=\bar p$ and  $(\bar p-1)^{*m}=\bar p-m$. 

We say that the degree $\deg$ is {\it compatible} with the derivations $\d_{i}$ and the translation $\tau$ if for any $\a\in\R$, $1\leq i\leq d$, $\deg( \tau_{\a}\d_{i}a)\leq\deg(a)$.

\begin{rem}\label{rem:D2} The relevant example for our purpose (proof of Lemma \ref{BNF:lemma6.3}) will be the following. Take $d'=d$, $\mathbb{A}=C^\omega(\T^d)[t]$ the set of polynomials in $t$ with coefficients in $C^\omega(\T^d)$, $F^{(t)}(\th)=a_{0}(\th)+\cdots+a_{n}(\th)t^n$, $a_{j}\in C^\omega(\T^d)$, $0\leq j\leq n$, $n\in\N$ and $a_{n}\ne 0$.  The  derivations $\d_{i}$, $1\leq i\leq d$ are  defined by $\d_{i}F^{(t)}(\th)=(\pa_{\th_{i}}a_{0})(\th)+\cdots+(\pa_{\th_{i}}a_{n})(\th)t^n$, the translations  $\tau_{\a}F^{(t)}(\th)=a_{0}(\th-\a)+\cdots+a_{n}(\th-\a)t^n$ and the degree  $\deg F^{t}=n$ is compatible with both of them.
\end{rem}

\bigskip
The following facts are easily checked. Assume that $\mathbb{A}$ is a ring with derivations $\d_{i}$, $1\leq i\leq d'$ and a compatible degree $\deg$ and let $(p_{l})_{l\in\N}$ be weights.
\begin{enumerate}
\item\label{iapp1} If $A,B\in\mathbb{A}[[X]]$, $A\in\cC(p_{1})$, $B\in\cC(p_{2})$ one has $AB\in \cC(p_{1}*p_{2})$.
\item\label{iapp2} If $A_{l}\in\mathbb{A}[[X]]$, $A_{l}\in \cC(p_{l})$, $\lim_{l\to\infty}v(A_{l})=\infty$ one has $\sum_{l\in\N}A_{l}\in\cC(\max_{l}p_{l})$.
\end{enumerate}
Let $p$ be a weight such that $p*p\leq p$. Using (\ref{defcircdelta}), point  \ref{iapp1} and \ref{iapp2}  we have
\begin{enumerate}[resume]
\item\label{iapp3} If $a\in \mathbb{A}$, $B\in\mathbb{A}[[X]]$, $B\in \cC(p)$ one has $a\circ_{\d} B-a\in \cC(\deg(a)+ p)$.
\end{enumerate}
\begin{lemma}\label{iapp4}
If $A\in\mathbb{A}[[X]]$, $B,C\in(\mathbb{A}[[X]])^{d'}$, $v(B)\geq 1$, $v(C)\geq 1$ with $A\in\cC(\bar p-c_{A})$, $B\in\cC(\bar p-c_{B})$, $C\in\cC(\bar p-1)$, $\min(c_{A},c_{B})\geq 0$,  then $A\circ_{\d}(B,C)-A\circ C\in \cC(\bar p-c_{A}-c_{B})$ and $A\circ_{\d}(B, C)\in \cC(\bar p-c_{A})$.
\end{lemma}
\begin{proof}
Recall that $A\circ_{\d} (B,C)=\sum_{n\in\N^{d'}}(a_{n}\circ_{\d} B)C^n$. From point \ref{iapp3} $a_{n}\circ_{\d} B-a_{n}\in\cC(\deg(a_{n})+\bar p-c_{B})$ and from point \ref{iapp1} $(a_{n}\circ_{\d} B)C^n-a_{n}C^n\in\cC((\deg(a_{n})+\bar p-c_{B})*({\bar p}-1)^{*|n|})\subset \cC((\deg(a_{n})+\bar p-c_{B})*(\bar p-|n|))$ hence  $(a_{n}\circ_{\pa} B)C^n-a_{n}C^n\in\cC(\bar p-c_{A}-c_{B})$. By using  point \ref{iapp2} we have  $A\circ_{\d}(B,C)-A\circ C\in \cC(\bar p-c_{A}-c_{B})$. A similar argument shows that $A\circ C\in\cC(\bar p-c_{A})$ whence the conclusion.
\end{proof}
Before stating the next lemma we introduce the following definition: we say that a formal diffeomorphism $f_{\a,A,B}$ is in $\cD(\bar p-1)$ if $A\in\cC(\bar p)\cap O(r)$, $B\in \cC(\bar p-1)\cap O^2(r)$. 
\begin{lemma}\label{lemma:eD3} One has
\begin{enumerate}
\item \label{z1} Let  $H\in\cC(\bar p-c)$, $c=0,1$, and $f_{\a,A,B}\in\cD(\bar p-1)$. Then $H\circ f_{\a,A,B}\in\cC(\bar p-c)$.
\item\label{z2} The composition of two formal diffeomorphisms in $\cD(\bar p-1)$ is in $\cD(\bar p-1)$.
\item\label{z3}  The inverse for the composition of a diffeomorphism of $\cD(\bar p-1)$ is in $\cD(\bar p-1)$.
\item \label{z4}If $f_{\a,A,B}^{-1}=f_{\ti\a,\ti A,\ti B}$, then for any $k\geq 1$, $[\ti A]_{k}$,  $[\ti B]_{k}$, are  polynomials in the coefficients of $[\tau_{m_{1}\a}\d^{l_{1}}A]_{k_{1}},[\tau_{m_{2}\a}\d^{l_{2}}B]_{k_{2}}$, $k_{1},k_{2}\leq k$, $l_{1},l_{2}\leq k$, $|m_{1}|,|m_{2}|\leq k$.
\end{enumerate}
\end{lemma}
\begin{proof}Items \ref{z1} and \ref{z2} are consequences of Lemma \ref{iapp4}.

For  point  \ref{z3} we just have to prove the result when $\a=0$. Let us  denote by $\cU$ the operator $H\mapsto H\circ f_{0,A,B}$. Note that  $v((\cU-id)H)\geq v(H)+1$ hence the series $\ti H:=\sum_{k=0}^\infty (\cU-id)^kH$ converges in $\mathbb{A}[[r]]$ and from \ref{z1} and \ref{z2} one sees that if $H\in\cC(\bar p-c)$, $c=0,1$, the same is true for $ \ti H$. To conclude we observe that $-(f_{0,A,B}^{-1}-id)=(\cU-id)(f_{0,A,B}^{-1}-id)+(f_{0,A,B}-id)$ hence 
\be -(f_{0,A,B}^{-1}-id)=\sum_{k=0}^\infty(\cU-id)^k(f_{0,A,B}-id).\label{eeD.423}
\ee
Finally point \ref{z4} is a consequence of (\ref{eeD.423}) and Lemma \ref{lemma:D1ee}.
\end{proof}

\subsection{Formal Birkhoff Normal Forms}\label{sec:fBNF}
 From now on we work in the setting of Remark \ref{rem:D2}.\footnote{Note that to prove the existence and uniqueness of the formal BNF of Subsection \ref{sec:D.2.2} it would be enough to work with $\mathbb{A}=\C^\omega(\T^d)$. }
\subsubsection{Formal exact symplectic diffeomorphism}
If $F\in\mathbb{A}_{}[[r]]$,  $F=\<\a,r\>+O^2(r)$ with $\a\in\R^d$  we define the formal diffeomorphism $f_{F}(\th,r)=(\th+A(\th,r),r+B(\th,r))$ as suggested  by the implicit relation 
$$ \ph=\th+\pa_{R}F(\th,R),\qquad r=R+\pa_{\th}F(\th,R),\qquad f_{F}(\th,r)=(\ph,R)
$$
or more formally
$$ A(\th,r)=\pa_{r}F(\th,r+B(\th,r)),\qquad 0=B(\th,r)+\pa_{\th}F(\th,r+B(\th,r))$$
$$ A=\pa_{r}F\circ_{\d}(0,B),\qquad 0=B+\pa_{\th}F\circ_{\d}(0,B)\qquad (\textrm{cf.}\ \ref{defcircdeltabis}).)$$
In this situation we use the more intuitive notations $R(\th,r)=r+B(\th,r)$, $\ph(\th,r)-\th=A(\th,r)$.
We shall call such formal diffeomorphisms $f_{F}$ {\it formal exact symplectic diffeomorphisms}. The set of all such diffeomorphisms is a group under composition. Let us define
$$\cE(\bar p-1) :=\{f_{F},\ F=\<\a,r\>+\sum_{|n|\geq 2}F_{n}(\th)r^n\in\mathbb{A}[[r]],\ F\in\cC(\bar p-1) \}.$$
The following result is then a consequence of Lemma \ref{lemma:eD3}.
\begin{lemma}The set $\cE(\bar p-1)$ is  a subset of $\cD(\bar p-1)$ stable by composition and inversion.
\end{lemma}

\begin{rem} In the (CC)-case the relevant choice for $\mathbb{A}$ is $\C[[t]]$ and the set of  formal series is $\mathbb{A}[[z,w]]$. One can extend in this context the notion of $\s_{2}$-symmetry. If $F=\sum_{(n,m)\in\N^d\times \N^d}F_{n,m}z^mw^m\in\mathbb{A}[[z,w]]$ we say it is $\s_{2}$-symmetric ($\s_{2}(z,w)=(i\bar w,i\bar z)$) if $\bar F_{n,m}=(i)^{|n|+|m|}F_{m,n}$ for all $(n,m)\in\N^d\times\N^d$ ($\bar F_{n,m}$ is the complex conjugate of $F_{n,m}$ and $i=\sqrt{-1}$). Similarly one can define the notion of $\s_{2}$-symmetric formal diffeomorphism. If  $F$ is $\s_{2}$-symmetric then $f_{F}$ is $\s_{2}$-symmetric.
\end{rem}

\bigskip

\subsubsection{Existence and uniqueness of the formal BNF}\label{sec:D.2.2}

Given  $f_{2\pi\<\omega_{0},r\>+F}$ we wish to find a formal exact symplectic diffeomorphism $f_{Z} =id+O^2(r)$  and $B(r)=2\pi\<\omega_{0},r\>+O^2(r)\in \R[[r]]$ such that 
\be f_{Z}\circ f_{2\pi\<\omega_{0},r\>+F}(\th,r)=f_{B}\circ f_{Z}(\th,r).\label{eeD.425}
\ee
We furthermore impose that $Z=O^2(r)$ is {\it normalized} in the sense that 
\be \int_{\T^d}Z(\ph,Q)d\ph=0.\label{eeD.427}\ee
We use the notations $f_{2\pi\<\omega_{0},r\>+F}:(\th,r)\mapsto (\ph,R)$, $f_{Z}:(\ph,R)\mapsto (\psi,Q)$ so that 
\begin{align*}&\psi=\ph+\pa_{Q}Z(\ph,Q),\qquad R=Q+\pa_{\ph}Z(\ph,Q)\\
&\ph=\th+2\pi\omega_{0}+\pa_{R}F(\th,R),\qquad r=R+\pa_{\th}F(\th,R).
\end{align*} 

Using  the relation $R=Q+\pa_{\ph}Z(\th+2\pi\omega_{0}+\pa_{R}F(\th,R),Q)$ and the fact that $g:(\th,Q,R)\mapsto (\th,Q,R-Q-\pa_{\ph}Z(\th+2\pi\omega_{0}+\pa_{R}F(\th,R),Q))$  is  a   formal diffeomorphism\footnote{Defined on $\mathbb{A}[[Q,R]]$.} we can define $R(\th,Q)=Q+O^2(Q)$ by $(\th,Q,R(\th,Q))=g^{-1}(\th,Q,0)$. 
\begin{lemma}\label{lemma:D.10}
\
\begin{enumerate}
\item \label{w1}For any $k\geq 1$, the coefficients of $[R(\th,Q)]_{k}$ are polynomials in the coefficients of $[\tau_{2\pi m_{1}\omega_{0}}\d^{l_{1}}F]_{k_{1}}, [\tau_{2\pi m_{2}\omega_{0}}\d^{l_{2}}Z]_{k_{2}}$, $k_{1},k_{2},l_{1},l_{2},|m_{1}|,|m_{2}|\leq k$.
\item \label{w2}If $F,Z\in\cC(\bar p-1)$, the formal diffeomorphism  $(\th,Q)\mapsto (\th,R(\th,Q))$ is in $\cD(\bar p-1)$.
\end{enumerate}
\end{lemma}
\begin{proof}These are consequences of Lemma \ref{lemma:eD3}.
\end{proof}

Let $f_{Z}(\th,r)=(\th',r')$; from the formal conjugation relation (\ref{eeD.425}) we get  $f_{B}\circ f_{Z}(\th,r)=(\th'+\nabla B(r'),r')=(\psi,Q)$ hence $Q=r'$ and $\th'=\th+\pa_{Q}Z(\th,Q)$. We thus have
$$\th+\pa_{Q}Z(\th,Q)+\nabla B(Q)=\ph+\pa_{Q}Z(\ph,Q)$$
and using the relations between 
$\ph,\th$ yields
\begin{multline*}-\pa_{R}F(\th,R)=\pa_{Q}Z\biggl(\th+2\pi\omega_{0}+\pa_{R}F(\th,R),Q\biggr)-\pa_{Q}Z(\th,Q)\\-(\nabla B(Q)-2\pi\omega_{0})
\end{multline*}
that we can write
\be -\pa_{Q} \cF(F,Z)= \pa_{Q}Z(\th+2\pi\omega_{0},Q)-\pa_{Q}Z(\th,Q)-(\pa_{Q} B(Q)-2\pi\omega_{0})\label{D.423}
\ee
where $\cF(F,Z)=O^2(r)$ is uniquely defined (note that the RHS of  (\ref{D.423}) is $O(r)$) by
\begin{multline}\pa_{Q}\cF(F,Z)=\pa_{Q}F(\th,R(\th,Q))+\\\biggl( \pa_{Q}Z\biggl(\th+2\pi\omega_{0}+\pa_{Q}F(\th,R(\th,Q)),Q\biggr) -\pa_{Q}Z(\th+2\pi\omega_{0},Q) \biggr). \label{D.424} \end{multline}
We thus have
\be  - \cF(F,Z)= Z(\th+2\pi\omega_{0},Q)-Z(\th,Q)-(B(Q)-2\<\pi\omega_{0},Q\>).\label{D.423bis}
\ee
\begin{lemma}\label{lemma:eeD.11}
\
\begin{enumerate}
\item \label{w1}For any $k\geq 1$, the coefficients of $[\cF(F,Z)-F]_{k}$ are polynomials in the coefficients of $[\tau_{2\pi m_{1}\omega_{0}}\d^{l_{1}}F]_{k_{1}}, [\tau_{2\pi m_{2}\omega_{0}}\d^{l_{2}}Z]_{k_{2}}$, $k_{1},k_{2}\leq k-1$, $l_{1},l_{2}\leq k$,  $|m_{1}|,|m_{2}|\leq k$.
\item \label{w2}If $F,Z\in\cC(\bar p-1)$, one has $\cF(F,Z)\in\cC(\bar p-1)$.
\end{enumerate}
\end{lemma}
\begin{proof}This is a consequence of (\ref{D.424}), Lemma \ref{lemma:D.10} and  Lemma \ref{lemma:eD3}.
\end{proof}

Now from (\ref{D.423bis}) one has 
\be\begin{cases}k=2,\quad&-[F]_{2}(\th,Q)= [Z]_{2}(\th+2\pi\omega_{0},Q)-[Z]_{2}(\th,Q)- [B]_{2}(Q),\\
 \forall\ k\geq 3\quad &- [\cF(F,Z)]_{k}(\th,Q)= [Z]_{k}(\th+2\pi\omega_{0},Q)-[Z]_{k}(\th,Q)- [B]_{k}(Q)
\end{cases}\label{eeD.431}\ee
We now recall the classical result ({\it cf.} Proposition \ref{prop:n5.2}):
\begin{lemma}\label{eelemmaD.12}If $\omega_{0}$ is Diophantine, for any $G\in\mathbb{A}[[r]]$ there exists a unique pair $(Z,B)$ with $Z\in\mathbb{A}[[r]]$ normalized in the sense of  (\ref{eeD.427}) and $B=B(r)\in\R[t][[r]]$ such that 
$$G(\th,Q)=Z(\th+2\pi\omega_{0},Q)-Z(\th,Q)+B(Q).$$
Furthermore: (1)  $B(Q)=\int_{\T^d}G(\th,Q)d\th$ and  if $G=[G]_{k}$ one has $Z=[Z]_{k}$, $B=[B]_{k}$ and  the coefficients of  $[Z]_{k}$ are  $\R$-linear functions of the coefficients of $[G]_{k}$; (2) if $G\in\cC(\bar p-1)$ then $Z,B\in\cC(\bar p-1)$.
\end{lemma}

\subsubsection{Existence and uniqueness of the BNF}
We can now give a proof of the uniqueness and the existence of formal Birkhoff Normal Forms (\ref{eeD.425}).

\medskip\noindent{\it Uniqueness}:  Equation (\ref{eeD.431}), Lemma \ref{lemma:eeD.11} and Lemma \ref{eelemmaD.12} show inductively that $[Z]_{k}, [B]_{k}$ are uniquely defined by $[F]_{j}$, $2\leq j\leq k-1$. Hence, $Z$ and $B$ are unique.

\medskip\noindent{\it Existence}: Define $[Z]_{2},[B]_{2}$ by (\ref{eeD.431}) and then inductively for $k\geq 3$,  $[Z]_{k},[B]_{k}$.
by 
\be - [\cF(F,[Z]_{\leq k-1})]_{k}(\th,Q)= [Z]_{k}(\th+2\pi\omega_{0},Q)-[Z]_{k}(\th,Q)- [B]_{k}(Q)\label{eeD.432} \ee
where $Z_{\leq k-1}=\sum_{l=2}^{k-1}[Z]_{l}$. Setting $F=\sum_{l=2}^{\infty}[Z]_{l}$, $B=\sum_{l=2}^{\infty}[B]_{l}$ one can check that (\ref{D.423bis}) holds modulo $O^k(r)$ for any $k$ and hence in $\mathbb{A}[[r]]$. \hfill $\Box$

\subsection{Proof of Lemma \ref{BNF:lemma6.3} }\label{appendix:D4}
We define $F^{(t)}(\th,r)=tF(\th,r)+(1-t)G(\th,r)$ which is in $\mathbb{A}[[r]] \cap \cC(\bar p-1)$, $\mathbb{A}=C^\omega(\T^d)[t]$. Note that  for any $k\geq 2$, $[F^{(t)}]_{k}\in\cC(\bar p-1)$. In particular, as a consequence of Lemmata  \ref{lemma:eeD.11}, item  \ref{w2} and \ref{eelemmaD.12}, point (2),   the sequences $[Z^{(t)}]_{k},[B^{(t)}]_{k}$, inductively constructed in (\ref{eeD.432}), are in $\cC(\bar p-1)$. Hence $B^{(t)}(r):=\sum_{n\in\N^d}b_{n}(t)r^n$ is in $\cC(\bar p-1)$ which by definition means that de degree in $t$ of each $b_{n}(t)$ is $\leq |n|-1$. \hfill $\Box$

 \section{Approximate Birkhoff Normal Forms.} 

We give in this section the proofs of Propositions  \ref{nprop:9.2} and  \ref{BNFprop}. 
 
\subsection{A useful Lemma}
Let be given for each  $\a\in ]0,1/2[$,  a function $P_{\a}:\R_{+}\times \R_{+}\to\R_{+}$, $P_{\a}:(k,t)\mapsto P_{\a}(k,t)$ nondecreasing in each variable and   assume that $(\e_{\a,k})_{k\in I_{\a}}$, $I_{\a}\subset\N$ is a sequence of nonnegative  real numbers  depending on $\a\in ]0,1/2[$ and defined inductively as long as a condition of the form 
\be  P_{\a}(k,\e_{\a,k})<1 \label{E462}
\ee
is satisfied (we assume that $\e_{\a,0}$ satisfies (\ref{E462})). Let us call $J_{\a}=\llbracket 0,k_{\a}^*\rrbracket$, $k_{\a}^*\geq 1$  the maximal set of integers $k\in\N$ for which $\e_{\a,k}$ is defined: this means that if $k\in J_{\a}$ and $\e_{\a,k}$ satisfies (\ref{E462}) then $k+1\in J_{\a}$ (in particular $P_{\a}(k_{\a}^*,\e_{\a,k_{\a}^*})\geq 1$). Let $\th>0$, $a>0$ and $\bar k_{\th,a}\in\N^*$ be such that 
 \be\forall k\in \llbracket 0,\min(k_{\a}^*,\bar k_{\th,a})-1\rrbracket,\quad \e_{\a,k+1}\leq C_{\th,a}\a^\th\times \biggl(1+\a^{-a}\sum_{j=0}^k\e_{\a,j}\biggr )\e_{\a,k}.\label{E458}
\ee

 We have the following type of Gronwall Lemma:
 \begin{lemma}\label{lem:E3} Assume that 
 \be (2C_{\th,a}\a)^\th<1/2,\qquad \e_{\a,0}\leq \a^{a}/2,\qquad  P_{\a}(\bar k_{\th,a}+1,\e_{\a,0})<1.\label{condalphaepsilon}\ee
 Then, 
 \be  k_{\a}^*\geq \bar k_{\th,a}
 \ee
  and 
  \be\forall \ k\in [0,\bar k_{\th,a}]\cap\N,\quad  \e_{\a,k}\leq (2C_{\th,a}\a)^{\th k}\e_{\a,0}.\label{E464}
\ee

 \end{lemma}
 \begin{proof}
 1) Let $k^*_{\a,\th,a}=\min(k^*_{\a},\bar k_{\th,a})$. We first prove that the set 
  $$K_{\a,\th,a}=\{k\in\llbracket 0,k^*_{\a,\th,a}\rrbracket,\ \e_{\a,k}>(2C_{\th,a}\a)^{\th k}\e_{\a,0}\}$$
  is empty. If this were not the case we could define $k_{\a,\th,a}=\inf K_{\a,\th,a}$ and write
\be \forall \ k\in \llbracket 0,k_{\a,\th,a}-1\rrbracket,\quad \e_{\a,k}\leq (2C_{\th,a}\a)^{\th k}\e_{\a,0}
\ee
hence
  $$\sum_{j=0}^k\e_{\a,j}\leq \frac{\e_{\a,0}}{1-(2C_{\a,\th,a}\a)^{\th }}\leq 2\e_{\a,0}$$
and thus from \ref{E458} and (\ref{condalphaepsilon}), for all $0\leq k\leq k_{\a,\th,a}-1$
$$\e_{\a,k+1}\leq C_{\th,a}\times \a^{\th}(1+2\a^{-a}\e_{\a,0})\e_{a,k}\leq (2C_{\th,a}\a^{\th})\e_{\a,k}.$$
  This implies that for all $0\leq k\leq k_{\a,\th,a}$ one has 
$$\e_{\a,k}\leq (2C_{\th,a}\a)^{\th k} \e_{\a,0}$$ 
and in particular $\e_{\a,k_{\a,\th,a}}\leq (2C_{\th,a}\a)^{\th k_{\a,\th,a}}\e_{\a,0}$. This contradicts the definition of $k_{\a,\th,a}$ as $\inf K_{\a,\th,a}$.

2) Since $K_{\a,\th,a}$ is empty, one has 
\be \forall \ k\in \llbracket 0, k^*_{\a,\th,a}\rrbracket,\quad \e_{\a,k}\leq (2C_{\th,a}\a)^{k\th}\e_{\a,0}\leq \e_{\a,0}.
\ee
But  since $k^*_{\a,\th,a}+1\leq \bar k_{\th,a}+1$ and $(2C_{\th,a}\a)\leq 1$
\begin{align*}P_{\a}(k^*_{\a,\th,a}+1,\e_{\a,k^*_{\a,\th,a}})&\leq P_{\a}(\bar k_{\th,a}+1,\e_{a,0})\\
&<1
\end{align*}
This implies that $k^*_{\a,\th,a}+1\in J_{\a}$ hence $k_{\a,\th,a}^*<k_{\a}^*$ and from the definition of $k^*_{\a,\th,a}$, we get  $k_{\a}^*\geq \bar k_{\th,a}$ which in its turn implies $ k^*_{\a,\th,a}=\bar k_{\th,a}$. We have thus proven that for all $k\leq \bar k_{\th,a}$ one has
$$\e_{\a,k}\leq (2C_{\th,a}\a)^{k\th}\e_{\a,0}.$$
 \end{proof}

\subsection{Proof of Proposition \ref{nprop:9.2} (BNF, (CC)-Case)}\label{appendixnprop:9.2}
For $n$ large enough we define
\be \rho_{0}= \frac{1}{q_{n}^{6}},\qquad W_{0}=e^hW_{h,\bD(0,\rho_{0})}\label{e8.135}
\ee
 and for $k\geq 0$ we introduce the sequences
\be\d_{k}=C^{-1}\frac{h}{(k+1)(\ln(k+2))^2}, \qquad \sum_{l=0}^\infty \d_{k}\leq h/10\label{deltaketcante}\ee
\be \rho_{k}=\exp(-\sum_{l=0}^{k-1}\d_{l})\rho_{0},\quad W_{k}=\exp(-\sum_{l=0}^{k-1}\d_{l})W_{0}.\label{deltaketc}\ee

Recall that $m=2\bar a+1$ and that ({\it cf.} (\ref{ee6.96})
\be F_{0}(z,w)=O^{2m}(z,w),\qquad \e_{0}:=\|F_{0}\|_{W_{0}}\lesssim \rho_{0}^m.\label{F0incond}\ee

 We  shall construct  inductively  for $k\geq 0$, sequences  $Z_{k}\in\cO_{\s}(W_{k})$, $F_{k}\in\cO_{\s}(W_{k})\cap O^{k+2m}(z,w)$, $\Omega_{k}\in \cO_{\s}(\bD(0,\rho_{k}))$, $\Omega_{k}(r)=2\pi\omega_{0}r+O^2(r)$, such that  
$$\Omega_{0}=\Omega,\qquad F_{0}=F
$$
and for $k\geq 1$, 
\be g_{k}^{-1}\circ \Phi_{\Omega_{0}}\circ  f_{F_{0}}\circ g_{k}=\Phi_{\Omega_{k}}\circ f_{F_{k}}.\label{indk}\ee
To do this we proceed the following way: assuming (\ref{indk}) holds and $\d_{k}\leq q_{n}^{-1}$, we apply  Proposition \ref{prop:n5.5}  with $\tau=0$, $K/2=N=q_{n}$ {\it cf.} (\ref{BNF7.122}),  and define $Y_{k}\in\cO_{\s}(W_{k})\cap O^{k+2m}(z,w)$ (see Remark \ref{rem:5.1}) satisfying
\be -[Q_{0}]\cdot Y_{k}=T_{q_{n}}F_{k}-\cM_{0}(F_{k}),\qquad \|Y_{k}\|_{e^{-\d_{k}/2}W_{k}}\lesssim q_{n}^{2}\|F_{k}\|_{W_{k}} \label{cohomeqprop6.2}\ee
where we denote $$Q_{0}(r)=2\pi\omega_{0}r.$$
Using Lemma \ref{lemma:5.4} we get ({\it cf.} formula (\ref{5.117b}))
\begin{align*}f_{Y_{k}}\circ \Phi_{\Omega_{k}}\circ  f_{F_{k}}\circ f_{Y_{k}}^{-1}&=
\Phi_{\Omega_{k}+M(F_{k})}\circ f_{F_{k}-\cM_{0}(F_{k})+[\Omega_{k}+M(F_{k})]\cdot Y_{k}+\dot\fO_{2}^{(\bar a)}(Y_{k},F_{k})}\\
&=\Phi_{\Omega_{k}+M(F_{k})}\circ f_{R_{q_{n}} F_{k}+[\Omega_{k}+M(F_{k})]\cdot Y_{k}-[Q_{0}]\cdot Y_{k}+\dot \fO_{2}^{(\bar a)}(Y_{k},F_{k})}.
\end{align*}
Hence $f_{Y_{k}}\circ \Phi_{\Omega_{k}}\circ  f_{F_{k}}\circ f_{Y_{k}}^{-1}= \Phi_{\Omega_{k+1}}\circ f_{F_{k+1}}$ with 
\be \Omega_{k+1}-\Omega_{k}=M(F_{k})\label{Omegak+1}
\ee
and, using the fact that $[\Omega_{k}+M(F_{k})]\cdot Y_{k}-[Q_{0}]\cdot Y_{k}=O(|\nabla Y_{k}|\times |\pa(\Omega_{k}-Q_{0})\circ r|)+\dot\fO_{2}^{(2)}(Y_{k},F_{k})$, 
\be F_{k+1}=R_{q_{n}}F_{k}+\dot\fO_{2}^{(\bar a)}(Y_{k},F_{k})+O\biggl(|\nabla Y_{k}|\times |\pa(\Omega_{k}-Q_{0})\circ r|\biggr).\label{Fk+1}\ee
Notice that from (\ref{Omegak+1}) and the fact that for $0\leq j\leq k-1$, $\cM_{0}(F_{j})=O^{j+2m}(z,w)$ ({\it cf.} the remark at the end of Subsection \ref{sec:5.1.1}), hence $M(F)(r)=O^{m}(r)$; one thus  has
\be\forall\ 0\leq j\leq k,\qquad  \Omega_{k}(r)-Q_{0}(r)=O(r^2).
\ee
Since $F_{k},\ Y_{k}\in O^{k+2m}(z,w)$  we have (see Remarks \ref{rem:2.3}, \ref{rem:5.1})   $\dot\fO_{2}^{(\bar a)}(Y_{k},F_{k})=O^{2k+4m-2\bar a}(z,w)$; also, $O(|\nabla Y|\times |\pa(\Omega_{k}-Q_{0})\circ r|)=O^{k+2m+1}(z,w)$ and from (\ref{n5.81ccbis})  $R_{q_{n}}F_{k}=O^{q_{n}}(z,w)$ (if $q_{n}\geq m$). As a consequence, since $2k+4m-2\bar a\geq k+1+2m$
 we see that $F_{k+1}=O^{k+1+2m}(z,w)$. Furthermore since $\Omega_{0}(r)-Q_{0}(r)=O(r^2)$
\begin{align*}\|\pa(\Omega_{k}-Q_{0})\circ r\|_{W_{k}}&\leq \|\pa(\Omega_{k}-\Omega_{0})\circ r\|_{W_{k}}+\|\pa(\Omega_{0}-Q_{0})\circ r\|_{W_{k}}\\
&\lesssim  \|\pa(\Omega_{k}-\Omega_{0})\circ r\|_{W_{k}}+\sup |r({W_{k}})|\\
&\lesssim  \|\pa(\Omega_{k}-\Omega_{0})\circ r\|_{W_{k}}+\rho_{k}
\end{align*}
and using (\ref{eq:cauchy:derivativesquarto})
\begin{align*}\|\nabla Y_{k}\|_{e^{-\d_{k}}W_{k}}&\lesssim  \d_{k}^{-1}\rho_{k}^{-1/2}\|Y_{k}\|_{e^{-\d_{k}/2}W_{k}}\\
&\lesssim {q_{n}^2}\rho_{k}^{-1/2}\d_{k}^{-1}\|F_{k}\|_{W_{k}}
\end{align*}
hence 
\be|\nabla Y_{k}|\times |\pa(\Omega_{k}-Q_{0}|\lesssim {q_{n}^2}\rho_{k}^{-2}\d_{k}^{-2}\|\Omega_{k}-\Omega_{0}\|_{\bD(0,\rho_{k})} \|F_{k}\|_{W_{k}}+{q_{n}^2}\d_{k}^{-1}\rho_{k}^{1/2}\|F_{k}\|_{W_{k}}.\label{rho1/2}
\ee
From  (\ref{Fk+1}), (\ref{cohomeqprop6.2}), 
(\ref{rho1/2}),
and (\ref{n5.83})
we get  that, provided 
$$ \rho_{k}^{-\bar a}\d_{k}^{-\bar a}{q_{n}^2}\|F_{k}\|_{W_{k}}<1,$$
one has the inequalities
\begin{multline}\|F_{k+1}\|_{e^{-\d_{k}}W_{k}}\lesssim \d_{k}^{-1}e^{-q_{n}\d_{k}/2}\|F_{k}\|_{W_{k}}+ \rho_{k}^{-\bar a}\d_{k}^{-\bar a}{q_{n}^2}\|F_{k}\|_{W_{k}}^2\\+{q_{n}^2}\rho_{k}^{-2}\d_{k}^{-2}\|\Omega_{k}-\Omega_{0}\|_{\bD(0,\rho_{k})} \|F_{k}\|_{W_{k}}+{q_{n}^2}\d_{k}^{-1}\rho_{k}^{1/2}\|F_{k}\|_{W_{k}}\label{}
\end{multline}
and 
\be \|\Omega_{k+1}-\Omega_{0}\|_{\bD(0,{e^{-\d_{k}}\rho_{k})}}\lesssim \sum_{j=1}^k \|F_{j}\|_{W_{j}}.
\ee
Let us define
$$s_{k}=\|\Omega_{k}-\Omega_{0}\|_{\bD(0,\rho_{k})},\qquad \e_{k}=\|F_{k}\|_{W_{k}},$$
\be\begin{cases}&\e_{k+1}\lesssim \biggl(\d_{k}^{-1}e^{-q_{n}\d_{k}/2}+(\rho_{k}\d_{k})^{-\bar a}{q_{n}^2}\e_{k}+{q_{n}^2}(\rho_{k}\d_{k})^{-2}s_{k}+{q_{n}^2}\d_{k}^{-1}\rho_{k}^{1/2}\biggr)\e_{k}\\
&s_{k+1}\lesssim \sum_{j=0}^{k}\e_{j}
\end{cases}\label{e6138}
\ee
as long as
$$\rho_{k}^{-\bar a}\d_{k}^{-\bar a}{q_{n}^2}\e_{k}<1.$$
Let $k^*$ be the largest integer for which the preceding sequences are defined and satisfy the stronger condition
\be\forall\ k<k^*,\quad P_{q_{n}}(k,\e_{k}):= \rho_{0}^{-1/5}\rho_{k}^{-\bar a}\d_{k}^{-\bar a}q_{n}^2\e_{k}<1.
\ee
From  (\ref{deltaketcante}) for any  $\s>0$ one has  $\d_{k}\gtrsim_{\s} (k+1)^{-(1+\s)}$. For $\th\in ]0,1/6[$ define
$$\mu_{0}=\frac{1}{6}-\th,\qquad \mu=\mu_{0}\frac{1}{1+\s}.$$
Since $\rho_{k}\gtrsim \rho_{0}$ one has 
$$\forall\ k< \min(k^*,\rho_{0}^{-\mu}),\quad \begin{cases}
& q_{n}\d_{k}\gtrsim \rho_{0}^{-1/6+\mu_{0}}=\rho_{0}^{-\th}\\
&(\rho_{k}\d_{k})^{-1} \lesssim \rho_{0}^{-1-\mu_{0}}=\rho_{0}^{-7/6+\th}\\
 &{q_{n}^2} \d_{k}^{-1}\rho_{k}^{1/2}\lesssim \rho_{0}^{-1/3+1/2-\mu_{0}}=\rho_{0}^\th. \end{cases}$$
thus (for $k<\rho_{0}^{-\mu}$, $\rho_{0}\ll_{\th} 1$),  $\d_{k}^{-1}e^{-q_{n}\d_{k}/2} \leq \rho_{0}^{1/5}$ and consequently,  if $\rho_{0}\ll_{\th} 1$,
\begin{align*}\forall\ k<\min(k^*,\rho_{0}^{-\mu}),\quad &\e_{k+1}\lesssim (\rho_{0}^{1/5}+\rho_{0}^{1/5}+\rho_{0}^{-1/3-7/3+2\th}\sum_{j=0}^k\e_{k}+\rho_{0}^\th)\e_{k}\\
&\leq \bar C_{\th}\rho_{0}^\th(1+\rho_{0}^{-3}\sum_{j=0}^k\e_{k})\e_{k}
\end{align*}
Since from (\ref{F0incond})
$\e_{0}\leq  \rho_{0}^{2\bar a+1}<\rho_{0}^{1/5+(7/6)\bar a+1/3 }$ we see that condition (\ref{condalphaepsilon}) of Lemma \ref{lem:E3} is satisfied (with $\a=\rho_{0}$, $\bar k_{\th,\a}=\rho_{0}^{-\mu}$) hence
\be \forall \ k\in [0,\rho_{0}^{-\mu}]\cap\N,\quad \e_{k}\leq (2\bar C_{\th}\rho_{0})^{\th k}\e_{0}.\ee
Now for any $0<\b \ll 1$, one can choose $\th$ and $\s$ so that $\rho_{0}^{-\mu}=q_{n}^{1-\b}$
and in particular taking $k=\bar k_{\b}=q_{n}^{1-\b}$ and using (\ref{e6138}) one gets for $q_{n}\gg_{\b} 1$
\be \e_{\bar k}\leq e^{-q_{n}^{1-\beta}},\qquad s_{\bar k}\leq 2\e_{0}.\label{EE4.56}\ee
(we assume $\bar a>10$).

We now define 
$$F^{BNF}_{q_{n}^{-1}}=F_{\bar k},\qquad \Omega^{BNF}_{q_{n}^{-1}}=\Omega_{\bar k},$$
and 
 $$g_{q_{n}^{-1}}^{BNF}=f_{Y_{1}^{Wh}}^{-1}\circ \cdots\circ  f_{Y_{\bar k}^{Wh}}^{-1}$$
 where $Y_{j}^{Wh}$ is a $C^2$ Whitney extension of $(Y_{j},e^{-\d_{j}/2}W_{j})$ given by Lemma \ref{lemma:2.3ee};
 one has
 \be \|g_{q_{n}^{-1}}^{BNF}-id\|_{C^1}\lesssim q_{n}^2 \sum_{k=0}^{\bar k}(2\bar C_{\th}\rho_{0})^{\th k}\e_{0}\rho_{0}^{-4}h^{-4}k^8\lesssim_{\th,h} q_{n}^{-(m-26)}.\label{EE4.57}\ee
 Inequalities (\ref{EE4.56}) and (\ref{EE4.57}) and the fact that $F_{\bar k}\in O^{\bar k+2m}(r)$ give the conclusion of the Proposition \ref{nprop:9.2}. Note that  (\ref{ee6.128}) is a consequence of $F^{BNF}_{q_{n}^{-1}}\in O^{q_{n}^{1-\b}}(z,w)$ and Remark \ref{rem:6.2}. For the last statement of the Proposition, we can choose $\Omega_{\bar k}=\Omega+\sum_{j=0}^{\bar k-1}M(F_{j})^{Wh}$ where $F_{J}^{Wh}$ is an $C^3$ Whitney extension of $(F_{j},W_{j})$ given by Lemma \ref{lemma:2.3ee}.
\hfill $\Box$

\subsection{Proof of Proposition \ref{BNFprop} (BNF (AA) or (CC) Case, $\omega_{0}$ Diophantine)}\label{appendixBNFprop}

The proof, that we mainly illustrate in the (AA)-Case, as well as  the notations,   are essentially the same as the ones of the proof of Proposition \ref{nprop:9.2} (see Section \ref{appendixnprop:9.2}, in particular, we use  the definitions (\ref{deltaketcante}), (\ref{deltaketc}) for $\d_{k}$, $\rho_{k}$, $W_{k}$) with the following differences:
\begin{itemize} \item we replace (\ref{e8.135}) by
$$ \rho_{0}= \rho^{(\tau+2)/\g},\qquad W_{0}=W_{h,\bD(0,\rho_{0})}$$
where $\g=1$ in the (AA)-case and $\g=1/2$ in the (CC)-case.
\item  since $\omega_{0}$ is Diophantine, we can and do solve instead of (\ref{cohomeqprop6.2}) the equation without truncation (using  Proposition \ref{prop:n5.5} with $N=\infty$,  $K=\kappa^{-1}$, {\it cf.} (\ref{omega0dioph})):
$$ -[Q_{0}]\cdot Y_{k}=F_{k}-\cM_{0}(F_{k}),\qquad \|Y_{k}\|_{e^{-\d_{k}/2}W_{k}}\lesssim \d_{k}^{-(1+\tau)}\|F_{k}\|_{W_{k}}.$$
\end{itemize}
Notice that both in the (AA) and (CC) cases
\be F_{0}=O^m(r),\qquad \|F_{0}\|_{W_{0}}\leq \rho_{0}^m.\label{incondF0bis}\ee
In place of (\ref{Fk+1}) we get (in the (AA)-case)
\be F_{k+1}=\dot\fO_{2}^{(\bar a)}(Y_{k},F_{k})+O\biggl(|\pa_{\th}Y_{k}|\times |\pa(\Omega_{k}-Q_{0})\circ r|\biggr).\label{Fk+1bis}
\ee
Since $F_{k},\ Y_{k}\in O^{k+m}(r)(\th)$ (see Remarks \ref{rem:2.3}, \ref{rem:5.1})  we have    $O(|\pa_{\th} Y_{k}|\times |\pa(\Omega_{k}-Q_{0})\circ r|)=O^{k+m+1}(r)(\theta)$ and  $\dot\fO_{2}^{(\bar a)}(Y_{k},F_{k})=O^{2k+2m-\bar a}(r)(\theta)$ ($\bar a$ from Proposition \ref{prop:n5.5}). As a consequence, since $2m\geq \bar a$ we see that $F_{k+1}=O^{k+1+m}(r)(\th)$.

From (\ref{Fk+1bis}) and the fact that ({\it cf.}  (\ref{eq:cauchy:derivativester}))
$$\|\pa_{\th}Y_{k}\|_{e^{-\d_{k}}W_{k}}\lesssim \d_{k}^{-1}\|Y_{k}\|_{e^{-\d_{k}/2}W_{k}},\qquad \|\pa(\Omega_{0}-Q_{0})\circ r\|_{W_{k}}\leq \rho_{k}
$$
hence
$$|\pa_{\th}Y_{k}|\times |\pa(\Omega_{0}-Q_{0})\circ r|\lesssim \d_{k}^{-(2+\tau)}\rho_{k}^{\g}\|F\|_{W_{k}}
$$
where $\g=1$ in the (AA)-case. A similar computation  ({\it cf.} (\ref{rho1/2})) shows that one can take $\g=1/2$ in the (CC)-case.
With the notations  $s_{k}=\|\Omega_{k}-\Omega_{0}\|_{\bD(0,\rho_{k})}$, $\e_{k}=\|F_{k}\|_{W_{k}}$, 
we then get
\be\begin{cases}&\e_{k+1}\lesssim \biggl((\rho_{k}\d_{k})^{-(1+\tau+\bar a)}\e_{k}+(\rho_{k}\d_{k})^{-(3+\tau)}s_{k}+\d_{k}^{-(2+\tau)}\rho_{k}^{\gamma}\biggr)\e_{k}\\
&s_{k+1}\lesssim \sum_{j=0}^{k}\e_{j}
\end{cases}\label{e6138bis}
\ee
provided 
\be (\rho_{k}\d_{k})^{-(1+\tau+\bar a)}\e_{k}<1.\label{e4.19a}\ee
Let $k^*$ be the largest integer for which the preceding sequences  are defined and satisfies 
\be \forall k< k_{*},\quad P_{\rho_{0}}(k,\e_{k}):=\rho_{0}^{-\gamma}(\rho_{k}\d_{k})^{-(1+\tau+\bar a)}\e_{k}<1;\label{e419}\ee
the condition involved in  (\ref{e419}) implies (\ref{e4.19a}).
From  (\ref{deltaketcante}) for any  $\s>0$ one has  $\d_{k}\gtrsim_{\s} (k+1)^{-(1+\s)}$. Fix $\th\in ]0,\gamma[$ and define
$$\mu=\frac{\gamma-\th}{2+\tau}\frac{1}{1+\s}.$$
Since $\rho_{k}\gtrsim \rho_{0}$ one has 
$$\forall\ k< \min(k^*,\rho_{0}^{-\mu}),\quad \begin{cases}&(\rho_{k}\d_{k})^{-1} \lesssim \rho_{0}^{-1-\mu(1+\s)}\\
 & \d_{k}^{-(2+\tau)}\rho_{k}^\gamma\lesssim \rho_{0}^{\gamma-(2+\tau)\mu(1+\s)}=\rho_{0}^\th. \end{cases}$$
 If we set 
 $$a=\biggl(1+\frac{\g-\th}{2+\tau}\biggr)(3+\tau)+\th\leq (4/3)(3+\tau)$$ we then get  
 using (\ref{e419}) and (\ref{e6138bis}) 
 \begin{align*}\forall\ k< \min(k^*,\rho_{0}^{-\mu}),\quad\e_{k+1}&\leq \bar C_{\s} \biggl(\rho_{0}^\gamma+\rho_{0}^{-a+\th}\sum_{j=0}^k\e_{j}+\rho_{0}^{\th}\biggr)\e_{k}\\
&\leq \bar C_{\s} \rho_{0}^{\th}\times \biggl(1+\rho_{0}^{-a}\sum_{j=0}^k\e_{j}\biggr)\e_{k}. 
\end{align*}
We now apply   Lemma \ref{lem:E3} with 
$\a=\rho_{0}$:
since  ($\bar a>10$) $$ \max\biggl(a, (1+\frac{\gamma-\th}{2+\tau})(1+\tau+\bar a)+1\biggr)\leq 2(\tau+\bar a+1)=m$$
condition (\ref{incondF0bis}) shows that 
(\ref{e419}) is satisfied with $\bar k=\min(k^*,\rho_{0}^{-\mu})$ as well as conditions (\ref{condalphaepsilon})
for $\rho_{0}{\mathop{\ll}}_{\th,\s}1$.
We thus get
 if $\bar k:=[\rho_{0}^{-\mu}]$, 
\be \forall \ k\in [0,\bar k]\cap\N,\quad \e_{k}\leq \rho_{0}^{\th k}\e_{0}.\ee

Since $\th$ and $\s$ can be taken arbitrarily close to 1, for any $0<\b\ll1$  one has for $\rho\ll_{\b} 1$ 
$$\e_{\bar k}\leq e^{-\rho^{-(1-\b)}},\qquad s_{\bar k}\leq 2\bar \e_{0}.$$
We  conclude like in the proof of Proposition \ref{nprop:9.2} (Section  \ref{appendixnprop:9.2}) by defining
$$F^{BNF}_{\rho}=F_{\bar k},\qquad \Omega^{BNF}_{\rho}=\Omega_{\bar k},\qquad  g_{\rho}^{BNF}=f_{Y_{1}^{Wh}}^{-1}\circ \cdots\circ  f_{Y_{\bar k}^{Wh}}^{-1}.$$
Note that  since $F_{\bar k}\in O^{\bar k+m}(r)$ one has $F_{\rho}^{BNF}\in O^{(1/\rho)^{1-\b}}(r)$ and   (\ref{ee6.128bis}) is a consequence of  Remark \ref{rem:6.4}.

\hfill $\Box$

\section{Resonant Normal Forms}\label{appendixE:RNF}

In this section we shall only consider the (AA)-Case.

Let  $c\in \bD(0,1)$, $\bar \rho>0$, $h>0$ and 
\be \Omega\in\ti \cO_{\s}(e^{h}\bD(c,\bar \rho))\  \textrm{and} \  F\in\cO_{\s}(e^hW_{h,\bD(c,\bar \rho)})\label{11.166}\ee
where $\Omega$ satisfies the twist condition
$$\forall\ r\in \R,\ A^{-1}\leq (2\pi)^{-1}\pa^2 \Omega(r)\leq A \ \textrm{and}\ \|(2\pi)^{-1}\Omega\|_{C^3}\leq B.
$$ 
Our aim in this section is to give an approximate  Normal Form for $\Phi_{\Omega}\circ f_{F}$ in a neighborhood of a $q$-resonant circle by which we mean  that 
for some  $(p,q)\in\Z\times \N^*$, $p\wedge q=1$
$$(2\pi)^{-1}\pa\Omega(c)=\frac{p}{q}.
$$
This Normal Form  is quite similar in spirit  (and in its construction) to the approximate BNF. It  is used in the paper in Sections \ref{sec:HJBNF} (approximate Hamilton-Jacobi Normal Form)  and \ref{sec:openingtheeyes} (creating hyperbolic periodic points).

As usual we define $\omega(c):=\frac{1}{2\pi}\pa\Omega(c)$.

\begin{prop}[$q$-resonant Normal Form]\label{prop:11.5}There exists a universal constant $\bar a_{4}\geq 10$ (not depending on $q$) such that,  if one has
\be \begin{cases} &\bar \rho<(Aq)^{-8}\\
&\|F\|_{W_{h, \bD(c,\bar \rho)}}<\bar \rho^{\ \bar a_{4}} 
\end{cases}\label{condF}
\ee
then the following holds:
There exist $\bar\Omega\in\ti \cO_{\s}(\bD(c,e^{-1/q}\bar\rho)) \cap \cT\cC(2A,2B)$,
$$
 \bar F^{res},F^{cor}\in\cO_{\s}(e^{-1/q}W_{h,\bD(c,\bar \rho)}),\qquad   g_{RNF}\in\ti{\rm Symp}_{ex,\s}(e^{-1/q}W_{h,\bD(c,\bar\rho)})$$ such that 
\be\begin{cases}& g_{RNF}^{-1}\circ \Phi_{\Omega}\circ  f_{F}\circ g_{RNF}= \Phi_{2\pi(p/q)r}\circ \Phi_{\bar\Omega}\circ  f_{\bar F^{res}}\circ f_{ F^{cor}}\\
&\bar F^{res}\ \textrm{ is}\  2\pi/q-\textrm{periodic},\qquad  \cM_{0}(\bar F^{res})=0,
\end{cases}
\label{10.162ter}
\ee
where 
\be\begin{cases}&\|\bar \Omega-(\Omega-2\pi(p/q)r)\|_{\bD(c,e^{-1/q}\bar\rho)}\lesssim \|F\|_{W_{h,\bD(c,\bar\rho)}}\\
&\|\bar F^{res}\|_{e^{-1/q}W_{h,\bD(c,\bar\rho)}}\lesssim \|F\|_{W_{h,\bD(c,\bar\rho)}}\\
&\|g_{RNF}-id\|_{C^1}\lesssim (q\bar \rho^{-2})^2\|F\|_{h,\bD(c,\bar\rho)}\leq  \bar \rho^{\hs \bar a_{4}-5}
\end{cases}\label{barFres}
\ee
and 
 \be \|F^{cor}\|_{e^{-1/q}W_{h,\bD(c,\bar \rho)}}\lesssim  \exp(-\bar \rho^{\hs-1/4}) \|F\|_{W_{h,\bD(c,\bar\rho)}}.\label{estFcor}\ee
\end{prop}
We give the proof of this Proposition in the next subsections.
\begin{rem}\label{remE1}The implicit constants in the symbol $\lesssim$  of the   preceding estimates  depend on $h$; if $h_{0}>0$, they can be bounded above by a constant $C_{h_{0}}$ whenever $h\geq h_{0}$. 
\end{rem}
\subsection{A preliminary Lemma}
\begin{lemma}\label{nlem:10.1}1) For any $(k,l)\in\Z^*\times \Z$ one has 
\be\begin{cases}&\textrm{either}\ q|k\ \textrm{and}\ p|l\\
&\textrm{or}\ |k\times \displaystyle\frac{p}{q}-l|\geq \frac{1}{q}.
\end{cases}
\ee
2) Let 
$$N=(q\bar \rho A)^{-1}.$$ For any $r\in \bD(c,\bar \rho)$ and any $(k,l)\in\N^*\times \Z$, $1\leq k\leq N$, which is not in $(q,p)\Z$   one has 
\be |k\omega(r)-l|\geq 1/(2q).\label{E332}
\ee
\end{lemma}
\begin{proof} 1) Indeed $|k(p/q)-l|=|kp-lq|/q$ and if the integer $kp-lq$ is 0 then $q|k$ and $p|l$.

2) We just notice that 
\begin{align*}|k\omega(r)-l|&\geq |k\omega(c)-l|-k|\omega(r)-\omega(c)|\\
&\geq (1/q)-N\|\pa \omega\|_{\bD(c,\bar \rho)}\bar \rho\\
&\geq 1/(2q).
\end{align*}
\end{proof}

Define 
$$T_{N}^{q-res}F=\sum_{\substack{k\in\Z\\ |k|<N \\ q|k}}\cM_{k}(F), \qquad T_{N}^{q-nr}F=T_{N}F-T_{N}^{q-res}F.$$
We shall often use in what follows the shortcuts $T^{res}_{N}$ and $T^{nr}_{N}$ for $T^{q-res}_{N}$, $T_{N}^{q-nr}$.

From (\ref{eq:5.69}), (\ref{n5.81}) we see that
\be T_{N}^{res}F\circ \phi_{r}^{2\pi/q}=T_{N}^{res}F\label{10.149}
\ee 
and
\be \|T^{res}_{N}F\|_{e^{-\d}h,\bD(c,\bar\rho)}\lesssim \d^{-1}\|F\|_{h,\bD(c,\bar\rho)}.
\ee
\begin{cor} \label{cor:10.3}For any  $F\in\cO_{\s}(W_{h,\bD(c,\bar\rho)})$, there exists  $Y\in\cO_{\s}(W_{h,\bD(c,\bar\rho)}^\Omega)$ such that $\cM_{0}(Y)=0$ and 
\be T_{N}^{nr}F=[\Omega]\cdot Y.\label{ncohoeq}
\ee
This $Y$ satisfies for any $0<\d<h$ 
\be \|Y\|_{e^{-\d}W_{h,\bD(c,\bar\rho)}}\lesssim q\d^{-1}\|F\|_{W_{h,\bD(c,\bar\rho)}}.\label{ne5.86b}
\ee
\end{cor}
\begin{proof}This is a simple adaptation of the proof of Proposition \ref{prop:n5.2} (the non resonance condition is replaced by (\ref{E332})).
\end{proof}

\subsection{Elimination of non-resonant terms}
\begin{prop}\label{propresNF10.3}There exists a universal constant $\bar a_{4}$ (not depending on $q$) such that if  $N=(q\bar \rho A)^{-1}$,
\be \begin{cases} &\bar \rho^{ 1/8}<(qA)^{-1}\\
&\|F\|_{W_{h, \bD(c,\bar \rho)}}<\bar \rho^{\ \hs\bar a_{4}} 
\end{cases}\label{condF}
\ee
then there exist  $F^{res}, F^{nr}\in\cO_{\s}(e^{-1/q}W_{h,\bD(0,\bar \rho)})$, $g\in\ti{\rm Symp}_{ex,\s}( e^{-1/q}W_{h,\bD(0,\bar \rho)})$ such that 
$$[e^{-1/q}W_{h,\bD(0,\bar \rho)}] \qquad g^{-1}\circ \Phi_{\Omega}\circ f_{F}\circ g=\Phi_{\Omega}\circ f_{ F^{res}+F^{nr}}
$$
\be F^{res}=T_{N}^{res}(F+O(q\bar\rho\|F\|_{W_{h,\bD(c,\bar \rho)}}))\label{Fres}\ee $F^{res}$ being $2\pi/q$-periodic 
and 
\be \|g-id\|_{C^1}\lesssim (q\bar \rho^{\hs -2})^2\|F\|_{h,\bD(c,\bar\rho)}\leq \bar \rho^{\hs \bar a_{4}-5 }\label{E339C2}
\ee
\be \|F^{nr}\|_{e^{-1/q}W_{h,\bD(c,\bar \rho)}}\lesssim q\exp(-\bar \rho^{\hs-1/3}) \|F\|_{h, \bD(c,\bar \rho)}.\label{E339C1}\ee
\end{prop}
\begin{proof} 
Note that we can assume, using Lemma \ref{lemma:2.3ee} that $F\in\ti \cO_{\s}(W_{h,\bD(c,\bar\rho)})$.
and satisfies 
\be \e_{0}:= \|F\|_{e^{-1/(10q)}W_{h,\bD(c,\bar\rho)}}\leq \bar \rho^{\hs \bar a_{4}},\qquad \|F\|_{C^3}\lesssim \bar \rho^{\hs \bar a_{4}-7} .\label{FF.480}\ee

Let $N=(q\bar \rho A)^{-1}$.
We define $W_{0}=e^{-1/(10q)}W_{h,\bD(c,\rho_{0})}$, $\rho_{0}=\bar \rho$ and for $k\geq 1$
\be\d_{k}=\frac{(5q)^{-1}}{(k+1)^{4/3}},\quad \rho_{k}=\exp(-\sum_{l=0}^{k-1}\d_{l})\bar \rho,\quad W_{k}=\exp(-\sum_{l=0}^{k-1}\d_{l})W_{0}\ee
and 
we  construct sequences 
 $Y_{k},F_{k}, F^{nr}_{k},F^{res}_{k}\in \cO_{\s}(W_{k})$
such that $F_{k}=F_{k}^{nr}+F_{k}^{res}$
\be F^{res}_0=T^{res}_{N}F,\qquad F^{nr}_{0}=F-F^{res}_{0},\qquad T_{N}^{res}F_{0}^{nr}=0\label{E340}\ee and  for $k\geq 0$
\be f_{Y_{k}}\circ\Phi_{\Omega}\circ  f_{ F^{nr}_{k}+F^{res}_{k}}\circ  f_{Y_{k}}^{-1}=\Phi_{\Omega}\circ  f_{F^{nr}_{k+1}+F^{res}_{k+1}}
\label{10.187}\ee
where for any $k$, 
$$F^{res}_{k}\circ \phi^{2\pi/q}_{J\nabla r}=F^{res}_{k}.$$
By  
Corollary \ref{cor:10.3} there exists  
$Y_{k}\in\cO_{\s}(e^{-\d_{k}/2}W_{k}^\Omega)$
 such that 
\be [\Omega]\cdot Y_{k}=-T_{N}^{nr} F^{nr}_{k},\qquad \|Y_{k}\|_{e^{-\d_{k}/2}W_{k}^\Omega}\lesssim q\d_{k}^{-1}\|F_{k}^{nr}\|_{W_{k}}.\label{10.156}\ee
Let $F_{k}:=F_{k}^{res}+F_{k}^{nr}$ and  compute using Proposition  \ref{lemma:6.4} 
\begin{align*}  f_{Y_{k}}\circ \Phi_{\Omega}\circ f_{F^{nr}_{k}+F^{res}_{k}}\circ  f_{Y_{k}}^{-1}&=\Phi_{\Omega}\circ f_{F^{nr}_{k}+F^{res}_{k}+[\Omega]\cdot Y_{k}+\|DF_{k}\|_{e^{-\d_{k}/2}W_{k}}\fO_{1}(Y_{k})}\\
&=\Phi_{\Omega}\circ f_{R_{N}F_{k}^{nr}+T^{res}_{N}F^{nr}_{k}+F_{k}^{res}+q\d_{k}^{-1}\|DF_{k}\|_{e^{-\d_{k}/2}W_{k}}\fO_{1}(F^{nr}_{k})}\\
&=\Phi_{\Omega}\circ f_{F_{k+1}^{nr}+F_{k+1}^{res}}
\end{align*}
with 
\be\begin{cases}&F^{nr}_{k+1}=R_{N}F_{k}^{nr}+ q( \rho_{k}\d_{k}^{-1})^2\|F_{k}\|_{W_{k}}\fO_{1}(F^{nr}_{k})\\
&F^{res}_{k+1}=T_{N}^{res}F^{nr}_{k}+F^{res}_{k}.
\end{cases}\label{10.157}
\ee
In particular since $F_{k+1}=F_{k+1}^{nr}+F_{k+1}^{res}=F_{k+1}^{nr}+T_{N}^{res}F^{nr}_{k}+F^{res}_{k}$
\be F_{k+1}=F_{k}+F_{k+1}^{nr}+T_{N}^{res}F_{k}^{nr}-F_{k}^{nr}.\label{10.158}
\ee

If we define  $\e^{*}_{k}=\| F^*_{k}\|_{W_{k}}$, $*=nr,res$, $\e_{k}=\|F_{k}\|_{W_{k}}$ we get from (\ref{E340}) and Lemma \ref{lemma:n5.1}
\be \e_{0}^{nr}\lesssim \d_{0}^{-1}\e_{0}\lesssim q\e_{0}\label{E345}
\ee
and from  (\ref{10.157}), (\ref{10.158}) and Lemma \ref{lemma:n5.1}
 that  for some $a>0$
\be\begin{cases}& \e^{nr}_{k+1}\lesssim \d_{k}^{-1}e^{-\d_{k}N/2}\e_{k}^{nr}+q \e_{k} ( \rho_{k}\d_{k})^{- a}\e^{nr}_{k}\\
&\e_{k+1}^{res}\lesssim \e^{res}_{k}+ \delta_{k}^{-1}\e^{nr}_{k}\\
& \e_{k+1}=\e_{k}+O(\e_{k+1}^{nr}+\d_{k}^{-1}\e_{k}^{nr} )
\end{cases}\label{11.176}
\ee
provided for some $a>0$
\be (\rho_{k}\d_{k})^{-a}\e_{k}^{nr}<1.\label{9.205}
\ee
From now on we define $\bar a_{4}=2a+2$ and we  assume that 
\be \e_{0}\leq \rho_{0}^{\bar a_{4}}, \qquad\bar a_{4}=2a+2 ;\label{choiceepsilon0}\ee
notice that this implies (see (\ref{E345}) and (\ref{FF.480}))
\be \e_{0}^{nr}\leq \rho_{0}^{2a}.\label{choiceepsilon0bis}
\ee
Let $k^*$ be the largest integer for which the sequences $\e_{k}^{nr},\e_{k}^{res},\e_{k}$ are defined. We notice that for $k<\min(k^*, \rho_{0}^{-1/3})$ one has from (\ref{condF})
$$(\rho_{k}\d_{k})^{-1}\lesssim \rho_{0}^{-1}q\rho_{0}^{-4/9}\lesssim A^{-1}\rho_{0}^{-1}\rho_{0}^{-1/8}\rho_{0}^{-4/9}\leq \rho_{0}^{-2}.
$$
Since
$\e_{k}\ =\e_{0}+O(\d_{k}^{-1}\sum_{j=0}^k\e_{k}^{nr})
$
we get that for $k+1\leq \rho_{0}^{-1/3}$, 
 $\e_{k}=\e_{0}+O(\rho_{0}^{-4/9}  \sum_{j=0}^k\e_{k}^{nr})$  hence if $k<\min(k^*, \rho_{0}^{-1/3})$ (recall $q\leq \rho_{0}^{-1/8}$)
$$q \e_{k} ( \rho_{k}\d_{k})^{-a}\e^{nr}_{k}\leq C\biggl(\rho_{0}^{-(2a+1)}\e_{0}+\rho_{0}^{-(2 a+2)} \sum_{j=0}^k\e_{k}^{nr}\biggr)\e_{k}^{nr}.$$

On the other hand, from (\ref{condF}),   $q^{-1} N=q^{-2}\rho_{0}^{-1} A^{-1}\geq A^{-1}\rho_{0}^{-3/4}$ hence, if $k+1\leq \rho_{0}^{-1/3}$  one has $\d_{k}N=q^{-1} N/(k+1)^{4/3}\geq\rho_{0}^{-3/4}\rho_{0}^{4/9}=\rho_{0}^{-11/36}$ and thus $\d_{k}^{-1}e^{-\d_{k}N}\leq \rho_{0}$ if $\rho_{0}$ is small enough. 
The outcome of this is that  for $k+1\leq \rho_{0}^{-1/3}$ one has (we use condition (\ref{choiceepsilon0}))
$$\e_{k+1}^{nr}\leq C\rho_{0}\biggl(1+\rho_{0}^{-(2a+3)} \sum_{j=0}^k\e_{k}^{nr}\biggr)\e_{k}^{nr}.
$$
Since for $\rho_{0}\ll 1$ one has ({\it cf.} (\ref{choiceepsilon0bis}))
\be \e_{0}^{nr}\leq \rho_{0}^{2a}\leq  (\rho_{0}\d_{0})^{a}\label{condepsilon0A}
\ee
we can thus apply   Lemma \ref{lem:E3} with $\a=\rho_{0}$, to get 
\be k^*\geq \rho_{0}^{-1/3},\qquad  \forall0\leq  k\leq k_{*},\quad \e_{k}^{nr}\leq (2C\rho_{0})^{k}\e^{nr}_{0}\leq e^{-k}q\e_{0}\label{11.180}.
\ee
We now  set
$$F^{res}=F^{res}_{k^*}, \qquad F^{nr}=F_{k^*}^{nr}, \qquad g=f_{Y_{1}^{Wh}}^{-1}\circ\cdots\circ f_{Y_{k^*-1} ^{Wh}}$$
where $Y_{j}^{Wh}$ it a $C^2$ Whitney extension of $(Y_{j},e^{-\d_{j}/2}W_{j})$ given by Lemma \ref{lemma:2.3ee}
and observe that conclusion (\ref{E339C1}) is satisfied since 
 $$e^{-1/q}W_{h,\bD(c,\bar\rho)}\subset W_{k^*},\qquad \|F^{nr}\|_{e^{-1/q}W_{h,\bD(c,\bar\rho)}}\lesssim e^{-\rho_{0}^{-1/3}}q\e_{0}.$$
To check (\ref{E339C2}) we just notice that from (\ref{10.156})
 $$\|g-id\|_{C^1}\lesssim_{h} q^2\rho_{0}^{-4}\e_{0}.$$
Finally, since $F^{res}=T_{N}^{res}F+T_{N}^{res}(\sum_{k=0}^{k_{*}}F^{nr}_{k})$ and   $T_{N}^{res}F_{0}^{nr}=0$ ({\it cf.} (\ref{E340})) one has from the inequality $\e_{k}^{nr}\leq (2C\rho_{0})^{k}\e^{nr}_{0}\lesssim \rho_{0}e^{-(k-1)}\e_{0}$ ($\rho_{0}\ll 1$, $k\geq 1$)
$$F^{res}=T_{N}^{res}F+T_{N}^{res}(\sum_{k=1}^{k_{*}}F^{nr}_{k}),
\qquad  \|\sum_{k=1}^{k_{*}}F^{nr}_{k}\|_{W_{k^*}} \leq \sum_{k=1}^{k^*}\e_{k}^{nr}\lesssim q\rho_{0}\e_{0}
$$
which gives  conclusion (\ref{Fres}):
$$F^{res}=T_{N}^{res}\biggl(F+O(q\bar \rho\e_{0})\biggr).
$$

\end{proof}

\subsection{Proof of Proposition \ref{prop:11.5}}\label{sec:RNF}
We apply Proposition \ref{propresNF10.3} and we write using Lemma \ref{lemma:6.2.bis}
\begin{align*}\Phi_{\Omega}\circ f_{F^{nr}+F^{res}}&= \Phi_{2\pi(p/q)(r-c)}\circ  \Phi_{\Omega-2\pi(p/q)(r-c)}\circ f_{{F^{res}}}\circ  f_{F^{nr}+\|DF^{res}\|_{W_{h,U}}\dot\fO_{1}(F^{nr})}\\
&= \Phi_{2\pi(p/q)(r-c)}\circ  \Phi_{\Omega-2\pi(p/q)(r-c)}\circ f_{{F^{res}}}\circ  f_{F^{cor}}
\end{align*}
with 
$$\|F^{cor}\|_{e^{-1/q}W_{h,\bD(c,\bar \rho)}}\lesssim  \exp(-\bar \rho^{\hs -1/4}) \|F\|_{W_{h, \bD(c,\bar \rho)}}$$
provided for some $a>0$
$$\bar \rho^{\ -\bar a}\|F^{nr}\|_{W_{h,\bD(0,\bar \rho)}}<1.$$
The inequality (\ref{E339C1})
  shows that this last condition is satisfied if $\bar \rho\ll 1$.

We now observe that
$$f_{F^{res}}= \Phi_{\cM_{0}(F)}\circ f_{F^{res}-\cM_{0}(F)}$$
and that 
$$\Phi_{\Omega-2\pi(p/q)(r-c)}\circ \Phi_{\cM_{0}(F)}=\Phi_{\Omega-2\pi(p/q)(r-c)+\cM_{0}(F)}.
$$
Note that since we have assumed that $F\in \ti\cO_{\s}(e^{-1/(10q)}W_{h,\bD(c,\bar\rho)})$ satisfies (\ref{FF.480}) we have $\bar \Omega\in \ti\cO_{\s}(e^{-1/q}W_{h,\bD(c,\bar\rho)})\cap\cT\cC(2A,2B)$ and the first inequality of (\ref{barFres}) is satisfied.
\hfill $\Box$

\begin{rem}Notice that from the first inequality of (\ref{n5.81}) in  Lemma \ref{lemma:n5.1}
\be\|F^{res}\|_{W_{e^{-1/q}h,\bD(c,e^{-1/q}\bar \rho)}}\lesssim \|F\|_{W_{h,\bD(c,\bar \rho)}}.\label{11.181}
\ee
\end{rem}

\medskip

\section{Approximations by vector fields} \label{sec:appendixG}
The main result of this Section is the following proposition on the  approximation of an exact symplectic diffeomorphism close to an integrable one by a vector field.
\begin{prop}\label{lemma:vf2} There exists a constant $\bar C>0$ for which the following holds. Let $0<\rho<1$,  $F\in \cO_{\s}(\T_{h}\times D(0,\rho))$ and $\Omega\in \cO_{\s}(\bD(0,\rho))$, $\Omega(r)=O(r^2)$. If $\rho>0$ is small enough, $h\gtrsim \rho^{1/3}$ and
\be\bar C\times (\rho h)^{-9}\|F\|_{h,\rho}<1\label{condSG}
\ee
 then, there exist $\Pi\in\cO_{\s}(\bD(0,\rho/2))$, $,A_{3}(F)\in \cO_{\s}(\T_{h/2}\times D(0,\rho/2))$ such that 
\be \Phi_{\Omega}\circ f_{F}=\Phi_{\Pi}\circ f_{A_{3}(F)}
\ee
with 
\begin{align}\Pi&=\Omega+F\circ\Phi_{-\Omega/2}+O(\rho^{1/4}\| F\|_{h,\rho})\\
&=\Omega+F+O(\rho^{1/4} \|F\|_{h,\rho})
\end{align} and 
\be \|A_{3}(F)\|_{h/2,\rho/2}< \exp(-\rho^{-1/4})\|F\|_{h,\rho}.
\ee
\end{prop}
The proof of this proposition is given in Subsection \ref{sec:G2}.

\subsection{Auxiliary result}
\begin{prop}\label{lemma:approxvfbis} Let $\rho>0$, $\Omega(r)=O(r^2)$, $\Omega\in \cO_{\s}(\bD(0,\rho))$, $F,G\in \cO_{\s}(\T_{h}\times\bD(0,\rho))$ such that ($C$ some universal constant)
\be  C\times  (\rho\d)^{-4}(\|F\|_{h,\rho}+\|G\|_{h,\rho})<1.\label{smallassump}\ee Then for any $h/2>\d\gtrsim \rho^{1/3}$, there exists $A(F,G)\in\cO_{\s}(e^{-\d}(\T_{h}\times\bD(0,\rho)))$ such that 
\be \Phi_{\Omega+F+G}=\Phi_{\Omega+F}\circ\Phi_{G\circ\Phi_{\Omega/2}}\circ f_{A(F,G)}\label{Dconh}
\ee
with 
\be \|A(F,G)\|_{h-\delta/2,e^{-\d}\rho}\lesssim  \biggl((\rho\d)^{-4}(\|F\|_{h,\rho} +\|G\|_{h,\rho}) +\rho\d^{-3}\biggr)\|G\|_{h,\rho}\label{estAFG}
\ee
\end{prop}
\begin{proof}To simplify the notations we denote $W=W_{h,\bD(0,\rho)}$ and we assume that $\omega(r):=\nabla\Omega(r)$, $\omega(0)=0$ satisfies
$$\omega(r)=r+O(r^2).$$
 If 
$$(\d\rho)^{-2}\max(\|F\|_{h,\rho},\|G\|_{h,\rho})<1
$$
the images of the domain $e^{-2\d}W$ by the flows  $\Phi^t_{\Omega},\Phi^t_{\Omega+F}, \Phi^t_{\Omega+F+G}$, $0\leq t\leq 1$, are contained in $e^{-\d}W$.

Let us denote
$$\s:=\max(\|DF\|_{h,\d},\|DG\|_{h,\d})
$$
and  for $x=(\th,r)\in e^{-2\d}W$ and $t\in [-1,1]$
$$\Delta(t,x)=\Phi^t_{\Omega+F+G}(x)- \Phi^t_{\Omega+F}(x).
$$
By classical theorems on ODE's for $t\in [-1,1]$
$$\Delta(t,\cdot)=O(\s),\qquad \Phi^t_{\Omega+F}-\Phi^t_{\Omega}=O(\s).$$
On the other hand one has 
\begin{align}\frac{d}{dt}\Delta(t,x)&=J\nabla (\Omega+F+G)\circ \Phi^{t}_{\Omega+F+G}(x)-J\nabla (\Omega+F)\circ \Phi^t_{\Omega+F}(x)\label{D427}\\
&=(I)(t,x)+(II)(t,x)+(III)(t,x)\notag
\end{align}
with
$$(I)(t,x)=J\nabla\Omega\circ \Phi^t_{\Omega+F+G}(x)-J\nabla\Omega\circ \Phi^t_{\Omega+F}(x)
$$
$$(II)(t,x)=J\nabla F\circ \Phi^t_{\Omega+F+G}(x)-J\nabla F\circ \Phi^t_{\Omega+F}(x)
$$
$$(III)(t,x)=J\nabla G\circ \Phi^t_{\Omega+F+G}(x).
$$
Since $\Phi^t_{\Omega+F+G}(x)=\Phi^t_{\Omega+F}(x)+(\Delta_{\th}(t,x),\Delta_{r}(t,x))$ and $\Phi_{\Omega+F}-\Phi_{\Omega}=O(\s)$ one has (note that $r\circ\Phi^t_{\Omega+F}=r+O(\s)$)
\begin{align}(I)(t,x)&=\bm \omega(r\circ\Phi^t_{\Omega+F}(x)+\D_{r}(t,x))-\omega(r\circ\Phi_{\Omega+F}^t(x))\\ 0\em\notag\\
&=\bm\pa\omega(r)\Delta_{r}(t,x)+O(\s|\D_{r}(t,x)|)\\0\em.\label{eqI}
\end{align}
and 
\be |(II)(t,x)|=O(|D^2F||\D(t,x)|).\label{eqII}
\ee
We have using the fact that $\Phi^t_{\Omega+F+G}-\Phi_{\Omega}=O(\s)$ and $\omega(r)=O(r)$
\begin{align}(III)(t,x)&=J\nabla G(\th+t\omega(r),r)+O(\e\|D^2G\|) \notag\\
&=\bm 
\pa_{r}G(\th,r)+t\omega(r)\pa^2_{\th r}G(\th,r)+O( \rho^2\|\pa^2_{\th}\pa_{r}G\|)\\
-\pa_{\th}G(\th,r)-t\omega(r)\pa^2_{\th}G(\th,r)+O(\rho^2\|\pa^3_{\th}G\|) \em+O(\s\|D^2G\|).\label{eqIII}
\end{align}
Summing (\ref{eqI}), (\ref{eqII}), (\ref{eqIII}) and integrating (\ref{D427}) gives 
\begin{multline}\bm\Delta_{\th}(t,x)\\ \D_{r}(t,x)\em =\bm\pa\omega(r)\int_{0}^t\Delta_{r}(s,x)ds\\ 0\em+\bm 
t\pa_{r}G(\th,r)+(t^2/2)\omega(r)\pa^2_{\th r}G(\th,r)\\
-t\pa_{\th}G(\th,r)-(t^2/2)\omega(r)\pa^2_{\th}G(\th,r)\em \\ +O(\e+|D^2F|) \int_{0}^t|\D(s,x)|ds +A_{1} \label{D429b}
\end{multline}
with 
$$A_{1}=O(\s\|D^2G\|)+O(\rho^2\|D\pa^2_{\th}G\|).$$
\begin{lemma}One has
$$|\De(t,x)|\leq A_{2}:=O(\|DG\|+\rho\|D\pa_{\th}G\|)+O(\s\|D^2G\|)+O(\rho^2\|D\pa^2_{\th}G\|).$$
\end{lemma}
\begin{proof}
From (\ref{D429b}) and the fact that $\pa\omega(r)\asymp 1$
$$ |\Delta(t,x)|\leq C(1+\e+\|D^2F\|_{h,\rho})\int_{0}^t|\Delta(s,x)|ds+O(\|DG\|+\rho\|D\pa_{\th}G\|)+A_{1}
$$
and  we conclude by Gr\"onwall inequality.
\end{proof}
Looking at the second component of (\ref{D429b}) gives ($\omega(r)=O(r)$)
$$\Delta_{r}(t,x)=-t\pa_{\th}G(\th,r)+O(\rho\|\pa^2_{\th}G\|)+O((\e+\|D^2F\|)A_{2})+A_{1}
$$
hence (integrating again and putting the result in (\ref{D429b}))
\begin{multline}\bm\Delta_{\th}(t,x)\\ \D_{r}(t,x)\em =\bm -\pa\omega(r)(t^2/2)\pa_{\th}G(\th,r)+
t\pa_{r}G(\th,r)+(t^2/2)\omega(r)\pa^2_{\th r}G(\th,r)\\
-t\pa_{\th}G(\th,r)-(t^2/2)\omega(r)\pa^2_{\th}G(\th,r)\em \\ +O(A_{3}) \label{D429}
\end{multline}
with 
$$A_{3}=O(\rho\|\pa^2_{\th}G\|)+O((\e+\|D^2F\|)A_{2})+A_{1}.$$
Taking $t=1$ gives 
\begin{multline*}\Phi_{\Omega+F+G}(x)= \Phi_{\Omega+F}(x)+\bm -(\pa\omega(r)/2)\pa_{\th}G(\th,r)+
\pa_{r}G(\th,r)+(\omega(r)/2)\pa^2_{\th r}G(\th,r)\\
-\pa_{\th}G(\th,r)-(\omega(r)/2)\pa^2_{\th}G(\th,r)\em \\ +O(A_{3}).
\end{multline*}
On the other hand 
$$\bm \pa_{r}(G(\th-\omega(r)/2,r)) \\  -\pa_{\th}(G(\th-\omega(r)/2,r) )\em=\bm  -(\pa\omega(r)/2)\pa_{\th}G(\th-\omega(r)/2,r)+\pa_{r}G(\th-\omega(r)/2,r)\\ -\pa_{\th}G(\th-\omega(r)/2,r)\em$$
hence
$$J\nabla (G\circ \Phi_{-\Omega/2})\circ \Phi_{\Omega}=\bm  -(\pa\omega(r)/2)\pa_{\th}G(\th+\omega(r)/2,r)+\pa_{r}G(\th+\omega(r)/2,r)\\ -\pa_{\th}G(\th+\omega(r)/2,r)\em$$
and from Taylor Formula and the fact that $\omega(r)=O(r)$
$$\Phi_{\Omega+F+G}(x)- \Phi_{\Omega+F}(x)=J\nabla (G\circ \Phi_{-\Omega/2})\circ \Phi_{\Omega}+O(\rho\|\pa^2_{\th}G\|)+O(A_{3}).
$$
Since $\Phi_{\Omega+F}=\Phi_{\Omega}+O(\s)$, this means that 
$$\Phi_{\Omega+F+G}=(id+J\nabla (G\circ \Phi_{-\Omega/2})\circ \Phi_{\Omega+F}+O(\s \|D^2G\|)+O(A_{3})$$
thus 
$$\Phi_{\Omega+F+G}=\Phi_{G\circ \Phi_{-\Omega/2}}\circ \Phi_{\Omega+F}+O(A_{3})$$
or
$$\Phi_{\Omega+F+G}=f_{O(A_{3})}\circ \Phi_{G\circ \Phi_{-\Omega/2}}\circ \Phi_{\Omega+F}$$
with
\begin{align*}A_{3}&\lesssim ((\rho\d)^{-2}\s+\rho^2(\rho\d)^{-1}\d^{-2}+(\s+(\rho\d)^{-2}\s)((\rho\d)^{-1}+\rho(\rho\d)^{-1} \d^{-1})+\rho\d^{-2})\|G\|_{h,\rho}\\
&\lesssim ((\rho\d)^{-3}\s+\rho\d^{-3})\|G\|_{h,\rho}\\
&\lesssim \biggl((\rho\d)^{-4}(\|F\|_{h,\rho} +\|G\|_{h,\rho}) +\rho\d^{-3}\biggr)\|G\|_{h,\rho}
\end{align*}
provided 
$$(\rho\d)^{-4}(\|F\|_{h,\rho}+\|G\|_{h,\d})<1,\qquad \rho\d^{-3}<1.$$
To conclude we observe that if we apply the preceding formula with $-\Omega$ and $-F$  instead of $\Omega$, $F$
$$ \Phi_{-\Omega-F-G}= f_{O(A_{3})}\circ \Phi_{-G\circ \Phi_{\Omega/2}}\circ \Phi_{-\Omega-F}$$
and inverting
$$\Phi_{\Omega+F+G}=\Phi_{\Omega+F}\circ \Phi_{G\circ\Phi_{\Omega/2}}+f_{O(A_{3})}.$$
\end{proof}

\begin{cor}\label{lemma:approxvf}Under the same conditions of Proposition \ref{lemma:approxvf} one has
$$\Phi_{\Omega+F}\circ f_{G}=\Phi_{\Omega+F+G\circ\Phi_{-\Omega/2}}\circ f_{A_{2}(F,G)}$$ with 
\be \|A_{2}(F,G)\|_{h-\delta/2,e^{-\d}\rho}\lesssim  \biggl((\rho\d)^{-4}(\|F\|_{h,\rho} +\|G\|_{h,\rho}) +\rho\d^{-3}\biggr)\|G\|_{h,\rho}.\label{estAFGbis}
\ee
\end{cor}
\begin{proof}If we apply (\ref{Dconh}) with $G\circ\Phi_{-\Omega/2}$ instead of $G$ we get 
$$\Phi_{\Omega+F+G\circ \Phi_{-\Omega/2}}=\Phi_{\Omega+F}\circ\Phi_{G}\circ f_{A(F,G\circ \Phi_{-\Omega/2})}$$
hence
\begin{align*}\Phi_{\Omega+F}\circ f_{G}&=\Phi_{\Omega+F+G\circ\Phi_{-\Omega/2}}\circ  f_{A(F,G\circ \Phi_{-\Omega/2})}^{-1} \circ \Phi_{G}^{-1}\circ f_{G} \\
&=\Phi_{\Omega+F+G\circ\Phi_{-\Omega/2}}\circ f_{A_{2}(F,G)}
\end{align*}
where $A_{2}(F,G)=A(F,G\circ \Phi_{-\Omega/2})+O(|DG||D^2G|)$ satisfies (\ref{estAFGbis}) ({\it cf.} (\ref{estAFG}
)).
\end{proof}
\subsection{Proof of Proposition \ref{lemma:vf2}}\label{sec:G2}
Let $\d_{k}=c/(k+1)^{3/2}$, $h_{k}=h-\d_{k}/2$, $\rho_{0}=(3/4)\rho$, $\rho_{k}=e^{-\d_{k}}\rho$ and $c$ chosen such that $h_{k}\geq h/2$, $\rho_{k}\geq \rho/2$ for all $k\in\N$.
Using Corollary \ref{lemma:approxvf} we construct sequences $S_{k},G_{k}$ such that $S_{0}=0$, $G_{0}=F$
\be \Phi_{\Omega+S_{k}}\circ f_{G_{k}}= \Phi_{\Omega+S_{k+1}}\circ f_{G_{k+1}}
\ee
\be \begin{cases}&S_{k+1}=S_{k}+G_{k}\circ\Phi_{-\Omega/2}\\
& G_{k+1}=A_{2}(S_{k},G_{k})\end{cases}
\ee
with 
$$\|S_{k+1}\|_{h_{k+1},\rho_{k+1}}\leq \|S_{k}\|_{h_{k},\rho_{k}}+\|G_{k}\|_{h_{k},\rho_{k}}
$$
and
\be \|G_{k+1}\|_{h_{k+1},\rho_{k+1}}\lesssim  \rho_{k}\delta_{k}^{-3}\|G_{k}\|_{h_{k},\rho_{k}}+(\rho_{k}\d_{k})^{-4}(\|S_{k}\|_{h_{k},\rho_{k}}+\|G_{k}\|_{h_{k},\rho_{k}})\|G_{k}\|_{h_{k},\rho_{k}}
\ee
as long as
$$ (\rho_{k}\d_{k})^{-4}(\|S_{k}\|_{h_{k},\rho_{k}}+\|G_{k}\|_{h_{k},\rho_{k}})<1.
$$
With $\e_{n}=\|G_{n}\|_{h_{n},\rho_{n}}$  and $\s_{n}:=\|S_{n}\|_{h_{n},\rho_{n}}$ we have ($s_{0}=0$)
\begin{align} \e_{k+1}&\leq C (\rho_{k} \d_{k}^{-3}+(\rho_{k}\d_{k})^{-4}\sum_{j=0}^k\e_{j})\e_{k}\\
\s_{k+1}&\leq\s_{k}+O(\e_{k})
\end{align}
as long as $(\rho_{k}\d_{k})^{-4}(s_{k}+\s_{k})<1$.

Let $k^*$ be the largest integer for which these sequences are defined. 
We observe that for $k<\min(k^*,\rho^{1/4})$ one has $(\rho_{k}\d_{k})^{-1}\leq \rho^{-2} $ and $\rho_{k}\d_{k}^{-3}\leq \rho_{k}^{1-3/4}=\rho^{1/4}$; hence, if $\bar k=\min(k^*,\rho^{-1/4})$ one has
$$\forall \ k<\bar k,\ \quad \e_{k+1}\leq C\rho^{1/4}(1+\rho^{-9}\sum_{j=0}^k\e_{j})\e_{k}.$$
We are in position to apply Lemma \ref{lem:E3} with $\a=\rho$, $\th=1/4$, $a=9$: since condition (\ref{condalphaepsilon}) is satisfied ({\it cf.}(\ref{condSG})) one has
$$k^*\geq \rho^{1/4},\qquad \forall\ k\leq k^*,\ \e_{k}\leq (2C\rho)^{k/4}\e_{0}
$$
and also 
$$s_{k}-\e_{0}\leq \sum_{j=1}^k\e_{j}\lesssim \rho^{1/4}\e_{0}.$$
To conclude the proof we set 
$$\Pi=S_{\bar k},\qquad A_{3}(F)=F_{\bar k}.$$
\hfill $\Box$

\section{Adapted KAM domains: Lemmas}\label{sec:appendixholes}
\subsection{Proof of Lemma \ref{aalemma:10.1}} \label{sec:appendixholes1}
From (\ref{formUn}) and the definition of $i_{-}(\rho)$
 for every $(k,l)\in E_{i_{-}(\rho)-1}$, $0<k<N_{i_{-}(\rho) -1}$, $0\leq|l|\leq N_{i_{-}(\rho)-1 }$ one has 
$$\bD(c_{l/k}^{(i_{-}(\rho)-1)},K_{i_{-}(\rho)-{1}}^{-1})\cap \bD(0,2\rho)=\emptyset,$$ 
hence 
$|c_{l/k}^{(i_{-}(\rho)-1)}|> \rho$. Since $\omega_{i_{-}(\rho)-1}^{}(c_{l/k}^{(i_{-}(\rho) -1)})=l/k$ we deduce from the fact that $\Omega_{i}$ satisfies an $(2A,2B)$-twist condition (\ref{7.139twist})  that 
$|(l/k)-\omega_{0}|= |\omega(c_{l/k}^{(i_{-}(\rho)-1)})-\omega(0)|\geq (2A)^{-1}\rho$. 
By Dirichlet Approximation  Theorem for any $L$ there exist $k,l\in\Z$, $0<|k|\leq L$ such that $|\omega_{0}-(l/k)|\leq 1/L^2$.   In particular if one chooses $L=N_{i_{-}(\rho)-1}-1\asymp N_{i_{-}(\rho)}$ one gets
$$N_{i_{-}(\rho)}^{-2}\gtrsim \rho$$
which proves  the  inequality of the RHS of (\ref{e10.204}).

Let us prove the second inequality of (\ref{e10.204}). By definition of $i_{-}(\rho)$ there exists $(l,k)\in\Z^2$, $0<k<N_{i_{-}(\rho)}$, $|l|\leq N_{i_{-}(\rho)}$ such that ({\it cf.} (\ref{formUn}))
$$\bD(c_{l/k}^{(i_{-}(\rho))},2K_{i_{-}(\rho)}^{-1})\cap \bD(0,2\rho)\ne \emptyset.$$
 In particular ({\it cf.} (\ref{requirements6.76})) $|c_{l/k}^{(i_{-}(\rho))}|\leq 3\rho$.
Since $\omega_{0}\in DC(\kappa,\tau)$, $|\omega(0)-(k/l)|\geq \kappa/k^{1+\tau}$ and from (\ref{7.114bis}) $|\omega_{i_{-}(\rho)}^{}(0)-\omega_{i_{-}(\rho)}^{}(c_{l/k}^{(i_{-}(\rho))})|\geq \kappa/k^{1+\tau}-2\bar\e^{1/2}$; by the twist condition $6A\rho\geq 2A|c_{l/k}^{(i_{-}(\rho))}|\geq \kappa N_{i_{-}(\rho)}^{-(1+\tau)}-\rho^2$
hence $$\rho\gtrsim   N_{i_{-}(\rho)}^{-(1+\tau)}
$$
which shows that the LHS of (\ref{e10.204}) holds.

Estimates (\ref{eq:5.101}), (\ref{502}) are then immediate.
\ \hfill $\Box$

\subsection{Proof of Items \ref{ab1}, \ref{ab2}, \ref{ab4} of Proposition \ref{prop:a10.4}} \label{sec:appendixholes2}
Recall  that from (\ref{abc10.207})
\be\begin{cases} &U_{i}^{((3/2)\rho)}=\bD(0,(3/2)\rho)\setminus\bigcup_{j=1}^{i-1}\bigcup_{(k,l)\in E_{j} }\bD(c_{l/k}^{(j)},s_{j,i-1}K_{j}^{-1}),\label{e10.207}\\
&s_{j,i-1}=e^{\sum_{m=j}^{i-1}{\d_{m}}}\in [1,2]\end{cases}\ee
where $E_{j}\subset\{(k,l)\in\Z^2,\ 0<k<N_{j},\ 0<|l|\leq N_{j}\}$, $\omega_{j}^{}(c_{l/k}^{(j)}) =l/k$.
In particular, any  $D\in\cD(U_{i})$ is of the form $D=\bD(c_{l/k}^{(j)},s_{j,i-1}K_{j}^{-1})$, where $j\leq i-1$,   $(k,l)\in E_{j}$.
 \begin{lemma}\label{ji-rho} If $D\in\cD_{(3/2)\rho}(U_{i})$ then  $j\geq i_{-}(\rho)$.
\end{lemma}
\begin{proof} Since $\bD(0,2\rho)=\bD(0,2\rho)\cap U_{i_{-}(\rho)}$, from (\ref{e10.207}) for all $j\leq i_{-}(\rho)-1$, $(k,l)\in E_{j}$ one has $|c^{(j)}_{l/k}|\geq 2\rho+K_{j}^{-1}$.  On the other hand, if $D\in\cD(U_{i})$ is of the form $D=\bD(c_{l/k}^{(j)},s_{j,i-1}K_{j}^{-1})$, where $j\leq i-1$, $(k,l)\in E_{j}$ and intersects $\bD(0,(3/2)\rho)$ one has $|c_{l/k}^{(j)}|\leq (3/2)\rho+ 2K_{j}^{-1}<2\rho+K_{j}^{-1}$ hence  $j\geq i_{-}(\rho)$. 
\end{proof}
From (\ref{formUn}) and Lemma \ref{ji-rho} we can thus write
\be\begin{cases} &U_{i}^{((3/2)\rho)}=\bD(0,(3/2)\rho)\setminus\bigcup_{j=i_{-}(\rho)}^{i-1}\bigcup_{(k,l)\in E_{j} }\bD(c_{l/k}^{(j)},s_{j,i-1}K_{j}^{-1}),\label{ee10.207}\\
&s_{j,i-1}=e^{\sum_{m=j}^{i-1}{\d_{m}}}\in [1,2].\end{cases}\ee
We define 
$$Q_{i}=\bigcup_{j=i_{-}(\rho)}^{i-1}\{l/k,\ (k,l)\in E_{j}\}$$
and for $t\in Q_{i}$
\begin{align*}&j(t,i)=\min\{j:\ j\in \N\cap[i_{-}(\rho),i-1],\ (k,l)\in E_{j}\ \textrm{and}\ l/k=t \}\\ 
&c(t,i)=c^{ ( j ( t , i ) ) }_{t},\qquad s(t,i)=s_{ j ( t , i ) , i-1 }. \end{align*}
Define for $i_{-}(\rho)\leq j\leq i\leq i_{+}(\rho)$, 
$$\kappa_{j,i}=s_{j,i-1}K_{j}^{-1}.$$
We observe that from the inequality $N_{i_{+}(\rho)} \leq N_{i_{-}(\rho)}^2$ for any  $i_{-}(\rho)\leq j\leq i\leq i_{+}(\rho)$, $i_{-}(\rho)\leq j'\leq i'\leq i_{+}(\rho)$   one has 
\footnote{This is clear if $j\ne j'$; if $j=j'$ observe that if $i\ne i'$, $\bar \e^{\hs 1/2}_{i_{-}(\rho) }\ll |s_{j,i-1}-s_{j,i'-1}| K_{j}^{-1}$.
}
$$\kappa_{j,i}+\kappa_{j',i'}\ll N_{\max(j,j')}^{-2},\qquad \bar \e^{\hs 1/2}_{\min(j,j')}\ll |\kappa_{j,i}-\kappa_{j',i'}|
$$
hence, from  Lemma \ref{cor:6.5},  Item (\ref{ni2}) for $(k,l)\in E_{j}$, $(k',l')\in E_{j'}$ one has if $\kappa_{j,i}\leq \kappa_{j',i'}$
\be\begin{cases}&\textrm{either}\ l/k\ne l'/k'\quad \textrm{and}\quad  \bD(c_{l/k}^{(j)},\kappa_{j,i})\cap \bD(c_{l'/k'}^{(j')},\kappa_{j',i'})=\emptyset\\
&\textrm{or}\  l/k=l'/k' \quad \textrm{and}\quad \bD(c_{l/k}^{(j)},\kappa_{j,i})\subset \bD(c_{l'/k'}^{(j')},\kappa_{j',i'}). \end{cases}\label{eee10.207}
\ee
As a consequence,  for $j\in \N\cap[i_{-}(\rho),i-1]$, $(k,l)\in E_{j}$ one has 
the inclusion $\bD(c_{l/k}^{(j)},s_{j,i-1}K_{j}^{-1})\subset \bD(c(t,i),s_{j(t,i),i-1}K_{j(t,i)}^{-1})$,
and therefore ({\it cf.} (\ref{ee10.207}), (\ref{10.207ante}))
$$U_{i}^{((3/2)\rho)}=\bD(0,(3/2)\rho)\setminus\bigcup_{t\in Q_{i}}\bD(c(t,i),s_{j(t,i), i-1}K_{j(t,i)}^{-1}).$$
This implies that any $D\in\cD_{(3/2)\rho}(U_{i})$  is of the form 
\be  D=\bD(c(t,i),s_{j(t,i), i-1}K_{j(t,i)}^{-1}),\qquad t\in Q_{i},\quad j(t,i)\leq i-1.\label{b10.210}
\ee

\medskip\noindent{\it Proof of  item \ref{ab1} of Proposition \ref{prop:a10.4}.}
This is  a consequence of (\ref{b10.210}) and (\ref{eee10.207}).\ \hfill $\Box$

\medskip\noindent{\it Proof of  item \ref{ab2} of Proposition \ref{prop:a10.4}.}
 One can write for some $t\in Q_{i}, t'\in Q_{i'}$,  $D=\bD(c(t,i),s_{j(t,i),i-1}K_{j(t,i)}^{-1})$,  $D'=\bD(c(t',i'),s_{j(t',i'), i'-1}K_{j(t',i')}^{-1})$ and from Lemma \ref{cor:6.5}, Item (\ref{ni2}) if  $D\cap D'\ne\emptyset$ one has $t=t'$.
On the other hand since $t=t'\in Q_{i'}\subset Q_{i} $ one has $j(t,i')=j(t,i)$. We now use the fact that $s_{j(t,i'), i'-1}\leq s_{j(t,i),i-1}$.\ \hfill $\Box$

\medskip\noindent{\it Proof of  item \ref{ab4} of Proposition \ref{prop:a10.4}.}
 Let us prove that  $D\in \cD_{\rho}(U_{i_{+}(\rho)})$  is a subset of $U_{i_{D}}$. If this were not the case,  there would exist $D'\in\cD_{}(U_{i_{D}})$ such that $D'\cap D\ne \emptyset$; in particular $D'\in\cD_{(3/2)\rho}(U_{i_{D}})$ and from item  \ref{ab2} $D'\subset D$; but this contradicts the definition of $i_{D}$. Hence $D\subset U_{i_{D}}$. 
 
 This latter inclusion and  (\ref{ee10.207}) applied  with $i=i_{D}$ show  that one has $D\cap \bD(c_{l/k}^{(j)},s_{j,i-1}K_{j}^{-1})=\emptyset$ for all $i_{-}(\rho)\leq j\leq i_{D}-1$, $(k,l)\in E_{j}$. As a consequence  $D=\bD(c_{l/k}^{(j)},s_{j,i_{+}(\rho)-1}K_{j}^{-1})$ for some $j\geq i_{D}$, $(k,l)\in E_{j}$.

 On the other hand, by definition of $i_{D}$ there exists  $D'\in \cD_{\rho}(U_{i_{D}+1})$ of the form $D'=\bD(c', s'K_{j'}^{-1})$ with $j'\leq i_{D}$, $s'\in [1,2]$ ({\it cf.} (\ref{b10.210})) such that $D'\subset D$. One hence have  $s_{j,i_{+}(\rho)-1}K_{j}^{-1}\geq K_{i_{D}}^{-1}$  thus $j\geq i_{D}$. We conclude that $j=i_{D}$.
\ \hfill $\Box$

\section{Classical KAM measure estimates}\label{KAMestappendixante}
\subsection{A lemma}\label{sec:H1}
\begin{lemma}\label{lemma42appendix} Let $A=I\setminus\bigcup_{j\in J}I_{j}$, where $I$ is an interval and all the intervals  are disjoint. Then if $\sum_{j\in J}|I_{j}|^{1/2}\leq 1$ and if $g:M_{\R}\to M_{\R}$ is  a $C^1$-symplectic diffeomorphism such that $\|g-id\|_{C^1}\leq 1/10$, then  one has ${\rm Leb}(W_{A}\ \triangle\  W_{g(A)})\lesssim \|g-id\|_{C^0}^{1/2}$.
\end{lemma}
\begin{proof}We can assume that the intervals $I_{j}$ are contained in $I$. Recall that ${\bf 1}_{A\ \triangle \ B}=|{\bf 1}_{A}-{\bf 1}_{B}|$ and notice that since the intervals  $I_{j}$ are pairwise disjoint   one has ${\bf 1}_{W_{A}}={\bf 1}_{W_{I}}-\sum_{j\in J}{\bf 1}_{W_{I_{j}}}$ hence
$${\bf 1}_{W_{A}\ \triangle\ g(W_{A})}=\biggl|\chi-\sum_{j\in J}\chi_{{j}}\biggr|$$
where $\chi={\bf 1}_{W_{I}}-{\bf 1}_{g(W_{I} ) }$,  $\chi_{{j}}={\bf 1}_{W_{I_{j}}  }-{\bf 1}_{g( W_{I_{j}} ) }$.
This gives
\begin{align*}{\Leb}_{M_{\R}}(W_{A} \ \triangle\ g(W_{A}  )  )&=\| \chi-\sum_{j\in J}\chi_{{j}} \|_{L^1}\\
&\leq \|\chi\|_{L^1}+\sum_{j\in J}\|\chi_{{j}}\|_{L^1}\\
&\leq {\Leb}_{M_{\R}}(W_{I} \ \triangle\ g(W_{I}  )  )+\sum_{j\in J}{\Leb}_{M_{\R}}(W_{I_{j}} \ \triangle\ g(W_{I_{j}}  )  ).
\end{align*}
On the other hand if $I$ is an interval there exist intervals $\check I\subset I \subset \hat I$ such that $W_{\check I}\subset g(W_{I})\subset W_{\hat I}$ and  $\max(| I\ \triangle  \ \hat I |, |I\ \triangle \  \check I||)\leq 2\max(\|g-id\|_{C^0},\|g^{-1}-id\|_{C^0})\leq C \|g-id\|_{C^0}$, $C>0$ depending only on $M$ (recall that we have assumed $\|g-id\|_{C^1}$ is small enough). This is clear in the (AA)-case and in the (CC) or (CC*)-case  it follows from the (AA)-case using the symplectic changes of coordinates $\psi_{\pm}$ and $\ph$ (\ref{defPsi}), (\ref{changecoordxyzw}). Therefore since $g$ is symplectic,
$${\Leb}_{M_{\R}}(W_{I_{j}} \ \triangle\ g(W_{I_{j}}  )  )\leq C\min(\|g-id\|_{C^0},{\Leb}_{M_{\R}}(W_{I_{j}})).
$$
In particular ${\Leb}_{M_{\R}}(W_{I_{j}} \ \triangle\ g(W_{I_{j}}  )  )\leq C\|g-id\|_{C^0}^{1/2}{\Leb}_{M_{\R}}(W_{I_{j}})^{1/2}$ and since ${\Leb}_{M_{\R}}(W_{I_{j}})\leq |I_{j}|$ the conclusion follows.
\end{proof}

\subsection{Proof of Theorem \ref{measureestimates} }\label{KAMestappendix}

We use the notations of Section \ref{sec:5} and Propositions \ref{prop:1.enonce},\ref{prop:1.enoncebis}, \ref{prop:7.5} and  Remark \ref{rem:7.1}. 

We apply Proposition \ref{prop:7.5} - Remark \ref{rem:7.1} with $m=1$ and Proposition \ref{prop:prop4.1} with $A=e^{-2\d_{1}}U$, $L=L_{1,\textrm{Prop.}\ \small\ref{prop:7.5}}$, $\ti A=\overline{U_{1}}=\overline{U}$,
\begin{align*}{\rm Leb}_{M_{\R}}(W_{\R\cap e^{-2\d_{1}} U}\setminus\cL(f,W_{\R\cap\bar U} )) &\leq C\times ( {\rm Leb}_{\R}(\R\cap (e^{-2\d_{1}} U\setminus L))+\|g_{1,\infty}-id\|_{C^0}^{1/2})\\
&\lesssim  \bar\e^{\frac{1}{2(\bar a_{0}+3)}}+\bar\e^{\hs 1/8}\lesssim \bar\e^{\frac{1}{2(\bar a_{0}+3)}}.
\end{align*}
\hfill $\Box$

\section{From (CC) to (AA) coordinates}\label{sec:4.6}
We sometime need to reduce the (CC)-case to the (AA)-case, for example when defining the Hamilton-Jacobi Normal Form in Section \ref{sec:HJBNF} or in Section \ref{sec:16}.

\medskip
For $\a\in ]0,\pi[$ define  the angular sector $\Delta^+_{\a}(\rho)=\{r\in \bD(0,\rho),\ \arg(r)\notin [-\a,\a] \}$ and $\Delta^-_{\a}(\rho)=-\Delta^+_{\a}(\rho)$. Recall the definition of the maps $\psi_{\pm}$, {\it cf.} (\ref{defPsi}) of Subsection \ref{sec:2.2}.
\begin{lemma}\label{lemma:4.5}Let $c\in\R$,  $F^{CC}\in\cO_{\s}(W^{CC}_{h,\bD(c,2\rho)})$, $\bar\e=Ce^{h/2}\|D^2F^{CC}\|_{W^{CC}_{h,\bD(c,2\rho)}}$, 
\be C\d^{-2}\rho^{-2}\bar\e<1\label{4.81a}
\ee
and  $\a\in ]\d,\pi-\d[$.
\begin{enumerate}
\item \label{item1:4.80} if $c=0$ and $F^{CC}=O^3(z,w)$ there exists $F_{\pm}^{AA}\in \cO_{\s}(\T_{h-\d}\times \Delta^\pm_{\a+4\d}(\rho-4\d))$ such that on $\T_{h-4\d}\times \Delta^\pm_{\a+4\d}(\rho-4\d) $ one has 
\be f_{F_{\pm}^{AA}}=\psi_{\pm}^{-1}\circ f_{F^{CC}}\circ \psi_{\pm},\qquad F_{\pm}^{AA}=F^{CC}\circ\psi_{\pm}+\fO_{2}(F^{CC}).\label{4.80a}\ee
\item\label{item2:4.80} if $|c|>4\rho$, there exists $F_{\pm}^{AA}\in \cO_{\s}(\T_{h-\d}\times \bD(0,\rho))$ such that (\ref{4.80a}) holds.
\end{enumerate}
\end{lemma}
\begin{proof}We prove item (\ref{item1:4.80}), the proof of item (\ref{item2:4.80}) is done in a similar (and even simpler) way. 
From $\bar\e\leq \d$ and $f_{F}(0)=0$ we get that 
if $z,w$ satisfy $|z|,|w|<e^{h-\d}(\rho-3\d)^{1/2}$, $r=-izw\in  \Delta_{\a+3\d}^\pm(\rho-3\d)$ then $\ti z,\ti w$ defined as $(\ti z,\ti w)=f_{F^{CC}}(z,w)$ satisfy $|\ti z-z|\leq e^{-h/2}\bar\e |z|$, $|\ti w-w|\leq e^{-h/2}\bar\e |w|$  and thus $|\ti z|\leq e^{\bar\e} |z|\leq e^{h}\rho^{1/2}$ and $|\ti w|\leq e^{\bar \e} |w|\leq e^{h}\rho^{1/2}$; on the other hand  if $\ti r=-i\ti z \ti w$ one has 
\be |\ti r -r|\leq 3\bar\e|r|,\qquad |\arg(\ti r)-\arg(r)|\leq 3\bar\e,\qquad \biggl|\frac{\ti z/\ti w}{z/w}-1\biggr|\leq (5/2)\bar\e.\label{e4.83}\ee 
Since $\bar\e<\d$, 
$$f_{F^{CC}}\circ \psi_{\pm}(\T_{h-3\d}\times \Delta^\pm_{\a+3\d}(0,\rho-3\d))\subset \psi_{\pm}(\T_{h}\times \Delta^\pm_{\a}(0,\rho))$$
hence $f^{AA}:=\psi_{\pm}^{-1}\circ f_{F^{CC}}\circ \psi_{\pm}:\T_{h-3\d}\times \Delta^\pm_{\a+3\d}(0,\rho-3\d)\to \T_{h}\times \Delta^\pm_{\a}(0,\rho) $ is well defined. On the other hand if  $\psi_{\pm}^{-1}(z,w)=(\th,r)$, $f^{AA}_{\pm}(\th,r)=(\ti\th,\ti r)$, $\psi_{\pm}(\ti\th,\ti r)=(\ti z,\ti w)$ one has from  (\ref{e4.83}) and Lemma \ref{lem:D1}
$$\max(|\ti \th-\th|_{2\pi\Z}, |\ti r-r|)\leq 3\bar\e
$$
hence
$$\|f^{AA}-id\|_{\T_{h-3\d}\times \Delta^\pm_{\a+3\d}(0,\rho-3\d)}\leq 3\bar\e$$
and from Remark \ref{sec:rem4.1},  Lemmata  \ref{domaindef}, \ref{lemma:6.2} and condition (\ref{4.81a}) there exists $F_{\pm}^{AA}\in\cO(\T_{h-4\d}\times \Delta^\pm_{\a+4\d}(0,\rho-4\d))$ such that $f_{F^{AA}_{\pm}}=f^{AA}$ and 
\be f_{F^{AA}_{\pm}}=\phi^1_{J\nabla F^{AA}_{\pm}}\circ f_{\fO_{2}(F_{\pm}^{AA})}. \label{4.84b}\ee
To get the second estimate in (\ref{4.80a}) we notice that 
$$f_{F^{CC}}=\phi^1_{J\nabla F^{CC}}\circ f_{\fO_{2}(F^{CC})}
$$
hence
\begin{align*} f_{F_{\pm}^{AA}}&=\psi_{\pm}^{-1}\circ \phi^1_{J\nabla F^{CC}}\circ f_{\fO_{2}(F^{CC})}\circ \psi_{\pm}\\
&=\phi^1_{J\nabla (F^{CC}\circ \psi_{\pm})}\circ \psi_{\pm}^{-1} \circ f_{\fO_{2}(F^{CC})}\circ \psi_{\pm}\\
&=\phi^1_{J\nabla (F^{CC}\circ \psi_{\pm})}\circ f_{\fO_{2}(F^{CC})}
\end{align*}
and from (\ref{4.84b})
$$F^{AA}_{\pm}=F^{CC}\circ \psi_{\pm}+\fO_{2}(F^{CC}).
$$
\end{proof}

\begin{rem}\label{rem:4.2}If $f^{CC}=\Phi^{CC}_{\Omega}\circ f_{F^{CC}}$ we have ({\it cf.} Subsection \ref{sec:4.2}).
$$\psi_{\pm}^{-1}\circ f^{CC}\circ \psi_{\pm}=\Phi^{AA}_{\Omega}\circ f_{F^{AA}_{\pm}}.$$
\end{rem}

\section{Some Lemmas from Section \ref{sec:HJBNF}}

\subsection{Proof of Lemma \ref{lem:10.7}  }\label{lemma:tiPibarPi}

Since $\pa^2_{r}\ti \Omega(r)\asymp 1$ ({\it cf.} (\ref{ee8.183})), (\ref{e11.212}) and (\ref{11.193}) show that  there exists  $e_{0}:\T_{qh/3}\to \C$, $e_{0}\in \cO_{\s}(\T_{qh/3})$  such that 
\be\forall \ \th\in \T_{qh/3},\  \pa_{r}\ti \Pi(\th,e_{0}(\th))=0,\qquad \|e_{0}\|_{\T_{qh/3}}\lesssim  (q\bar \rho)^{-1} q^2\bar \e.\label{tayl1}\ee
We now make a Taylor expansion: using (\ref{tayl1}) we see that 
\be
\begin{split}\ti \Pi(\th,r)&=\ti \Pi(\th,e_{0}(\th)+(r-e_{0}(\th))\\
&\begin{multlined}=\ti \Pi(\th,e_{0}(\th))+(1/2)\pa^2_{r}\ti \Pi(\th,e_{0}(\th))(r-e_{0}(\th))^2\\
+(r-e_{0}(\th))^3\sum_{k=3}^\infty \frac{1}{k!}\pa_{r}^k\ti \Pi(\th,e_{0}(\th))(r-e_{0}(\th))^{k-3}\end{multlined}
\end{split}
\ee
and if we define
\be \varpi(\th)=(1/2)\pa^2_{r}\ti \Pi(\th,e_{0}(\th)),\qquad e_{1}(\th)=-\ti \Pi(\th,e_{0}(\th))/\varpi(\th)\label{K.525}\ee
one gets for some $p(\th,r)$
\begin{align*}
&\ti\Pi(\th,r)=\varpi(\th)\biggl( -e_{1}(\th)+(r-e_{0}(\th))^2+(r-e_{0}(\th))^3p(\th,r)\biggr)\\
&=\bar\Pi(\th,r-e_{0}(\th))
\end{align*}
with $\bar \Pi\in\cO(\T_{qh/3}\times \bD(0,e^{-2/q}q\bar\rho/2-Cq\bar \rho^{\ -1}\bar \e))\subset \cO(\T_{qh/3}\times \bD(0,\rho_{q}))$ 
$$\bar\Pi(\th,r)=\varpi(\th)\biggl( r^2-e_{1}(\th)+r^3p(\th,r+e_{0}(\th))\biggr)$$
which gives the desired form for $\bar \Pi(\th,r)$ if one sets  $f(\th,r)=p(\th,r+e_{0}(\th))$.

The estimates (\ref{deftiPi}) on $e_{0},e_{1},\varpi$ are then clear from (\ref{tayl1}), (\ref{K.525}). Let us check the one on $f$. From (\ref{e11.212bis}) and (\ref{10.208}) we have
$$\varpi(\th)(r^2-e_{1}(\th)+r^3f(\th,r))=:\varpi r^2+\sum_{i=0}^2f_{i}(\th)(r+e_{0}(\th))^{i}+r^3(b(r)+ \ti f(\th,r+e_{0}(\th)))
$$
hence from (\ref{eq:another}) and the first two inequalities of (\ref{deftiPi})
$$r^3\biggl(f(\th,r)-\varpi(\th)^{-1}\biggl(b(r)-\ti f(\th,r+e_{0}(\th)) \biggr) \biggr)\lesssim (q\bar \rho)^{-3}q^2\bar\e
$$
and by the maximum principle
$$\sup_{(\th,r)\in \T_{qh/3} \times \bD(0,\rho_{q}) } \biggl|f(\th,r)-\varpi(\th)^{-1}\biggl(b(r)-\ti f(\th,r+e_{0}(\th))) \biggr)\biggr|\lesssim  \bar \rho^{-3}(q\bar \rho)^{ \hs-3}q^2\bar\e\ll 1.
$$
We the conclude by (\ref{ee8.183}) and (\ref{eq:another}).
\hfill $\Box$

\subsection{Square roots}
\begin{lemma}\label{lemma:11.10} Let $a\in\C^*$. There exist a unique function $m_{a}(z)=z(1+a/z^2)^{1/2}$ univalent  on $\C\setminus \bar\bD(0,|a|^{1/2})$ such that 
\be m_{a}^2(z)=z^2+a,\qquad m_{a}(z)=z+O(z^{-1}).
\ee 
It satisfies for $z,z'\in E_{L}:=\{w\in\C,|w|>L|a|^{1/2}\}$ ($L>3$)
\be (2/\pi)e^{-2/L^2} \leq \biggl|\frac{m_{a}(z)-m_{a}(z')}{z-z'}\biggr|\leq (\pi/2)e^{1/L^2}\label{K.488}
\ee
\end{lemma}
\begin{proof} The existence and uniqueness  of $m_{a}(z)=z(1+(a/z^2))^{1/2}$ is clear.  

Note that the inverse for the composition  of $m_{a}$ is $m_{-a}$ and that if $L>2$ $m_{a}(E_{L})\subset E_{3L/4}$.  On the other hand the derivative of $m_{a}(z)$ is equal to $\pa_{z}m_{a}(z)=(1+a/z^2)^{-1/2}$ and  since for $t\in [0,1/2]$, $(1-t)^{-1/2}\leq 1+t$ one gets for $z\in E_{L}$ ($L>2$)  $|\pa_{z}m_{a}(z)|\leq e^{1/L^2}$. Now any two points $z,z'\in E_{L}$ can be joined by a path in $E_{L}$ the length of which is $\leq (\pi/2) |z-z'|$; thus for any $z,z'\in E_{L}$, $|m_{a}(z)-m_{a}(z')|\leq (\pi/2)e^{1/L^2}|z-z'|$ which is the right hand side inequality of (\ref{K.488}). To get the left hand side  we use the fact that $|m_{-a}(m_{a}(z))-m_{-a}(m_{a}(z'))|\leq (\pi/2)e^{1/(3L/4)^2}|m_{a}(z)-m_a(z')| $ if  $L>3$ ($3L/4>2$).

\end{proof}

\subsection{Proof of Lemma \ref{lemma:10.2}}\label{sec:G2}
 From Lemma \ref{lemma:11.10}  $z\mapsto (z^2+a)^{1/2}$ is well defined on $\C\setminus\{|z|>|a|^{1/2}\}$.

Let $0\leq s \leq h/3$. We are looking for   $g(\th,z)=\varpi(\th)^{-1/2}z(1+\mathring{g}(\th,z))$ such that 
\begin{multline*} z^2=\varpi(\th)\biggl(z^2\varpi(\th)^{-1}(1+\mathring{g}(\th,z))^2-e_{1}(\th)+z^3\varpi(\th)^{-3/2}(1+\mathring{g}(\th,z))^3f(\th,g(\th,z))\biggr)
\end{multline*}
which can be written as a Fixed Point problem
\begin{multline}\mathring{g}(\th,z)=\biggl(1+\varpi(\th)\frac{e_{1}(\th)}{z^2}-z\varpi(\th)^{-1/2}(1+\mathring{g}(\th,z))^{3}f(\th,g(\th,z))\biggr)^{1/2}-1\label{10.217}
\end{multline}
Using the estimate on $f$ given by (\ref{deftiPi}) one can see that  the map $\Psi:\mathring{g}\mapsto$ R.H.S. of  (\ref{10.217}) defines a $2\rho_{q}$-contracting map on the ball $B(0,CL^{-2})$ of center 0 and radius $CL^{-2}$ of the Banach space $(\cO(\T_{sq}\times \bA(\l_{s,L},\rho_{q})),\|\cdot\|_{\infty})$ provided $L^{-1}$  and $\rho_{q}$ are small enough. By the Contraction Mapping Theorem it has a unique fixed point $\mathring{g}$ in this ball.
In other words
\be \bar \Pi(\th,g(\th,z))=z^2.
\ee
The fact that $g\in \cO(\T_{qs}\times\bA(\l_{s,L},\rho_{q}))$ is uniquely defined  shows that the various $g$ found for different values of $s$ must agree. Hence $g$ is defined on $\bigcup_{0\leq s\leq 1}(\T_{qs}\times\bA(\l_{s,L},\rho_{q}))$.
\hfill $\Box$

\subsection{Proof of Lemma \ref{lemma:7.6}}\label{sec:G3}
We look for $H$ under the form $H(z)=\gamma^{-1}z(1+\mathring{H}(z))$. Equation (\ref{eq:8.197}) can be written as a Fixed Point problem:
\be \mathring{H}(z)=-\frac{\mathring{\Gamma}(\gamma^{-1}z(1+\mathring{H}(z))))}{(1+\mathring{\Gamma}(\gamma^{-1}z(1+\mathring{H}(z))))}.\label{10.224}
\ee
By Cauchy estimates for $z\in \bA(\l_{s},\rho_{q})$
$$|\pa\mathring{\Gamma}(z)|\leq \frac{1}{\dist(z,\pa\bA(\l_{s,L},\rho_{q}))}L^{-2}.$$
Hence  if $z\in \bA(2\l_{s,L},(1/2)\rho_{q})$ the map $u\mapsto \mathring{\Gamma}(\gamma^{-1}zu)$ is $4L^{-2}$-lipschitz on  $\{(3/4)\leq |u|\leq 4/3\}$ and the map $\Psi$ defined by the R.H.S. of (\ref{10.224}) is $4L^{-2}$contracting on the ball $\{\|\mathring{H}\|_{\bA(2\l_{s,L},(1/2)\rho_{q})}\leq 2 L^{-2}\}$. It admits thus a unique fixed point in this ball. 
\hfill $\Box$

\section{Some other lemmas}

\begin{lemma}\label{lem:D1}For $z\in\C$ 
$$|e^{iz}-1|\geq \frac{1}{2}\min(1,\min_{l\in\Z}|z-2\pi l|).
$$ 
\end{lemma}
\begin{proof} Let $\eta:=e^{iz}-1$. We can assume $|\eta|<1/2$.  We can thus define $iz_{0}:=\ln(1+\eta)=\sum_{k\in\N^*}(-1)^kz^k/k$ such that $e^{iz_{0}}=1+\eta=e^{iz}$. There thus exists $l\in\Z$ such that $z_{0}=z-2\pi l$. But $|z_{0}|=|\ln(1+\eta)|\leq 2|\eta|$.
\end{proof}

\begin{lemma}\label{L2C0}Let $f\in C^\omega_{h}(\T)$ be such that for some $\d\in ]0,1[$, $\mu>0$
\be \|f\|_{L^2(\T)}\leq \d \|f\|_{C^0(\T)}+\mu.
\ee
Then, for some $C>0$,
\be
\|f\|_{C^0(\T)}\leq \d^{-1}\mu+ \frac{C}{h}e^{-h/(12\d^2)}\|f\|_{h}.
\ee
\end{lemma}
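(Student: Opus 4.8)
The plan is to interpolate between the $L^2(\T)$-norm and the analytic norm $\|\cdot\|_h$ by a Fourier cut-off, exactly as in Section~2. Write $f=T_Nf+R_Nf$ with $T_Nf(\theta)=\sum_{|k|\le N}\hat f(k)e^{2\pi ik\theta}$, the cut-off level $N\in\N^*$ to be fixed at the end of size $N\asymp\delta^{-2}$. We may assume $\delta$ smaller than a fixed universal constant: in the complementary regime $\delta\gtrsim 1$ one has $\|f\|_{C^0(\T)}\le\|f\|_h$ while $\tfrac1h e^{-h/(2\delta^2)}\gtrsim 1$ (as $h$ ranges over a bounded interval), so the conclusion is trivial.

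First I would estimate the tail. The one-dimensional Cauchy estimate $|\hat f(k)|\le e^{-2\pi|k|h}\|f\|_h$ (obtained by shifting the contour of $\int_\T f(\theta)e^{-2\pi ik\theta}\,d\theta$ into the strip $\T_h$), summed as a geometric series, gives
\[
\|R_Nf\|_{C^0(\T)}\le\sum_{|k|>N}|\hat f(k)|\le\|f\|_h\sum_{|k|>N}e^{-2\pi|k|h}\lesssim\frac1h\,e^{-2\pi Nh}\,\|f\|_h .
\]
Next the low-frequency part: by Cauchy--Schwarz and Parseval on $\T$,
\[
\|T_Nf\|_{C^0(\T)}\le\sum_{|k|\le N}|\hat f(k)|\le(2N+1)^{1/2}\Big(\sum_k|\hat f(k)|^2\Big)^{1/2}=(2N+1)^{1/2}\,\|f\|_{L^2(\T)} .
\]

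Adding the two bounds and inserting the hypothesis $\|f\|_{L^2(\T)}\le\delta\|f\|_{C^0(\T)}+\mu$ produces a universal constant $c_0$ with
\[
\|f\|_{C^0(\T)}\le c_0N^{1/2}\delta\,\|f\|_{C^0(\T)}+c_0N^{1/2}\mu+\frac{c_0}{h}\,e^{-2\pi Nh}\,\|f\|_h .
\]
Then I would take $N=\lceil(4\pi\delta^2)^{-1}\rceil$. Since $(2N+1)^{1/2}=(1+o(1))\sqrt{2N}$ as $\delta\to0$, for $\delta$ small one checks $c_0N^{1/2}\delta\le1/2$, so the first term on the right is absorbed into the left-hand side; simultaneously $2\pi Nh\ge h/(2\delta^2)$ by the choice of $N$, and $N^{1/2}\lesssim\delta^{-1}$. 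Hence
\[
\|f\|_{C^0(\T)}\le2c_0N^{1/2}\mu+\frac{2c_0}{h}\,e^{-2\pi Nh}\,\|f\|_h\lesssim\delta^{-1}\mu+\frac1h\,e^{-h/(2\delta^2)}\|f\|_h ,
\]
which is the claim.

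The only delicate point is bookkeeping of constants: one must verify that the constant coming from the Cauchy--Schwarz step is small enough that the two inequalities $c_0N^{1/2}\delta\le1/2$ and $2\pi Nh\ge h/(2\delta^2)$ can hold for the same $N$ — this works because that constant tends to $\sqrt2<\sqrt\pi$ as $N\to\infty$, and $N\to\infty$ precisely when $\delta\to0$. There is no conceptual obstacle; the whole argument is the standard ``truncate, absorb, optimize'' interpolation.
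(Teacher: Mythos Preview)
Your proof is correct and follows essentially the same route as the paper: Fourier truncation at level $N\asymp\delta^{-2}$, Cauchy--Schwarz on the low modes, exponential decay from analyticity on the high modes, then absorption of the $\|f\|_{C^0}$ term into the left-hand side. The only cosmetic difference is the precise choice of cut-off (the paper takes $N=\delta^{-2}/10$ rather than your $N=\lceil(4\pi\delta^{2})^{-1}\rceil$) and your more explicit check that the two constraints on $N$ are compatible.
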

\begin{proof}If 
\be f(\th)=\sum_{k\in\Z}\hat f(k) e^{i ik \th}
\ee
is the Fourier expansion of $f$, one has for some $C>0$ and any  $N\in\N^*$
\begin{align} \|f\|_{C^0(T)}&\leq \sum_{|k|\leq N}|\hat f(k)|+\frac{C}{h}e^{- hN}\|f\|_{h}\\
&\leq (2N+1)^{1/2}\|f\|_{L^2(\T)}+\frac{C}{h}e^{- hN}\|f\|_{h}\\
&\leq (3N)^{1/2}(\d\|f\|_{C^0(\T)}+\mu)+\frac{C}{h}e^{- hN}\|f\|_{h}.
\end{align}
If we choose $N=\d^{-2}/12$ we have $(3N)^{1/2}\d\leq 1/2$ and 
\be
\|f\|_{C^0(\T)}\leq \d^{-1}\mu+ \frac{C}{h}e^{- h/(12\d^{2})}\|f\|_{h}.
\ee 
\end{proof}

\section{Stable and unstable  Manifolds}\label{sec:K}
\subsection{The Stable Manifold Theorem}
Let $(E,\|\cdot \|)$ be a Banach space, $M:E\to E$ an invertible linear continuous map. Let $\kappa,\d>0$. We say that $M$ is $(\kappa,\d)$-{\it  hyperbolic} if there exist $\kappa>0$ and continuous projectors $P_{s}$, $P_{u}$ satisfying $id_{E}=P_{s}+P_{u}$, $P_{s}P_{u}=P_{u}P_{s}=0$, $P_{s}MP_{u}=P_{u}MP_{s}=0$ such that
$$\begin{cases}&\max (\|P_{s}MP_{s}\|,\|(P_{u}MP_{u})^{-1}\|)\leq e^{-\kappa}\\
& \max(\|P_{s}\|,\|P_{u}\|)\leq \d^{-1}.
\end{cases}
$$
The spaces $E_{*}:=P_{*}E$, $*=s,u$, are then $M$-invariant and are the stable and unstable spaces of the linear map  $M$. We shall use the notations $M_{*}=P_{*}MP_{*}$, $*=s,u$. 

Let $B(0,\rho)\subset E$ be the ball of center 0 and radius $\rho>0$.
\begin{theo}[Stable/Unstable Manifold Theorem]\label{theo:stableunstabletheorem}Assume that $M$ is $(\kappa,\d)$-hyperbolic as above and let $F:B(0,\rho)\to E$ be $C^1$. Assume that 
\be \|F(0)\|\leq C^{-1}\d \kappa\rho,\quad  \|DF\|_{C^1(B(0,\rho))}\leq C^{-1}\d \kappa.\label{F407}
\ee
Then, if $C$ is large enough (but universal)
\begin{enumerate}
\item The map $x\mapsto Mx+F(x)$ has a unique  hyperbolic fixed point $\bar x$ such that $\max(\|P_{s}\bar x\|,\|P_{u}\bar x\|)\leq (\rho/4)$ (in particular, it is  located in $B(0,\rho/2)$).
\item The local stable (resp. unstable) manifold $$W^s_{loc}(\bar x;M+F):=\{y\in B(\bar x,\rho/4),\ \forall\ n\geq 0,\ (M+F)^n(y)\in B(\bar x,\rho/2)\}$$ (resp.  $W^u_{loc}(\bar x;M+F):=\{y\in B(\bar x,\rho/4),\ \forall\ n\leq 0,\ (M+F)^n(y)\in B(\bar x,\rho/2)\}$) 
of the point $\bar x$ for  $M+F$ is of the form $\{x_{s}+\g_{s,F}(x_{s}), \ x_{s}\in E_{s}\cap B(0,\rho/2)\}$ (resp. $\{x_{u}+\g_{u,F}(x_{u}), \ x_{u}\in E_{u}\cap B(0,\rho/2)\}$)
where $\g_{s,F}:E_{s}\to E_{u}$ (resp. $\g_{u,F}:E_{u}\to E_{s}$)
is a map of class $C^1$ and $\|D\gamma_{s,F}\|\leq  C\|DF\|_{B(0,\rho)}(\d\kappa)^{-1}$ (resp. $\|D\gamma_{u,F}\|\leq  C\|DF\|_{B(0,\rho)}(\d\kappa)^{-1}$).
\item If $G$ satisfies also (\ref{F407}) then for $*=s,u$,  $\|D\gamma_{*,F}-D\gamma_{*,G}\|\leq  C\|D(F-G)\|_{B(0,\rho)}(\d\kappa)^{-1}$.
\item If $F(0)=0$, $DF(0)=0$ then $\bar x=0$ and  $T_{0}W_{loc}^*(0)=E_{*}$, $*=s,u$.
\end{enumerate} 
\end{theo}
Notice that the Theorem gives the same size for the domains of definition for $\g_{s,F}$, $\g_{u,F}$.

\subsection{Proof of Lemma \ref{lemma:15.4}}\label{sec:K2}

Using the definition of $f_{H_{Q}+\ti\omega}(\th,r)=(\ph,R)$ one can see that 
$f_{H_{Q}+\ti\omega}(\th,r)=(\th,r)$
if and only if 
\be \nabla H_{Q}(\th,r)+\nabla\ti\omega(r)=0\label{fp0}\ee or equivalently 
\begin{align*}& 0=\ti a(0)\th+\pa_{r}\ti b(0)r\\
&0=\pa_{r}\ti b(0)\th+(\varpi+\pa^2_{r}\ti a(0))r+\pa_{r}\ti a(0)+\pa_{r}\ti\omega(r)
\end{align*}
Solving the first equation and inserting it into the second yields
\begin{align}& \th=-\frac{\pa_{r}\ti b(0)}{\ti a(0)}r \label{fp1}\\
&r=-\frac {\pa_{r}\ti a(0)+\pa_{r}\ti\omega(r)} {\varpi+\pa^2_{r}\ti a(0)-\frac {(\pa_{r}\ti b(0))^2}{\ti a(0)}}.\label{fp2}
\end{align}
We observe that, {\it cf.}  (\ref{16.329}), 
$$\max(|\pa_{r}^2\ti a(0)|,|(\pa\ti b_{r}(0))^2/\ti a(0)|)\lesssim q^2\bar \nu_{q}^{\hs -1}\rho_{p/q}^{-2}e^{-qh}\e_{p/q}<\varpi/10
$$
$$|\pa_{r}\ti a(0)|\lesssim q^2\rho_{p/q}^{-1}e^{-qh}\e_{p/q},\qquad \pa_{r}\ti \omega(r)=O(r^2)$$
and deduce by a simple fixed point theorem (in dimension 1) that (\ref{fp2}) has a unique real  solution $r_{0}\asymp \pa_{r}\ti a(0)$; returning  to (\ref{fp1}) and using ({\it cf.} (\ref{16.329bis}), (\ref{16.329}))
\be \ti a(0)=q^2\bar \nu_{q}e^{-qh}\e_{p/q},\qquad |\pa_{r}\ti b(0)|\lesssim q^2\rho_{p/q}^{-1}e^{-qh}\e_{p/q}\label{ememb}
\ee
we conclude  that (\ref{fp0}) has also a unique solution $(\th_{0},r_{0})\in \bD(0,\rho_{p/q})^2$
$$|\th_{0}|\lesssim \bar \nu_{q}^{\hs -1}q^2\rho_{p/q}^{-2}\e_{p/q}e^{-qh},\qquad |r_{0}|\lesssim q^2\rho_{p/q}^{-1}\e_{p/q} e^{-qh}
$$
and in particular since $\rho_{p/q}^{-8}=\max((c_{p/q}/4)^{-8},q^{72})$ ({\it cf.} \ref{defrhop/q})), $q^2e^{-qh}=O(q^{-100})$, $\e_{p/q}\leq c_{p/q}^{\bar a_{4}}$ ({\it cf.} \ref{16.320})), $\nu_{q}\gtrsim q\rho_{p/q}$    ({\it cf.} \ref{15.255a})), $\bar a_{4}\geq 10$, one has 
\be(\th_{0},r_{0})\in (\bD(0,\rho_{p/q}^5)\times \bD(0,\rho_{p/q}^5))\cap\R^2.\label{16.344}
\ee
We now compute $Df_{H_{Q}+\ti\omega}(\th_{0},r_{0})$.  Since $\ti\omega$ depends only on the $r$-variable one has, {\it cf.} (\ref{eq:4.55}) of Lemma \ref{lemma:6.2.bis},
$$f_{H_{Q}+\ti\omega}=\Phi_{\ti\omega}\circ f_{H_{Q}}$$
hence
$$Df_{H_{Q}+\ti\omega}(\th_{0},r_{0})=\bm 1& \pa_{r}^2\ti\omega(r_{0})\\ 0 &1\em Df_{H_{Q}}.$$
A simple computation shows that 
the derivative of the  symplectic map $f_{H_{Q}}$ is equal to
$$Df_{H_{Q}}=\bm 1+\pa\ti b(0)+\frac{(\varpi+\pa^2_{r}\ti a(0))\ti a(0) }{1+\pa\ti b(0)}& \frac{\varpi+\pa^2_{r}\ti a(0)}{1+\pa\ti b(0)}\\ \frac{\ti a(0)}{1+\pa\ti b(0)} & \frac{1}{1+\pa\ti b(0)}  \em
$$
hence 
$${\rm tr}(Df_{H_{Q}+\ti \omega})=2+\varpi \ti a(0)(1+O(q^2\rho_{p/q}^{-2}\e_{p/q}e^{-qh}))+O((\pa_{r}\ti b(0))^2)+\frac{\pa^2\ti\omega(r_{0})\ti a(0)}{1+\pa\ti b(0)}.$$
The estimate (\ref{16.344}) on $r_{0}$, the fact that $\pa^2\ti\omega(r_{0})=O(r_{0})$  and (\ref{ememb}) show that
$${\rm tr}(Df_{H_{Q}+\ti \omega})=2+\varpi q^2\bar \nu_{q}\e_{p/q}e^{-qh}(1+O(\rho_{p/q}^5)).$$

Since $Df_{H_{Q}+\ti \omega}(\th_{0},r_{0})\in SL(2,\R)$ we deduce that it  is a $(\kappa,\delta)$-hyperbolic matrix with 
\be \delta=\kappa=q(\varpi  \nu_{q}\e_{p/q}e^{-qh})^{1/2}(1+o_{1/q}(1))
\ee
(we used that  $\bar\nu_{q}=\nu_{q}(1+o_{1/q}(1))$).

The statement on the eigendirections is then a simple computation.
$\Box$

\bibliographystyle{plain}

\end{document}